    \providecommand*{\input@path}{}
    \g@addto@macro\input@path{{../}}
    \newcommand{\red}{\textcolor{red}} %
    \newcommand{\blue}{\textcolor{blue}} %
    \newcommand{\magenta}{\textcolor{magenta}} 
    \newcommand{\orange}{\textcolor{orange}}
    \newcommand{\purple}{\textcolor{purple}}
\newlist{steps}{enumerate}{1}
\setlist[steps, 1]{itemsep=3pt,leftmargin=0cm,itemindent=.5cm,labelwidth=\itemindent,labelsep=0cm,align=left,label = \emph{Step \arabic*}:\,}
\newtheorem*{rep@theorem}{\rep@title}
\newcommand{\newreptheorem}[2]{%
\newenvironment{rep#1}[1]{%
 \def\rep@title{#2 \ref{##1}}%
 \begin{rep@theorem}}%
 {\end{rep@theorem}}}
\newtheorem*{rep@cor}{\rep@title}
\newcommand{\newrepcor}[2]{%
\newenvironment{rep#1}[1]{%
 \def\rep@title{#2 \ref{##1}}%
 \begin{rep@cor}}%
 {\end{rep@cor}}}
\newtheorem*{rep@prop}{\rep@title}
\newcommand{\newrepprop}[2]{%
\newenvironment{rep#1}[1]{%
 \def\rep@title{#2 \ref{##1}}%
 \begin{rep@prop}}%
 {\end{rep@prop}}}
\newtheorem{cor}{Corollary}[section]
\newtheorem{corx}{Corollary}
\newtheorem{thmx}[corx]{Theorem}
\newtheorem{prop}[cor]{Proposition}
\newtheorem{propx}[corx]{Proposition}
\newtheorem{lemma}[cor]{Lemma}
\newtheorem*{problem*}{Problem}
\theoremstyle{definition}
\newtheorem{defi}[cor]{Definition}
\theoremstyle{remark}
\newtheorem{remark}[cor]{Remark}
\newtheorem*{remark*}{Remark}
\newtheorem{example}[cor]{Example}
\theoremstyle{plain}
\newcommand{\thistheoremname}{}
\newtheorem*{genericthm}{\thistheoremname}
\newcommand{\ginv}{\Gamma}
\newcommand{\R}{{\mathbb R}}
\newcommand{\Z}{{\mathbb Z}}
\newcommand{\Hyp}{\mathbb{H}}
\newcommand{\ph}{\varphi}
\newcommand{\arcsinh}{\mathrm{arcsinh}}
\newcommand{\Isom}{\mathrm{Isom}}
\newcommand{\grad}{\operatorname{grad}}
\newcommand{\D}{\mathbb{D}}
\newcommand{\I}{\mathrm{I}}
\newcommand{\II}{\mathrm{I}\hspace{-0.04cm}\mathrm{I}}
\newcommand{\III}{\mathrm{I}\hspace{-0.04cm}\mathrm{I}\hspace{-0.04cm}\mathrm{I}}
\DeclareRobustCommand{\SkipTocEntry}[4]{}
\begin{document}

\setcounter{secnumdepth}{3}
\setcounter{tocdepth}{2}

\title[Completeness of convex entire surfaces in Minkowski 3-space]{Completeness of convex entire surfaces\\in Minkowski 3-space}

\author[Francesco Bonsante]{Francesco Bonsante}
\address{Francesco Bonsante: Dipartimento di Matematica ``Felice Casorati", Universit\`{a} degli Studi di Pavia, Via Ferrata 5, 27100, Pavia, Italy.} \email{bonfra07@unipv.it} 
\author[Andrea Seppi]{Andrea Seppi}
\address{Andrea Seppi: Institut Fourier, UMR 5582, Laboratoire de Math\'ematiques,
Universit\'e Grenoble Alpes, CS 40700, 38058 Grenoble cedex 9, France.} \email{andrea.seppi@univ-grenoble-alpes.fr}
\author[Peter Smillie]{Peter Smillie}
\address{Peter Smillie: Mathematisches Institut, Universit\"at Heidelberg, Im Neuenheimer Feld 205, 69120 Heidelberg, Germany.} \email{psmillie@mathi.uni-heidelberg.de}


\thanks{The first aurthor was partially supported by Blue Sky Research project ``Analytic and geometric properties of low-dimensional manifolds" . The first two authors are members of the national research group GNSAGA}

\begin{abstract}
We prove four results towards a description, in terms of the null support function, of the set of isometric embeddings of the hyperbolic plane into Minkowski 3-space. We show that for sufficiently tame null support function, the corresponding entire surface of constant curvature -1 is complete, and for sufficiently sharp null support function, it is incomplete. Our results apply also to entire surfaces whose curvature is merely bounded.
\end{abstract}

\maketitle
%

\tableofcontents

\section{Introduction}

The purpose of this paper is to study complete spacelike surfaces of constant intrinsic curvature $-1$ (which we call hyperbolic), or more generally surfaces whose curvature is non-positive and is bounded above or below by negative constants, in Minkowski three-space $\R^{2,1}$. 

\subsection*{Examples and statement of the problem} It is easily shown that every complete spacelike surface in $\R^{2,1}$ is entire, meaning that it is the graph of a function globally defined on the horizontal plane. Moreover, if the intrinsic curvature is non-positive, then (up to a reflection in a horizontal plane) such function is convex. The very first example is the well-known hyperboloid model of the hyperbolic plane $\Hyp^2$, namely the future unit sphere in $\R^{2,1}$, see Figure \ref{fig:hyperboloid}.

\begin{figure}[htb]
\centering
\includegraphics[height=4cm]{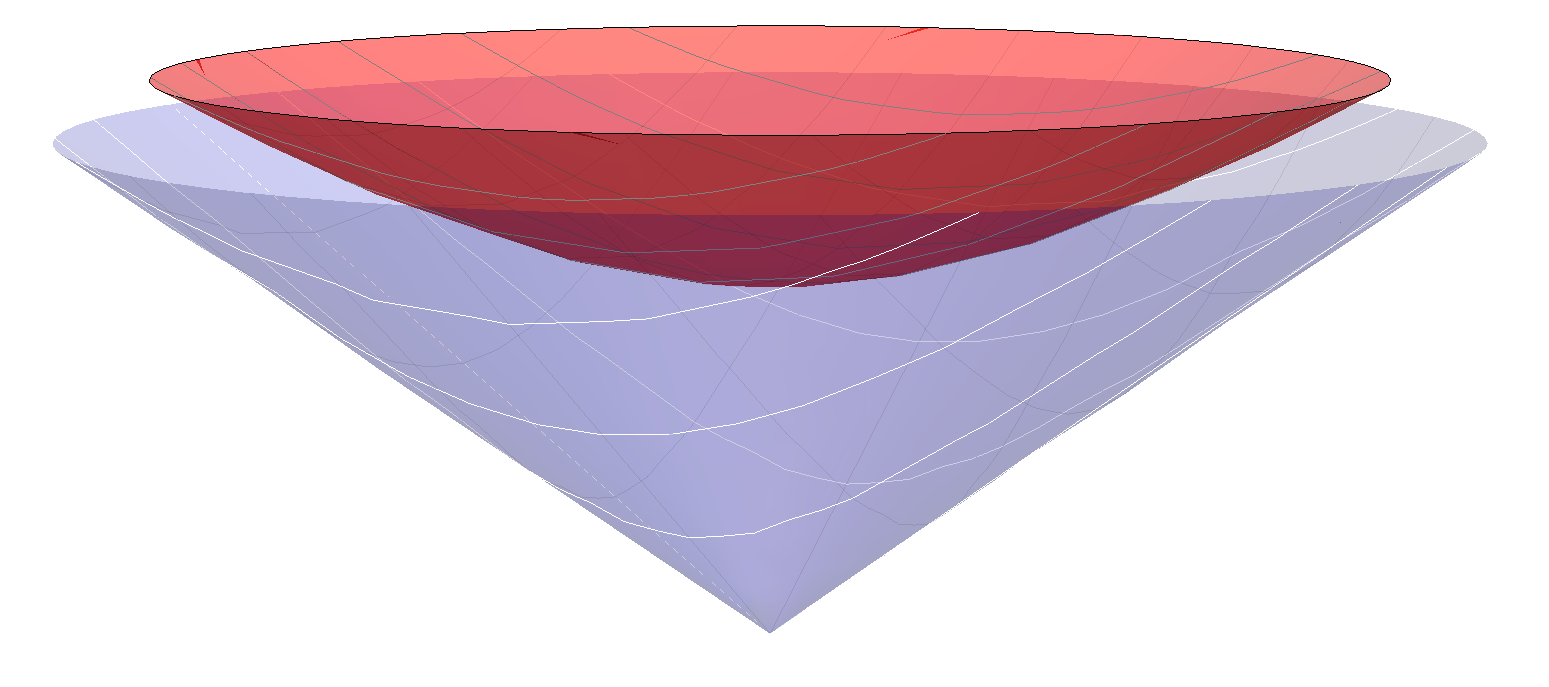}
\caption{The hyperboloid model of the hyperbolic plane (in red). In this and the following figures, the blue surface is the boundary of the domain of dependence of the red surface. \label{fig:hyperboloid}}
\end{figure}

The first non-umbilic examples of complete hyperbolic surfaces in $\R^{2,1}$ have been obtained by Hano and Nomizu in 1983 (\cite{hanonomizu}). By considering ``surfaces of revolution'' with spacelike axis, they reduced the problem of finding a function whose graph has constant intrinsic curvature $-1$ to an ordinary differential equation, whose maximal solutions give a one-parameter family of non-equivalent  surfaces. It turns out that these surfaces are complete, and thus intrinsically isometric to $\Hyp^2$. See Figures \ref{fig:hanonomizu} and \ref{fig:hanonomizu2} (on the left).

Another large source of examples arose from the work \cite{barbotzeghib} of Barbot-B\'eguin-Zeghib: they showed that for every $g\geq 2$ and every representation $\rho:\pi_1(S_g)\to\Isom(\Hyp^3)\cong \mathrm O(2,1)\ltimes\R^{2,1}$ ($S_g$ being a closed orientable surface of genus $g$) whose linear part is Fuchsian, there exists a unique $\rho$-equivariant embedding of constant curvature $-1$. By cocompactness, the induced metric is complete and therefore isometric to $\Hyp^2$; these surfaces are non-equivalent to the hyperboloid unless $\rho$ has a global fixed point.

\begin{figure}[htb]
\centering
\begin{minipage}[c]{.65\textwidth}
\centering
\includegraphics[height=3.3cm]{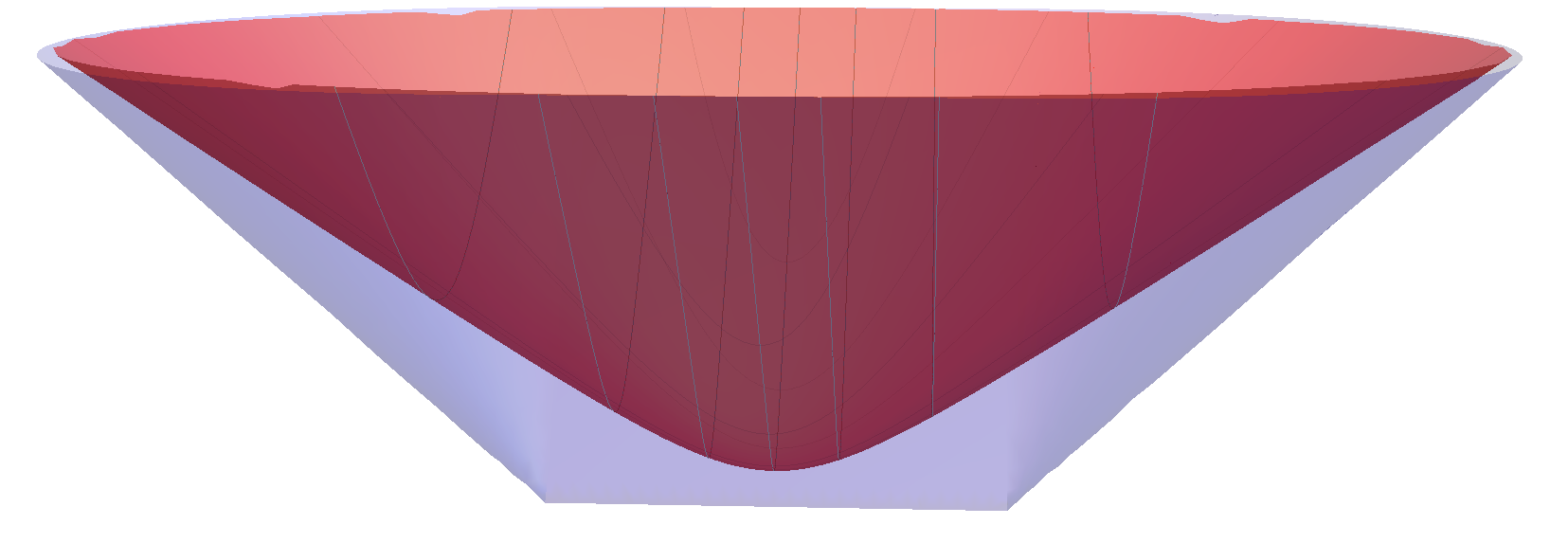}
\end{minipage}%
\begin{minipage}[c]{.35\textwidth}
\centering
\includegraphics[height=4.5cm]{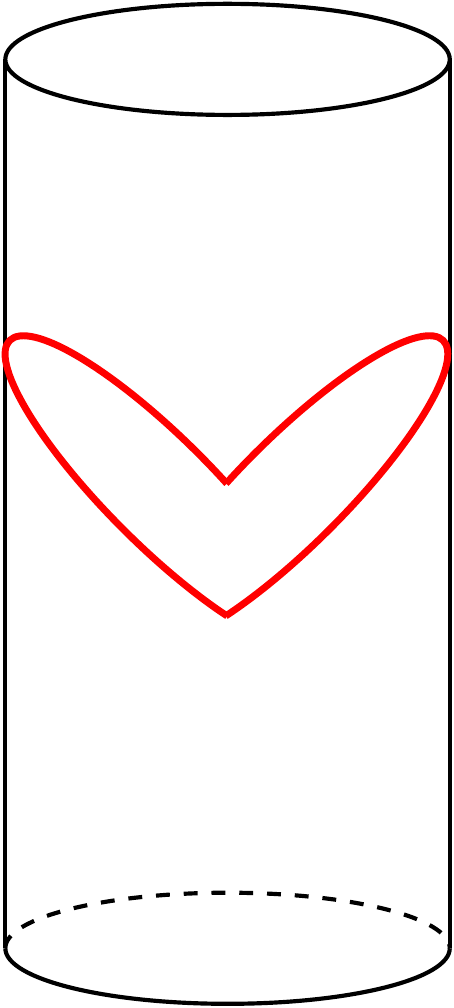}
\end{minipage}%

\caption{A complete hyperbolic surfaces in the Hano-Nomizu family, obtained as surface of revolution with spacelike axis. Its null support is a continuous piecewise affine function on the circle.}
\label{fig:hanonomizu}
\end{figure}

\begin{figure}[htb]
\centering
\begin{minipage}[c]{.65\textwidth}
\centering
\includegraphics[height=3.3cm]{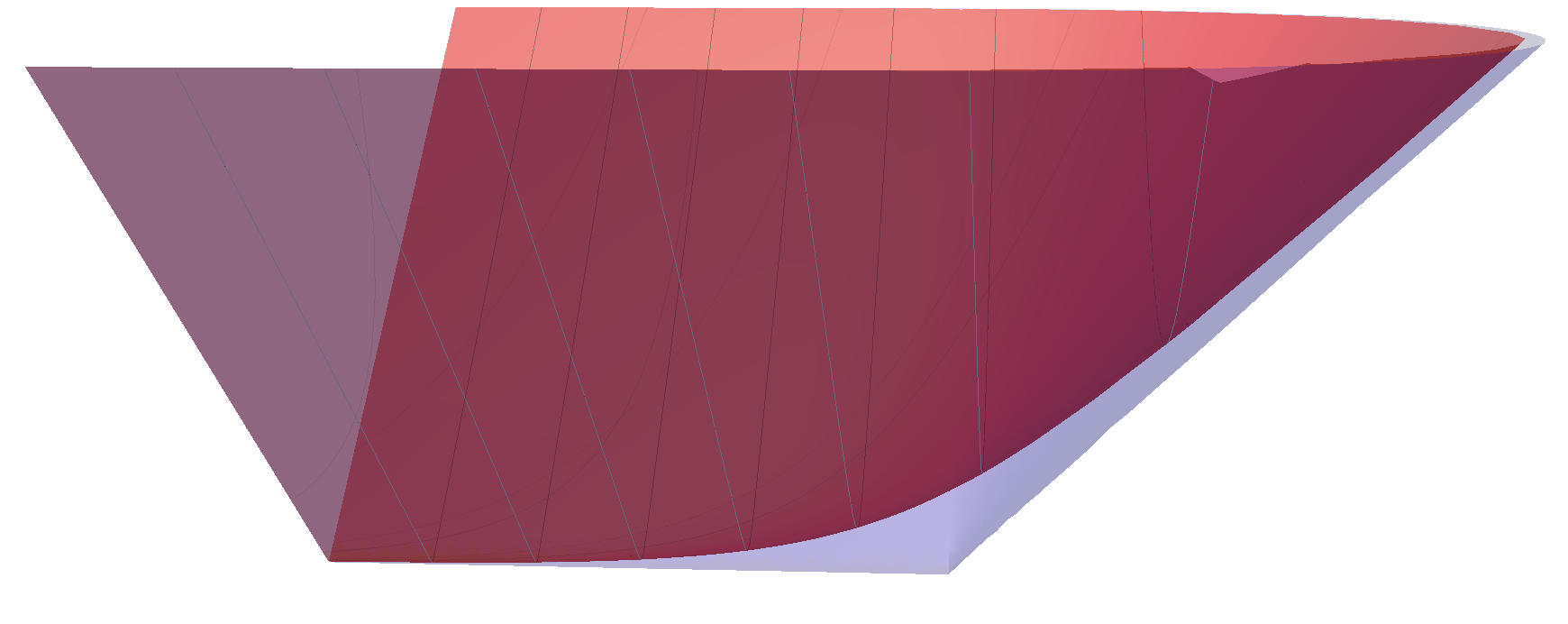}
\end{minipage}%
\begin{minipage}[c]{.35\textwidth}
\centering
\includegraphics[height=4.5cm]{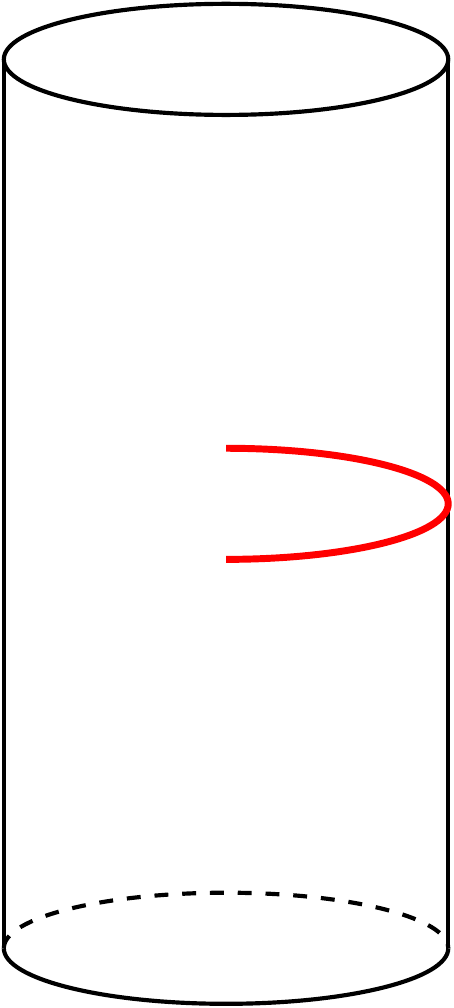}
\end{minipage}%
\caption{Another example in the Hano-Nomizu family, called the \emph{semitrough}. In this case the null support function equals zero on a closed arc, and $+\infty$ on its complement.}
\label{fig:hanonomizu2}
\end{figure}

A third source of examples arises from the study of the asymptotics of entire hyperbolic surfaces. To explain this, let us introduce a fundamental object for the present work, namely the \emph{null support function}. Given an entire convex spacelike surface $\Sigma$ in $\R^{2,1}$, expressed as the graph of a convex, 1-Lipschitz function $f:\R^2\to\R$, we define the function $\phi:\mathbb S^1\to\R\cup\{+\infty\}$ by
$$\phi(\theta)=\sup_{(x,y)\in\R^2}(x\cos\theta+y\sin\theta-f(x,y))=\sup_{p\in\Sigma}\langle p,\vec\theta\rangle~,$$
which is easily seen to be well-defined and lower-semicontinuous as a consequence of convexity of $f$. Here and in the following, we use the notation $\vec\theta:=(\cos\theta,\sin\theta,1)$. 

The geometric interpretation of the null support function is the following. Given $\theta\in \mathbb S^1$, the null affine plane $P$ defined by the equation $\langle \vec\theta,\cdot\rangle=\phi(\theta)$ is a support plane for $\Sigma$, meaning that every translate of $P$ in the future intersects $\Sigma$, while every translate in the past is disjoint from $\Sigma$. We also remark that the intersection of the future half-spaces bounded by the null support planes of equation $\langle \vec\theta,\cdot\rangle=\phi(\theta)$, as $\theta$ varies in $\mathbb S^1$, is called \emph{domain of dependence} $\mathcal D$ of $\Sigma$, and is pictured in blue in Figures \ref{fig:hyperboloid}, \ref{fig:hanonomizu} and \ref{fig:hanonomizu2}.
Some concrete examples: the null support function of the hyperboloid of Figure \ref{fig:hyperboloid} is identically zero and its domain of dependence is the future cone over the origin; the null support functions of some of the Hano-Nomizu examples are pictured in Figures \ref{fig:hanonomizu} and \ref{fig:hanonomizu2} (on the right). 

In \cite{Li}, Li showed that for every smooth function $\phi$ on the circle, there exists a hyperbolic surface having null support function $\phi$, which is moreover complete. The results of \cite{Li} actually hold in higher dimension, for hypersurfaces of constant Gauss-Kronecker curvature.  In dimension three, the existence part of Li's results has been improved in \cite{schoenetal}, for Lipschitz continuous $\phi$, and in \cite{Bonsante:2015vi} for $\phi$ lower semicontinuous and bounded. Furthermore, in \cite{Bonsante:2015vi} the first two authors proved that this construction gives a bijection between the set of (convex, which we always assume here) entire hyperbolic surfaces in $\R^{2,1}$ with bounded second fundamental form and the (infinite-dimensional!) vector space of functions $\phi:\mathbb S^1\to\R\cup\{+\infty\}$ having the Zygmund regularity. Since an entire hyperbolic surface with bounded second fundamental form is necessarily complete, the aforementioned results provide another large class of isometrically embedded copies of the hyperbolic plane, non-equivalent to one another.

Finally, in \cite{Bonsante:2019aa}, we characterized \emph{all} entire hyperbolic surfaces in terms of their null support functions, by showing that the same construction gives 
a bijection between the set of \emph{entire} hyperbolic surfaces in $\R^{2,1}$ and the set of lower semicontinuous functions $\phi:\mathbb S^1\to\R\cup\{+\infty\}$ which are finite on at least three points. In the present paper we address the question of characterizing all smooth isometric embeddings of the hyperbolic plane into $\R^{2,1}$. Equivalently, our aim is to determine those lower semicontinuous functions $\phi$ which correspond to \emph{complete} hyperbolic surfaces. 

It is worth pausing to point out, once more, that an entire spacelike surface may be incomplete. Roughly speaking, this may happen if the surface approaches a null direction quickly enough. For instance, in Figure \ref{fig:parabolic} we see an entire hyperbolic surface, obtained as a surface of revolution with respect to a lightlike axis, which is incomplete, being intrinsically isometric to a half-plane in $\Hyp^2$ (\cite[Appendix A]{Bonsante:2015vi}). Its null support function is equal to minus the characteristic function of the point in $\mathbb S^1$ corresponding to the  lightlike axis. 

\begin{figure}[htb]
\centering
\begin{minipage}[c]{.6\textwidth}
\centering
\includegraphics[height=4.5cm]{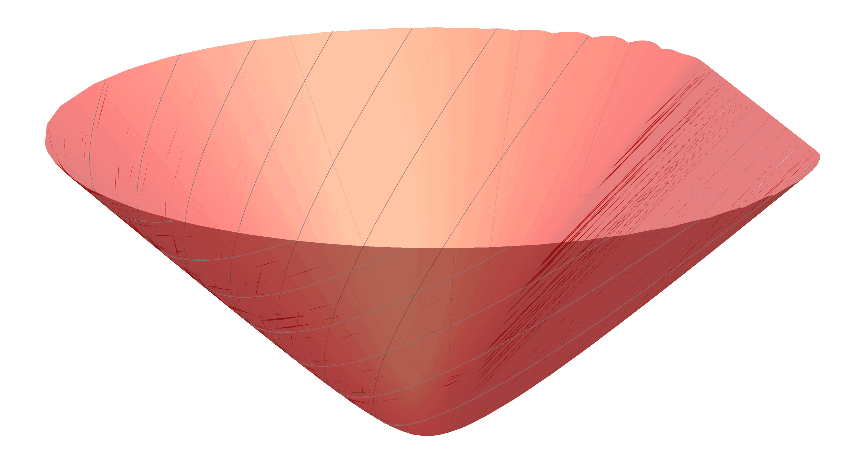}
\end{minipage}%
\begin{minipage}[c]{.4\textwidth}
\centering
\includegraphics[height=4.5cm]{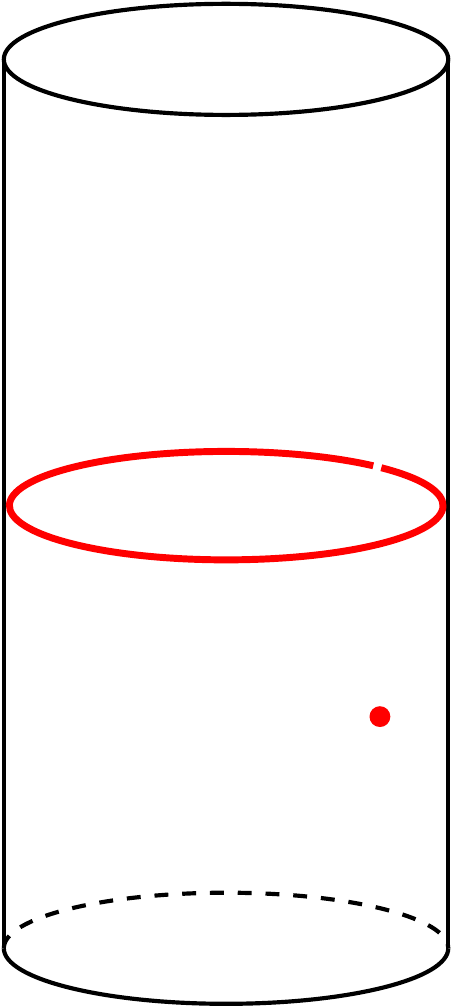}
\end{minipage}%

\caption{The entire hyperbolic surface of revolution with lightlike axis, and its null support function. The surface is incomplete, because it has a Cauchy sequence escaping to the right. \label{fig:parabolic}}

\end{figure}

We remark that in \cite[Corollary E]{bss2}, we completely described the possible intrinsic geometry of entire hyperbolic surfaces, showing that they are isometric to the interior of the convex hull in $\Hyp^2$ of a subset of $\partial_\infty\Hyp^2$, and moreover any such metric can be realized. Let us now state our results.

\subsection*{Main results}

Our first results are two conditions on the null support function which guarantee completeness of a convex entire spacelike surface. Although in the above discussion we focused on \emph{hyperbolic} surfaces, our results hold  more generally for non-positively curved surfaces satisfying certain curvature bounds. 

\begin{thmx}[Sequentially sublinear condition] \label{thm: Lipschitz completeness intro} 
Let $\phi:\mathbb S^1\to\R\cup\{+\infty\}$ be lower semicontinuous and finite on at least three points. Suppose that for each $\theta_0 \in \mathbb S^1$ at which $\phi$ is finite, there exists $M > 0$ and a sequence $\theta_i \to \theta_0$ such that
\begin{equation} \label{eqn: M}
\phi(\theta_i) < \phi(\theta_0) + M |\theta_i - \theta_0|~. \tag{Comp}
\end{equation}
If $\Sigma$ is a convex entire spacelike surface in $\R^{2,1}$ with curvature bounded below and null support function $\phi$, then $\Sigma$ is complete.
\end{thmx}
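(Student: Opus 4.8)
The plan is to show that every divergent path in $\Sigma$ has infinite length, which is equivalent to completeness: $\Sigma$ is properly embedded (being a graph over $\R^2$), so a Cauchy sequence with bounded $\R^2$-projection lies in a compact subset of $\Sigma$ and converges, while an unbounded one yields a finite-length divergent path. So let $\gamma$ be a divergent path, and suppose toward a contradiction that it has finite length; then $\pi\circ\gamma$ escapes to infinity in $\R^2$. Writing the induced metric as $\I=(1-f_x^2)\,dx^2-2f_xf_y\,dx\,dy+(1-f_y^2)\,dy^2$ one has $\det\I=1-|\nabla f|^2$, so if $|\nabla f|$ stays bounded away from $1$ on a tail of $\gamma$ then $\I$ is uniformly comparable there to the Euclidean metric and $\gamma$ has infinite length. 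Hence, after replacing $\gamma$ by a suitable subpath, we may assume $\nabla f(\pi(\gamma(t)))\to\vec u_{\theta_0}:=(\cos\theta_0,\sin\theta_0)$ for some $\theta_0\in\mathbb{S}^1$; equivalently, the future unit normal $N=(\nabla f,1)/\sqrt{1-|\nabla f|^2}$ exits $\Hyp^2$ toward the ideal point $\theta_0$.

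Next I would show that $\phi(\theta_0)$ is finite and that $\gamma$ asymptotically reaches the null support plane $P_{\theta_0}=\{\langle\vec\theta_0,\cdot\rangle=\phi(\theta_0)\}$. Normalizing so that $\theta_0=0$ and $\phi(\theta_0)=0$, so that $g:=f(x,y)-x$ is convex and nonnegative with $\inf_{\R^2}g=0$ and $\langle\vec\theta_0,p\rangle=-g(\pi(p))$, this amounts to $g(\pi(\gamma(t)))\to0$, together with $f_x\to1$ and $f_y\to0$ along $\gamma$. Finiteness of $\phi(\theta_0)$ should follow from convexity: the tangent planes to $\Sigma$ along $\gamma$ converge to $P_{\theta_0}$, which is then a genuine support plane rather than a support plane ``at infinity'' with divergent intercept (the latter case should be handled separately and shown to force infinite length). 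The assertion $g(\pi(\gamma(t)))\to0$ is the delicate point, since a divergent path need not approach any single null plane — on the hyperboloid there are divergent paths with $\nabla f\to\vec u_{\theta_0}$ along which $g\to\infty$ — so here one must use both the finiteness of $\mathrm{length}(\gamma)$ and the lower curvature bound $K\geq-\kappa_0$ (which controls the Hessian of $f$, hence the second fundamental form, along $\gamma$) to rule out $\gamma$ ``sliding along at positive distance from $P_{\theta_0}$''.

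The core of the proof is then a barrier estimate. Applying hypothesis \eqref{eqn: M} at $\theta_0=0$ gives $M>0$ and $\theta_i\to0$ with $\phi(\theta_i)<M|\theta_i|$; the corresponding null support planes are graphs of the affine maps $\ell_{\theta_i}(x,y)=x\cos\theta_i+y\sin\theta_i-\phi(\theta_i)$, and since $\Sigma$ lies in the future of each of them, $f(x,y)\geq\ell_{\theta_i}(x,y)$ and hence
\[
g(x,y)\;>\;x(\cos\theta_i-1)+y\sin\theta_i-M|\theta_i|\;\approx\;-\tfrac12 x\,\theta_i^2+y\,\theta_i-M|\theta_i|\,.
\]
Optimizing the right-hand side over the available indices — matching the sign of $\theta_i$ to that of $y$ where both occur, and using $\theta_i$ of order $(|y|-M)/x$ — should produce a lower bound $g(x,y)\gtrsim(|y|-M)^2/x$ on $\{|y|>M\}$, hence $g(x,y)\gtrsim c/x$ for large $x$ along the $y$-values visited by $\gamma$ (which cluster near a fixed $y_\infty$, using $f_y\to0$).

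Finally I would turn this into a length bound. For the null vector $\vec\theta_0$ and the linear function $u_{\theta_0}(p)=\langle\vec\theta_0,p\rangle$, using that $\vec\theta_0$ is null one computes $|\grad^\Sigma u_{\theta_0}|=|\langle\vec\theta_0,N\rangle|=(1-f_x)/\sqrt{1-|\nabla f|^2}$, so that $F:=-\log\bigl(\phi(\theta_0)-u_{\theta_0}\bigr)=-\log g$ has $|\grad^\Sigma F|=(1-f_x)\big/\bigl(g\sqrt{1-|\nabla f|^2}\bigr)$. Along $\gamma$ one has $\sqrt{1-|\nabla f|^2}\gtrsim\sqrt{1-f_x}$ (a consequence of $f_y\to0$ together with the curvature bound), and combining the barrier bound $g\gtrsim c/x$ with $-g_x=1-f_x$ and the convexity of $g$, the quantity $(1-f_x)/g^2$ — hence $|\grad^\Sigma F|$ — is bounded along $\gamma$ by some constant $C$. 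Since $F(\gamma(t))=-\log g(\pi(\gamma(t)))\to+\infty$, we obtain $\mathrm{length}(\gamma)\geq C^{-1}|F(\gamma(1))-F(\gamma(0))|=+\infty$, the desired contradiction. I expect the two genuinely delicate points to be the second paragraph — proving $g(\pi(\gamma(t)))\to0$ and controlling the Hessian of $f$ along $\gamma$, where the finiteness of length and the curvature bound must be used in tandem — and the optimization in the third paragraph, since \eqref{eqn: M} only furnishes a discrete sequence $\theta_i$ accumulating at $\theta_0$, so one cannot freely take $|\theta_i|\approx(|y|-M)/x$ and must instead show that whatever sequence is given still suffices (e.g.\ after passing to a roughly geometric subsequence, or by reorganizing the estimate so that it tolerates $\theta_i$ within a bounded factor of the optimal value).
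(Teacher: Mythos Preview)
Your approach has a genuine gap, and it is not among the points you flagged as delicate. The barrier estimate
\[
g(x,y)\;\gtrsim\;\frac{(|y|-M)^2}{x}\qquad\text{on }\{|y|>M\}
\]
is vacuous on $\{|y|\le M\}$: optimizing $-\tfrac12 x\theta^2+y\theta-M|\theta|$ over $\theta$ when $|y|\le M$ gives optimum $\theta=0$ and value $0$, so the null support planes supplied by \eqref{eqn: M} contribute nothing beyond $g\ge 0$ there. But this is exactly the region where $\gamma$ lives. You correctly argue that the $y$--coordinate of $\gamma$ converges to some $y_\infty$; after translating by $(0,-y_\infty,0)$ you have $y_\infty=0$, at the cost of replacing $M$ by $M'=M+|y_\infty|$, so along the tail of $\gamma$ one always has $|y|<M'$. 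Thus the claim ``hence $g(x,y)\gtrsim c/x$ for large $x$ along the $y$--values visited by $\gamma$'' does not follow, and the subsequent bound on $|\grad^\Sigma(-\log g)|$ collapses. This is not a matter of the sequence $\theta_i$ being discrete rather than continuous; even with a continuous family the bound degenerates at $|y|=M$. Note also that your argument never invokes the lower curvature bound in a substantive way, whereas the theorem is false without it.

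The paper's proof runs in the opposite direction and avoids this obstruction entirely. It argues the contrapositive at a fixed $\theta_0$: assuming a finite-length proper geodesic $\gamma$ asymptotic to $P_{\theta_0}$, it first shows (Lemma~\ref{lem: timelike distance}) that the \emph{timelike} distance from $\gamma(t)$ to a fixed spacelike line $L\subset P_{\theta_0}$ tends to zero. Then, using a comparison with the semitrough (this is where $K\ge -\kappa_0$ enters), it shows that whenever $\Sigma$ comes within timelike distance $\delta$ of $L$, a segment parallel to $L$ of length $\sim\tfrac12\log(1/\delta)$ lies in the future of $\Sigma$ (Proposition~\ref{prop: L tilde}). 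As $\delta\to 0$ these segments become arbitrarily long, and their null support functions force $\phi(\theta)\ge M|\theta-\theta_0|$ near $\theta_0$ for \emph{every} $M$, contradicting \eqref{eqn: M}. The key difference is that the paper manufactures objects in the \emph{future} of $\Sigma$ to push $\phi$ up, rather than using the given null support planes in the past of $\Sigma$ to push $g$ up; the latter cannot see the region $|y|\le M$ where the incompleteness actually occurs.
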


Throughout, we identify $\mathbb S^1$ with $\R/2\pi\Z$ in the standard way, and define $|\theta - \theta'|$ to be distance in $\R/2\pi\Z$. This is not an essential point, as the statement remains true for any reasonable notion of distance in the circle. 

We remark the condition \eqref{eqn: M} has a simple geometric interpretation in terms of the domain of dependence $\mathcal D$ of the surface $\Sigma$ which, as explained above, is entirely determined by the function $\phi$ on $\mathbb S^1$. Namely,  \eqref{eqn: M} is equivalent to the condition that, whenever $\mathcal D$ has a null support plane $P$ (which is of the form $\langle \vec\theta,\cdot\rangle=\phi(\theta)$ for some  $\theta\in\mathbb S^1$),  there exists a null line in $P$ which does not intersect $\partial\mathcal D$. See Proposition \ref{prop: contact set completeness}  and Corollary \ref{cor: contact set completeness}  for more details.

If we restrict to constant curvature $-1$, Theorem \ref{thm: Lipschitz completeness intro} implies that for any function $\phi$ satisfying \eqref{eqn: M}, the entire hyperbolic surface with null support function $\phi$ (which exists and is unique by the results in \cite{Bonsante:2019aa}) is complete, and hence gives an isometric embeddings of the hyperbolic plane in $\R^{2,1}$.

See Figure \ref{fig:condition1} for a schematic picture of the condition \eqref{eqn: M} of Theorem \ref{thm: Lipschitz completeness intro}. We stress that a function $\phi$ satisfying the condition \eqref{eqn: M} at every point can be highly discontinuous, and can take value $+\infty$ on large portions of the circle. For instance, by virtue of Theorem \ref{thm: Lipschitz completeness intro} the function taking value $0$ on any Cantor set in $\mathbb S^1$, and $+\infty$ elsewhere, is the null support function of a complete hyperbolic surface.

\begin{figure}[htb]
\centering
\includegraphics[height=5cm]{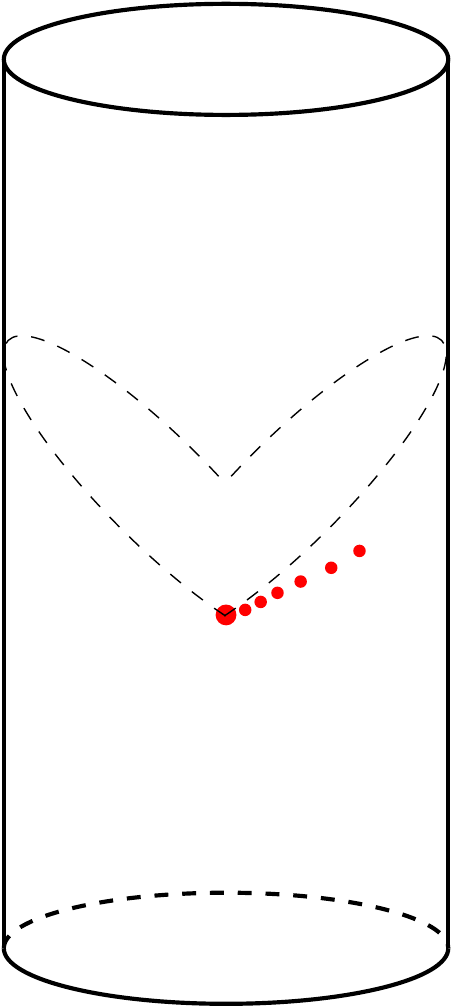}
\caption{The condition \eqref{eqn: M} in Theorem \ref{thm: Lipschitz completeness intro}. \label{fig:condition1}}
\end{figure}

Hence Theorem \ref{thm: Lipschitz completeness intro} is a remarkable improvement with respect to the state-of-the-art, since so far the most general result in this direction, which follows from \cite{Bonsante:2015vi}, is that an entire hyperbolic surface with \emph{Zygmund continuous} null support function is complete.  
Nevertheless, Theorem \ref{thm: Lipschitz completeness intro} is not sharp, as showed by the next statement, which gives another criterion for completeness.

\begin{thmx}[Subloglogarithmic condition] \label{thm: xloglogx completeness}
Let $\phi:\mathbb S^1\to\R\cup\{+\infty\}$ be lower semicontinuous and  finite on at least three points, and let $\lambda>0$. Suppose that that for each $\theta_0 \in \mathbb S^1$ at which $\phi$ is finite, there is a one-sided neighbourhood $U$ of $\theta_0$ and such that
\begin{equation} \label{eqn: lxloglogx}
\phi(\theta) \leq \phi(\theta_0) + \frac{\lambda}{4}|\theta - \theta_0| \log(-\log|\theta-\theta_0|) \tag{Comp'}
\end{equation}
for every $\theta \in U$. If $\Sigma$ is a convex entire spacelike surface in $\R^{2,1}$ with curvature bounded below by $-\lambda^2$ and null support function $\phi$, then $\Sigma$ is complete.
\end{thmx}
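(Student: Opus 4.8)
The plan is to argue by contradiction: assume $\Sigma=\{z=f(x,y)\}$, with $f$ convex and $|\nabla f|<1$, is incomplete, and produce a point $\theta_0\in\mathbb S^1$ at which \eqref{eqn: lxloglogx} fails. First I would locate the ``direction of incompleteness''. Fix a Cauchy sequence $p_n=(x_n,y_n,f(x_n,y_n))$ in $\Sigma$ with no convergent subsequence. Since the induced metric on $\Sigma$ is bounded above by the Euclidean metric on the horizontal plane, a sequence whose horizontal projections $(x_n,y_n)$ stay in a compact set subconverges in $\Sigma$; hence $(x_n,y_n)\to\infty$, and after extracting it escapes to infinity along some direction $\theta_0\in\mathbb S^1$. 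As in the proof of Theorem~\ref{thm: Lipschitz completeness intro} one checks that $\phi(\theta_0)$ is necessarily finite (where $\phi=+\infty$ the surface stays at definite timelike distance from the corresponding null direction) and that, near its escaping end, $\Sigma$ accumulates onto the null support plane $P_{\theta_0}$, i.e. $\langle p_n,\vec\theta_0\rangle\to\phi(\theta_0)$. Applying an isometry of $\R^{2,1}$ carrying $P_{\theta_0}$ to $\{z=x\}$, I may assume $\theta_0=0$, $\phi(0)=0$, $f\ge x$ everywhere, $x_n\to+\infty$, and $f(x_n,y_n)-x_n\to 0$; write $U$ for the one-sided neighbourhood of $0$ on which \eqref{eqn: lxloglogx} holds.

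The heart of the argument is a quantitative dichotomy: an incomplete escape towards $\theta_0$ forces $\Sigma$ to ``bulge away'' from $P_{\theta_0}$ by a definite amount at every small scale, and the curvature lower bound makes this amount at least $\tfrac\lambda4|\theta-\theta_0|\log(-\log|\theta-\theta_0|)$ --- which, translated back through the definition of $\phi$, is exactly the negation of \eqref{eqn: lxloglogx}. Concretely, I would work with the convex nonnegative function $w:=f-x$, which tends to $0$ along $(p_n)$ while $x\to\infty$. On one side, each $\theta\in U$ gives a support plane $\langle\vec\theta,\cdot\rangle=\phi(\theta)$ of $\Sigma$, whence the barrier family
\[
w(x,y)\ \ge\ -2x\sin^2(\theta/2)+y\sin\theta-\tfrac{\lambda}{4}\,|\theta|\log(-\log|\theta|),\qquad\theta\in U,
\]
so that \eqref{eqn: lxloglogx} bounds how far $\Sigma$ is allowed to stay from $\{z=x\}$ near its escaping end, and a lower bound $\phi(\theta)\ge c\,|\theta|\log(-\log|\theta|)$ with $c>\lambda/4$ would violate \eqref{eqn: lxloglogx}. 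On the other side, the curvature condition $K\ge-\lambda^2$, which for a graph reads $\det D^2f\le\lambda^2(1-|\nabla f|^2)^2$, controls how quickly a bounded-curvature convex surface can peel off a null plane it is approaching. Reducing, by convexity, the essentially two-dimensional picture to the profile of $\Sigma$ in the escaping direction, I expect these two inputs to combine into a differential inequality whose extremal (slowest-peeling, hence shortest-tail) solution is the constant-curvature-$(-\lambda^2)$ surface of revolution with lightlike axis --- the rescaled incomplete half-plane example.

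Solving this comparison yields the threshold. For the extremal profile the intrinsic length of the tail of $(p_n)$ is bounded below by a divergent integral of the shape $\int^{\infty}\frac{dx}{\lambda\,x\log x}=+\infty$, contradicting that $(p_n)$ is Cauchy; equivalently, unless $\phi$ near $\theta_0$ is at least of size $\tfrac\lambda4|\theta-\theta_0|\log(-\log|\theta-\theta_0|)$ on one side, the escape cannot have finite length. The constant $\lambda/4$ and the iterated logarithm are precisely the borderline case of this integral: the natural scale identifies a direction $\theta\in U$ with the point of the escaping tail at which $P_\theta$ becomes the binding barrier (roughly $|\theta-\theta_0|\asymp x^{-1/2}$), and under this correspondence the growth allowed by \eqref{eqn: lxloglogx} is exactly the one at which the length integral diverges. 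A strictly slower growth --- in particular the sequential condition \eqref{eqn: M}, which makes $\phi$ essentially linear along some sequence --- keeps the integral divergent, while a sharper $\phi$ would make the model integral converge, consistently with the incomplete examples.

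I expect the main obstacle to be the comparison step. Converting ``$K\ge-\lambda^2$'' (a degenerate Monge--Amp\`ere inequality near the null plane) into a usable one-dimensional comparison requires care: one must build explicit barriers out of the lightlike-axis surfaces of revolution and run a maximum-principle argument adapted to the degeneration, and one must rule out that the escaping path evades the length lower bound by drifting transversally in the $y$-direction. This last point is exactly where it matters that \eqref{eqn: lxloglogx} is assumed on a whole one-sided neighbourhood of $\theta_0$, rather than only along a sequence as in \eqref{eqn: M}: it supplies a barrier valid on an open set of directions, uniform enough to trap the entire tail of $(p_n)$. Finally, tracking constants carefully enough through the comparison to land precisely on $\lambda/4$ is the remaining technical chore.
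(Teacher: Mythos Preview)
Your overall architecture---contradiction, locating a direction $\theta_0$ of incompleteness, then deriving a lower bound on $\phi$ near $\theta_0$ that violates \eqref{eqn: lxloglogx}---matches the paper. But the mechanism you propose for that lower bound is not the one that works, and the pieces you identify as ``the main obstacle'' are in fact where the argument is missing.

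The paper does not bound the \emph{length} of the escaping tail by a divergent integral, nor does it run a Monge--Amp\`ere comparison with the lightlike-axis surfaces of revolution. Those surfaces are incomplete; they are the barriers used in the \emph{incompleteness} theorems, and the comparison direction they give is useless here. Instead, the paper proceeds as follows. First, it introduces an explicit auxiliary spacelike surface $\Sigma_\epsilon$, given in Lorentzian-cylindrical coordinates about the $y$-axis by $\rho=\epsilon(1+\alpha^2)^{-1/2}$, and checks directly that $\Sigma_\epsilon$ is complete at $P$. By the $1$-Lipschitz projection (Lemma~\ref{prop:contraction}), the incomplete geodesic $\gamma$ on $\Sigma$ cannot stay in the future of $\Sigma_\epsilon$, so there is a divergent sequence of points $\gamma(t_n)$ lying in the \emph{past} of $\Sigma_\epsilon$; this gives quantitative control $\rho_n<\epsilon\alpha_n^{-1}$ on the timelike distance of $\gamma(t_n)$ to the $y$-axis. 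Second, the curvature lower bound enters only through Proposition~\ref{prop: L tilde} (whose proof uses the semitrough, not the parabolic example), which converts each such point into a long spacelike segment $\tilde L_n$ of length $\tfrac12\log(\alpha_n/4\epsilon)$ lying in the future of $\Sigma$. Third, the parabolic null support function of the endpoint $w_n$ of $\tilde L_n$ is computed explicitly and evaluated at $\mathsf x_n=e^{-\alpha_n}$, yielding
\[
\psi(\mathsf x_n)\ \ge\ 2\mathsf x_n\Bigl(-1+y_n+\tfrac14\log(-\log\mathsf x_n)+C\Bigr),
\]
which for suitable $\epsilon$ exceeds $\tfrac12\mathsf x_n\log(-\log\mathsf x_n)$ and contradicts the parabolic version of \eqref{eqn: lxloglogx}. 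The constant $\lambda/4$ thus comes from the factor $\tfrac12$ in the segment length in Proposition~\ref{prop: L tilde} together with the elliptic--parabolic conversion (Lemma~\ref{lemma:mouse0}), not from the threshold of a length integral.

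In your sketch, the step ``bounded below by a divergent integral $\int^\infty\frac{dx}{\lambda x\log x}$'' is asserted rather than derived, and the heuristic $|\theta-\theta_0|\asymp x^{-1/2}$ is not connected to any comparison you can actually run. The genuine content---finding long segments in the future of $\Sigma$ from points of small timelike distance, and the auxiliary complete surface $\Sigma_\epsilon$ that pins down such points---is absent.
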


By one-sided neighbourhood of $\theta_0$, we mean that $U$ contains an interval either of the form $(\theta_0-\epsilon,\theta_0]$ or $[\theta_0,\theta_0+\epsilon)$ for $\epsilon>0$.

In the other direction, we state now two conditions which are sufficient to guarantee incompleteness.

\begin{thmx}[Power function condition] \label{thm: Holder incomplete} Let $\phi: \mathbb S^1 \to \R \cup \{+\infty\}$ be lower semicontinuous and finite on at least three points. Suppose that there exist $\theta_0 \in \mathbb S^1$ at which $\phi$ is finite, a neighborhood $U$ of $\theta_0$, and constants $\epsilon>0$ and $0 < \alpha < 1$ such that
\begin{equation}\label{eq:inc power}
\phi(\theta) - \phi(\theta_0) > \epsilon|\theta - \theta_0|^{\alpha} \tag{Inc}
\end{equation}
for every $\theta \in U$. If $\Sigma$ is a convex entire spacelike  surface in $\R^{2,1}$ with null support function $\phi$ and curvature bounded above by a negative constant, then $\Sigma$ is incomplete.
\end{thmx}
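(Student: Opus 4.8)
Throughout write $-c^{2}$ for the negative upper bound on the curvature, and normalise by an isometry of $\R^{2,1}$ (which changes $\phi$ by the restriction of a linear functional and does not affect completeness) so that $\theta_{0}=0$ and $\phi(0)=0$. Then $P_{0}:=\{z=x\}$, a null plane, is a support plane of $\Sigma$; writing $\Sigma$ as the graph of a convex $1$-Lipschitz $f\colon\R^{2}\to\R$, this says $f\ge x$ on $\R^{2}$, and $\phi(0)=\sup_{\Sigma}\langle\,\cdot\,,\vec 0\rangle=0$ says $\inf_{\R^{2}}(f-x)=0$, the infimum not attained (at an interior minimum $\nabla f$ would equal $(1,0)$, whereas $|\nabla f|<1$), hence only approached ``at infinity''. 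The curvature bound translates into $\det\Hess f\ge c^{2}(1-|\nabla f|^{2})^{2}$, so $f$ is strictly convex and, since curvature is merely bounded, of class $C^{1,1}$. It suffices to exhibit a curve $\gamma\colon[x_{0},\infty)\to\Sigma$ that leaves every compact subset of $\R^{2,1}$ and has finite length for the induced metric.

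The curve I would use runs to infinity ``along the null direction $\vec 0$'': set $\gamma(x)=(x,y_{\ast}(x),f(x,y_{\ast}(x)))$, where $y_{\ast}(x)$ realises the minimum of $y\mapsto f(x,y)$, so that $f_{y}(x,y_{\ast}(x))=0$. On this ``ridge'' one computes $\langle\gamma'(x),\gamma'(x)\rangle=(1-f_{x}^{2})+y_{\ast}'(x)^{2}$, so $\gamma$ has finite length precisely when $\int^{\infty}\sqrt{1-f_{x}^{2}}\,dx<\infty$ and $\int^{\infty}|y_{\ast}'|\,dx<\infty$, both along the ridge. Now along the end $f_{x}\uparrow 1$: differentiating $f_{y}(x,y_{\ast}(x))=0$ gives $\tfrac{d}{dx}f_{x}\big|_{\mathrm{ridge}}=\det\Hess f/f_{yy}$, and together with $\det\Hess f\ge c^{2}(1-f_{x}^{2})^{2}$ and $f_{yy}>0$ on the ridge this yields the differential inequality
\[
\frac{d}{dx}\Bigl(\frac{1}{1-f_{x}}\Bigr)\ \gtrsim\ \frac{c^{2}}{f_{yy}} .
\]
By itself this gives only $1-f_{x}=O(x^{-1})$ when $f_{yy}$ stays bounded, and $\sqrt{x^{-1}}$ is \emph{not} integrable; so the statement cannot follow from the curvature bound alone, and the improvement must come from \eqref{eq:inc power}.

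The role of the exponent $\alpha<1$ is to force the missing decay of $f_{yy}$. Using the duality $\phi(\theta)=\sup_{p\in\Sigma}\bigl(x_{p}(\cos\theta-1)+y_{p}\sin\theta-(f(p)-x_{p})\bigr)$, the lower bound $\phi(\theta)>\epsilon|\theta|^{\alpha}$ forces the portion of $\partial\cD$ that coincides with $P_{0}$ to extend, at horizontal coordinate $x$, a distance $\gtrsim x^{\beta}$ in the $y$-direction with $\beta=(1-\alpha)/(2-\alpha)\in(0,\tfrac12)$; combined with the $1$-Lipschitz bound on $f$ and the curvature equation this forces $f_{yy}$ to decay along the ridge at a power rate governed by $\beta$, and feeding this back into the differential inequality bootstraps $1-f_{x}$ below the critical exponent $2$, making $\sqrt{1-f_{x}^{2}}$ integrable; a parallel estimate controls $\int|y_{\ast}'|$. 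Assembling these gives the finite-length divergent curve, hence incompleteness. The case of non-constant, merely bounded curvature runs identically, the only point being to work with $\det\Hess f\ge c^{2}(1-|\nabla f|^{2})^{2}$ in the $C^{1,1}$ category.

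The delicate step — and the one I expect to be the main obstacle — is precisely this last implication: converting the \emph{single-point, one-sided} power estimate \eqref{eq:inc power} into an honest, quantitatively uniform decay rate for the transverse geometry of the end (for $f_{yy}$ on the ridge, or equivalently for the width of the sublevel sets $\{f-x<s\}$) strong enough to close the loop with the curvature inequality and beat the exponent $2$. A cleaner way to organise this, avoiding second derivatives of $f$, would be by comparison: from $\epsilon|\theta-\theta_{0}|^{\alpha}$ build a model surface of revolution about a lightlike axis in the direction $\theta_{0}$ with constant curvature $-c^{2}$ whose null support function lies below $\phi$ on a one-sided neighbourhood of $\theta_{0}$; reduce the length computation on the model to a Hano--Nomizu-type ODE and check directly that it carries a Cauchy sequence escaping to infinity; then the inclusion of domains of dependence near $P_{0}$, together with the comparison of curvatures, should place the $\theta_{0}$-end of $\Sigma$ ``no longer'' than the corresponding end of the model, transferring incompleteness to $\Sigma$.
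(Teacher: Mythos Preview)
Your proposal contains the right strategic instinct at the very end --- prove incompleteness by comparison with an explicit barrier --- but both concrete routes you sketch have genuine gaps.

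For the direct approach (the ridge curve), you yourself identify the obstacle: you never actually prove that the hypothesis $\phi(\theta)>\epsilon|\theta|^{\alpha}$ forces $f_{yy}$ to decay along the ridge at a rate strong enough to beat the critical exponent. The heuristic about the width of $\partial\mathcal D\cap P_{0}$ scaling like $x^{\beta}$ is plausible but unjustified, and the ``bootstrap'' is not carried out. As written this is a programme, not a proof.

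The comparison approach is closer to what the paper does, but your proposed barrier --- a constant-curvature surface of revolution about the lightlike axis $\vec\theta_{0}$ --- cannot work. Such a surface is invariant under the one-parameter parabolic group fixing $\vec\theta_{0}$, so its parabolic support function depends only on $\mathsf y$ and its parabolic null support function $\psi$ is constant. Translating back via $\phi(\theta)=\psi(\mathsf x)/(1+\mathsf x^{2})$ with $\mathsf x=\tan((\theta-\theta_{0})/2)$, you find that near $\theta_{0}$ the elliptic null support function of any such surface behaves like $c\,|\theta-\theta_{0}|^{2}$, never like $|\theta-\theta_{0}|^{\alpha}$ with $\alpha<1$. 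So this barrier only touches $\phi$ from below when $\phi$ has a genuine two-sided jump at $\theta_{0}$; indeed the paper records exactly this as the easy Corollary~\ref{cor easy}, and then explains that a more elaborate barrier is needed for the power condition.

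The paper's barrier is \emph{not} rotationally symmetric and has only bounded (not constant) curvature: it is defined by the explicit parabolic support function
\[
u(\mathsf x,\mathsf y)=-M\mathsf y^{\beta}+\epsilon|\mathsf x|^{2-\alpha}(1+\mathsf y)^{-\gamma},
\]
with $\beta,\gamma\in(0,1)$ and $M$ chosen so that the curvature operator \eqref{eqn: det B} is bounded below by $4/C$. The term $\epsilon|\mathsf x|^{2-\alpha}$ is precisely what produces the correct boundary behaviour $\psi_{+}(\mathsf x)\sim\epsilon|\mathsf x|^{2-\alpha}$, i.e.\ $\phi_{+}(\theta)\lesssim\epsilon|\theta-\theta_{0}|^{\alpha}$; the $-M\mathsf y^{\beta}$ term guarantees incompleteness at $\theta_{0}$ (via Lemma~\ref{lem: u symmetric incomplete}); and the factor $(1+\mathsf y)^{-\gamma}$ is there to keep the curvature bounded. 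The real work is the algebraic verification in Lemma~\ref{lem: F subsolution} that $F(u)\ge 4/C$ for a suitable choice of exponents, and the regularity analysis (Lemmas~\ref{lm:Bextends}--\ref{lem: exists sigma+}) needed because $u$ is not $C^{2}$ across $\mathsf x=0$. Once this barrier exists, the comparison (Corollary~\ref{cor:complete comparison}) finishes exactly as you suggest.
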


For example, taking any $\alpha$, the conclusion holds if $\phi$ has a \emph{two-sided jump} at $\theta_0$, meaning that $\phi(\theta_0) < \liminf_{\theta \to \theta_0} \phi(\theta)$, which covers the example of Figure \ref{fig:parabolic}.

\begin{thmx}[One-sided superlogarithmic condition] \label{thm: xlogx incomplete} Let $\phi: \mathbb S^1 \to \R \cup \{+\infty\}$ be lower semicontinuous and finite on at least three points. Suppose that there exist $\theta_0 \in \mathbb S^1$ at which $\phi$ is finite, a neighbourhood  $U$ of $\theta_0$, and $\epsilon > 0$ such that 
\begin{equation} \label{eq:one sided log}\tag{Inc'}
\begin{cases}
\phi(\theta)= +\infty & \text{ if }\theta\text{ is on one side of }\theta_0  \\ 
\phi(\theta)\geq \phi(\theta_0) + \epsilon|(\theta - \theta_0)\log|\theta - \theta_0|| &  \text{ if }\theta\text{ is on the other one side of }\theta_0.
\end{cases}
\end{equation}
for every $\theta \in U$. If $\Sigma$ is a convex entire spacelike surface in $\R^{2,1}$ with null support function $\phi$ and curvature bounded above by a negative constant, then $\Sigma$ is incomplete.
\end{thmx}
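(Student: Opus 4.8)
The plan is to produce an explicit incomplete path on $\Sigma$ by exploiting the asymmetry at $\theta_0$: on one side $\phi\equiv+\infty$, which forces $\partial\mathcal D$ to contain the whole null line $L$ in the null support plane $P=\{\langle\vec\theta_0,\cdot\rangle=\phi(\theta_0)\}$ (since there is no competing support plane from that side to cut it off), while on the other side the $|(\theta-\theta_0)\log|\theta-\theta_0||$ lower bound controls how fast $\partial\mathcal D$ — and hence $\Sigma$ — peels away from $L$. Concretely, after an isometry of $\R^{2,1}$ we may assume $\theta_0$ corresponds to a fixed null direction, $\phi(\theta_0)=0$, and $P$ is a coordinate null plane containing the null line $L=\{(t,c,t):t\in\R\}$ for the appropriate constant $c$. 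The geometric translation of \eqref{eq:one sided log}, via Proposition \ref{prop: contact set completeness} and Corollary \ref{cor: contact set completeness}, is: $L\subset\partial\mathcal D$, and on the side where $\phi$ is finite the boundary $\partial\mathcal D$ approaches $L$ with a quantitative rate governed by the function $r\mapsto r\log(1/r)$.

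Next I would estimate the induced distance along $\Sigma$ near $L$. The surface $\Sigma$ lies in the future of $\partial\mathcal D$ and, having curvature bounded above by $-a^2<0$, cannot bend away from its domain of dependence too slowly; using the upper curvature bound one gets an upper bound on how far $\Sigma$ sits above $\partial\mathcal D$ in terms of the local geometry of $\partial\mathcal D$ — this is the same mechanism as in the incompleteness arguments but now run on the logarithmic scale rather than the Hölder scale of Theorem \ref{thm: Holder incomplete}. The key computation is to take a curve $\gamma(t)$ on $\Sigma$ that shadows the null line $L$ (projecting to $(t,c)$ in the horizontal plane, say, for $t\to+\infty$ or $t\to-\infty$ along the direction of $L$) and to show $\int\sqrt{\I(\dot\gamma,\dot\gamma)}\,dt<\infty$. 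The induced metric restricted to such a curve is of the form $g(t)\,dt^2$ where $g(t)$ measures the "spacelike defect" of $\gamma$ from being null; because $\Sigma$ is squeezed between the null plane $P$ and the part of $\partial\mathcal D$ that rises at rate $\sim s\log(1/s)$ at horizontal distance $s$ from $L$, together with the upper curvature bound controlling the gap, one obtains $g(t)\lesssim 1/(t^2(\log t)^2)$ (or the analogous tail estimate), whose square root $1/(t\log t)$ is integrable at infinity. Hence $\gamma$ has finite length but escapes every compact set, so $\Sigma$ is incomplete.

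I expect the main obstacle to be the quantitative comparison between $\Sigma$ and $\partial\mathcal D$: translating "curvature bounded above by a negative constant" into a usable upper bound on the height of $\Sigma$ over its domain of dependence near a null line, with constants good enough that the resulting integrand is exactly on the integrable side of the $1/(t\log t)$ threshold. This is where the precise shape of the bound \eqref{eq:one sided log} — the $\log$ rather than a power — is essential and where a slightly weaker rate would fail (consistent with Theorem \ref{thm: xloglogx completeness} showing a $\log\log$ rate on both sides is already enough for \emph{completeness}). A secondary technical point is justifying that the one-sided hypothesis $\phi\equiv+\infty$ genuinely pins the full null line $L$ inside $\partial\mathcal D$ and that the chosen shadowing curve $\gamma$ stays on $\Sigma$ and remains Cauchy; both should follow from the description of $\partial\mathcal D$ as an intersection of null half-spaces and from the monotone convergence of the relevant support planes, but care is needed near the non-smooth locus of $\partial\mathcal D$.
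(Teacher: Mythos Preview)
Your proposal has a concrete error and a deeper structural gap.

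The concrete error: you assert that $\sqrt{g(t)}\sim 1/(t\log t)$ is integrable at infinity. It is not. With $u=\log t$ one has $\int_e^\infty \frac{dt}{t\log t}=\int_1^\infty \frac{du}{u}=+\infty$. So even if your metric estimate $g(t)\lesssim 1/(t^2(\log t)^2)$ were correct, it would not yield a finite-length curve. This is not a minor slip: the $x\log x$ condition is genuinely borderline (compare Theorem~\ref{thm: xloglogx completeness}), so the shadowing curve on a general $\Sigma$ need not decay any faster than this non-integrable rate. You would need a strictly better exponent, and there is no reason to expect one.

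The structural gap is the step you yourself flag as the ``main obstacle'': converting the hypothesis $K_\Sigma\le -a^2$ into an \emph{upper} bound on the height of $\Sigma$ above $\partial\mathcal D$. An upper curvature bound does not directly control $\Sigma$ from above; what it does, via the comparison principle (Proposition~\ref{prop:comp_principle}), is place $\Sigma$ \emph{below} any surface $\Sigma_+$ with $-a^2\le K_{\Sigma_+}\le 0$ and smaller null support function. But then to extract incompleteness you would need to know $\Sigma_+$ is incomplete and project a finite-length path down to $\Sigma$ --- which is exactly the barrier method, not a direct metric estimate.

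This is what the paper does. It constructs an explicit one-parameter family $\Sigma_\lambda$ of constant curvature $-1$ surfaces, invariant under a glide one-parameter group with hyperbolic linear part, and shows (i) $\Sigma_\lambda$ is incomplete at $\theta_0$ via an explicit curve whose speed decays like $1/\sinh(\lambda\tau)$, hence exponentially; (ii) the parabolic null support function of $\Sigma_\lambda$ is exactly $-2\lambda|\mathsf x|\log|\mathsf x|$ on one side and $+\infty$ on the other. After adjusting by a parabolic isometry and a null translation so that $\phi_+\le\phi$ globally with equality at $\theta_0$, Corollary~\ref{cor:complete comparison} transfers the incompleteness from $\Sigma_\lambda$ to $\Sigma$. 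The point is that on the explicit barrier the finite-length curve is easy to write down and its length integral is manifestly convergent; the delicate analysis you propose on the unknown surface $\Sigma$ is avoided entirely.
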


\begin{figure}[htb]
\centering
\includegraphics[height=5cm]{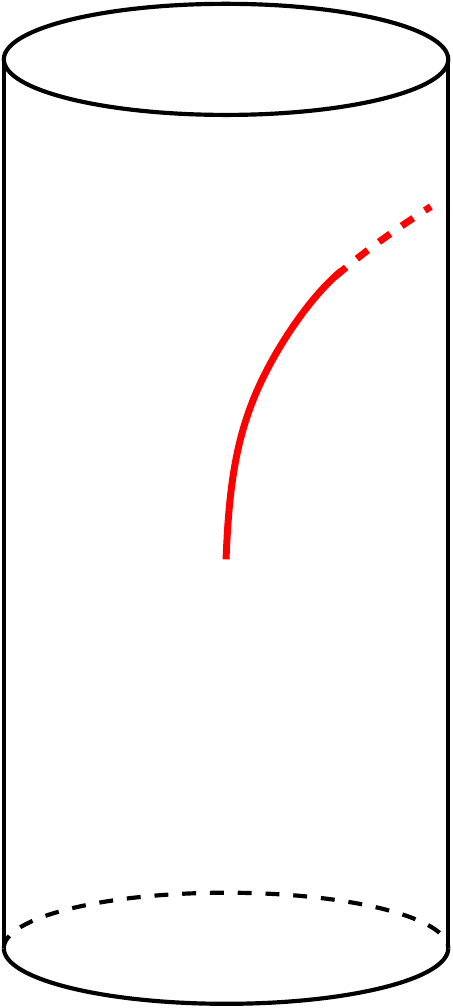}
\caption{The condition \eqref{eq:one sided log} in Theorem \ref{thm: xlogx incomplete}. \label{fig:glide}}

\end{figure}

See Figure \ref{fig:glide} for a schematic picture of the condition \eqref{eq:one sided log} of Theorem \ref{thm: xlogx incomplete}.

Taking together our completeness and incompleteness theorems, we have a narrow window of local behaviors of $\phi$ for which we cannot yet determine completeness, such as $\phi(\theta) = |\theta \log|\theta||$ near $\theta = 0$. From our perspective, the still-open problem of classifying isometric immersions of the hyperbolic plane by their asymptotic behavior amounts to closing this window.

Let us now briefly outline the methods involved in the proofs.

\subsection*{Directional completeness}

The first important step in our investigation consists in establishing a notion of ``directional completeness". We say a convex entire surface $\Sigma$ with null support function $\phi$ is \emph{incomplete at $\theta \in \mathbb S^1$} (Definition \ref{defi:incomplete at theta}) if there is a proper finite-length geodesic $\gamma(t)$ on $\Sigma$ with $\langle \gamma(t), \vec{\theta} \rangle$ converging to $\phi(\theta)$, where we recall that $\vec\theta$ is the null vector $(\cos\theta,\sin\theta,1)$. Geometrically, this means that $\gamma(t)$ is asymptotic to a null support plane which is defined by the equation $\langle \vec\theta,\cdot\rangle=\phi(\theta)$. Otherwise, we say that $\Sigma$ is \emph{complete at} $\theta$.

A first fundamental result about this notion is the following:

\begin{repprop}{prop: exists theta}[Rough version] Let $\Sigma$ be a convex entire spacelike surface in $\R^{2,1}$. If $\gamma: [0,T) \to \Sigma$ is a proper geodesic with finite length, then there exists a unique direction $\theta_0\in\mathbb S^1$ such that 
$\langle \gamma(t), \vec{\theta_0} \rangle$ converges to $\phi(\theta_0)$, and $\phi(\theta_0)$ is finite.
\end{repprop}

In other words, $\gamma(t)$ is asymptotic to a unique null support plane. For the most complete version of Proposition \ref{prop: exists theta}, see Section \ref{subsec:asymptotics finite length}. An immediate consequence of Proposition \ref{prop: exists theta}, together with the Hopf-Rinow theorem, is the following.

\begin{repcor}{thm: incomplete at theta} Let $\Sigma$ be a convex entire spacelike surface in $\R^{2,1}$. If $\Sigma$ is complete at every $\theta\in \mathbb S^1$, then $\Sigma$ is complete.
\end{repcor}

In fact, all the theorems stated above are proved in the paper in a stronger version, in terms of directional completeness. Namely, Theorems \ref{thm: Lipschitz completeness intro}  and \ref{thm: xloglogx completeness} actually state that, if the null support function $\phi$ satisfies the conditions \eqref{eqn: M}  or \eqref{eqn: lxloglogx} \emph{for a fixed }$\theta_0\in\mathbb S^1$, then the convex entire surface $\Sigma$ (satisfying some curvature bounds) is complete \emph{at }$\theta_0$. The statements of Theorems \ref{thm: Lipschitz completeness intro}  and \ref{thm: xloglogx completeness}  are then an immediate consequence using Corollary \ref{thm: incomplete at theta}. Similarly, the conclusion of the stronger version of Theorems \ref{thm: Holder incomplete}  and \ref{thm: xlogx incomplete} is that the surface $\Sigma$ is incomplete \emph{at }$\theta_0$.

Having established such a well-behaved notion of directional completeness, we now explain the main tools in the proofs of our results, keeping in mind that, based on Corollary \ref{thm: incomplete at theta} above, it now suffices  to deal with completeness and incompleteness \emph{in a fixed direction $\theta_0$}. 

\subsection*{Completeness criteria: ``finding long segments"}

In order to prove Theorems \ref{thm: Lipschitz completeness intro}  and \ref{thm: xloglogx completeness}, the rough strategy is the following. Assume $\gamma(t)$ is a proper finite-length geodesic on $\Sigma$ which is asymptotic to a null plane $P$ in the direction of $\theta_0$. We need to use this geodesic in order to find lower bounds for the null support function $\phi$ around $\theta_0$, thus showing the contrapositive of Theorems \ref{thm: Lipschitz completeness intro}  and \ref{thm: xloglogx completeness}. 

Those lower bounds will be achieved by producing certain surfaces contained in the future of the surface $\Sigma$. Indeed, if a surface $\Sigma'$ is in the future of $\Sigma$, then the null support function of $\Sigma'$ is smaller than that of $\Sigma$. Those surfaces $\Sigma'$ that we consider need not  be spacelike, though: roughly speaking, they contain ``lightlike parts". 
For instance, in the proof of Theorem \ref{thm: xloglogx completeness}, these surfaces are the boundary of the future of a spacelike segments, and look exactly like the blue surface that appears in Figure \ref{fig:hanonomizu}. 

More concretely, the strategy involves two main steps:

\begin{enumerate}
\item If $\gamma(t)$ is a proper finite-length geodesic on $\Sigma$ asymptotic to a null support plane $P$, and $L$ is a spacelike line in $P$, then the timelike distance of $\gamma(t)$ from $L$ tends to zero.  (See Lemma \ref{lem: timelike distance}.)
\item If $L$ is a spacelike line containing $\Sigma$ in its future and $\Sigma$ contains a point $p$ whose timelike distance from $L$ is smaller than $\delta$, then one can find a ``long'' spacelike segment parallel to $L$ in the future of $\Sigma$, whose length is roughly $-\log\delta$.  (See Proposition \ref{prop: L tilde}.)
\end{enumerate}

The proof of Theorem \ref{thm: Lipschitz completeness intro} essentially puts together these two ingredients: we produce a spacelike strip in the future of $\Sigma$ which is ``swept-out'' by the segments of step (2), as the point $p$ varies in the tail of the finite-length geodesic $\gamma$. This spacelike strip is very ``wide'' since the distance of $p$ by the line $L$ goes to zero by step (1).

Theorem \ref{thm: xloglogx completeness} is proved by applying similar ideas, and by making those estimates more quantitative. The price to pay is that the ``long segments'' from item (2) that we consider here are constructed over a sequence of points $\gamma(t_n)$ in $\Sigma$, not over the entire tail of $\gamma$. Taking the boundary of the future of these segments provides entire surfaces in the future of $\Sigma$ and leads to a better bound (indeed, the function $|\theta - \theta_0| \log(-\log|\theta-\theta_0|)$ that appears in the condition \eqref{eqn: lxloglogx} is much larger than the linear function $M|\theta - \theta_0|$ that appears in \eqref{eqn: M}), but at the price that condition \eqref{eqn: lxloglogx} must be supposed to hold in a whole one-sided neighbourhood of $\theta_0$, not just along a subsequence as in Theorem \ref{thm: xloglogx completeness}.

\subsection*{Incompleteness criteria: ``finding barriers"}

Let us finally move on to an outline of the proofs of Theorems \ref{thm: Holder incomplete}  and \ref{thm: xlogx incomplete}. In this setting, we need to \emph{assume} a lower bound on the null support function $\phi$ of $\Sigma$ near $\theta_0$, and infer the incompleteness of $\Sigma$ in the direction of $\theta_0$. For both theorems, this is done by means of ``barriers''. 

More precisely, the general tool that we use is the following ``comparison'' statement, that allows to infer, given two surfaces, incompleteness of one from incompleteness of the other.

\begin{repcor}{cor:complete comparison} 
Let $\Sigma_+$ and $\Sigma_-$ be entire spacelike  surfaces in $\R^{2,1}$ with null support functions $\phi_+$ and $\phi_-$, and let ${\theta}\in \mathbb S^1$. Suppose that the curvature functions of $\Sigma_+$ and $\Sigma_-$ satisfy:
\begin{equation}\label{curvature inequality}
K_{\Sigma_-}\leq -C\leq K_{\Sigma_+}\leq 0
\end{equation}
for some constant $C>0$. If $\phi_+$ touches $\phi_-$ from below at ${\theta}$ and $\Sigma_+$ is incomplete at $\theta$, then $\Sigma_-$ is also incomplete at $\theta$.
\end{repcor}

We say that $\phi_+$ \emph{touches $\phi_-$ from below at $\theta\in \mathbb S^1$} if $\phi_+\leq \phi_-$ and $\phi_+(\theta)=\phi_-(\theta) < \infty$.
Corollary \ref{cor:complete comparison}  is essentially a consequence of:
\begin{enumerate}
\item The comparison principle for Gaussian curvature, saying that if the curvature functions of the surfaces $\Sigma_+$ and $\Sigma_-$ satisfy the inequality \eqref{curvature inequality} and $\phi_+\leq \phi_-$, then $\Sigma_+$ is in the future of $\Sigma_-$ (Proposition \ref {prop:comp_principle});
\item The statement that one can find a 1-Lipschitz map from $\Sigma_+$ to $\Sigma_-$, and that, since $\Sigma_+$ and $\Sigma_-$ have the same null support plane $P$ in the direction $\theta$, this 1-Lipschitz map sends a diverging Cauchy surface of $\Sigma_+$ asymptotic to $P$ to a diverging Cauchy surface of $\Sigma_-$, again asymptotic to $P$ (Lemma \ref{prop:contraction}).
\end{enumerate}

Based on Corollary \ref{cor:complete comparison}, we then prove  Theorems \ref{thm: Holder incomplete}  and \ref{thm: xlogx incomplete} using explicit ``barriers". As a first remark, using the hyperbolic surface of Figure \ref{fig:parabolic}, one can immediately deduce that if a convex entire surface $\Sigma$ has curvature bounded above by a negative constant and its null support function has a two-sided jump at a point $\theta_0$, then $\Sigma$ is incomplete at $\theta_0$ (see Corollary \ref{cor easy}). Theorem \ref{thm: Holder incomplete}  is a much stronger statement: it shows that the condition \eqref{eq:inc power}, which is much weaker than having a two-sided jump, implies incompleteness at $\theta_0$.

To prove Theorem \ref{thm: Holder incomplete} we produce a ``barrier", namely an entire spacelike surface whose curvature is non-constant, but is non-positive and bounded below. Moreover, this surface is incomplete in a direction $\theta_0$, around which the null support function behaves like $\epsilon|\theta-\theta_0|^\alpha$. This is done by constructing an explicit support function in the so-called \emph{parabolic coordinates}. For Theorem \ref{thm: xlogx incomplete}, instead, the ``barrier'' that we use has constant curvature and is obtained by imposing the invariance by a one-parameter group of isometries with hyperbolic linear part, similarly to the examples of Hano and Nomizu, except that in this case the translation part is allowed to be  non-trivial. The surfaces obtained by this construction are incomplete (except when the trivial part vanishes, thus recovering the Hano and Nomizu surfaces), and therefore can be successfully applied as the ``barrier" $\Sigma_+$ in Corollary \ref{cor:complete comparison}. 

\subsection*{Organization of the paper}

In Section \ref{sec:prel} we present the necessary preliminaries, and some examples. In Section \ref{sec:finitelengthgeo} we discuss the asymptotic properties of proper geodesics of finite length (including Proposition \ref{prop: exists theta} and Corollary \ref{thm: incomplete at theta}), leading to the notion of directional completeness. In Section \ref{sec:Completeness I}
 we prove Theorem \ref{thm: Lipschitz completeness intro}. In Section \ref{sec:comparison} we develop some comparison tools in order to prove incompleteness, for instance Corollary \ref{cor:complete comparison}. In Section \ref{sec:Completeness II} we prove Theorem \ref{thm: xloglogx completeness}. In Section \ref{sec:Holder incomplete} we prove Theorem \ref{thm: Holder incomplete}. 
In Section \ref{sec:xlogx incomplete} we prove Theorem \ref{thm: xlogx incomplete}.

\section{Preliminaries}\label{sec:prel}

Let $\R^{2,1}$ be the vector space $\R^3$ endowed with the standard Lorentzian metric 
\[
\langle (x, y, z), (x',y',z') \rangle = xx' + yy' - zz'.
\]
 A vector $v\in\R^{2,1}$ is said to be:
\begin{itemize}
\item \emph{spacelike} if $\langle v,v\rangle>0$;
\item \emph{lightlike}, or \emph{null}, if $\langle v,v\rangle = 0$ and $v \neq 0$;
\item \emph{timelike} if $\langle v,v\rangle<0$.
\end{itemize}
and \emph{causal} if it is either timelike, lightlike, or zero. A causal vector $v$ is called \emph{future-directed} if the $z$-coordinate of $v$ is non-negative.

The three-dimensional Minkowski space is the underlying oriented and time-oriented Lorentzian manifold of $\R^{2,1}$.
Its group of isometries  is the semidirect product $\mathrm{O}(2,1)\ltimes\R^{2,1}$, and its identity component $\mathrm{O}_0(2,1)\ltimes\R^{2,1}$ consists of the isometries that preserve both orientation and time-orientation.
We will not distinguish notationally between Minkowski space and the vector space $\R^{2,1}$.

\subsection{Causality for curves and surfaces}\label{subsec:causality}

A curve or surface in $\R^{2,1}$ is a locally Lipschitz regular submanifold. Most of the time, our surfaces will be $C^2$. A curve in $\R^{2,1}$ is called timelike (resp. causal) if every pair of points in it is timelike (resp. causally) separated. We say that a surface $\Sigma$ in $\R^{2,1}$ is:
\begin{itemize}
\item  \emph{achronal} if it meets every timelike curve at most once;
\item \emph{acausal} if it meets every causal curve at most once;
\item \emph{spacelike} if it is $C^1$ and its tangent plane at each point is acausal.
\end{itemize}

Clearly acausal is stronger than achronal. We will see below that if $\Sigma$ is either convex or closed, then spacelike does imply acausal. However, since spacelike is a local condition, it does not in general imply acausal or achronal. 

If $P$ is an affine plane, let $v$ be a nonzero vector orthogonal to the linear plane parallel to $P$. Then $P$ is spacelike if and only if it is acausal, if and only if $v$ is timelike. It is achronal if and only if $v$ is causal. 
An affine plane which is achronal but not acausal (that is, such that $v$ is lightlike) will be called a \emph{null plane}. 

In the following, let $\pi:\R^{2,1}\to\R^2$ be the orthogonal projection to the plane $z=0$.

\begin{remark}\label{rmk locally graphs}
 If $\Sigma$ is an achronal surface, then $\pi|_\Sigma:\Sigma\to\R^2$ is a homeomorphism onto its image. Indeed $\pi|_\Sigma$ is locally injective because $\Sigma$ meets every vertical line at most once, so the conclusion follows from invariance of domain. Hence $\Sigma$ is the graph of a continuous function over a domain in $\R^2$.
 We can thus give the following characterization:
\begin{itemize}
\item $\Sigma$ is achronal if and only if it is the graph of a $1$-Lipschitz function from a subset of $\R^2$ to $\R$;
\item $\Sigma$ is acausal if and only if it is  the graph of a contractive function (that is, it satisfies $|f(x,y)-f(x',y')|< \|(x-x',y-y')\|$);
\item $\Sigma$ is spacelike if and only if it is locally the graph of a $C^1$ function with $\|Df\|<1$. 
\end{itemize}
\end{remark}

\begin{remark}
If $\Sigma$ is spacelike, then its first fundamental form defines a $C^0$ Riemannian metric on $\Sigma$. 
\end{remark}

\begin{defi} We will denote by $J^+$ be the cone of future-directed causal vectors (which includes the zero vector), and by $I^+$ the open cone of future-directed timelike vectors. If $X$ is a subset of $\R^{2,1}$, we will call the set 
\[
J^+(X) = \bigcup_{p \in X} \{q \in \R^{2,1} | q - p \in J^+\}
\]
the \emph{causal future} of $X$, or simply the \emph{future} of $X$. We will call the analogous set $I^+(X)$ the \emph{timelike future} of $X$. The pasts $J^-$, $I^-$, $J^-(X)$, and $I^-(X)$ are defined analogously.
\end{defi}

Note that according to our conventions, $X\subseteq J^+(X)$.

We also introduce the notion of \emph{domain of dependence}, which we use primarily in section \ref{sec: dod}. Since it gives a useful tool for visualizing achronal surfaces, we also draw the domain of dependence, shaded in blue, in Figures \ref{fig:hyperboloid}, \ref{fig:hanonomizu}, \ref{fig:hanonomizu2}.
\begin{defi}\label{defi: dod} The \emph{domain of dependence} of an achronal surface $\Sigma \subset \R^{2,1}$ is the set of points $p \in \R^{2,1}$ such that every causal line through $p$ meets $\Sigma$.
\end{defi}

\subsection{Convexity and support planes}
We now turn our attention to notions concerning convexity, with particular interest in achronal surfaces. The definitions below take advantage of the splitting $\R^{2,1} \cong \R^2 \times \R$, but in fact the notions are all invariant under the  group of  isometries of Minkowski space that preserve the time-orientation.

\begin{defi}\label{def:convex}
An achronal surface $\Sigma$ in $\R^{2,1}$ is \emph{locally (strictly) convex} it is locally the graph of a (strictly) convex function from a subset of $\R^2$ to $\R$. It is \emph{(strictly) convex} if it is the graph of a (strictly) convex function (which is therefore defined on a convex subset of $\R^2$).
\end{defi}

Some authors define a surface as in Definition \ref{def:convex} above to be \emph{future-convex}, since it is convex with respect to the choice of the future-directed vertical vector (or any other future-directed vector). We will simply say (locally) convex to simplify the terminology. 

Let us now introduce support planes and study them in relation with the causality of Minkowski space.

\begin{defi}\label{defi:support plane}
Let $\Sigma$ be a convex achronal surface in $\R^{2,1}$. A spacelike or null plane $P\subset\R^{2,1}$ is a \emph{support plane} for $\Sigma$ if $\Sigma$ is contained in the (causal) future of $P$, but for every $\epsilon > 0$, $\Sigma$ is not contained in the future of $P + (0,0,\epsilon)$.
\end{defi}

\begin{remark}\label{rmk:support plane unique}
From the definition, it follows that a convex achronal surface $\Sigma$ does not admit two distinct parallel support planes. That is, if $P$ is a support plane, then every  plane of the form $P+v$ and distinct from $P$ is not a support plane.
\end{remark}

We remark that a support plane $P$ may or may not intersect $\Sigma$. We say that a support plane $P$ \emph{touches} $\Sigma$ at a point $p$ if $p\in\Sigma\cap P$. By convexity, if $\Sigma$ is a convex surface, for every $p\in\Sigma$ there exists at least one support plane $P$ that touches $\Sigma$ at $p$. Let us now develop this point of view further.

{\begin{prop} \label{prop achronal acausal} 
A convex achronal surface in $\R^{2,1}$ can only be touched by spacelike or null support planes. A convex acausal surface in $\R^{2,1}$ can only be touched by spacelike support planes.
\end{prop}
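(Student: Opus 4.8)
The plan is to prove the contrapositive of each assertion: if a support plane $P$ touching $\Sigma$ at some point $p$ is timelike, then $\Sigma$ fails to be achronal; and if $P$ is null (so not acausal), then $\Sigma$ fails to be acausal. The whole argument is local near $p$, so I would work in the graphical picture of Remark \ref{rmk locally graphs}: write $\Sigma$ locally as the graph of a convex function $f$, with $p=(x_0,f(x_0))$, and exploit that $P$ being a support plane means $P$ lies weakly below the graph and touches it at $p$.

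First I would treat the timelike case. Suppose $P$ is a timelike affine plane through $p$ with $\Sigma\subset J^+(P)$. After applying a time-orientation-preserving isometry I may assume $P$ is the vertical plane $\{y=0\}$ (any timelike plane contains the vertical direction). "Support plane" forces $\Sigma$ to lie in one of the two closed half-spaces bounded by $P$, say $\{y\ge 0\}$, and to touch $P$ at $p$, so $p$ lies in $\{y=0\}$. But a vertical plane is itself the union of vertical lines; pick the vertical line $\ell\subset P$ through $p$. Since $\Sigma$ is a graph over a neighbourhood of $x_0=\pi(p)$ and $\pi(p)\in\pi(\ell)$, the line $\ell$ meets $\Sigma$ at $p$; and because $\ell$ is timelike this contradicts achronality as soon as I can produce a \emph{second} intersection point, or more simply: $\ell$ is a timelike curve meeting $\Sigma$, and in fact the condition that $\Sigma$ is a graph containing $p$ with $\pi(\Sigma)$ a neighbourhood of $x_0$ already gives that $\ell\cap\Sigma=\{p\}$, which is allowed — so I need to be more careful. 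The correct move is: a timelike support plane cannot be a support plane at all. Indeed if $\Sigma\subset\{y\ge 0\}$ and touches $\{y=0\}$ at $p$, then translating $P$ by $(0,0,\epsilon)$ keeps $\Sigma$ in its future for every $\epsilon$ (vertical translation does not change the half-space $\{y\ge0\}$), contradicting the last clause of Definition \ref{defi:support plane}. So in fact the key point is simply that the defining condition of a support plane — that $\Sigma\subset J^+(P)$ but not $J^+(P+(0,0,\epsilon))$ — is impossible for timelike $P$, because $J^+(P)=J^+(P+(0,0,\epsilon))$ when $P$ is vertical. This disposes of the timelike case cleanly.

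Next, the acausal case: I must show a null plane $P$ cannot touch a convex \emph{acausal} surface. Normalize $P$ to be a null plane, say $\{z=y\}$ after an isometry, touching $\Sigma$ at $p$; then $P$ contains a null line $\ell$ through $p$ (the null direction in $P$), and $\Sigma\subset J^+(P)$ means $\Sigma$ lies weakly to the future of $P$. I want a causal curve meeting $\Sigma$ twice. Here I would use convexity: since $P$ is a support plane of the convex surface $\Sigma$ and touches at $p$, the null line $\ell\subset P$ through $p$ — which is a causal (indeed lightlike) line — is tangent to $\Sigma$ at $p$ in the appropriate one-sided sense, and by convexity $\Sigma$ stays weakly on one side of $P$; the intersection $\Sigma\cap P$ is then a convex subset of $P$ containing $p$. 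If this intersection is just $\{p\}$, I still need a causal curve through $p$ meeting $\Sigma$ again, which I get by perturbing: take a point $q\in\Sigma$ near $p$ with $\pi(q)$ slightly off $\pi(p)$ in the null direction of $P$; convexity together with $\Sigma\subset J^+(P)$ forces $q$ to lie in $J^+(p)$ (this is the crux), so the segment $[p,q]$ is causal and meets $\Sigma$ at both ends — contradicting acausality. The honest way to see "$q\in J^+(p)$": write $f$ convex with graph $\Sigma$ and let $h$ be the affine function whose graph is $P$; then $f\ge h$ with equality at $x_0$, and moving in the direction $v=\pi(\ell)$ one has $f(x_0+tv)\ge h(x_0+tv)=h(x_0)=f(x_0)$ for small $t$, while the Lipschitz bound gives $f(x_0+tv)-f(x_0)\le |t||v|=|t|$ (with equality only in the null case), so the displacement $(tv, f(x_0+tv)-f(x_0))$ is future causal. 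Thus $q\in J^+(p)$, giving the contradiction; and in the achronal case the same computation with the weak inequality $f(x_0+tv)-f(x_0)\le|t||v|$ shows $[p,q]$ is causal but possibly not timelike, which is exactly why null support planes are permitted there but not for acausal surfaces.

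The main obstacle I anticipate is the second part: ruling out that $\Sigma\cap P$ near $p$ is the single point $p$ in such a way that no second intersection with a causal curve exists. The resolution is the convexity-plus-Lipschitz computation above, which shows the graph cannot "peel away" fast enough from a null support plane to avoid a causal chord — the $1$-Lipschitz bound is precisely saturated by null directions, so any convex graph touching a null plane must contain, or be causally connected along, the null line in that plane. Making this rigorous without assuming $C^1$ regularity (only the locally Lipschitz convex graph structure from Remark \ref{rmk locally graphs}) is the one place requiring care; everything else is a normalization plus the observation that vertical translations act trivially on $J^+$ of a vertical plane.
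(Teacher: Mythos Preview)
Your argument for the timelike case has a conceptual gap. You correctly set up the normalization: $\Sigma$ is locally a graph near $p$, $P = \{y = 0\}$ is vertical, and $\Sigma \subset \{y \ge 0\}$ with $p \in \{y = 0\}$. But your conclusion---that $P$ violates the $\epsilon$-clause of Definition~\ref{defi:support plane}---is vacuous: that definition already restricts to spacelike or null planes, so showing a timelike plane fails it proves nothing. The substantive content of the proposition is that no plane touching a convex achronal surface in the \emph{convex-analytic} sense (graph lying weakly above an affine function, with equality at $p$) can be timelike; note that your argument never uses achronality. The argument you need is the one you set up and then abandoned: since $\Sigma$ is achronal, $\pi|_\Sigma$ is a local homeomorphism (Remark~\ref{rmk locally graphs}), so $\pi(\Sigma)$ contains an open neighbourhood of $\pi(p)$ in $\R^2$; but $\Sigma \subset \{y \ge 0\}$ forces $\pi(\Sigma) \subset \{y \ge 0\}$, and $\pi(p)$ lies on the boundary $\{y = 0\}$ of this half-plane---contradiction. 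This is precisely the paper's argument, phrased there via the ray $R$ in the direction of the gradient.

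Your null/acausal case has the right idea but a computational slip that breaks the conclusion. With $P$ the graph of an affine $h$ satisfying $|\nabla h| = 1$ and $v = \pi(\ell)$ the projected null direction, one has $v = \nabla h$ and hence $h(x_0 + tv) = h(x_0) + t$, \emph{not} $h(x_0)$. Thus for $t > 0$ the support inequality $f \ge h$ gives $f(x_0 + tv) - f(x_0) \ge t$, while the contractive bound (for acausal $\Sigma$) gives $f(x_0 + tv) - f(x_0) < t$: an immediate contradiction. As you wrote it, with the incorrect lower bound $\Delta f \ge 0$ together with $\Delta f \le |t|$, the displacement $(tv, \Delta f)$ has Minkowski norm $t^2 - \Delta f^2 \ge 0$, so it is spacelike or null, not causal; the assertion ``the displacement is future causal'' simply does not follow. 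Once corrected, your argument coincides with the paper's, which runs the same ray argument with $J^+(p)$ in place of $I^+(p)$.
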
}

\begin{proof} {Let $\Sigma$ be an achronal convex surface, and let $p \in \Sigma$. Since $\Sigma$ is achronal, it does not intersect $I^+(p)$. Since $\Sigma$ is the graph of a convex function on $\R^2$, it is touched from below by the graph of some {affine} function $f(x,y)$ at $p$. To prove the first statement, it suffices to show that the graph of $f$ is not timelike. 

Suppose by contradiction that the graph of $f$ is timelike, and let $R$ be the open ray in $\R^2$ starting from $p$ in the direction of the gradient of $f$. If $\pi: \R^{2,1} \to \R^2$ denotes the vertical projection, then every point of $\pi^{-1}(R)$ is either in $I^+(p)$ or lies below the graph of $f$. Therefore, the restriction $\pi_\Sigma$ of $\pi$ to $\Sigma$ cannot meet $R$. But by Remark \ref{rmk locally graphs}, $\pi_\Sigma$ is a local homeomorphism at $p$. This contradiction shows that $p$ is touched by a spacelike or null support plane.}

{If $\Sigma$ is moreover acausal, then it does not intersect $J^+(p)$ except at $p$, and the same argument with $I^+$ replaced by $J^+$ shows that it can have no null support planes.}
\end{proof}

%

We immediately conclude: 
\begin{cor}\label{cor:spacelike iff acausal} Every $C^1$ convex acausal surface in $\R^{2,1}$ is spacelike.
\end{cor}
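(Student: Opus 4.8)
The plan is to obtain the statement as a formal consequence of Proposition \ref{prop achronal acausal}, the bridge being the elementary observation that for a $C^1$ convex surface the tangent plane at each point \emph{is} a support plane in the sense of Definition \ref{defi:support plane}.

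Concretely, I would fix a $C^1$ convex acausal surface $\Sigma$ and a point $p\in\Sigma$. By the remark preceding Proposition \ref{prop achronal acausal}, convexity guarantees that there is at least one support plane $P$ touching $\Sigma$ at $p$; by definition such a $P$ is spacelike or null, and it is the graph of an affine function lying weakly below $\Sigma$ and coinciding with $\Sigma$ at $p$. The next step is to identify $P$ with the tangent plane $T_p\Sigma$: since $\Sigma$ is locally the graph of a convex $C^1$ function, its first-order Taylor polynomial at $p$ is the unique affine minorant of that function touching it at $p$, and this minorant is exactly $T_p\Sigma$. Hence $P=T_p\Sigma$, so $T_p\Sigma$ is spacelike or null. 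Finally, because $\Sigma$ is acausal, the second part of Proposition \ref{prop achronal acausal} rules out the null case, leaving $T_p\Sigma$ spacelike, i.e., acausal. As $p\in\Sigma$ was arbitrary, every tangent plane of $\Sigma$ is acausal, which is the definition of $\Sigma$ being spacelike.

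I do not expect any real obstacle here: the corollary is essentially a restatement of Proposition \ref{prop achronal acausal} once the $C^1$ hypothesis is used. The only step that deserves a moment of care is the identification $P=T_p\Sigma$, which relies on the fact that a convex function has a unique supporting affine hyperplane at each point where it is differentiable; the remainder is bookkeeping with the equivalences (spacelike plane $\iff$ acausal plane) recorded earlier in this section.
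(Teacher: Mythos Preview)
Your proposal is correct and follows exactly the same route as the paper: identify the tangent plane at each point with the (unique) touching support plane via the $C^1$-convex hypothesis, then invoke Proposition~\ref{prop achronal acausal} to conclude that this plane is spacelike. The paper's proof is just a two-sentence version of what you wrote.
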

\begin{proof} Since $\Sigma$ is $C^1$ and convex, the tangent planes are precisely the support planes that touch $\Sigma$. By Proposition \ref{prop achronal acausal}, each tangent  plane must be spacelike, that is, $\Sigma$ is spacelike. 
\end{proof}

Finally, let us relate the local convexity with the Gauss map and the curvature of the first fundamental form. Recall that the \emph{Gauss map} of a spacelike surface $\Sigma$ is the map $G:\Sigma\to\Hyp^{2}$, where $\Hyp^{2}$ is the one-sheeted hyperboloid consisting of all those future-directed vectors $v$ with $\langle v,v\rangle=-1$, sending $p\in \Sigma$ to the unique $v\in\Hyp^{2}$ normal to $\Sigma$ at $p$. The \emph{shape operator} of $\Sigma$ is the section $B$ of the bundle $\mathrm{End}(T\Sigma)$ defined by $B=P \circ dG$ where $P: T_{G(p)}\Hyp^{2} \to T_p\Sigma$ is the parallel transport in $\R^{2,1}$. The \emph{Gauss equation} in Minkoswki space is the following identity:
$$K_\I=-\det B~,$$
where $\I$ is the first fundamental form of $\Sigma$ and $K_\I$ its curvature.

\begin{prop}\label{prop:Gauss map properties}
Let $\Sigma$ be a spacelike surface in $\R^{2,1}$.
\begin{itemize}
\item If $\Sigma$ is locally strictly convex, then $G$ is a local homeomorphism.
\item If $\Sigma$ is $C^2$, then it is locally convex (up to a reflection in the plane $z=0$) if and only if the first fundamental form is non-positively curved.
\end{itemize}
\end{prop}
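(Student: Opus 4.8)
For the first statement the plan is to make the Gauss map explicit in graph coordinates and recognize it, up to a fixed homeomorphism, as the gradient map of the defining function. Being a local homeomorphism is a local condition, so by Remark~\ref{rmk locally graphs} we may assume $\Sigma$ is the graph of a strictly convex $C^1$ function $f$ on a convex open set $\Omega\subset\R^2$ with $\|Df\|<1$. Computing the future-directed unit normal along the graph gives
\[
G(x,y)=\frac{1}{\sqrt{1-|Df(x,y)|^2}}\,\bigl(Df(x,y),1\bigr),
\]
so $G$ factors as the gradient map $Df\colon\Omega\to\D$, where $\D$ is the open unit disk, followed by $w\mapsto(1-|w|^2)^{-1/2}(w,1)$, which one checks directly is a homeomorphism of $\D$ onto $\Hyp^{2}$. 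It therefore suffices to show that $Df$ is injective: it is continuous because $f\in C^1$, and an injective continuous map between open subsets of $\R^2$ is open, hence a homeomorphism onto its image, by invariance of domain. Injectivity of $Df$ is the strict monotonicity of the gradient of a strictly convex function: adding the two strict convexity inequalities at $p\neq q$ in $\Omega$ yields $\langle Df(p)-Df(q),\,p-q\rangle>0$, so $Df(p)\neq Df(q)$. I expect no real obstacle here; the only routine points are the coordinate expression for $G$ and the identification of the disk-to-hyperboloid map.

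For the second statement I again pass to graph coordinates $z=f(x,y)$ with $f\in C^2$. A short computation gives $\I=\mathrm{Id}-Df\otimes Df$ (positive definite since $\|Df\|<1$) and $\det B=\det\Hess f\,/\,(1-|Df|^2)^2$ for the shape operator $B$, so the Gauss equation $K_\I=-\det B$ becomes
\[
K_\I=-\frac{\det\Hess f}{(1-|Df|^2)^2}\,.
\]
In particular $K_\I\leq 0$ if and only if $\det\Hess f\geq 0$. If $\Sigma$ is locally convex then $\Hess f\geq 0$, so $\det\Hess f\geq 0$ and $K_\I\leq 0$; reflection in the plane $z=0$ replaces $f$ by $-f$ and preserves $\det\Hess f$, so the same conclusion holds when $\Sigma$ is locally concave. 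This is the easy implication.

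For the converse, assume $K_\I\leq 0$, i.e.\ $\det\Hess f\geq 0$ everywhere. A symmetric $2\times2$ matrix with nonnegative determinant is semidefinite, so $\Hess f$---equivalently the second fundamental form, equivalently the pair of principal curvatures---is semidefinite at every point; on the open set where it is positive definite $\Sigma$ is locally strictly convex, on the open set where it is negative definite $\Sigma$ is locally strictly concave, and these two open sets are disjoint with union the complement of the degenerate locus $\{K_\I=0\}$. When this degenerate locus is empty---which is automatic as soon as $K_\I$ is bounded away from $0$, the only situation used later in the paper (e.g.\ $K_\I\equiv-1$)---connectedness of $\Sigma$ forces one of the two open sets to be empty, so $\Sigma$ is globally convex or globally concave and ``locally convex up to a reflection'' holds. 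I expect the converse, and specifically the behaviour near the degenerate locus, to be the main obstacle; by contrast the forward implication and the first statement are short computations together with invariance of domain. (The graph of $x\mapsto x^3$, whose second fundamental form is semidefinite everywhere but definite nowhere while $K_\I\equiv 0$, shows that some non-degeneracy genuinely enters in handling the locus $\{K_\I=0\}$.)
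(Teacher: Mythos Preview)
Your approach is essentially the paper's, just more explicit. For the first item the paper writes one sentence: strict convexity forces distinct normals in small neighbourhoods, hence $G$ is locally injective, hence a local homeomorphism by invariance of domain. You unpack this in graph coordinates by factoring $G$ through the gradient map $Df$ and the homeomorphism $\D\to\Hyp^2$, then use strict monotonicity of the gradient of a strictly convex function; this is the same argument with the injectivity step made concrete. For the second item the paper again writes one sentence, invoking the Gauss equation $K_\I=-\det B$ and the equivalence of local convexity/concavity with semidefiniteness of $B$; your graph-coordinate formula $K_\I=-\det\Hess f/(1-|Df|^2)^2$ is exactly this.

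On the converse of the second item you are in fact more careful than the paper. The paper's proof does not discuss the degenerate locus $\{K_\I=0\}$ at all, and your example $f(x,y)=x^3$ (where $K_\I\equiv 0$ but the graph is neither locally convex nor locally concave near the origin) shows that the converse, read literally, needs some non-degeneracy. Your observation that every later use in the paper has $K_\I$ bounded away from zero---so the degenerate locus is empty and connectedness finishes the argument---is the right resolution, and is more than the paper itself says.
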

\begin{proof}
The first item follows from the definition of locally strictly convex, since in a small neighbourhood $U$, no two points can have the same normal vector. Hence $G$ is locally injective, and a local homeomorphism by the invariance of domain. The second item is a consequence of the Gauss equation, since local convexity/concavity is equivalent to $B$ being non-negative definite. Applying a reflection in the horizontal plane, we can transform a locally concave map into a locally convex one.
\end{proof}

\subsection{Entire surfaces}

In this paper we will study \emph{entire} surfaces in $\R^{2,1}$, which we now define.

\begin{defi} \label{defi entire} 
An achronal surface $\Sigma$ in $\R^{2,1}$ is \emph{entire} if it meets every timelike line exactly once. 
\end{defi}

Equivalently, an achronal surface in $\R^{2,1}$ is entire if and only if it is the graph of a (globally defined) 1-Lipschitz function from $\R^2$ to $\R$. The following lemma shows that it suffices for $\Sigma$ to be closed, and moreover that we can replace achronal by locally achronal (i.e. covered by open achronal sets), which is in general weaker than both achronal and spacelike.

\begin{lemma}[{\cite[Proposition 1.10]{Bonsante:2019aa}}] \label{lem: entire} Let $\Sigma$ be a closed locally achronal surface in $\R^{2,1}$. Then $\Sigma$ is an entire achronal surface.
\end{lemma}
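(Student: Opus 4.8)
The statement to prove is Lemma~\ref{lem: entire}: a closed locally achronal surface $\Sigma$ in $\R^{2,1}$ is in fact an entire achronal surface, i.e.\ the graph of a globally defined $1$-Lipschitz function $\R^2 \to \R$. (This is attributed to \cite[Proposition 1.10]{Bonsante:2019aa}, so strictly I only need to reconstruct the argument.) The plan is first to upgrade \emph{locally achronal} to \emph{achronal}, and then to upgrade \emph{achronal} to \emph{entire}. Throughout, $\pi:\R^{2,1}\to\R^2$ is the vertical projection.

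\emph{Step 1: local achronality plus closedness implies $\pi|_\Sigma$ is a covering onto its image, hence that $\Sigma$ is globally a graph.} Since $\Sigma$ is locally achronal, around each point it is the graph of a $1$-Lipschitz function on an open subset of $\R^2$ (Remark~\ref{rmk locally graphs}); in particular $\pi|_\Sigma$ is a local homeomorphism, and moreover it is locally $1$-Lipschitz in the sense that nearby sheets cannot be vertically close in a way violating the Lipschitz bound. The key point is that a locally $1$-Lipschitz, locally injective map from $\Sigma$ into $\R^2$ which is a local homeomorphism and has closed image with the property that vertical lines meet $\Sigma$ in a closed set, is actually injective: if two distinct points $p,q\in\Sigma$ had $\pi(p)=\pi(q)$, then because $\Sigma$ is closed and each local sheet is $1$-Lipschitz, one could slide a vertical segment between the two sheets and use closedness to reach a contradiction with local achronality along the vertical segment (the segment would have to exit $\Sigma$ and re-enter, but the $1$-Lipschitz graphs near each crossing point force the sheets to stay separated). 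Hence $\pi|_\Sigma$ is injective, and by invariance of domain $\pi(\Sigma)$ is open in $\R^2$; so $\Sigma$ is the graph of a function $f$ on the open set $\Omega:=\pi(\Sigma)$, and $f$ is locally $1$-Lipschitz, hence $1$-Lipschitz on $\Omega$. In particular $\Sigma$ is achronal.

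\emph{Step 2: a closed achronal graph is defined on all of $\R^2$.} Now $\Sigma$ is the graph of a $1$-Lipschitz $f:\Omega\to\R$ with $\Omega\subseteq\R^2$ open and $\Sigma=\mathrm{graph}(f)$ closed in $\R^{2,1}$. If $\Omega\neq\R^2$, pick a boundary point $x_0\in\partial\Omega\cap\R^2$ and a sequence $x_n\in\Omega$ with $x_n\to x_0$. Since $f$ is $1$-Lipschitz, $|f(x_n)-f(x_m)|\le\|x_n-x_m\|$, so $(x_n,f(x_n))$ is Cauchy in $\R^{2,1}$ and converges to some $(x_0,c)$; since $\Sigma$ is closed, $(x_0,c)\in\Sigma$, so $x_0\in\pi(\Sigma)=\Omega$, contradicting $x_0\in\partial\Omega$. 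Hence $\Omega=\R^2$, $f$ is a globally defined $1$-Lipschitz function, and by Remark~\ref{rmk locally graphs} its graph $\Sigma$ is achronal and meets every timelike line exactly once, i.e.\ $\Sigma$ is entire.

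\emph{Main obstacle.} The delicate point is Step~1: showing that local achronality together with closedness rules out two sheets of $\Sigma$ lying over the same point of $\R^2$. Local achronality only controls a timelike segment \emph{locally}; one has to argue that a vertical segment joining two putative sheets cannot be timelike-disjoint from $\Sigma$ along its whole length, exploiting that $\Sigma$ is closed (so the first and last intersections with the segment exist) and that near those intersection points $\Sigma$ is a $1$-Lipschitz graph (so the segment is forced to be the vertical direction at a point where $\Sigma$'s tangent cone is acausal — impossible for a genuine timelike segment strictly between two sheets). Making this ``sliding/first-exit'' argument precise, rather than the two bookkeeping steps above, is where the real work lies; everything else is a routine consequence of the $1$-Lipschitz property and closedness.
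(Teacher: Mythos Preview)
Your Step~2 is fine: once $\Sigma$ is known to be the graph of a $1$-Lipschitz function $f$ on an open set $\Omega$, closedness of $\Sigma$ forces $\Omega=\R^2$ by exactly the Cauchy-sequence argument you give.

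The problem is Step~1. The vertical-segment argument does not yield a contradiction. Suppose $p,q\in\Sigma$ with $\pi(p)=\pi(q)$ and $p\neq q$; then the segment $[p,q]$ is timelike. Local achronality only says that a small neighborhood of $p$ in $\Sigma$ meets this segment just at $p$, and similarly at $q$. Nothing prevents the segment from meeting $\Sigma$ only at $p$ and $q$: think of two parallel spacelike planes in the disconnected case; in the connected case such a configuration is indeed impossible, but not for the reason you suggest. There is no ``exit and re-entry'' forced, and the local $1$-Lipschitz graphs near $p$ and $q$ being ``separated'' is precisely the non-contradictory situation. Your own final paragraph flags that this step is not made precise, but the issue is not bookkeeping: the argument as outlined cannot be completed, and in particular it never uses that $\Sigma$ is connected, which is essential (the two-planes example shows the statement is false otherwise).

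The paper itself defers to \cite[Proposition~1.10]{Bonsante:2019aa}, whose argument runs differently and in effect reverses your two steps. One first shows that $\pi|_\Sigma$ has the path-lifting property: given a path $\alpha$ in $\R^2$ starting at $\pi(p_0)$, one lifts it to $\Sigma$ on a maximal interval, uses that $\Sigma$ is locally a $1$-Lipschitz graph to bound the $z$-coordinate of the lift by the length of $\alpha$, and uses closedness of $\Sigma$ to extend the lift across the endpoint. This already gives $\pi(\Sigma)=\R^2$. Then $\pi|_\Sigma$ is a local homeomorphism onto the simply connected base $\R^2$ with unique path lifting, and $\Sigma$ is connected, so $\pi|_\Sigma$ is a homeomorphism. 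This is where connectedness enters, and it is what replaces your missing injectivity argument.
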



Another easy criterion that ensures entireness is the following:
 
\begin{prop}[{\cite[Lemma 3.1]{Bonsante}}] \label{prop:complete entire} Let $\Sigma$ be a spacelike surface in $\R^{2,1}$ such that the first fundamental form is complete.  Then $\Sigma$ is an entire acausal surface.
\end{prop}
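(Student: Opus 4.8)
\textbf{Proof plan for Proposition \ref{prop:complete entire}.}

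The plan is to show that $\Sigma$ is closed and locally achronal, and then invoke Lemma \ref{lem: entire} to conclude that it is entire; the acausality will then follow from Corollary \ref{cor:spacelike iff acausal} once we know $\Sigma$ is entire and convex, or more directly from Proposition \ref{prop achronal acausal}. Actually, since $\Sigma$ is merely assumed spacelike (not convex), I would instead argue acausality directly: an entire spacelike surface is the graph of a $1$-Lipschitz function $f:\R^2\to\R$, and I will upgrade $1$-Lipschitz to contractive using completeness and the strict inequality $\|Df\|<1$ that holds pointwise. Let me organize this into steps.

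\emph{Step 1: Local achronality.} Since $\Sigma$ is spacelike, it is $C^1$ with acausal tangent planes, hence locally the graph of a $C^1$ function with $\|Df\|<1$ by Remark \ref{rmk locally graphs}. In particular $\Sigma$ is covered by open achronal sets, i.e. it is locally achronal.

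\emph{Step 2: Closedness.} This is the main obstacle, and the only place where completeness of the first fundamental form is used. Suppose $p_n\in\Sigma$ converges in $\R^{2,1}$ to a point $q$; I must show $q\in\Sigma$. The key point is that the Euclidean distance in $\R^{2,1}$ between two points of $\Sigma$ controls the intrinsic Riemannian distance from below, but to get a bound in the other direction one uses that $\Sigma$ is spacelike: locally, $\Sigma$ is a graph over a disk in $\R^2$ with gradient bounded strictly below $1$ in operator norm, so on a fixed small ball the induced metric is uniformly comparable to the Euclidean metric on the base. The issue is that this comparison degrades as $\|Df\|\to 1$. To handle this, I would argue by contradiction: if $q\notin\Sigma$, then the sequence $p_n$ must eventually leave every compact subset of $\Sigma$ (since $\Sigma$ with its manifold topology is locally compact and the inclusion into $\R^{2,1}$ is continuous and injective, a convergent sequence with limit outside $\Sigma$ cannot have a subsequence converging in $\Sigma$). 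A curve in $\Sigma$ joining $p_n$ to $p_m$ projects to a curve in $\R^2$ joining $\pi(p_n)$ to $\pi(p_m)$, whose Euclidean length is at most the intrinsic length of the original curve since $\pi|_\Sigma$ is distance non-increasing for the Euclidean metric on $\R^2$ (because $\|Df\|<1$ makes the first fundamental form dominate $\pi^*(\text{eucl})$). Hence $\{\pi(p_n)\}$ is Cauchy in $\R^2$ if $\{p_n\}$ is Cauchy in the intrinsic metric — but we only know $\{p_n\}$ is Cauchy in $\R^{2,1}$. The resolution: a Cauchy sequence in $\R^{2,1}$ lying on a spacelike graph has $\{\pi(p_n)\}$ Cauchy in $\R^2$ as well (projection is Lipschitz), so $\pi(p_n)\to x_0\in\R^2$; and then I claim the intrinsic distances $d_\Sigma(p_n,p_m)$ are bounded, because one can join $\pi(p_n)$ to $\pi(p_m)$ by a short Euclidean segment, lift it to $\Sigma$, and estimate its intrinsic length using that along this lift the graph function has controlled gradient. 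Completeness then gives $p_n\to p_\infty\in\Sigma$ in the intrinsic metric, hence in $\R^{2,1}$, so $p_\infty = q\in\Sigma$. One must be slightly careful that the gradient bound needed for the length estimate is uniform along the lifted segment; this is where one genuinely needs that $\Sigma$ is a $C^1$ graph near $x_0$ over a neighborhood of $x_0$, which requires knowing $x_0$ is in the domain — a mild circularity I would break by a standard open-closed argument on the domain of the graph, or by noting that the lift stays in a compact subset of $\Sigma$ for $n,m$ large and invoking uniform metric comparison on compacta.

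\emph{Step 3: Conclusion.} By Steps 1 and 2, $\Sigma$ is a closed locally achronal surface, so Lemma \ref{lem: entire} applies and $\Sigma$ is an entire achronal surface, i.e. the graph of a $1$-Lipschitz $f:\R^2\to\R$. Finally, to see $\Sigma$ is acausal, note that since $\Sigma$ is spacelike it is $C^1$ with $\|Df(x)\|<1$ for every $x$; if $\Sigma$ met some causal line twice, say at graphs over $x\ne x'$, then $|f(x)-f(x')|\ge\|x-x'\|$, contradicting the fact that $f$ is $C^1$ with $\|Df\|<1$ everywhere (integrate $Df$ along the segment from $x$ to $x'$ to get strict inequality $|f(x)-f(x')|<\|x-x'\|$). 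Hence $\Sigma$ is the graph of a contractive function, which by Remark \ref{rmk locally graphs} means $\Sigma$ is acausal. The main obstacle is genuinely Step 2: converting completeness of the intrinsic metric into closedness in $\R^{2,1}$, where the subtlety is the possible degeneration of the Euclidean-vs-intrinsic metric comparison as one approaches lightlike slopes.
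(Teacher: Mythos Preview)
The paper does not prove this proposition --- it cites the result from elsewhere --- so there is no in-paper proof to compare against. Assessing your argument on its own: Steps 1 and 3 are fine, but Step 2 has two real problems.

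First, you have the key metric comparison backwards. For a tangent vector $v=(w,\nabla f\cdot w)$ to a local graph, the first fundamental form gives
\[
g(v,v)=|w|^2-(\nabla f\cdot w)^2 \le |w|^2 = |\pi_* v|^2,
\]
so $g$ is \emph{dominated by} $\pi^*(\text{eucl})$, not the other way around. Consequently $\pi|_\Sigma$ does not decrease lengths; rather, any lift to $\Sigma$ of a path in $\R^2$ has intrinsic length at most the Euclidean length of the base path. This is exactly the estimate you need, and it requires no gradient bound at all --- the circularity you flag is an artifact of having the inequality reversed.

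Second, and more seriously, even once you know a lift of the segment from $\pi(p_n)$ to $\pi(p_m)$ starting at $p_n$ exists and has small intrinsic length, you have not shown it terminates at $p_m$ rather than at some other preimage of $\pi(p_m)$. Your proposed fixes (open--closed, compactness) do not address this. The clean repair bypasses closedness entirely: the correct inequality above, together with completeness and Hopf--Rinow, gives the path-lifting property for the local diffeomorphism $\pi|_\Sigma:\Sigma\to\R^2$, so $\pi|_\Sigma$ is a covering map; since $\R^2$ is simply connected and $\Sigma$ connected, $\pi|_\Sigma$ is a diffeomorphism and $\Sigma$ is entire. This is the standard argument, and it makes the detour through closedness and Lemma \ref{lem: entire} unnecessary. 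Your Step 3 then handles acausality correctly.
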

 
The converse of Proposition \ref{prop:complete entire} does not hold, that is, there exist examples of entire spacelike surfaces which are not complete. Those examples can also be produced of constant negative curvature, see Example \ref{ex:parabolic}. This is exactly the starting point of the present work. Our results address necessary and sufficient conditions for an entire spacelike surface satisfying certain curvature bounds (as a  particular case, when it has constant negative curvature) to be complete. 

\begin{remark}\label{rmk convention} 
Recall that, from Section \ref{subsec:causality}, a spacelike surface is supposed to be  $C^1$. When $\Sigma$ is assumed to satisfy certain hypotheses on the curvature, like in the statements of all our main theorems, then we implicitly assume without further mention that $\Sigma$ is $C^2$.

Moreover, all our surfaces will always be  convex and entire. From Proposition \ref{prop:Gauss map properties}, an entire spacelike surface with non-positive curvature is convex up to reflection across a spacelike plane. By our convention of choosing the future-directed normal vector, the second fundamental form, which is defined as $\II(\cdot,\cdot)=\I(B(\cdot),\cdot)$, will always be non-negative definite.
\end{remark}

\subsection{Support functions}

We now focus on convex entire achronal surfaces, which can be described dually by its \emph{homogeneous support function} $s$.

\begin{defi}
Given a convex entire achronal surface $\Sigma$ in $\R^{2,1}$, the \emph{homogeneous support function} of $\Sigma$ is the function $s:J^+\to\R\cup\{+\infty\}$ defined by:
\[
s({v}) = \sup_{{p} \in \Sigma} \langle {p}, {v}\rangle~.
\]
\end{defi}

The homogeneous support function is a proper closed convex function, which means that the supergraph $\{({v}, z) \in J^+ \times \R \,|\, z \geq s({v})\}$ is nonempty, closed, and convex. (We remark that this meaning of proper is unrelated to the topological condition of properness as in Section \ref{sec:finitelengthgeo}.) It is also homogeneous in the sense that $s(\lambda {v}) = \lambda s({v})$ for any $\lambda\geq 0$.

\begin{remark}\label{rmk directional support plane 1}
Fix $v\in J^+\setminus\{0\}$. Then $s(v)=c\in\R$ if and only if the affine plane defined by the equation $\langle \cdot,v\rangle=c$ is a support plane of $\Sigma$. In this case, it is the unique support plane parallel to $v^\perp$, by Remark \ref{rmk:support plane unique}. On the other hand, $s(v)=+\infty$ means that there exists no support plane parallel to $v^\perp$. 
\end{remark}

A standard fact in the theory of convex functions is the following.

\begin{prop} \cite[Proposition 2.4]{Bonsante:2019aa} \label{prop: convex duality} The map sending $\Sigma$ to its homogeneous support function $s$ gives a bijection between the set 
of convex entire achronal surfaces in $\R^{2,1}$ and the set 
of homogeneous proper closed convex functions on $J^+$.
\end{prop}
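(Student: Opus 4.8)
The plan is to exhibit mutually inverse maps between the two sets and check that each is well-defined. In one direction we already have the assignment $\Sigma \mapsto s_\Sigma$, where $s_\Sigma(v) = \sup_{p\in\Sigma}\langle p, v\rangle$; in the other direction, given a homogeneous proper closed convex function $s$ on $J^+$, we set
\[
\Sigma_s = \{p\in\R^{2,1} : \langle p,v\rangle \leq s(v)\ \text{for all}\ v\in J^+,\ \text{and $p$ is on the boundary of this set}\},
\]
i.e. $\Sigma_s$ is the boundary of the closed convex set $C_s := \bigcap_{v\in J^+}\{p : \langle p, v\rangle \leq s(v)\}$. The core of the argument is to show: (i) $\Sigma_s$ is a convex entire achronal surface; (ii) the two constructions are inverse to one another. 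Throughout I would lean on the standard correspondence between closed convex sets and their support functions from convex analysis, the only subtlety being that here the "directions" $v$ are restricted to the cone $J^+$ rather than all of $\R^3$ — but this is exactly compensated by the achronality/entireness of $\Sigma$, since the relevant support planes of a convex entire achronal surface all have causal (future-directed) conormals by Proposition \ref{prop achronal acausal} and the graph-over-$\R^2$ description.

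For step (i): since $s$ is proper, $C_s$ is nonempty; since each half-space $\{\langle \cdot, v\rangle \leq s(v)\}$ with $v\in J^+$ future-directed is past-directed (it contains vertical downward rays), $C_s$ contains a downward vertical ray through each of its points, so $\partial C_s$ meets each vertical line at most once. I would check it meets each vertical line \emph{exactly} once: boundedness above of $s$ on, say, the three directions where in the surface picture $\phi$ is finite — or here simply the hypothesis that $s\not\equiv+\infty$ together with homogeneity and convexity — forces $C_s$ to be contained in a downward half-space of the form $z \leq g(x,y)$ for a finite convex $g$, and to contain the region below the graph of a $1$-Lipschitz function; hence $\partial C_s$ is the graph of a $1$-Lipschitz convex function on all of $\R^2$, i.e. a convex entire achronal surface. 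This is where the bulk of the verification lies, and I expect the main obstacle to be checking carefully that the closed convex function $s$ being \emph{proper} (supergraph nonempty) and \emph{homogeneous} on the cone $J^+$ is exactly the condition ensuring $\partial C_s$ projects onto all of $\R^2$ and is nowhere vertical — this amounts to controlling the recession/asymptotic behavior of $C_s$ using that $J^+$ is a full-dimensional cone.

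For step (ii), one composition is the classical biduality: $s_{\Sigma_s} = s$ because $s$ is closed and convex, so it equals the support function of the intersection of the half-spaces it defines — restricted to $J^+$, which is all we are claiming (and homogeneity lets us ignore scaling). For the other composition, $\Sigma_{s_\Sigma} = \Sigma$: clearly $\Sigma \subseteq C_{s_\Sigma}$ by definition of $s_\Sigma$, and since $\Sigma$ is itself the graph of a convex function, every boundary point of the region below $\Sigma$ is cut out by an affine support function whose conormal, by Proposition \ref{prop achronal acausal}, is future-directed causal, hence lies in $J^+$; thus $\partial C_{s_\Sigma}\subseteq \Sigma$, and equality of the two entire graphs follows. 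I would phrase the convex-analytic input as a citation to a standard reference (e.g. Rockafellar) and spend the written proof on the causality-specific points: that conormals live in $J^+$, and that properness$+$homogeneity of $s$ translates into the $1$-Lipschitz-graph property of $\Sigma_s$.
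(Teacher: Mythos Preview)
The paper does not actually prove this proposition; it is quoted from \cite{Bonsante:2019aa} as a ``standard fact in the theory of convex functions,'' with no argument given. So there is no proof here to compare against, and your outline is exactly the standard convex-duality argument one would expect.

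Your approach is correct in structure, but there is a persistent sign error that you should fix. With the signature $\langle (x,y,z),(x',y',z')\rangle = xx'+yy'-zz'$, for $v\in J^+$ (say $v=(0,0,1)$) the half-space $\{p:\langle p,v\rangle\le s(v)\}$ is $\{p_3\ge -s(v)\}$, i.e.\ it contains \emph{upward} vertical rays, not downward ones. Consequently $C_s$ is the closed \emph{future} of $\Sigma_s$ (the epigraph of the convex function whose graph is $\Sigma_s$), not the past. Every instance of ``below'' and ``past-directed'' in your write-up should be ``above'' and ``future-directed.'' Once this is corrected, your checks go through: $C_s+J^+\subset C_s$ gives achronality of $\partial C_s$; nonemptiness of $C_s$ (from properness of $s$, via $s=s^{**}$) together with $J^+(p_0)\subset C_s$ for any $p_0\in C_s$ gives that $C_s$ projects onto all of $\R^2$; and finiteness of $s$ at any nonzero $v_0\in J^+$ bounds each vertical line \emph{below} in $C_s$, so $\partial C_s$ is an entire graph. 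For the inverse composition you invoke Proposition~\ref{prop achronal acausal}, which handles the support planes that \emph{touch} $\Sigma$; you might add a word about the remaining (asymptotic null) support planes, but since both $\Sigma$ and $\Sigma_{s_\Sigma}$ are entire graphs, the inclusion $\Sigma\subset\partial C_{s_\Sigma}$ already forces equality.
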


It is often more convenient to work with a dehomogenized support function. We will use two different dehomogenizations in this paper, either restricting to an affine spacelike plane, which we call an elliptic dehomogenization, or restricting to an affine null plane, which we call a parabolic dehomogenization. 

Let us first introduce the elliptic dehomogenization. Denote by $\Xi:\overline\D\to J^+$ be the inclusion of the closed unit disc at height one, namely:
\begin{equation}\label{eq:inclusion disc}
\Xi(x,y)=(x,y,1)~,
\end{equation}
for $x^2+y^2\leq 1$.

\begin{defi}\label{defi elliptic support}
The \emph{elliptic dehomogenization} $s_\mathrm{ell}:\overline\D\to\R\cup\{+\infty\}$ of the homogeneous support function $s$ is the map:
$$s_\mathrm{ell}(x,y)=s\circ\Xi(x,y)= \sup_{{p} \in \Sigma} \langle {p}, \Xi(x,y)\rangle~,$$
and is called the \emph{elliptic support function}, or simply the \emph{support function}. Its restriction to the unit circle $\mathbb S^1=\partial\D$, that we will always denote by $\phi$, is called \emph{(elliptic) null support function}. 
\end{defi}

Clearly the homogeneous support function $s$ is uniquely determined by $s_\mathrm{ell}$, and it is recovered as the homogeneous extension of $s_\mathrm{ell}$ to $J^+$. As a consequence of Proposition \ref{prop: convex duality}, we have:

\begin{cor} \label{cor: convex duality 1} The map sending $\Sigma$ to its (elliptic) support function $s_\mathrm{ell}$ gives a bijection between the set 
of convex entire achronal surfaces in $\R^{2,1}$ and the set 
of  proper closed convex functions on $\overline \D$.
\end{cor}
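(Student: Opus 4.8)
The plan is to deduce the statement from Proposition~\ref{prop: convex duality} by pushing it through the dehomogenization $s\mapsto s_{\mathrm{ell}}=s\circ\Xi$. Concretely, I would show that this operation restricts to a bijection between the set of homogeneous proper closed convex functions on $J^+$ and the set of proper closed convex functions on $\overline{\D}$, its inverse being the homogeneous extension (as already observed in the text); composing this with the bijection of Proposition~\ref{prop: convex duality} yields the corollary.

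First I would record that $\Xi(\overline{\D})=J^+\cap\{z=1\}$ is a global section of the punctured cone $J^+\setminus\{0\}$: any future-directed causal vector $v=(a,b,c)\neq 0$ has $c>0$, since $a^2+b^2\le c^2$ forces $v=0$ when $c=0$, so $v/c\in\Xi(\overline{\D})$ and this is the unique point of $\Xi(\overline{\D})$ on the ray through $v$. Consequently a homogeneous function on $J^+$ is completely determined by its restriction to $\Xi(\overline{\D})$ together with its value $0$ at the origin, and conversely every function on $\overline{\D}$ has a unique homogeneous extension to $J^+$. This gives the desired set-theoretic bijection, and it remains only to check that it matches the ``proper closed convex'' conditions on the two sides.

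For the forward direction, if $s$ is homogeneous, proper, closed and convex, then $s_{\mathrm{ell}}=s\circ\Xi$ is convex because $\Xi$ is the restriction to $\overline{\D}$ of an affine map $\R^2\to\R^{2,1}$, it is lower semicontinuous because $\Xi$ is continuous and $s$ is lower semicontinuous, and it never takes the value $-\infty$ because $s$ does not. It is not identically $+\infty$ because the surface $\Sigma$ associated to $s$ by Proposition~\ref{prop: convex duality} is convex, hence admits at least one support plane, which by Proposition~\ref{prop achronal acausal} is spacelike or null and therefore of the form $\{\langle\cdot,v\rangle=c\}$ with $v\in J^+\setminus\{0\}$ and $c\in\R$; by Remark~\ref{rmk directional support plane 1} this forces $s$, and hence $s_{\mathrm{ell}}$, to be finite at the rescaling of $v$ to $z$-coordinate $1$. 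For the converse direction, given $g:\overline{\D}\to\R\cup\{+\infty\}$ proper, closed and convex, set $\tilde g(v)=c\,g(v/c)$ for $v\in J^+$ with $z$-coordinate $c>0$, and $\tilde g(0)=0$; a routine computation (the ``conification'' of a convex function defined on a section of a cone) shows $\tilde g$ is convex on $J^+$, and it is manifestly homogeneous, never $-\infty$, and not identically $+\infty$ since $\tilde g(0)=0$, hence proper.

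The one point requiring genuine care, and the main obstacle, is the lower semicontinuity of $\tilde g$ at the vertex of the cone. Away from the origin this is immediate from lower semicontinuity of $g$ together with continuity of $v\mapsto v/c$. At the origin, the key observation is that a proper closed convex function on the compact disc $\overline{\D}$ admits an affine minorant and is therefore bounded below, say $g\ge m$; then for any sequence $v_n\to 0$ in $J^+$ with $z$-coordinates $c_n\to 0$ one gets $\tilde g(v_n)=c_n\,g(v_n/c_n)\ge c_n m\to 0=\tilde g(0)$, so $\liminf_n\tilde g(v_n)\ge\tilde g(0)$, as needed. This boundedness-below estimate at the cone vertex is the only non-formal ingredient; the rest is unwinding definitions, after which composition with Proposition~\ref{prop: convex duality} finishes the proof.
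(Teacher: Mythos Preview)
Your proposal is correct and follows exactly the route the paper indicates: the paper states the corollary as ``a consequence of Proposition~\ref{prop: convex duality}'' via the observation that the homogeneous support function is recovered from $s_{\mathrm{ell}}$ by homogeneous extension, and gives no further proof. You have simply supplied the verification that the paper omits, including the only non-formal point (lower semicontinuity of the homogeneous extension at the vertex of the cone, handled via boundedness below of a lower semicontinuous function on the compact disc $\overline{\D}$).
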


We remark that the unit circle $\mathbb S^1=\partial\D$ contains exactly one representative for each null direction in $\R^{2,1}$. 
To simplify the notation, we will identify the unit circle $\mathbb S^1$ in the standard way by $\mathbb S^1 = \R/2\pi\Z$, and write the null support function of $\Sigma$ as $\phi(\theta)$ (where by a small abuse of notation, we write a real number $\theta$ to actually mean the class of $\theta$ modulo $2\pi\Z$). This choice of notation and terminology is consistent with the paper \cite{Bonsante:2019aa}. 

We will also occasionally find it convenient to use the alternative notation 
\begin{equation}\label{eq:vectheta}
\vec{\theta}:=\Xi(\cos(\theta), \sin(\theta))=(\cos(\theta), \sin(\theta),1)~,
\end{equation}
 so that 
\begin{equation}\label{eq: ell null supp fun}
\phi(\theta) = \sup_{p\in\Sigma } \langle p,\vec\theta\rangle~.
\end{equation}


The null support function at a point $\theta$ encodes the null support plane of $\Sigma$ \emph{in a given direction}, a notion that we now define formally and that will be of fundamental importance.

\begin{defi}\label{defi:support plane in direction of}
Let $\Sigma$ be a convex entire achronal surface in $\R^{2,1}$, and let $\theta\in \mathbb S^1$. We say that $P$ is a null support plane \emph{in the direction} $\theta$ if $P$ is a null support plane and the translate of $P$ through the origin is $\vec \theta^\perp$.
\end{defi}

\begin{remark}
By Remark \ref{rmk directional support plane 1}, $\Sigma$ has a (necessarily unique) support plane in the direction of $\theta$ exactly when $\phi(\theta)<+\infty$, and in this case it is the plane defined by the equation $\langle \cdot,\vec\theta\rangle=\phi(\theta)$. 
\end{remark}

\subsection{Parabolic support functions}

The parabolic dehomogenization of the homogeneous support function, depends on a choice of a null direction. In most of the paper, we will choose the direction of the null vector $\vec{\pi} = (-1,0,1)$, but in Section \ref{sec:xlogx incomplete} we will have to make different choices too. 

The idea is to consider the restriction of the homogeneous support function to a null plane which is a translate of the linear plane $\vec\pi^\perp$. However, we choose a particular parametrization of this plane, that we now introduce. 
 Let $\overline{\mathcal{H}} = \{(\mathsf x,\mathsf y){\in\mathbb R^2}\, |\, \mathsf y \geq 0\}$ be the closed upper half-plane, and let $\zeta:\overline{\mathcal{H}}\to\R^{2,1}$ as follows:
\begin{equation}\label{eq:defi p}
\begin{split}
{\zeta}(\mathsf x,\mathsf y) = \begin{bmatrix}  1- \mathsf x^2 - \mathsf y^2 \\ 2\mathsf x \\ 1+ \mathsf x^2 + \mathsf y^2 \end{bmatrix}~.
\end{split}
\end{equation}

\begin{remark}
We will use everywhere the symbols $\mathsf x,\mathsf y$ for the coordinates on $\overline{\mathcal H}$, to distinguish them from the coordinates $(x,y)$ on $\overline\D$ and $(x,y,z)$ on $\R^{2,1}$.
\end{remark}

Note that the image of ${\zeta}$ is the intersection of $J^+$ with the null plane $x+z =2$. This intersection meets every causal line through the origin except for the single null direction $\vec{\pi}$. The map $\zeta$ is well-behaved with respect to the group of linear parabolic isometries fixing $\vec{\pi}$. This is formulated more precisely in the next two remarks.

\begin{remark}\label{rmk:parabolic equivariance}
The map $\zeta$ above has the useful property that the action of the one-parameter group of parabolic isometries of $\R^{2,1}$ fixing $\vec\pi=(-1,0,1)$ corresponds to the action by translation in the $\mathsf x$-coordinate in the upper half-plane $\overline{\mathcal{H}} $. 
Indeed, let $\{L_t\}_{t\in\R}$ the one-parameter parabolic group defined by:
\begin{equation}\label{eq:linear parabolic}
L_t\begin{bmatrix}  -1\\0\\1 \end{bmatrix}=\begin{bmatrix}  -1\\0\\1 \end{bmatrix}\qquad L_t\begin{bmatrix}  0\\1\\0 \end{bmatrix}=\begin{bmatrix}  t\\1\\-t \end{bmatrix}\qquad L_t\begin{bmatrix}  1\\0\\1 \end{bmatrix}=\begin{bmatrix}  1-t^2\\2t\\1+t^2 \end{bmatrix}~.
\end{equation}
Then it is immediate to check that
$
L_t\circ \zeta(\mathsf x,\mathsf y)=\zeta(\mathsf x+t,\mathsf y)$.
\end{remark}

\begin{remark}\label{rmk:metric halfspace}
Moreover, our choice of coordinates ${\zeta}$ has the property that the projection 
$$(\mathsf x,\mathsf y) \mapsto {\zeta}(\mathsf x,\mathsf y) / |{\zeta}(\mathsf x, \mathsf y)|={\zeta}(\mathsf x,\mathsf y)/2\mathsf y\in\Hyp^{2,1}$$
from the upper half-plane to the unit hyperboloid $\Hyp^{2}$ is conformal (by a direct computation we have $|{\zeta}| := \sqrt{-\langle {\zeta}, {\zeta} \rangle} = 2\mathsf y$). Indeed the pull-back of the hyperbolic metric of $\Hyp^{2}$ by this map is the standard metric $(d\mathsf x^2+d\mathsf y^2)/\mathsf y^2$ on the upper half-plane $\mathcal H$.
\end{remark}

We are now ready to introduce the parabolic (null) support function.

\begin{defi}\label{defi elliptic support}
The \emph{parabolic dehomogenization} $s_\mathrm{par}:\overline{\mathcal H}\to\R\cup\{+\infty\}$ of the homogeneous support function $s$ is the map :
\begin{equation} \label{eq: par supp fun}
\begin{split}
s_{\mathrm{par}}(\mathsf x,\mathsf y) =s\circ\zeta(\mathsf x,\mathsf y)= \sup_{{p} \in \Sigma} \langle {p}, {\zeta}(\mathsf x,\mathsf y) \rangle~,
\end{split}
\end{equation}
and is called the \emph{parabolic support function}. Its restriction to $\R=\partial\mathcal H=\{\mathsf y=0\}$, which we will always denote by $\psi$, is called the \emph{parabolic null support function}. 
\end{defi}

Similarly to Equation \eqref{eq: ell null supp fun}, we thus have the following identity for the parabolic null support function: 
\begin{equation} \label{eq: par null supp fun}
\psi(\mathsf x) = \sup_{{p} \in \Sigma} \langle {p}, {\zeta}(\mathsf x,0) \rangle~.
\end{equation}

\begin{remark}
We will sometimes need to specify the direction at infinity, we will say that $s_{\mathrm{par}}$ and $\psi$ are the parabolic (null) support functions \emph{with point at infinity $\vec{\pi}$}. We will occasionally use the terminology ``parabolic (null) support function with point at infinity $\vec\theta_0$" to mean the functions obtained as in \eqref{eq: par supp fun} and \eqref{eq: par null supp fun}, having modified the map $\zeta$ by post-composition with an element of the group $\mathrm{O}(2)$ of time-orientation isometries fixing the $z$-axis, sending $\vec \pi$ to $\vec\theta$.
\end{remark}

The following proposition collects formulas relating the elliptic and parabolic support functions.

\begin{prop} \label{prop: ell/par}
Let $s_{\mathrm{ell}}$ and $s_{\mathrm{par}}$ be the elliptic and parabolic dehomogenizations of the homogeneous support function $s$. Then
\begin{equation}\label{eq:affine vs parabolic}
s_{\mathrm{ell}}\left(\frac{1- \mathsf x^2-\mathsf y^2}{1+ \mathsf x^2+\mathsf y^2},\frac{2\mathsf x}{1+ \mathsf x^2+\mathsf y^2}\right) = \frac{s_{\mathrm{par}}(\mathsf x,\mathsf y)}{1+ \mathsf x^2 + \mathsf y^2} ~.
\end{equation}
In particular, if $\theta \in \mathbb S^1 - \{\pi\}$, the elliptic and parabolic null support functions satisfy:
\begin{equation}\label{eq:null affine vs parabolic}
\phi(\theta) = \frac{\psi(\mathsf x)}{1+ \mathsf x^2} 
\end{equation}
where $\mathsf x = \tan(\theta/2)$. Moreover, 
\begin{equation} \label{eqn: u of infinity}
\phi(\pi) = \liminf_{\mathsf x^2 + \mathsf y^2 \to \infty} \frac{s_\mathrm{par}(\mathsf x,\mathsf y)}{1 + \mathsf x^2 + \mathsf y^2}~.
\end{equation}
\end{prop}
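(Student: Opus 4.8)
The plan is to derive everything from the definitions of the two dehomogenizations together with a single key identity relating $\zeta(\mathsf x,\mathsf y)$ and $\Xi$. First I would observe that the point $\zeta(\mathsf x,\mathsf y)/(1+\mathsf x^2+\mathsf y^2)$ lies on the affine plane $z=1$, since the third coordinate of $\zeta$ is $1+\mathsf x^2+\mathsf y^2$; explicitly,
\[
\frac{1}{1+\mathsf x^2+\mathsf y^2}\,\zeta(\mathsf x,\mathsf y)=\Xi\!\left(\frac{1-\mathsf x^2-\mathsf y^2}{1+\mathsf x^2+\mathsf y^2},\,\frac{2\mathsf x}{1+\mathsf x^2+\mathsf y^2}\right),
\]
and one must check that the argument of $\Xi$ indeed lands in $\overline{\D}$, i.e.\ that $(1-\mathsf x^2-\mathsf y^2)^2+4\mathsf x^2\leq (1+\mathsf x^2+\mathsf y^2)^2$ — a short algebraic verification (with equality precisely when $\mathsf y=0$, which is where the image lands on $\mathbb S^1$). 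Then \eqref{eq:affine vs parabolic} is immediate from homogeneity of $s$: applying $s$ to both sides of the displayed vector identity and using $s(\lambda v)=\lambda s(v)$ for $\lambda=1/(1+\mathsf x^2+\mathsf y^2)\geq 0$ gives exactly the claimed formula $s_{\mathrm{ell}}(\dots)=s_{\mathrm{par}}(\mathsf x,\mathsf y)/(1+\mathsf x^2+\mathsf y^2)$.

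For \eqref{eq:null affine vs parabolic} I would specialize to $\mathsf y=0$: then $\zeta(\mathsf x,0)=(1-\mathsf x^2,2\mathsf x,1+\mathsf x^2)$, so $\zeta(\mathsf x,0)/(1+\mathsf x^2)$ lies on $\mathbb S^1\subset\partial\D$, and writing $\mathsf x=\tan(\theta/2)$ the half-angle (Weierstrass) substitution identities $\cos\theta=(1-\mathsf x^2)/(1+\mathsf x^2)$ and $\sin\theta=2\mathsf x/(1+\mathsf x^2)$ identify this point with $(\cos\theta,\sin\theta)$; note $\mathsf x\mapsto\tan(\mathsf x/2)$ is a bijection $\R\to\mathbb S^1\setminus\{\pi\}$, which is why the statement excludes $\theta=\pi$. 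Plugging $\mathsf y=0$ into \eqref{eq:affine vs parabolic} and recalling $\phi=s_{\mathrm{ell}}|_{\mathbb S^1}$, $\psi=s_{\mathrm{par}}|_{\{\mathsf y=0\}}$ then yields $\phi(\theta)=\psi(\mathsf x)/(1+\mathsf x^2)$.

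Finally, for \eqref{eqn: u of infinity} the point is that as $\mathsf x^2+\mathsf y^2\to\infty$, the unit vector $\zeta(\mathsf x,\mathsf y)/|\zeta(\mathsf x,\mathsf y)|$ (with $|\zeta|=2\mathsf y$, cf.\ Remark \ref{rmk:metric halfspace}) stays in $\Hyp^2$, but the \emph{direction} $\zeta(\mathsf x,\mathsf y)/(1+\mathsf x^2+\mathsf y^2)\in\partial\D$ accumulates on the excluded point $(-1,0)=\vec\pi$'s direction: indeed its first coordinate $(1-\mathsf x^2-\mathsf y^2)/(1+\mathsf x^2+\mathsf y^2)\to -1$. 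So \eqref{eqn: u of infinity} says that $\phi$ at the "missing" point $\pi$ equals the $\liminf$ of $s_{\mathrm{ell}}$ along directions approaching $\vec\pi$, which is precisely the content of lower semicontinuity of $s_{\mathrm{ell}}$ (equivalently, closedness of the convex function): $\phi(\pi)=s_{\mathrm{ell}}(-1,0)=\liminf$ of $s_{\mathrm{ell}}$ at $(-1,0)$, and rewriting each $s_{\mathrm{ell}}$ value via \eqref{eq:affine vs parabolic} converts this into the $\liminf$ of $s_{\mathrm{par}}(\mathsf x,\mathsf y)/(1+\mathsf x^2+\mathsf y^2)$. I expect the main (though still modest) obstacle to be the last point: one must argue that every sequence $(x_n,y_n)\in\overline\D$ with $(x_n,y_n)\to(-1,0)$ arises, up to the value of the fraction, from a sequence $(\mathsf x_n,\mathsf y_n)$ with $\mathsf x_n^2+\mathsf y_n^2\to\infty$, and conversely, so that the two $\liminf$'s genuinely agree — this is a surjectivity/properness check on the change of coordinates $(\mathsf x,\mathsf y)\mapsto\zeta(\mathsf x,\mathsf y)/(1+\mathsf x^2+\mathsf y^2)$ near the boundary, combined with the fact that $s_{\mathrm{ell}}$, being a closed convex function on $\overline\D$, equals its own lower-semicontinuous envelope at boundary points.
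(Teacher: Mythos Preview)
Your proposal is correct and follows essentially the same approach as the paper: the vector identity $\zeta(\mathsf x,\mathsf y)=(1+\mathsf x^2+\mathsf y^2)\,\Xi(\cdot,\cdot)$ plus homogeneity of $s$ gives \eqref{eq:affine vs parabolic}, the half-angle formulas give \eqref{eq:null affine vs parabolic}, and closedness of $s_{\mathrm{ell}}$ gives \eqref{eqn: u of infinity}. The only difference is packaging: where you anticipate a ``surjectivity/properness check'' to match the two $\liminf$'s, the paper dispatches this in one line by observing that the radial projection $\zeta(\overline{\mathcal H})\to\overline\D$ extends to a homeomorphism from the one-point compactification $\overline{\mathcal H}\cup\{\infty\}$ onto $\overline\D$, which immediately identifies neighborhoods of $\infty$ with neighborhoods of $(-1,0)$. (Minor slip: you wrote ``$\mathsf x\mapsto\tan(\mathsf x/2)$ is a bijection $\R\to\mathbb S^1\setminus\{\pi\}$''; you mean $\theta\mapsto\tan(\theta/2)$ is a bijection $\mathbb S^1\setminus\{\pi\}\to\R$.)
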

\begin{proof}
The proof of \eqref{eq:affine vs parabolic} is immediate by observing that
$$\zeta(\mathsf x,\mathsf y)=(1+ \mathsf x^2 + \mathsf y^2)\begin{bmatrix}  \frac{1- \mathsf x^2 - \mathsf y^2}{1+ \mathsf x^2 + \mathsf y^2} \\ \frac{2\mathsf x}{1+ \mathsf x^2 + \mathsf y^2} \\ 1 \end{bmatrix}$$
and applying the homogeneity of $s$ and the definitions of $s_{\mathrm{ell}}$ and $s_{\mathrm{par}}$. The proof of \eqref{eq:null affine vs parabolic} then follows from setting $\mathsf y=0$ and noting that 
$$\vec\theta=\begin{bmatrix} \cos(\theta) \\ \sin(\theta) \\ 1  \end{bmatrix}=\begin{bmatrix}  \frac{1- \mathsf x^2}{1+ \mathsf x^2} \\ \frac{2\mathsf x}{1+ \mathsf x^2} \\ 1 \end{bmatrix}$$
for $\mathsf x = \tan(\theta/2)$. Finally, to show \eqref{eqn: u of infinity}, observe that the radial projection from $\zeta(\overline{\mathcal H})$ to the closed unit disc $\overline\D$ at height one induces a homeomorphism between the one-point compactification $\overline{\mathcal{H}}\cup\{\infty\}$ and $\overline\D$. Moreover, since $s_\mathrm{ell}$ is a closed convex function, its value at any boundary point equals the liminf there:
$$s_\mathrm{ell}(\cos(\theta),\sin(\theta))=\liminf_{(x,y)\to (\cos(\theta),\sin(\theta))}s_\mathrm{ell}(x,y)~.$$
Together with \eqref{eq:affine vs parabolic}, we deduce
$$\phi(\pi)=s_\mathrm{ell}(-1,0)=\liminf_{\mathsf x^2 + \mathsf y^2 \to \infty} \frac{s_\mathrm{par}(\mathsf x,\mathsf y)}{1 + \mathsf x^2 + \mathsf y^2}$$
as claimed.
\end{proof}

Since the map $\zeta$ in the definition of the parabolic support function omits the null direction spanned by $\vec\pi$, the parabolic null support function $\psi$ does not quite contain as much information as the (elliptic) null support function $\phi$. However, the previous proposition shows that the missing information, namely the value of $\phi$ at $\pi$, can be recovered from the value of $s_{\mathrm{par}}$ in the interior of $\mathcal H$. In particular, we recover an analog of convex duality for parabolic support functions:

\begin{cor} \label{cor: convex duality 2} \label{lem: convex duality} The map sending $\Sigma$ to its parabolic support function $s_{\mathrm{par}}$ gives a bijection between the set 
of convex entire achronal surfaces in $\R^{2,1}$ and the set 
of  proper closed functions on $\overline {\mathcal{H}}$ convex with respect to the affine structure of the plane $x+z=2$.
\end{cor}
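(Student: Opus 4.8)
The plan is to deduce the statement from the elliptic version, Corollary~\ref{cor: convex duality 1}, by transport of structure along the explicit change of variables of Proposition~\ref{prop: ell/par}. Since $\Sigma\mapsto s_{\mathrm{ell}}$ is already a bijection onto the proper closed convex functions on $\overline\D$, it suffices to show that $s_{\mathrm{ell}}\mapsto s_{\mathrm{par}}$ is a bijection from that set onto the set of proper closed functions on $\overline{\mathcal H}$ that are convex with respect to the affine structure of the plane $x+z=2$. First I would record the relevant homeomorphisms. The map $\zeta$ of \eqref{eq:defi p} is a homeomorphism from $\overline{\mathcal H}$ onto the cross-section $\Pi^+:=J^+\cap\{x+z=2\}$, and since $J^+$ is a convex cone and $\{x+z=2\}$ an affine plane, $\Pi^+$ is a closed convex subset of that plane; by definition, a function $u$ on $\overline{\mathcal H}$ is convex with respect to the affine structure of $x+z=2$ exactly when $u\circ\zeta^{-1}$ is convex on $\Pi^+$ in the usual sense. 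Moreover, for $v\in J^+\setminus\{0\}$ one has $v_1+v_3>0$ unless $v\in\R_{\geq0}\vec\pi$, so the rays through $\Pi^+$ sweep out precisely $J^+\setminus\R_{\geq0}\vec\pi$; and radial projection to the disc at height one carries $\Pi^+$ homeomorphically onto $\overline\D\setminus\{(-1,0)\}$ and the one-point compactification $\overline{\mathcal H}\cup\{\infty\}$ onto $\overline\D$, with $\infty\mapsto(-1,0)$.

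Next I would check bijectivity at the level of extended-real-valued functions. Given $s_{\mathrm{ell}}$, formula \eqref{eq:affine vs parabolic} expresses $s_{\mathrm{par}}$ as $s_{\mathrm{ell}}$ precomposed with the above homeomorphism and multiplied by the positive factor $1+\mathsf x^2+\mathsf y^2$; inverting this recovers the values of $s_{\mathrm{ell}}$ on $\overline\D\setminus\{(-1,0)\}$ from $s_{\mathrm{par}}$, while \eqref{eqn: u of infinity} recovers the remaining value $s_{\mathrm{ell}}(-1,0)=\phi(\pi)$ from the behaviour of $s_{\mathrm{par}}$ at infinity. Hence $s_{\mathrm{ell}}\leftrightarrow s_{\mathrm{par}}$ is a bijection between functions, equivalently between the common homogeneous extension $s$ on $J^+$ and $s_{\mathrm{par}}=s\circ\zeta$.

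It then remains to match the regularity classes, i.e.\ to show that $s_{\mathrm{ell}}$ is proper closed convex on $\overline\D$ if and only if $s_{\mathrm{par}}$ is proper closed and convex with respect to the affine structure of $x+z=2$. For convexity, both conditions are equivalent to convexity of the homogeneous extension $s$ on $J^+$: restriction of a convex function to the affine slices $\overline\D$ or $\Pi^+$ is convex, and conversely the homogeneous extension of a convex function on a convex cross-section is convex on the convex cone it generates, the value on the boundary ray $\R_{\geq0}\vec\pi$ being forced by closedness to be the limit computed in \eqref{eqn: u of infinity}. For closedness (lower semicontinuity), away from the direction $\vec\pi$ this is preserved in both directions because $\zeta$ and the radial projection are continuous; at $\vec\pi$ it is precisely the content of \eqref{eqn: u of infinity}. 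Properness transfers since $s\not\equiv+\infty$ and $s>-\infty$ on $J^+$.

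The main obstacle is the behaviour at the omitted null direction $\vec\pi$: one must verify that, starting from a closed function $u$ on $\overline{\mathcal H}$ convex with respect to the affine structure of $x+z=2$, the homogeneous extension of $u\circ\zeta^{-1}$ together with the value $\lambda\,\liminf_{\mathsf x^2+\mathsf y^2\to\infty}u/(1+\mathsf x^2+\mathsf y^2)$ on $\R_{\geq0}\vec\pi$ is genuinely a closed convex function on all of $J^+$ --- that is, lower semicontinuous and convex across that boundary ray. This is handled by the standard description of the closure of a convex function in terms of limits along segments from the relative interior of its effective domain, which is exactly the ingredient already invoked in the proof of Proposition~\ref{prop: ell/par}. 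Once this is in hand, the bijection and the identification of the image follow formally, and applying Proposition~\ref{prop: convex duality} (equivalently Corollary~\ref{cor: convex duality 1}) completes the proof.
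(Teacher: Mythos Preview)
Your proposal is correct and follows essentially the same approach as the paper: both arguments extend the function on the parabolic cross-section $\Pi^+=J^+\cap\{x+z=2\}$ to a homogeneous function on $J^+\setminus\R_{\geq 0}\vec\pi$, fill in the missing ray via the $\liminf$ of \eqref{eqn: u of infinity} to obtain a closed convex homogeneous function on all of $J^+$, and then invoke Proposition~\ref{prop: convex duality}. Your version is somewhat more detailed (routing through Corollary~\ref{cor: convex duality 1} and explicitly verifying that convexity, closedness, and properness transfer in both directions), whereas the paper's proof only spells out the inverse construction $u\mapsto\Sigma$; but the substance is the same.
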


We say that a function $u: \overline{\mathcal{H}} \to \R \cup \{+\infty\}$ is convex with respect to the affine structure of the plane $x+z=2$ if $u\circ \zeta^{-1}:J^+\cap \{x+z=2\}\to\R \cup \{+\infty\}$ is convex.

\begin{proof}
We have to show that, given $u: \overline{\mathcal{H}} \to \{\R \cup +\infty\}$ proper, closed and convex with respect to the affine structure of the plane $x + z = 2$, there is a unique  convex entire achronal surface $\Sigma$ with parabolic support function $u$.
To see this, recall that the affine plane $x+z = 2$ meets every line through the origin in $J^+$ except for $\mathrm{Span}(\vec{\pi})$. Hence we can extend $u$ to a homogeneous function $s$ on $J^+\setminus \mathrm{Span}(\vec{\pi})$. The 1-homogeneous extension $s$ 
is still convex. 
Finally, the function $s$ has a unique closed convex extension to $\langle \vec{\pi}\rangle$, which is defined at the point $(-1,0,1)$ by the liminf as in \eqref{eqn: u of infinity}.
The existence and uniqueness of $\Sigma$ then follow from Proposition \ref{prop: convex duality}.
\end{proof}

\subsection{Examples}

We conclude this preliminary section by two examples of entire spacelike surfaces of constant curvature $-1$, which will be used in the following. 

\begin{example}\label{ex:semitrough}
The first surface we consider is the so-called \emph{semitrough}, which is pictured in Figure \ref{fig:hanonomizu2}. It can be parameterized by the  map $X:(0,+\infty)\times\R\to\R^{2,1}$:
\[
     X(t,s)=\begin{bmatrix}   t -\coth(t) \\ {\sinh(s)}/{\sinh(t)} \\ {\cosh(s)}/{\sinh(t)} \end{bmatrix}~.
\]
One can see immediately that $X$ is invariant under the one-parameter linear \emph{hyperbolic} group $M_s$ fixing $(1,0,0)$ and translating by $s$ along the geodesic $\{z^2-y^2=1\,,y=0\}=\Hyp^2\cap \{x=0\}$. More precisely, $X(t,s+s_0)=M_{s_0}X(t,s)$. It can be shown that $X$ parameterizes an entire spacelike hyperbolic surface $\Sigma$. This surface $\Sigma$ has been first studied by Hano and Nomizu in \cite{hanonomizu}, where it was also shown that it is complete. More precisely,  Hano and Nomizu studied a one-parameter family of complete hyperbolic surfaces in $\R^{2,1}$ which are invariant under the same linear hyperbolic group, which includes $\Sigma$; see also Figure \ref{fig:hanonomizu}.

We omit the proofs of our claims here, since in Section \ref{sec:xlogx incomplete} we will  recover $X$ as part of a different one-parameter family of surfaces,  invariant under one-parameter groups of isometries now with non-zero translation parts. 

The null support function of this entire surface is very simple, namely:
$$\phi(\theta)=\begin{cases} 0 & \textrm{if }\theta\in [-\pi/2,\pi/2] \\
+\infty & \textrm{otherwise} \end{cases}
~.$$
See again Figure \ref{fig:hanonomizu2}. Moreover, the surface is \emph{complete}, which shows that the null support function of complete hyperbolic surfaces in $\R^{2,1}$ need not be finite. \end{example}

It is instructive to consider another example, which is easily expressed in terms of the parabolic support function.

\begin{example}\label{ex:parabolic}
We now consider a family of surfaces which are invariant under the linear \emph{parabolic} group fixing $\vec\pi=(-1,0,1)$, which is defined in \eqref{eq:linear parabolic}. These surfaces have been studied in detail in \cite[Appendix A]{Bonsante:2015vi}. Here we will describe such a surface by means of its parabolic support function, using Corollary \ref{cor: convex duality 2}. By Remark \ref{rmk:parabolic equivariance}, we should consider a parabolic support function of the form 
$$u(\mathsf x,\mathsf y)=f(\mathsf y)~.$$ It turns out that the choice
$$f(\mathsf y)=\int_0^\mathsf y g(\mathsf t)d\mathsf t\qquad g(\mathsf y)=-2\sqrt{1+\epsilon^2 \mathsf y^2}~,$$
for any $\epsilon\geq 0$,
gives rise to an entire spacelike surface of curvature $-1$. For more details, see Section \ref{sec:Holder incomplete}. 
By applying Lemma \ref{lem: u symmetric incomplete}, the corresponding entire spacelike surface provided by Corollary \ref{cor: convex duality 2}  is incomplete unless $\epsilon=0$. In fact, one sees readily that the case $\epsilon=0$ corresponds to the hyperboloid $\Hyp^2$, by definition of parabolic support function applied to $\Sigma=\Hyp^2$:
$$s_{\mathrm{par}}^{\Hyp^2}(\mathsf x,\mathsf y)=\sup_{p\in\Hyp^2}\langle p,\zeta(\mathsf x,\mathsf y)\rangle=\langle \zeta(\mathsf x,\mathsf y),\zeta(\mathsf x,\mathsf y)/|\zeta(\mathsf x,\mathsf y)|\rangle=-|\zeta(\mathsf x,\mathsf y)|=-2\mathsf y~.$$

Finally, we can compute explicitly the elliptic or parabolic null support function, using Proposition \ref{prop: ell/par}. Indeed, for $\mathsf x\in\R$, we immediately get $\psi(\mathsf x)=u(\mathsf x,0)=0$, and therefore  $\phi(\theta)=0$ for all $\theta\neq\pi$, by \eqref{eq:null affine vs parabolic}. To compute $\phi(\pi)$, we can explicitly compute
$$f(\mathsf y)=-\mathsf y \sqrt{1+\epsilon^2\mathsf y^2} - (1/\epsilon)\arcsinh(\epsilon\mathsf y)$$
and therefore $\phi(\pi)=-\epsilon$ by \eqref{eqn: u of infinity}. In conclusion, $\phi$ is a function that is constantly zero on the complement of $\pi$, and has a negative jump (depending on the parameter $\epsilon>0$) at $\pi$. See Figure \ref{fig:parabolic}. We will meet again this family of surfaces in Subsection \ref{subsec:easy}.
\end{example}

\section{Proper finite-length geodesics}\label{sec:finitelengthgeo}

In this section, we begin to develop some tools to understand the geodesic completeness of convex entire spacelike surfaces. Without further mention, we will work with $C^2$ surfaces. The most important result is Proposition \ref{prop: exists theta}, where we associate a unique null direction to each proper finite-length geodesic on any convex entire spacelike surface. We remark that the results in this section use the convexity assumption, but no further assumption on the curvature of the surface (e.g. curvature bounded above or below by negative constants, which are instead assumed in the main results of this paper).

\subsection{Geodesic convexity}

In this subsection we prove that convex entire spacelike surfaces $\Sigma$ in $\R^{2,1}$ are geodesically convex, in the sense that there is a unique geodesic in $\Sigma$ between any two points.

\begin{remark}
When the curvature of $\Sigma$ is a negative constant, the result follows (up to applying a homothety) by \cite[Corollary E]{bss2}, where it has been proved that an entire spacelike surface of curvature $-1$ is isometric to the interior of a domain in the hyperbolic plane bounded by complete geodesics.
\end{remark}

As a tool to study the behavior of geodesics in $\Sigma$, we will use the extrinsic distance. 
Given $p\in\R^{2,1}$ and $q$ in the complement of  $I^+(p)\cup I^-(p)$, we define
\[
d^{\mathrm{ext}}_p(q) = \sqrt{\langle q-p,q-p\rangle}~.
\]
To simplify the notation, we will sometimes write $d^{\mathrm{ext}}_p(q)=|q - p|$.
We first establish some important properties of this function.
\begin{lemma} \label{lem: d ext proper}
Let $\Sigma$ be an entire spacelike surface in $\R^{2,1}$. Fix any point $p \in \Sigma$. Then $d^{\mathrm{ext}}_p:\Sigma\to\R$ is a well-defined proper function on $\Sigma$.
\end{lemma}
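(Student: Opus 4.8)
The plan is to prove the two claims (well-definedness and properness) separately. For well-definedness, I need to check that $q - p$ is never timelike for $q \in \Sigma$, so that $d^{\mathrm{ext}}_p(q) = \sqrt{\langle q - p, q - p\rangle}$ makes sense as a nonnegative real number. This is exactly the achronality of $\Sigma$: since $\Sigma$ is an entire achronal surface (being spacelike and entire, hence the graph of a $1$-Lipschitz function $f : \R^2 \to \R$ by Remark~\ref{rmk locally graphs}), no two of its points are timelike-separated, so $q - p$ is spacelike or null, and $\langle q-p, q-p\rangle \geq 0$. Thus $d^{\mathrm{ext}}_p$ is well-defined and continuous (indeed the square root of a continuous nonnegative function).

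For properness, I would give the estimate explicitly in terms of the $1$-Lipschitz graphing function. Write $\Sigma = \{(v, f(v)) : v \in \R^2\}$ with $f$ $1$-Lipschitz, and $p = (v_0, f(v_0))$. For $q = (v, f(v))$ we compute
\[
\langle q - p, q - p \rangle = \|v - v_0\|^2 - (f(v) - f(v_0))^2 \geq \|v - v_0\|^2 - \|v - v_0\|^2 = 0,
\]
which re-proves well-definedness, but I want a \emph{lower} bound that blows up. The point is that for a convex $1$-Lipschitz function the Lipschitz constant is \emph{not} attained in the limit generically, but it certainly can be asymptotically $1$, so a naive bound $\langle q-p,q-p\rangle \geq (1-\text{const})\|v-v_0\|^2$ fails in general. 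Instead I would argue topologically: $d^{\mathrm{ext}}_p$ is proper on $\Sigma$ if and only if the preimage of every bounded set is compact; since $\pi|_\Sigma : \Sigma \to \R^2$ is a homeomorphism (Remark~\ref{rmk locally graphs}), it suffices to show that $\{v \in \R^2 : \langle q - p, q - p\rangle \leq R^2\}$ is bounded for each $R$. So I must rule out the existence of a sequence $v_n$ with $\|v_n - v_0\| \to \infty$ but $\|v_n - v_0\|^2 - (f(v_n) - f(v_0))^2$ bounded; equivalently $f(v_n) - f(v_0) = \|v_n - v_0\| - o(1) \cdot (\text{something})$... the clean way is: $\|v_n-v_0\|^2 - (f(v_n)-f(v_0))^2 = (\|v_n-v_0\| - |f(v_n)-f(v_0)|)(\|v_n-v_0\| + |f(v_n)-f(v_0)|)$, and the second factor $\to \infty$, so boundedness of the product forces $\|v_n - v_0\| - |f(v_n) - f(v_0)| \to 0$, i.e. $f(v_n) - f(v_0) \to \|v_n - v_0\|$ (using $f$ $1$-Lipschitz so $f(v_n) - f(v_0) \leq \|v_n - v_0\|$, and for $n$ large the difference is positive). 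Passing to a subsequence so that $(v_n - v_0)/\|v_n - v_0\| \to \omega \in \mathbb S^1$, convexity of $f$ gives, for the point $v_0 + t\omega$ on the ray, $f(v_0 + t\omega) - f(v_0) \geq$ a supporting-line value; but more directly, convexity implies the slope of $f$ along this ray is non-decreasing, and $\frac{f(v_n) - f(v_0)}{\|v_n - v_0\|} \to 1$ forces the asymptotic slope to be $\geq 1$ — but then by convexity the slope is eventually $> 1 - \delta$, hence using the difference quotient between two far points $v_m, v_n$ on (nearly) the ray one contradicts the $1$-Lipschitz bound, or alternatively the graph would eventually be non-spacelike.

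The cleanest route, which I would actually write, is via the \emph{null support function} $\phi$: since $\phi$ is finite on at least three points (here we only need: $\Sigma$ is entire, hence has a support plane in at least three directions, equivalently $\phi < \infty$ somewhere — actually Remark~\ref{rmk directional support plane 1} plus the structure theory gives finiteness on $\geq 3$ points), pick directions $\theta_1, \theta_2, \theta_3$ with $\phi(\theta_i) < \infty$ not contained in a half-circle. For each $i$, $\langle q, \vec\theta_i \rangle \leq \phi(\theta_i)$ for all $q \in \Sigma$, i.e. $q$ lies in the intersection of three future half-spaces bounded by null planes, whose transverse directions positively span. This intersection $\mathcal D$ is (contained in) the future cone over a compact set, and on it the Minkowski "distance" to the fixed point $p$ is proper: writing $q = p + w$ with $w$ ranging in $(\mathcal D - p) \cap \{\text{spacelike or null}\}$, the three inequalities $\langle w, \vec\theta_i\rangle \leq \phi(\theta_i) - \langle p, \vec\theta_i\rangle =: c_i$ confine $w$ to a region where $|w|^2 = \langle w, w\rangle \to \infty$ as $\|w\| \to \infty$ (one checks $\langle w, w\rangle \geq c\|w\|^2$ for a constant $c > 0$ depending only on the $\theta_i$, by a compactness argument on the set of unit spacelike/null $w$ satisfying $\langle w, \vec\theta_i\rangle \leq 0$, which is compact and on which $\langle w, w \rangle$ is bounded below by a positive constant — this is where the "three points not in a half-circle" hypothesis is used). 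Then $d^{\mathrm{ext}}_p(q)^2 \geq c \|q - p\|^2$, and since $\pi|_\Sigma$ is a homeomorphism and $\|q - p\| \to \infty$ as $q \to \infty$ in $\Sigma$, properness follows. \textbf{The main obstacle} is precisely this last quantitative lower bound $\langle w, w\rangle \geq c\|w\|^2$: it is false for a single null constraint (null rays are orthogonal to themselves), and one genuinely needs that the constraint directions $\vec\theta_i$ do not all lie in a closed half-circle, which is guaranteed by $\phi$ being finite on three points that are not collinear through a diameter — I would make sure the preliminaries indeed supply this, and otherwise reduce to it, noting an entire surface cannot have all its support planes' null directions confined to a half-circle (else it would be unbounded in a spacelike direction in a way incompatible with being a $1$-Lipschitz graph over all of $\R^2$).
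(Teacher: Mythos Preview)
Your argument has a genuine gap: both of your approaches rely on \emph{convexity} of $\Sigma$, which is not part of the hypothesis of Lemma~\ref{lem: d ext proper}. The lemma only assumes $\Sigma$ is entire and spacelike. In your first approach you explicitly invoke ``convexity of $f$'' to force the asymptotic slope along a ray to exceed $1$; in your second approach the null support function $\phi$ (and the existence of null support planes) is only defined for convex surfaces, and the structural fact that $\phi$ is finite on at least three points is likewise a statement about convex entire surfaces. Without convexity, neither route goes through: the condition $\|v_n - v_0\| - |f(v_n) - f(v_0)| \to 0$ that you isolate by factoring is \emph{exactly} what happens for the hyperboloid, so it cannot by itself produce a contradiction.

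The paper's proof (following Cheng--Yau) uses instead the one extra piece of structure that \emph{is} assumed, namely that $\Sigma$ is $C^1$ with spacelike tangent plane at $p$. After an isometry making $p$ the origin and $df(0,0)=0$, Taylor's theorem gives an $\epsilon>0$ with $|f(x)| \leq \epsilon/2$ on the circle $|x|=\epsilon$. The global $1$-Lipschitz bound then propagates this gap: for $|x|\geq \epsilon$, comparing with the point $\epsilon x/|x|$ on that circle gives $|f(x)| \leq |x| - \epsilon/2$, whence
\[
d^{\mathrm{ext}}_p\big((x,f(x))\big)^2 = |x|^2 - f(x)^2 \geq \epsilon|x| - \epsilon^2/4 \to \infty.
\]
The missing idea in your attempt is precisely this use of the local $C^1$ condition at $p$ to create a uniform buffer between $|f|$ and $|x|$; once you have that, properness is immediate and no convexity is needed.
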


\begin{proof} This is Proposition 1 of \cite{chengyaumax}. {Here is a recap of the proof. Write $\Sigma$ as the graph of an entire 1-Lipschitz function $f$, as assume after an isometry that $p$ is the origin and $d f(0,0) = 0$. We will write $x$ for a point in $\R^2$. Using only that $f(x) \leq |x|$, we see that $d^{\mathrm{ext}}_p$ is well-defined. By Taylor's theorem, there is an $\epsilon$ such that $|f(x)| \leq \epsilon / 2$ on the circle of radius $\epsilon$. Since $f$ is 1-Lipschitz, it follows that $|f(x)| \leq |x| - \epsilon/2$ outside the circle of radius $\epsilon$. Therefore for $|x| \geq \epsilon$, $d^{\mathrm{ext}}_p((x, f(x))) = \sqrt{|x|^2 - f(x)^2} \geq \sqrt{\epsilon |x|-\epsilon^2/4}$, and so $d^{\mathrm{ext}}_p$ is proper.}
\end{proof}

\begin{lemma} \label{lem: phi concave}
Let $\Sigma$ be a convex spacelike surface in $\R^{2,1}$ and let $\gamma(t)$ be any  geodesic of $\Sigma$. For any future-directed causal vector $v$, the function
$$
\Phi_v(t) = \langle \gamma(t), v\rangle~.
$$
is concave in $t$.
\end{lemma}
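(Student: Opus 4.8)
The plan is to compute the second derivative of $\Phi_v(t) = \langle \gamma(t), v \rangle$ and show it is non-positive, using convexity of $\Sigma$ together with the spacelike condition. Write $\Sigma$ as the graph of a convex function $f : \R^2 \to \R$, so that $\gamma(t) = (\xi(t), f(\xi(t)))$ for a curve $\xi(t)$ in $\R^2$. Since $v = (v_1, v_2, v_3)$ is a future-directed causal vector, we have $v_3 \geq \|(v_1, v_2)\| \geq 0$. Writing $\bar v = (v_1, v_2)$, we get $\Phi_v(t) = \langle \xi(t), \bar v \rangle_{\R^2} - v_3 f(\xi(t))$, where $\langle \cdot, \cdot \rangle_{\R^2}$ is the Euclidean inner product. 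The strategy is then to differentiate twice and use that $\gamma$ being a geodesic imposes a relation between $\ddot\xi$ and $\dot\xi$.

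First I would recall that the geodesic equation on $\Sigma$ says that the extrinsic acceleration $\ddot\gamma(t)$ is everywhere normal to $\Sigma$, i.e. parallel to the (timelike, future-directed unit) normal $N(\gamma(t))$; this holds because the ambient $\R^{2,1}$ is flat and the second fundamental form measures exactly the normal component of $\ddot\gamma$. Hence $\ddot\gamma(t) = c(t) N(\gamma(t))$ for some scalar function $c(t)$, where the sign of $c(t)$ is controlled by our convexity convention: with the future-directed normal and the convention from Remark \ref{rmk convention} that the second fundamental form $\II$ is non-negative definite, one has $c(t) = \II(\dot\gamma, \dot\gamma) \geq 0$ (one should double-check the sign, but the point is it has a definite sign). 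Then $\Phi_v''(t) = \langle \ddot\gamma(t), v \rangle = c(t) \langle N(\gamma(t)), v \rangle$. Since $N$ is future-directed timelike and $v$ is future-directed causal, the reversed Cauchy–Schwarz inequality in Lorentzian signature gives $\langle N, v \rangle \leq 0$ (two future-directed causal vectors have non-positive inner product in our $(+,+,-)$ convention). Combined with $c(t) \geq 0$, this yields $\Phi_v''(t) \leq 0$, which is concavity.

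Alternatively, to avoid relying on the sign convention for $\II$, I would argue more directly with the graph picture: $\Phi_v(t) = \langle \xi(t), \bar v\rangle_{\R^2} - v_3 f(\xi(t))$, so $\Phi_v''(t) = \langle \ddot\xi, \bar v\rangle_{\R^2} - v_3 \big( \Hess f(\xi)(\dot\xi, \dot\xi) + \langle \nabla f(\xi), \ddot\xi \rangle_{\R^2} \big)$. The geodesic condition, projected to $\R^2$, expresses $\ddot\xi$ in terms of $\dot\xi$ and the first/second derivatives of $f$; substituting, the terms involving $\ddot\xi$ rearrange into a multiple of $\Hess f(\xi)(\dot\xi,\dot\xi)$ with a coefficient of a definite sign coming from $v_3 - \langle \nabla f, (v_1,v_2)/v_3 \rangle$-type expressions, which is $\geq 0$ precisely because $v$ is causal ($v_3 \geq |\bar v|$) and $f$ is $1$-Lipschitz ($|\nabla f| \leq 1$). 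Convexity of $f$ gives $\Hess f \geq 0$, so $\Phi_v'' \leq 0$.

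\textbf{Main obstacle.} The conceptual content is elementary, but the delicate point is bookkeeping the \emph{signs}: one must correctly pin down (i) the sign of the normal component of the geodesic acceleration under the stated convexity/time-orientation conventions, and (ii) the sign of $\langle N, v\rangle$ for future-directed causal $N, v$ in the $(+,+,-)$ metric. A secondary subtlety is the regularity: $\gamma$ is only $C^2$ if $\Sigma$ is $C^2$, which suffices for computing $\Phi_v''$ pointwise; for merely $C^1$ (spacelike) surfaces one would instead prove concavity via a limiting/support-plane argument, comparing $\gamma$ with its chords and using that convexity of $\Sigma$ forces each chord of $\gamma$ to lie weakly on one side. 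I would present the $C^2$ computation as the main argument, since the paper works with $C^2$ surfaces throughout this section, and remark that the concavity is in fact non-strict and automatic from convexity plus the causal character of $v$.
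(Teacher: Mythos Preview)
Your main argument is correct and is essentially identical to the paper's proof: compute $\Phi_v''(t) = \langle \ddot\gamma(t), v\rangle$, use that $\ddot\gamma$ is a non-negative multiple of the future unit normal (by the geodesic condition plus the convexity convention of Remark~\ref{rmk convention}), and conclude via $\langle N, v\rangle \leq 0$ for future-directed causal vectors. The alternative graph computation and the regularity discussion are unnecessary here, as the paper works with $C^2$ surfaces and the three-line intrinsic argument suffices.
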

\begin{proof}
The acceleration $\ddot{\gamma}$ in $\R^{2,1}$ is normal to $\Sigma$ because $\gamma$ is a geodesic. Moreover {(recalling our conventions from Remark \ref{rmk convention})} since $\Sigma$ is convex, $\ddot{\gamma}$ must be a nonnegative multiple of the future normal vector to $\Sigma$.  Since the inner product of any two future-directed causal vectors is nonpositive,  we have $\ddot\Phi_v(t) = \langle \ddot{\gamma}(t), v \rangle \leq 0$.
\end{proof}

\begin{lemma} \label{lem: d ext increasing}
Let $\Sigma$ be a convex spacelike surface in $\R^{2,1}$, and let $\gamma(t)$ be any unit speed geodesic of $\Sigma$ with $\gamma(0) = p$. Then for every $t \geq 0$,
\[
\frac{d}{dt} d^{\mathrm{ext}}_p( \gamma(t)) \geq 1~.
\]
\end{lemma}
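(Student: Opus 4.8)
The plan is to compute $\frac{d}{dt} d^{\mathrm{ext}}_p(\gamma(t))$ directly and bound it from below using Lemma \ref{lem: phi concave}. Write $r(t) = d^{\mathrm{ext}}_p(\gamma(t)) = |\gamma(t) - p|$, so that $r(t)^2 = \langle \gamma(t) - p, \gamma(t) - p\rangle$; this is well-defined and nonnegative by Lemma \ref{lem: d ext proper}, and since $\Sigma$ is spacelike and $\gamma(0)=p$, we have $r(t)>0$ for $t>0$. Differentiating gives $r(t) r'(t) = \langle \gamma(t) - p, \dot\gamma(t)\rangle$. The strategy is to show that this last quantity is at least $r(t)$, which yields $r'(t) \geq 1$ once we know $r(t)>0$.

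The key step is to analyze $h(t) := \langle \gamma(t) - p, \dot\gamma(t)\rangle$. Note $h(0) = 0$ since $\gamma(0) = p$. We compute $h'(t) = \langle \dot\gamma(t), \dot\gamma(t)\rangle + \langle \gamma(t) - p, \ddot\gamma(t)\rangle = 1 + \langle \gamma(t)-p, \ddot\gamma(t)\rangle$, using that $\gamma$ is unit speed. Now $\ddot\gamma(t)$ is a nonnegative multiple of the future-directed unit normal $\nu(t)$ to $\Sigma$ at $\gamma(t)$, by convexity (as in the proof of Lemma \ref{lem: phi concave}). The remaining point is that $\langle \gamma(t) - p, \nu(t)\rangle \geq 0$, i.e.\ the vector from $p$ to $\gamma(t)$ points weakly into the future half-space of the tangent plane at $\gamma(t)$. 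This is exactly the convexity of $\Sigma$: since $\Sigma$ is the graph of a convex function and $p\in\Sigma$, the point $p$ lies in the closed future of every support (tangent) plane of $\Sigma$, in particular the tangent plane at $\gamma(t)$; equivalently $\langle p - \gamma(t), \nu(t)\rangle \le 0$, where $\nu(t)$ is future-directed. Hence $\langle \gamma(t)-p, \ddot\gamma(t)\rangle \geq 0$, so $h'(t) \geq 1$ for all $t \geq 0$, and integrating from $0$ with $h(0)=0$ gives $h(t) \geq t \geq 0$ for $t\geq 0$.

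To conclude, observe $r(t) r'(t) = h(t) \geq t$. For $t>0$ we have $r(t)>0$, and moreover from $h(t)\ge t$ together with the Cauchy--Schwarz-type bound $h(t) = \langle \gamma(t)-p,\dot\gamma(t)\rangle \le |\gamma(t)-p|\,|\dot\gamma(t)|_{\mathrm{Eucl}}$ one sees $r$ does not vanish for $t>0$; then $r'(t) = h(t)/r(t)$. It remains to upgrade $r(t)r'(t)\ge t$ to $r'(t)\ge 1$. I would do this by a differential-inequality argument: from $r r' \ge t$ we get $(r^2)' \ge 2t$, hence $r(t)^2 \ge t^2$, so $r(t) \ge t$; combined with $r(t) r'(t) = h(t) \ge t \cdot$ (something)—more cleanly, rewrite using $h(t)\ge t$ and note $r(t)\le$ no—let me instead argue: $r(t)r'(t)=h(t)$ and we want $r'(t)\ge 1$, i.e.\ $h(t)\ge r(t)$. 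Since $h'(s)\ge 1$ and $r'(s) = h(s)/r(s)\le h'(s)$ would need $h(s)\le r(s)h'(s)$; this is getting circular, so the cleanest route is: define $\delta(t) = r(t) - t$, show $\delta(0)=0$ and $\delta'(t) = r'(t) - 1 = (h(t) - r(t))/r(t)$, and prove $h(t)\ge r(t)$ by showing the auxiliary function $h(t)-r(t)$ is nonnegative — which follows since $(h-r)(0)=0$ and $(h-r)'(t) = h'(t) - r'(t) \ge 1 - r'(t)$, requiring $r'(t)\le 1$...

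The genuine content, and the step I expect to require the most care, is precisely closing this loop. The clean fix: from $r(t)r'(t) = h(t)$ and $h'(t)\ge 1$, integrate $h$ to get $h(t)\ge t$ directly (done above, no circularity — it only uses convexity and unit speed). Then $\tfrac12(r^2)'(t) = h(t) \ge t$ gives $r(t)^2 \ge t^2$ after integrating, so $r(t)\ge t$. Finally, $r'(t) = h(t)/r(t)$; I claim $h(t)\ge r(t)$, equivalently $\langle \gamma(t)-p,\dot\gamma(t)\rangle \ge |\gamma(t)-p|$. This follows from the \emph{first variation / convexity of $d^{\mathrm{ext}}_p$ along geodesics}: in fact the honest way is to note that $\langle \gamma(t)-p, \ddot\gamma(t)\rangle\ge 0$ says $h$ is convex-ish, but more to the point one shows $t\mapsto r(t)$ is convex on $[0,\infty)$ (since $r'' \ge 0$ by a computation using $\ddot\gamma \parallel \nu$ and $\langle\gamma-p,\nu\rangle\ge0$), and a convex function with $r(0)=0$, $r'(0^+)=1$ (as $\gamma$ is unit speed and spacelike, $r'(0)=1$) satisfies $r'(t)\ge r'(0)=1$. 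This convexity-of-$r$ argument is the one I would write up, and verifying $r''(t)\ge 0$ — carefully handling the sign of $\langle\gamma(t)-p,\nu(t)\rangle$ via convexity of $\Sigma$ — is the main obstacle.
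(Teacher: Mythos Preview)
Your setup is sound: with $r(t)=|\gamma(t)-p|$ and $h(t)=\langle\gamma(t)-p,\dot\gamma(t)\rangle$ you correctly get $rr'=h$, $h(0)=0$, and $h'(t)=1+\langle\gamma(t)-p,\ddot\gamma(t)\rangle\ge 1$ (the sign of $\langle\gamma(t)-p,\nu(t)\rangle$ is exactly the statement that $p\in\Sigma$ lies in the future of the tangent plane at $\gamma(t)$, which is convexity). So $h(t)\ge t$ and $r(t)^2\ge t^2$. The gap is the last step: you never establish $h(t)\ge r(t)$, and your fallback---that $r$ is convex---is neither proved nor obviously true. From $r''=(h'-r'^2)/r$ and $h'\ge 1$ you only get $r''\ge 0$ where $r'\le 1$; there is no reason $h'\ge r'^2$ should hold globally, and in general it need not. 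The ``Cauchy--Schwarz'' aside mixing Minkowski and Euclidean norms is also not usable.

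The clean closure of your argument is one line. Consider $F(t)=h(t)^2-r(t)^2$. Then $F(0)=0$ and
\[
F'(t)=2h(t)h'(t)-2r(t)r'(t)=2h(t)\bigl(h'(t)-1\bigr)\ge 0,
\]
since $h(t)\ge t\ge 0$ and $h'(t)\ge 1$. Hence $h(t)^2\ge r(t)^2$, and as both are nonnegative for $t\ge 0$ this gives $h(t)\ge r(t)$, so $r'(t)=h(t)/r(t)\ge 1$.

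For comparison, the paper takes a different route that sidesteps this issue entirely: it writes $r'(t)=\langle u,v\rangle$ with $u=(\gamma(t)-p)/r(t)$ and $v=\dot\gamma(t)$ both unit spacelike, and shows $\langle u,v\rangle\ge 1$ by proving the interval inclusion $I_v\subseteq I_u$ in $\mathbb S^1$ (where $I_w=\{\theta:\langle w,\vec\theta\rangle\ge 0\}$), using only the concavity of $t\mapsto\langle\gamma(t),\vec\theta\rangle$ from Lemma~\ref{lem: phi concave}. Both proofs rest on the same convexity fact ($\ddot\gamma$ is a nonnegative multiple of the future normal); yours is more hands-on and, once patched with the $h^2-r^2$ trick, arguably more elementary, while the paper's interval argument packages the Lorentzian ``reverse Cauchy--Schwarz'' phenomenon more transparently.
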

\begin{proof}
By the chain rule, 
\[
\frac{d}{dt} d^{\mathrm{ext}}_p(\gamma(t)) = \langle \mathrm{grad}~d^{\mathrm{ext}}_p(\gamma(t)), \dot{\gamma}(t) \rangle~.
\]
The gradient in $\R^{2,1}$ of $d^{\mathrm{ext}}_p$ at $q$ is just the unit vector in the direction of $q-p$, so the right hand side is the inner product of two unit spacelike vectors in $\R^{2,1}$. 
If $u$ is a unit spacelike vector, define
\begin{equation} \label{eqn: It}
\begin{split}
I_u :=  \{\theta \in \mathbb S^1 | \langle u, \vec{\theta}\rangle \geq 0 \}~,
\end{split}
\end{equation}
which a nonempty closed interval.

 If ${u}$ and ${v}$ are two unit spacelike vectors, one can check that $\langle {u}, {v} \rangle \geq 1$ exactly when either $I_u \subseteq I_v$ or $I_v \subseteq I_u$. We will prove the lemma by showing that $I_{\dot{\gamma}(t)} \subseteq I_{\mathrm{grad}\,d^{\mathrm{ext}}_p(\gamma(t))}$  for all $t$.

For a given $\theta$, define  
\begin{equation} \label{eqn: phitheta}
\begin{split}
\Phi_\theta(t) = \langle \gamma(t), \vec{\theta}\rangle~,
\end{split}
\end{equation}
which is concave in $t$ by Lemma \ref{lem: phi concave}.

Now, suppose $\theta \in I_{\dot{\gamma}(t)}$. This implies $\dot{\Phi}_\theta(t) \geq 0$.  
By concavity of $\Phi_\theta$, we have:
\[
\Phi_\theta(t) - \Phi_\theta(0) \geq t\dot{\Phi}_\theta(t)\geq 0~.
\]
This implies that $\theta \in I_{\mathrm{grad}\,d^{\mathrm{ext}}_p(\gamma(t))}$ because
\[
\langle \mathrm{grad}~d^{\mathrm{ext}}_p(\gamma(t)), \vec{\theta} \rangle = \langle \frac{\gamma(t)-p}{|\gamma(t)-p|},\vec\theta\rangle= \frac{\Phi_\theta(t) - \Phi_\theta(0)}{|\gamma(t) - p|} \geq 0~.
\]
Hence, $I_{\dot{\gamma}(t)} \subseteq I_{\mathrm{grad}\,d^{\mathrm{ext}}_p(\gamma(t))}$, which concludes the proof.
\end{proof}

For the next lemma, we suppose that $\Sigma$ is both convex and entire. By convexity, its Gauss curvature is non-positive, so no geodesic on $\Sigma$ can have conjugate points, which is to say that the exponential map of $\Sigma$ from any point $p$ is a local diffeomorphism. The next lemma shows that it is also proper.

\begin{lemma} \label{lem: exp proper}
Let $\Sigma$ be a convex entire spacelike surface in $\R^{2,1}$. For any $p \in \Sigma$, the exponential map at $p$ is a proper map from its domain of definition in $T_p \Sigma$ to $\Sigma$.
\end{lemma}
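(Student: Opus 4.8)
The plan is to prove that for every compact $K\subseteq\Sigma$ the preimage $A:=(\exp_p)^{-1}(K)$, inside the star-shaped domain $\mathcal D\subseteq T_p\Sigma$ of $\exp_p$, is compact; since $T_p\Sigma$ is metrizable it suffices to show $A$ is sequentially compact. The two tools are the two lemmas just proved: Lemma \ref{lem: d ext proper} (the extrinsic distance $d^{\mathrm{ext}}_p$ is proper, so its sublevel sets are compact) and Lemma \ref{lem: d ext increasing} ($d^{\mathrm{ext}}_p$ is nondecreasing, with derivative at least $1$, along unit-speed geodesics issuing from $p$).

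First I would set $R:=\sup_{q\in K}d^{\mathrm{ext}}_p(q)<\infty$ (finite by properness) and $K':=\{q\in\Sigma: d^{\mathrm{ext}}_p(q)\le R\}$, which is compact and contains $K$. For $v\in A$, the curve $s\mapsto\exp_p(sv)$, $s\in[0,1]$, is the geodesic segment from $p$ with velocity $v$ (this uses that $\mathcal D$ is star-shaped); applying Lemma \ref{lem: d ext increasing} to its unit-speed reparametrization gives $d^{\mathrm{ext}}_p(\exp_p(sv))\ge s|v|_\I$ and, by monotonicity, $d^{\mathrm{ext}}_p(\exp_p(sv))\le d^{\mathrm{ext}}_p(\exp_p(v))\le R$. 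The inequality at $s=1$ yields $|v|_\I\le R$, so $A$ is bounded in $T_p\Sigma$; the second inequality shows the whole segment $\exp_p([0,1]v)$ stays inside $K'$.

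Now given a sequence $v_n\in A$, boundedness lets me pass to a subsequence with $v_n\to v_\infty$ in $T_p\Sigma$, and I must check $v_\infty\in A$. Let $\gamma_\infty:[0,\beta)\to\Sigma$ be the maximal geodesic with $\gamma_\infty(0)=p$, $\dot\gamma_\infty(0)=v_\infty$, where $\beta\in(0,+\infty]$. By continuous dependence of geodesics on initial data, the segments $s\mapsto\exp_p(sv_n)$ converge to $\gamma_\infty$ uniformly on compact subintervals of $[0,\beta)$, so $\gamma_\infty(s)\in K'$ for all $s<\min(1,\beta)$. The crux — and the only genuine subtlety, precisely because $\Sigma$ may be incomplete — is ruling out $\beta\le 1$: here I invoke the standard fact that a maximal geodesic defined on a bounded interval cannot have its image contained in a compact set, which contradicts $\gamma_\infty([0,\beta))\subseteq K'$. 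Hence $\beta>1$, so $v_\infty\in\mathcal D$, and $\exp_p(v_\infty)=\gamma_\infty(1)=\lim_n\exp_p(v_n)\in K$ since $K$ is closed. Therefore $v_\infty\in A$, so $A$ is sequentially compact, hence compact.
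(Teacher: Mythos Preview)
Your proof is correct and uses the same three ingredients as the paper's argument: Lemma~\ref{lem: d ext proper} (properness of $d^{\mathrm{ext}}_p$), Lemma~\ref{lem: d ext increasing} (monotonicity with slope $\ge 1$ along radial geodesics), and the standard fact that a maximal geodesic defined on a bounded interval must leave every compact set. The organization differs slightly: the paper works radially, showing that $r:=d^{\mathrm{ext}}_p\circ\exp_p$ restricted to each ray $[0,t_{\max})\,u$ is a diffeomorphism onto $[0,\infty)$, and then invokes the implicit function theorem to conclude that the sublevel set $\{r\le a\}$ is the radial subgraph of a continuous function $t_a(u)$, hence compact. You instead run a sequential compactness argument, replacing the implicit function theorem step by continuous dependence of geodesics on initial data. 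Both packagings are clean; yours is arguably a touch more elementary since it avoids the IFT, while the paper's radial picture makes the structure of the domain $\mathcal D$ slightly more visible.
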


\begin{proof}
Let $U \subset T_p \Sigma$ be the domain of definition of the exponential map, meaning the star-shaped domain consisting of vectors $v$ in $T_p \Sigma$ such that the geodesic through $p$ with derivative $v$ exists for time 1. It suffices to show that the composition $r(v) := d^{\mathrm{ext}}_p \circ \mathrm{exp}_p$ is proper on $U$. 

First, let $u$ be an arbitrary unit vector in $U$ and consider the restriction of $r$ to the maximal ray $\{tu: t \in [0, t_\mathrm{max})\}$. We will show that this is a diffeomorphism from $[0, t_\mathrm{max})$ to $[0, \infty)$. Obviously, its value at $t=0$ is 0. According to Lemma \ref{lem: d ext increasing}, $r(tu)$ is a strictly increasing function of $t$ and, integrating from 0, $r(tu) \geq t$. In the case that $t_\mathrm{max} = \infty$, it follows that $r(tu)$ tends to infinity as $t \to t_\mathrm{max}$, and hence that $r(tu)$ is a diffeomorphism from $[0, t_\mathrm{max})$ to $[0, \infty)$. On the other hand, if $t_\mathrm{max} < \infty$, then the geodesic $\mathrm{exp}_p(tu)$ must be proper in $\Sigma$ since finite length geodesics that stay in a compact set are always extendable. Hence the properness of $d^{\mathrm{ext}}_p$ on $\Sigma$ (Lemma \ref{lem: d ext proper}) also implies in this case that $r(tu)$ tends to infinity as $t \to t_\mathrm{max}$, and the claim follows.

To complete the proof, for any $a \in \R$, we need to show that the sublevel set $\{r(v) \leq a\} \subset U$ is compact in $U$. According to the previous paragraph, there is a unique function $t_a(u)$ on the unit sphere in $T_p\Sigma$ such that $r(t_a u) = a$, and the sublevel set of $r$ is the radial subgraph $\{tu\, |\, t \leq t_a(u) \}$. Moreover, since $r$ is a smooth function on $U\setminus\{0\}$ and its radial derivative is nonzero by Lemma \ref{lem: d ext increasing}, the implicit function theorem shows that $t_a$ is a smooth --- in particular, continuous --- function of $u$. Therefore the sublevel set of $r$ is a compact subset of $U$, completing the proof.
\end{proof}

\begin{prop}\label{prop:geoconv}
Let $\Sigma$ be a convex entire spacelike surface in $\R^{2,1}$. Then $\Sigma$ is geodesically convex.
\end{prop}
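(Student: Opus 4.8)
The plan is to deduce geodesic convexity from the properness of the exponential map established in Lemma \ref{lem: exp proper}, together with the non-positivity of the curvature. Fix a point $p\in\Sigma$ and let $U\subset T_p\Sigma$ be the star-shaped domain of definition of $\exp_p$. Since $\Sigma$ is convex and entire, its Gauss curvature is non-positive, so $\exp_p$ has no critical points: it is a local diffeomorphism from $U$ to $\Sigma$. Combined with Lemma \ref{lem: exp proper}, which says $\exp_p$ is proper, a proper local diffeomorphism between (connected) manifolds is a covering map; since $\Sigma$ is simply connected (being a graph over $\R^2$, hence homeomorphic to $\R^2$ via $\pi|_\Sigma$, see Remark \ref{rmk locally graphs}) and $U$ is connected, $\exp_p:U\to\Sigma$ is a diffeomorphism. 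Hence for every $q\in\Sigma$ there is a unique $v\in U$ with $\exp_p(v)=q$, giving a geodesic from $p$ to $q$; uniqueness of $v$ gives uniqueness of the geodesic (as a geodesic segment, parametrized up to its endpoints).

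First I would record the curvature/no-conjugate-points fact: by convexity $K_\Sigma\le 0$ (Proposition \ref{prop:Gauss map properties} / Remark \ref{rmk convention}), so by the standard Jacobi field argument there are no conjugate points along any geodesic, and $\exp_p$ is a submersion, hence (equal dimensions) a local diffeomorphism on all of $U$. Next I would invoke Lemma \ref{lem: exp proper} for properness. Then I would apply the topological lemma that a proper local homeomorphism onto a connected, locally compact, Hausdorff space is a covering map — here the target $\Sigma$ is a surface, so this applies. The number of sheets is constant; evaluating near $v=0$ shows it is $1$, using that $\exp_p$ is a genuine diffeomorphism from a small ball around $0$ onto a neighborhood of $p$ and that $U$ is connected. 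Alternatively, and perhaps cleaner to write, one argues directly: $U$ is connected and $\exp_p$ is an open map (local diffeo) which is also closed (proper maps to locally compact Hausdorff spaces are closed), so $\exp_p(U)$ is clopen in the connected $\Sigma$, hence all of $\Sigma$; and injectivity follows because a proper local diffeomorphism onto a simply connected space is a covering, and a connected covering of a simply connected space is trivial.

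I would then conclude: given $q\in\Sigma$, the unique preimage $v=\exp_p^{-1}(q)$ determines the unique geodesic $t\mapsto\exp_p(tv)$, $t\in[0,1]$, joining $p$ to $q$ inside $\Sigma$. Since $p$ was arbitrary, $\Sigma$ is geodesically convex. I would also note that length-minimization is not needed for the statement as phrased (``there is a unique geodesic between any two points''), though one can add that this geodesic realizes the intrinsic distance, since $r(tv)=d^{\mathrm{ext}}_p(\exp_p(tv))$ is strictly increasing by Lemma \ref{lem: d ext increasing}, so distinct geodesic rays from $p$ stay distinct and the exponential coordinates are global normal coordinates.

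The main obstacle is making the ``proper local diffeomorphism onto a simply connected space is a diffeomorphism'' step precise and self-contained, i.e. correctly citing or proving that a proper local homeomorphism between reasonable spaces is a covering map, and that $\Sigma$ (equivalently its universal-cover-free model $\R^2$) is simply connected. Everything else — non-positivity of curvature, absence of conjugate points, properness of $\exp_p$ — is either immediate from convexity or already proved in Lemma \ref{lem: exp proper}. A minor technical care point is the behavior at the boundary of $U$ when some geodesics are incomplete: this is exactly what Lemma \ref{lem: exp proper} handles, since it gives properness of $\exp_p$ on $U$ itself, not on all of $T_p\Sigma$, so the covering-map argument goes through verbatim on $U$.
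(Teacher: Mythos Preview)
Your proposal is correct and follows essentially the same approach as the paper: non-positive curvature gives that $\exp_p$ is a local diffeomorphism, Lemma~\ref{lem: exp proper} gives properness, hence $\exp_p$ is a covering map, and simple connectedness of $\Sigma$ makes it a diffeomorphism. The paper's proof is just a terser version of what you wrote.
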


\begin{proof}
For any point $p \in \Sigma$, the exponential map $\mathrm{exp}_p$ at $p$ is a local diffeomorphism since $\Sigma$ is non-positively curved, and proper by Lemma \ref{lem: exp proper}, hence it is a covering map. Since $\Sigma$ is simply connected, $\mathrm{exp}_p$ is in fact a diffeomorphism. Therefore, there is a unique geodesic from $p$ to any point $q$ of $\Sigma$. Since $p$ was arbitrary, we conclude that there is a unique geodesic between any two points of $\Sigma$, in other words that $\Sigma$ is geodesically convex.
\end{proof}

\subsection{Asymptotics of finite-length geodesics} \label{subsec:asymptotics finite length}
In the following, recall that $\vec{\theta}$ is the null vector in the direction $\theta$ as defined in \eqref{eq:vectheta}. We now prove the following result that describes the asymptotic behaviour of incomplete proper geodesics on an entire convex spacelike surface.


\begin{defi} \label{defi: asymptotic to} Let $V$ be an affine subspace of $\R^{2,1}$, and let $TV$ be the parallel linear subspace. We say that a curve or a sequence in $\R^{2,1}$ is asymptotic to $V$ if it converges in the quotient $\R^{2,1}/TV$ to the point corresponding to $V$. 
\end{defi}


\begin{propx} \label{prop: exists theta} Let $\Sigma$ be a convex entire spacelike surface in $\R^{2,1}$. If $\gamma: [0,T) \to \Sigma$ is a proper geodesic with finite length, then
\begin{enumerate}
\item There is a unique direction ${\theta}_+\in \mathbb S^1$ such that $\dot{\gamma}$ converges projectively to $\vec{\theta}_+$ as $t\to T$.
\item There is a unique null line $L = \{x + r \vec{\theta}_+, r \in \R\}$ in $\R^{2,1}$ such that $\gamma$ is asymptotic to $L$ in the sense of Definition \ref{defi: asymptotic to}.
\item $\langle \gamma(t),\vec\theta_+\rangle\to\phi(\theta_+) < \infty$ as $t\to T$, where $\phi$ is the support function of $\Sigma$. 
\item $\langle \gamma(t),\vec\theta\rangle\to -\infty$ for all $\theta\neq \theta_+$.
\end{enumerate}
\end{propx}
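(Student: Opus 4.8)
The plan is to exploit the concavity lemma (Lemma \ref{lem: phi concave}) together with the properness and finite length of $\gamma$. For each $\theta$ write $\Phi_\theta(t) = \langle \gamma(t),\vec\theta\rangle$, which is concave by Lemma \ref{lem: phi concave}. Concavity on the finite interval $[0,T)$ forces $\Phi_\theta$ to be eventually monotone, hence to have a limit in $\R\cup\{-\infty\}$ as $t\to T$; and since $\Phi_\theta(t)\le \phi(\theta)$ always, if the limit is finite it is at most $\phi(\theta)$. The derivative $\dot\Phi_\theta(t) = \langle\dot\gamma(t),\vec\theta\rangle$ is non-increasing in $t$ (concavity), so it too has a limit $m_\theta\in\R\cup\{-\infty\}$, and $m_\theta$ depends monotonically/continuously on the situation. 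First I would argue that $\dot\gamma(t)$, being unit spacelike (assume unit speed) and thus lying on a circle in $T\R^{2,1}$, subconverges projectively to some spacelike direction; call a projective limit $\vec{\theta}_+$ — a spacelike vector is characterized by the closed arc $I_u$ of $\theta$'s with $\langle u,\vec\theta\rangle\ge 0$, as in \eqref{eqn: It}, so projective convergence of $\dot\gamma(t)$ is equivalent to convergence of the intervals $I_{\dot\gamma(t)}$.

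Next I would pin down $\theta_+$ and prove items (1)–(4) simultaneously. The key observation is that $d^{\mathrm{ext}}_p\circ\gamma$ is bounded (properness of $d^{\mathrm{ext}}_p$, Lemma \ref{lem: d ext proper}, plus the fact that $\gamma$ proper of finite length must stay in a compact set — wait, rather: by Lemma \ref{lem: d ext increasing}, $d^{\mathrm{ext}}_p(\gamma(t))\ge t$, so actually it grows; but $\gamma$ has finite length $T$, and by Lemma \ref{lem: exp proper} $t_{\max}<\infty$ means $\gamma$ is proper and $d^{\mathrm{ext}}_p(\gamma(t))\to\infty$). Hence $|\gamma(t)|=\sqrt{\langle\gamma(t)-p,\gamma(t)-p\rangle}\to\infty$ with $p$ the origin, i.e. $\gamma(t)$ escapes to infinity along a spacelike-separated direction. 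Combined with $\Phi_\theta(t)$ concave and bounded above by $\phi(\theta)$: for $\theta$ outside a certain limiting arc, $\dot\Phi_\theta$ becomes eventually negative and bounded away from $0$, forcing $\Phi_\theta(t)\to-\infty$; this will give (4). Meanwhile the monotone limit of $I_{\dot\gamma(t)}$ must shrink — since the length is finite but $\gamma$ escapes, the only way the extrinsic geometry is consistent is that $I_{\dot\gamma(t)}$ converges to a single point $\theta_+$ (if it stayed a nondegenerate arc, one gets a contradiction with finite length by integrating $|\dot\gamma|=1$ against the geometry of escaping to infinity in a null-ish direction). This gives uniqueness in (1). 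For (2), once $\dot\gamma\to\vec{\theta}_+$ projectively, the component of $\gamma(t)$ in $\R^{2,1}/\R\vec{\theta}_+$ has derivative tending to $0$ in that quotient and total variation at most the length $T<\infty$, so it converges; its limit determines the asymptotic null line $L$, and uniqueness of $L$ follows because two distinct parallel null lines are separated in the quotient. For (3), $\Phi_{\theta_+}(t)$ is concave and, I claim, bounded below along a sequence (it cannot go to $-\infty$ since $\gamma$ is asymptotic to $L\subset\vec{\theta_+}^\perp + \mathrm{const}$, precisely by item (2)), hence converges to a finite limit $c\le\phi(\theta_+)$; then one shows $c=\phi(\theta_+)$ and in particular $\phi(\theta_+)<\infty$, using that $\Phi_{\theta_+}$ non-increasing derivative with nonpositive limit but $\Phi_{\theta_+}$ bounded forces $\dot\Phi_{\theta_+}(t)\to 0$, and a support-plane argument (the plane $\langle\cdot,\vec{\theta_+}\rangle = c$ must be a support plane, since $\gamma$ asymptotic to a null line inside it and $\Sigma$ convex on one side) identifies $c$ with $\phi(\theta_+)$.

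The main obstacle, I expect, is the rigidity step: showing that the limiting interval $I_{\dot\gamma(t)}$ degenerates to a point rather than remaining a nondegenerate arc, equivalently that $\dot\gamma$ genuinely converges to a \emph{null} direction $\vec{\theta}_+$ and not merely to some spacelike direction. This is where finiteness of the length has to be used in an essential quantitative way, presumably by estimating how fast $\gamma(t)$ must run off to infinity (via $d^{\mathrm{ext}}_p(\gamma(t))\to\infty$ and the constraint $f(x)\le |x|$ on the graphing function) and comparing with the bound that the arclength gives on the Euclidean displacement; a spacelike limiting direction for $\dot\gamma$ would be incompatible with $\gamma(t)$ leaving every compact set in finite intrinsic time while $\Phi_\theta(t)$ stays bounded above for a whole arc of $\theta$. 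The remaining items are then bookkeeping with concave functions of one variable and the quotient $\R^{2,1}/\R\vec{\theta}_+$.
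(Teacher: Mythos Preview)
Your overall architecture matches the paper's: use the concavity of $\Phi_\theta$ to get nested intervals $I_t=\{\theta:\dot\Phi_\theta(t)\ge0\}$, show their intersection is a single point $\theta_+$, then read off the remaining items. However, two of your claimed mechanisms are actually false, and the degeneracy step you flag as the main obstacle has a clean one-line argument you are missing.

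\textbf{The degeneracy step.} The paper's argument is much simpler than what you sketch. If $\bigcap_t I_t$ contained two points $\theta_1,\theta_2$, then $\vec\theta_1+\vec\theta_2$ is timelike and $\langle\dot\gamma(t),\vec\theta_1+\vec\theta_2\rangle$ is nonnegative (both summands are) and nonincreasing (concavity), hence bounded. But the set of unit spacelike vectors with bounded inner product against a fixed timelike vector is compact in $\R^{2,1}$, so $\dot\gamma$ stays bounded, and since $T<\infty$ this forces $\gamma$ to stay in a compact set, contradicting properness. No quantitative estimate is needed.

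\textbf{Item (4).} Your claim that ``$\dot\Phi_\theta$ eventually negative and bounded away from $0$ forces $\Phi_\theta\to-\infty$'' is wrong: on the \emph{finite} interval $[0,T)$, a derivative bounded below by $-c$ gives only $\Phi_\theta(t)\ge\Phi_\theta(0)-cT$. To get $\Phi_\theta\to-\infty$ you need $\dot\Phi_\theta\to-\infty$, and that does not come for free from concavity. The paper instead writes $\gamma=a e_-+b e_0+c e_+$ in a null--spacelike--null basis and uses properness: since $a$ and $b$ turn out to be bounded, $c$ must diverge, and $c$ is (a negative multiple of) $\Phi_{\theta_-}$.

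\textbf{Item (2).} Your claim that the derivative of $\gamma$ in the quotient $\R^{2,1}/\R\vec\theta_+$ tends to zero is false for the transverse spacelike component $\dot b$: projective convergence $\dot\gamma\to[\vec\theta_+]$ gives only $\dot b/\dot c\to 0$, and $\dot c\to\infty$, so $\dot b$ need not vanish. What saves you is the algebraic identity $|\dot\gamma|^2=-\dot a\dot c+\dot b^2=1$ together with $\dot a\le 0<\dot c$, which gives $|\dot b|\le 1$; then $b$ is Lipschitz on $[0,T)$ and has a limit. (Your ``total variation'' remark is salvageable for the $b$-coordinate once you have $|\dot b|\le 1$, but you still need the monotonicity argument for $a$.)

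All three repairs come from the same device: the explicit decomposition $\gamma=a e_-+b e_0+c e_+$ with $e_\pm$ null, $e_0$ spacelike, and the resulting relation $-\dot a\dot c+\dot b^2=1$. This is the missing ingredient in your plan.
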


\begin{proof} It is harmless to assume that $\gamma$ is parameterized by unit speed. For $t \in [0, T)$ and $\theta \in \mathbb S^1$, let
\[
\Phi_\theta(t) = \langle \gamma(t), \vec{\theta}\rangle
\]
as in equation \eqref{eqn: phitheta}. By Lemma \ref{lem: phi concave}, $\Phi_\theta(t)$ is concave in $t$.
 

Since $\dot{\gamma}(t)$ is a spacelike vector, at every time $t$ the set (defined similarly to \eqref{eqn: It} in the proof of Lemma \ref{lem: d ext increasing})
\begin{equation} 
\begin{split}
I_t := \{\theta \in \mathbb S^1 | \dot\Phi_\theta(t) \geq 0 \}= \{\theta \in \mathbb S^1 | \langle \dot\gamma(t), \vec{\theta}\rangle \geq 0 \}
\end{split}
\end{equation}
 is a nonempty closed interval. The concavity in $t$ of $\Phi_\theta$ says that $\dot{\Phi}_\theta$ is non-increasing in time, hence for $t' > t$ we have $I_{t'} \subseteq I_t$. Since the intervals $I_t$ are closed, their intersection is nonempty. We claim that the intersection is a single point. Indeed, if it contained two different points $\theta_1$ and $\theta_2$, then $\vec{\theta_1} + \vec{\theta_2}$ is a timelike vector whose inner product with $\dot{\gamma}$ is non-negative and decreasing with $t$, hence is bounded for all $t$. Then the unit vector $\dot{\gamma}$ must lie between two spacelike planes perpendicular to $\vec{\theta_1} + \vec{\theta_2}$ --- in particular, in a compact subset of $\R^{2,1}$ --- so the finite-length curve $\gamma$ must stay inside a compact subset of $\R^{2,1}$, contradicting properness. We conclude that the intersection of the intervals $I_t$ over all $t$ must be a single point. Let ${\theta}_+$ be this point. Since $\dot\gamma^\perp\cap (\mathbb S^1\times\{1\})$ consists of two vectors that converge to $\vec\theta_+$ as $t\to T$, it follows that $\dot{\gamma}$ converges projectively to $\vec{\theta}_+$, which proves the first statement.

For the second statement, we first observe that since $\Phi_{\theta_+}(t)$ is non-decreasing for all $t \in [0,T)$ and concave, it has a finite limit as $t \to T$. This already implies convergence to some null plane in the direction of ${\theta_+}$. To show that $\gamma$ converges to some line in that plane, choose another point ${\theta}_-$ which lies outside all of the intervals $I_t$. Next, let $e_-$ be the positive multiple of $\vec{\theta}_-$ with $\langle e_-, \vec{\theta}_+ \rangle = -1$; let $e_+= \frac{1}{2}\vec{\theta}_+$; and third let $e_0$ be a (spacelike) unit vector perpendicular to both $e_+$ and $e_-$. Using the basis $\{e_-, {e}_0, e_+\}$, we write $\gamma$ as
\begin{equation}\label{eq:dec basis}
\gamma(t) = a(t) e_- + b(t) e_0 + c(t) e_+~.
\end{equation}
By our choice of basis, we have $a(t) = - \Phi_{\theta_+}(t)$ and $c(t)$ is a negative multiple of $\Phi_{\theta_-}$, hence $\dot{a}(t) \leq 0 < \dot{c}(t)$ for all $t$. By our hypotheses on the basis $\{e_-, {e}_0, e_+\}$,
\[
|\dot{\gamma}(t)|^2 = -\dot{a}(t)\dot{c}(t) + \dot{b}(t)^2~,
\]
which is equal to one since we have taken $\gamma$ to be unit speed.  Hence $\dot{b}(t)^2 \leq 1$ for all $t$, so the coordinate $b(t)$ has a finite limit as $t \to T$. Therefore, $\gamma$ converges to a null line parallel to $\vec{\theta}_+$. 

To prove the third statement, we just need to show that the unique null plane containing the line $L$ is a support plane for $\Sigma$, since we have shown that $\gamma$ lies in the future of this plane and is asymptotic to it. For each time $t$, the spacelike surface $\Sigma$ lies in the future of the past null cone $J^-(\gamma(t))$. Since $\gamma$ is asymptotic to $L$ and $c(t)$ tends to infinity, the null cones $J^-(\gamma(t))$ converge in compact subsets to $J^-(L)$, which is a half-space whose boundary is the null plane containing $L$. Hence $\Sigma$ lies in the future of this null plane. But it cannot lie in the future of any parallel null plane in the future of $L$ since $\gamma$ is asymptotic to $L$. Therefore, the null plane containing $L$ is the null support plane for $\Sigma$ in the direction of $\theta_+$. 

Finally, to prove the fourth statement, note that since $\gamma$ is proper and the first two coordinates are bounded, the third coordinate, $c(t)$ must diverge, and since it is increasing, it diverges to positive infinity. Since $c(t)$ is a negative multiple of $\Phi_{\theta_-} = \langle \gamma(t), \vec{\theta}_-\rangle$, this is equivalent to
\begin{equation}\label{eqref:coordinatediverge}
\lim_{t\to T}\langle \gamma(t),{\vec\theta_-}\rangle=-\infty~.
\end{equation}
Now observe that the only assumption we made on $\theta_-$ was that it is contained in the complement of all the intervals $I_t$. But since the intersection of the intervals $I_t$ equals $\{\theta_+\}$, every $\theta\neq \theta_+$ satisfies this assumption up to restricting $\gamma$ to $(T-\epsilon,T)$. Hence \eqref{eqref:coordinatediverge} holds for any choice of $\theta_- \neq \theta_+$.

\end{proof}

In the same setting as Proposition \ref{prop: exists theta}, we now show that the timelike distance from $\gamma$ to any spacelike line contained in its asymptotic null plane converges to zero. This will be important in the proof of Theorem \ref{thm: Lipschitz completeness intro}, in the next section.

\begin{lemma}\label{lem: timelike distance} Let $\Sigma$ be a convex entire spacelike surface in $\R^{2,1}$, and suppose $\gamma: [-T,0) \to \Sigma$ is a proper finite length geodesic in $\Sigma$ asymptotic to a null support plane $P$ of $\Sigma$ in the sense of Definition \ref{defi: asymptotic to}. Let $L$ be any spacelike line in $P$. Then  as $t \to 0$, the timelike distance from $L$ to $\gamma(t)$ tends to zero. 
\end{lemma}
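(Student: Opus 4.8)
The plan is to work in a coordinate system adapted to the null support plane $P$ and the spacelike line $L$, and to exploit the detailed conclusions of Proposition \ref{prop: exists theta}. Let $\theta_+ \in \mathbb S^1$ be the direction supplied by Proposition \ref{prop: exists theta}, so that $P$ is the null support plane in the direction $\theta_+$ and $\gamma$ is asymptotic to a null line $L_0 = \{x_0 + r\vec\theta_+ : r \in \R\} \subset P$. Since any two spacelike lines in $P$ are parallel translates of each other inside $P$, and since timelike distance to a line is translation-invariant, it suffices to treat a single convenient choice of $L$; moreover, after applying an isometry of $\R^{2,1}$ we may assume $\theta_+$ corresponds to the vector $\vec\theta_+ = (1,0,1)$ and $P$ is the plane $\{x + z = \phi(\theta_+)\}$, and further that $L$ passes through a point of $L_0$. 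I would then reuse the basis $\{e_-, e_0, e_+\}$ from the proof of Proposition \ref{prop: exists theta}: here $e_+ = \tfrac12 \vec\theta_+$ is null, $e_-$ is the null vector with $\langle e_-, e_+\rangle = -1$ (appropriately normalized), and $e_0$ is a spacelike unit vector orthogonal to both. In these coordinates the plane $P$ is $\{a = -\phi(\theta_+)\}$ after identifying the $e_-$-coordinate with $-\Phi_{\theta_+}$; the line $L_0$ is the set of points with $a = -\phi(\theta_+)$ and $b = b_\infty := \lim_{t\to 0} b(t)$; and $L$ can be taken to be $\{a = -\phi(\theta_+), b = b_\infty\}$ as well, i.e. $L = L_0$ (the general case differs only by fixing a different value of $b$, which changes nothing in the estimate below).

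Next I would write down the timelike distance from a point to a spacelike line in these coordinates. Decompose $\gamma(t) = a(t) e_- + b(t) e_0 + c(t) e_+$ as in \eqref{eq:dec basis}. By Proposition \ref{prop: exists theta}, $a(t) \to -\phi(\theta_+)$ (a finite limit, achieved from below since $\Phi_{\theta_+}$ is non-decreasing), $b(t) \to b_\infty$, and $c(t) \to +\infty$. The timelike distance from $\gamma(t)$ to $L$ is computed by minimizing $\sqrt{-\langle \gamma(t) - q, \gamma(t) - q\rangle}$ over $q \in L$; parametrizing $q = (-\phi(\theta_+)) e_- + b_\infty e_0 + s\, e_+$ and using $\langle e_-, e_-\rangle = \langle e_+, e_+\rangle = 0$, $\langle e_-, e_+\rangle = -1$, $\langle e_0, e_0\rangle = 1$, one gets
\[
-\langle \gamma(t) - q, \gamma(t) - q\rangle = 2(a(t) + \phi(\theta_+))(c(t) - s) - (b(t) - b_\infty)^2.
\]
Here $c(t) - s$ is unbounded as $s \to -\infty$ and $a(t) + \phi(\theta_+) \geq 0$, so the supremum of this quantity over $s$ is $+\infty$ unless $a(t) + \phi(\theta_+) = 0$; hence, strictly speaking, $L$ lies partly in the causal future and partly in the causal past of $\gamma(t)$ and one must be slightly careful about what ``timelike distance'' means. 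The right interpretation (and the one used implicitly in the paper) is the infimum of $\sqrt{-\langle\gamma(t)-q,\gamma(t)-q\rangle}$ over those $q \in L$ that are timelike-separated from $\gamma(t)$, i.e. the smallest Lorentzian distance; equivalently, one drops a timelike perpendicular. Minimizing $-\langle\gamma(t)-q,\gamma(t)-q\rangle$ over the segment of $L$ where it is nonnegative, the infimum over the relevant range is attained in the limit and equals the value forced when the $e_+$-freedom is used up against the constraint — the upshot is that the timelike distance squared is bounded above by a quantity of the form $2(a(t)+\phi(\theta_+))\cdot(\text{something bounded}) + (b(t)-b_\infty)^2$, or more precisely one shows directly that the timelike perpendicular foot exists and the squared distance equals $\dfrac{(b(t)-b_\infty)^2 \cdot (\ldots)}{\ldots}$ — in any case a product/sum of the two quantities $a(t)+\phi(\theta_+) \to 0$ and $(b(t)-b_\infty)^2 \to 0$.

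The cleanest route, which I would actually carry out, avoids the coordinate case analysis: since $a(t) + \phi(\theta_+) \to 0$ and $b(t) \to b_\infty$, the point $\gamma(t)$ converges, in the quotient $\R^{2,1}/\R e_+ = \R^{2,1}/\R\vec\theta_+$, to the image of $L$; that is exactly the statement that $\gamma$ is asymptotic to $L$ in the sense of Definition \ref{defi: asymptotic to} (which is part of Proposition \ref{prop: exists theta}(2), noting all spacelike and null lines in $P$ have the same image in this quotient). Now $\gamma(t)$ lies in the (closed) causal future of $P$, hence in the causal future of the half-plane of $P$ bounded by $L$ on the appropriate side; combined with the fact that its projection to $\R^{2,1}/\R\vec\theta_+$ tends to $[L]$, one finds a point $q(t) \in L$ with $\gamma(t) - q(t)$ future causal and $|\langle \gamma(t) - q(t), \gamma(t)-q(t)\rangle|$ bounded by a constant times $\big((a(t)+\phi(\theta_+)) + (b(t)-b_\infty)^2\big)$, which tends to $0$. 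Taking square roots gives that the timelike distance from $L$ to $\gamma(t)$ tends to $0$.

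\textbf{Main obstacle.} The genuinely delicate point is not the convergence itself but pinning down the correct definition of ``timelike distance from $L$ to $\gamma(t)$'' and verifying that $\gamma(t)$ is timelike-separated from $L$ at all (so that the quantity is finite and the statement is non-vacuous): one must use that $\gamma(t)$ is strictly in the future of the null plane $P$ — i.e. $a(t) + \phi(\theta_+) > 0$ for $t$ near $0$, which holds because $\gamma(t) \notin P$ (the geodesic is spacelike and $P$ is its asymptotic support plane, not a plane it meets in the tail) — and that $\gamma(t)$ projects close to $L$, so that the timelike perpendicular from $\gamma(t)$ to the affine line $L$ has its foot on the line and is future-directed. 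Once that is set up, the estimate is the short computation sketched above. I would also remark that the same argument shows the timelike distance is achieved and depends continuously on $t$, though only the limit is needed here.
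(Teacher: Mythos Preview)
Your argument has a genuine gap: you have conflated the \emph{spacelike} line $L$ in the lemma with the \emph{null} line $L_0$ from Proposition~\ref{prop: exists theta}. When you write ``$L$ can be taken to be $\{a=-\phi(\theta_+),\,b=b_\infty\}$, i.e.\ $L=L_0$'', you are parametrizing $L$ in the direction $e_+=\tfrac12\vec\theta_+$, which is null. A spacelike line in $P$ must instead point in a direction of the form $e_0+\lambda e_+$; in the paper's setup one simply takes $e_0$ tangent to $L$ and (after a translation) $L$ through the origin. With that choice, a point $q\in L$ is $q=s\,e_0$, and the (Lorentzian) distance from $\gamma(t)=a(t)e_-+b(t)e_0+c(t)e_+$ to $L$ is $\sup_s\sqrt{-\langle\gamma(t)-s e_0,\gamma(t)-s e_0\rangle}=\sqrt{a(t)c(t)}$ (in the paper's normalization $\langle e_+,e_-\rangle=-\tfrac12$). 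The whole content of the lemma is that this product $a(t)c(t)$ tends to zero even though $c(t)\to+\infty$; convergence in the quotient $\R^{2,1}/\R\vec\theta_+$ gives you only $a(t)\to 0$ and $b(t)\to b_\infty$ and says nothing about $c(t)$, so your ``cleanest route'' recovers exactly Proposition~\ref{prop: exists theta}(2) and no more.

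The missing ingredient is a quantitative use of the finite-length hypothesis. The paper parametrizes $\gamma$ by unit speed on $[-T,0)$, so $1=-\dot a\dot c+\dot b^{\,2}$ and hence $\dot a\dot c\ge -1$. Convexity (Lemma~\ref{lem: phi concave}) makes $a$ convex with $a(0^-)=0$, giving $\dot a(t)\le -a(t)/|t|$; properness forces $c(t)\to+\infty$, so eventually $c,\dot c>0$. One then checks directly that
\[
\frac{d}{dt}\Bigl(|t|+\frac{a(t)c(t)}{|t|}\Bigr)\;\le\;\Bigl(\dot a+\frac{a}{|t|}\Bigr)\Bigl(\dot c+\frac{c}{|t|}\Bigr)\;\le\;0,
\]
so $|t|+a c/|t|$ stays bounded as $t\to 0^-$, and therefore $a(t)c(t)\to 0$. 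Your proposal never invokes the unit-speed relation or the convexity inequality for $a$, and without them the conclusion is false in general (one can write down curves in the future of $P$ with $a\to 0$, $b\to b_\infty$, $c\to\infty$ and $ac$ bounded away from zero).
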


\begin{proof}

Choose a basis $\{e_-, e_0, e_+\}$ of $\R^{2,1}$ such that $e_+$ is a future null vector tangent to $P$, $e_0$ is tangent to $L$, and $e_-$ is a future null vector with
\begin{align*}
\langle e_+, e_-\rangle&= -1/2 \\
\langle e_0, e_0\rangle&=1 \\
\langle e_\pm, e_0\rangle &=0.
\end{align*}
Suppose furthermore that $L$ goes through the origin, up to applying a translation. Write
\begin{equation} \label{eqn: nullnull}
\begin{split}
\gamma(t) = a(t) e_- + b(t) e_0 + c(t) e_+.
\end{split}
\end{equation}

Observe that $a(t)=-2\langle \gamma(t),e_+\rangle=-2\Phi_{e_+}(t)$ and similarly $a(t)=-2\Phi_{e_-}(t)$, hence by Lemma \ref{lem: phi concave} $a(t)$ and $c(t)$ are convex functions.
As in the proof of Proposition \ref{prop: exists theta},
the function $a(t)$ is positive, {nonincreasing} and (by the assumption that $L$ contains the origin) converges to 0 as $t \to 0$. 
Since the graph of $a$ on the interval $[t,0]$ lies below the linear function interpolating its values at the endpoints, we have that 
\begin{equation} \label{eqn: adot}
\begin{split}
\dot{a}(t) \leq - a(t)/|t|
\end{split}
\end{equation}
for all $t < 0$.

Since $a(t)$ is bounded as $t \to 0$ and $b(t)$ is bounded by Proposition \ref{prop: exists theta}, in order for $\gamma$ to be proper we must have $\lim_{t \to 0} c(t) =+ \infty$. Assume from here on that $t$ is close enough to zero such that 
\begin{equation} \label{eqn: cdot}
\begin{split}
c(t) > 0 \textrm{ and } \dot{c}(t) > 0. 
\end{split}
\end{equation}
Then $\gamma(t)$ is in the future of $L$, and the timelike distance to $L$ is $\sqrt{a(t)c(t)}$. 

Assuming without loss of generality that $\gamma$ is unit speed,
\[
1=|\dot{\gamma}(t)|^2 = - \dot{a}(t)\dot{c}(t) + \dot{b}(t)^2~,
\]
and in particular $-1 \leq \dot{a}\dot{c}$. 
Therefore
\begin{align*}
\frac{d}{dt}\left(|t| + \frac{a(t)c(t)}{|t|} \right) &= -1 + \frac{\dot{a}c}{|t|} + \frac{a\dot{c}}{|t|} + \frac{ac}{t^2} \\
&\leq \dot{a}\dot{c} + \frac{\dot{a}c}{|t|} + \frac{a\dot{c}}{|t|} +  \frac{ac}{t^2}\\ 
&= \left(\dot{a} + \frac{a}{|t|}\right)\left(\dot{c} + \frac{c}{|t|}\right) \\
& \leq 0
\end{align*}
where the last inequality follows from \eqref{eqn: adot} and \eqref{eqn: cdot}. Hence $|t| + ac/|t|$ is bounded as $t \to 0$, and so the timelike distance $\sqrt{a(t)c(t)}$ tends to zero. 
\end{proof}

{\begin{remark}
Although we will not use a more quantitative version of Lemma \ref{lem: timelike distance}, the proof shows that if the proper geodesic $\gamma:[-T,0)\to\Sigma$ is parameterized by unit speed, then the timelike distance from $L$ to $\gamma(t)$ is $O(\sqrt{|t|})$.
\end{remark}}

\subsection{Directional completeness}
In light of Proposition \ref{prop: exists theta}, it is natural to give the following definition (recall also Definition \ref{defi:support plane in direction of}).

\begin{defi} \label{defi:incomplete at theta} A convex entire  spacelike surface $\Sigma$ in $\R^{2,1}$ is \emph{incomplete at $\theta \in \mathbb S^1$} if there is a proper finite-length geodesic on $\Sigma$ asymptotic to a null support plane in the direction of $\theta$.  Otherwise, we say that $\Sigma$ is \emph{complete at} $\theta$.
\end{defi}

\begin{remark}\label{rmk:infinite support complete} According to our definition, if $\Sigma$ admits no null support plane in the direction of $\theta$, then $\Sigma$ is complete at $\theta$. 
\end{remark}

The following is an immediate consequence of Proposition \ref{prop: exists theta} and the Hopf-Rinow theorem.

\begin{corx} \label{thm: incomplete at theta} Let $\Sigma$ be a convex entire spacelike surface in $\R^{2,1}$. If $\Sigma$ is complete at every $\theta\in \mathbb S^1$, then $\Sigma$ is complete.
\end{corx}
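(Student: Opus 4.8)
The plan is to prove the contrapositive via the Hopf--Rinow theorem: assuming $\Sigma$ is not complete, I will exhibit a direction $\theta_+\in\mathbb S^1$ at which $\Sigma$ is incomplete. Since $\Sigma$ is a $C^2$ spacelike surface, its first fundamental form is a genuine Riemannian metric to which Hopf--Rinow applies, so failure of metric completeness is equivalent to failure of geodesic completeness. Hence there is a unit-speed geodesic $\gamma\colon[0,T)\to\Sigma$, with $T<\infty$, that cannot be extended past $T$; in particular $\gamma$ has finite length $T$.

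The one step with genuine content is to check that $\gamma$ is a \emph{proper} map (so that Proposition \ref{prop: exists theta} applies). Suppose it were not: then there would be a compact $K\subseteq\Sigma$ and a sequence $t_n\to T$ with $\gamma(t_n)\in K$. Since the sub-arc of $\gamma$ from $\gamma(s)$ to $\gamma(t)$ has length $|s-t|$, we get $d_\Sigma(\gamma(s),\gamma(t))\le|s-t|$, so $\{\gamma(t)\}$ is Cauchy as $t\to T$; having a subsequential limit in the compact set $K$, it converges to some $p\in K$, i.e. $\gamma(t)\to p$. Local existence and uniqueness of geodesics near $p$ then extends $\gamma$ to a geodesic on $[0,T+\epsilon)$, contradicting inextendibility. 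This is exactly the fact --- already invoked in the proof of Lemma \ref{lem: exp proper} --- that a finite-length geodesic remaining in a compact set is always extendable. Therefore $\gamma$ is proper.

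Finally, $\gamma$ is a proper geodesic of finite length, so Proposition \ref{prop: exists theta} yields a unique $\theta_+\in\mathbb S^1$ and a unique null line $L=\{x+r\vec{\theta}_+ : r\in\R\}$ with $\gamma$ asymptotic to $L$ in the sense of Definition \ref{defi: asymptotic to}, and by part (3) the null plane $P\supseteq L$ is the null support plane of $\Sigma$ in the direction $\theta_+$. Since $TL\subseteq TP$, convergence in $\R^{2,1}/TL$ forces convergence in $\R^{2,1}/TP$, so $\gamma$ is also asymptotic to $P$. Thus $\gamma$ is a proper finite-length geodesic asymptotic to a null support plane in the direction of $\theta_+$, which by Definition \ref{defi:incomplete at theta} is precisely the assertion that $\Sigma$ is incomplete at $\theta_+$ --- contradicting the hypothesis. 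The main (and essentially only) obstacle is the properness claim, and even that is a routine metric-geometry argument already used elsewhere in the paper.
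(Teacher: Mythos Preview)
Your proof is correct and follows essentially the same approach as the paper: contrapositive via Hopf--Rinow, then invoke Proposition~\ref{prop: exists theta}. The paper's proof is terser, simply asserting that Hopf--Rinow produces a \emph{proper} finite-length geodesic, whereas you spell out the properness argument explicitly (and also make the ``asymptotic to $L$ $\Rightarrow$ asymptotic to $P$'' step explicit); these are useful details but not a different route.
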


\begin{proof} The Hopf-Rinow theorem says that if $\Sigma$ is incomplete, it has a proper finite length geodesic. By statements (2) and (3) of Proposition \ref{prop: exists theta}, this geodesic must be asymptotic to some null support plane of $\Sigma$.  Hence $\Sigma$ is incomplete at some direction $\theta$.
\end{proof}


In the following lemma, we show that one could alternatively define completeness at $\theta$ in terms of Cauchy sequences. As a corollary, it is not necessary to require that the proper finite-length curve is a geodesic in Definition \ref{defi:incomplete at theta}. This will be useful below when we want to show (see the proof of Proposition \ref{lem:complete comparison} in Section \ref{sec:comparison}) that a surface is incomplete using a short map from another incomplete surface.

\begin{lemma}\label{lemma:incompleteness by cauchy}
Let $\Sigma$ be a convex entire spacelike surface in $\R^{2,1}$ and let $\theta\in \mathbb S^1$. Suppose that there exists a diverging Cauchy sequence on $\Sigma$ asymptotic to a null support plane in the direction of $\theta$. Then $\Sigma$ is incomplete at $\theta$.
\end{lemma}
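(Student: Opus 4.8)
The plan is to upgrade the given diverging Cauchy sequence to a genuine \emph{proper finite-length geodesic} asymptotic to the same null direction, and then invoke Proposition~\ref{prop: exists theta}. First I would unwind the hypothesis: if $(p_n)$ denotes the Cauchy sequence, then being asymptotic to a null support plane $P$ in the direction $\theta$ (Definition~\ref{defi: asymptotic to}) simply means $\langle p_n,\vec\theta\rangle\to\phi(\theta)$, and the existence of such a $P$ forces $\phi(\theta)<+\infty$. Next, fix a basepoint $p_0\in\Sigma$ and let $\sigma_n\colon[0,\ell_n]\to\Sigma$ be the unique unit-speed geodesic from $p_0$ to $p_n$ (geodesic convexity, Proposition~\ref{prop:geoconv}), with initial velocity $u_n\in T_{p_0}\Sigma$, so that $\exp_{p_0}^{-1}(p_n)=\ell_n u_n$. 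Recalling from the proof of Proposition~\ref{prop:geoconv} that $\exp_{p_0}$ is a diffeomorphism from its star-shaped domain $U\subseteq T_{p_0}\Sigma$ onto $\Sigma$, and that $\Sigma$ is non-positively curved, the Gauss lemma shows that $\sigma_n$ is length-minimizing, so $\ell_n=d(p_0,p_n)$, which is bounded since $(p_n)$ is Cauchy. Passing to a subsequence, $u_n\to v$ (a unit vector) and $\ell_n\to\ell_\infty\in(0,\infty)$.

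Then I would let $\sigma\colon[0,t_{\mathrm{max}})\to\Sigma$ be the maximal unit-speed geodesic with $\sigma(0)=p_0$ and $\dot\sigma(0)=v$, and check it is the geodesic sought. For finite length: since $(p_n)$ escapes every compact set while $\exp_{p_0}$ is proper (Lemma~\ref{lem: exp proper}), the bounded sequence $w_n=\ell_n u_n$ must leave every compact subset of $U$, so its limit $\ell_\infty v$ cannot lie in the open set $U$ (a closed ball about it would be a compact subset of $U$ containing a tail of $(w_n)$); since $U$ is star-shaped about the origin this gives $t_{\mathrm{max}}\le\ell_\infty<\infty$. For properness: a maximal geodesic of finite length that returned to a fixed compact set infinitely often would be extendable (the standard fact already used in the proofs of Lemma~\ref{lem: exp proper} and Corollary~\ref{thm: incomplete at theta}), contradicting maximality. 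Now Proposition~\ref{prop: exists theta} applies to $\sigma$ and produces a unique direction $\theta_+$ with $\langle\sigma(t),\vec\theta_+\rangle\to\phi(\theta_+)$ and $\langle\sigma(t),\vec\theta\rangle\to-\infty$ for every $\theta\neq\theta_+$.

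It then remains to identify $\theta_+$ with $\theta$, which I would do via concavity along the $\sigma_n$. For any fixed $t<t_{\mathrm{max}}\le\ell_\infty$ and all large $n$ one has $\ell_n>t$, hence $tu_n\in U$ by star-shapedness and $\sigma_n(t)=\exp_{p_0}(tu_n)\to\exp_{p_0}(tv)=\sigma(t)$ by continuity of $\exp_{p_0}$. Since $s\mapsto\langle\sigma_n(s),\vec\theta\rangle$ is concave on $[0,\ell_n]$ by Lemma~\ref{lem: phi concave}, it lies above the chord through its endpoint values, so $\langle\sigma_n(t),\vec\theta\rangle\geq\min\{\langle p_0,\vec\theta\rangle,\langle p_n,\vec\theta\rangle\}$; letting $n\to\infty$ gives $\langle\sigma(t),\vec\theta\rangle\geq\min\{\langle p_0,\vec\theta\rangle,\phi(\theta)\}>-\infty$ for all $t<t_{\mathrm{max}}$. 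Thus $\langle\sigma(t),\vec\theta\rangle$ does not tend to $-\infty$, which by item~(4) of Proposition~\ref{prop: exists theta} forces $\theta_+=\theta$. Hence $\sigma$ is a proper finite-length geodesic asymptotic to the null support plane of $\Sigma$ in the direction $\theta$, i.e.\ $\Sigma$ is incomplete at $\theta$ in the sense of Definition~\ref{defi:incomplete at theta}.

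The step I expect to be the main obstacle is the construction and control of the limiting geodesic $\sigma$: one must ensure that the ``escape to infinity'' of the Cauchy sequence is faithfully inherited, i.e.\ that $\sigma$ is simultaneously of finite length and proper. This is where properness of $\exp_{p_0}$, the diffeomorphism property, and non-extendability of finite-length geodesics leaving compact sets all enter; the auxiliary minimality statement used to keep the lengths $\ell_n$ bounded is a minor point but should not be omitted.
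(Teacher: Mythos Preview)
Your proposal is correct and follows essentially the same route as the paper's proof: both fix a basepoint, take the unique geodesics to the $p_n$, extract a limiting direction, use properness of $\exp_{p_0}$ (Lemma~\ref{lem: exp proper}) to show the limit geodesic has finite maximal time, and then use concavity of $\langle\cdot,\vec\theta\rangle$ along geodesics (Lemma~\ref{lem: phi concave}) together with item~(4) of Proposition~\ref{prop: exists theta} to identify $\theta_+=\theta$. One small point where you are in fact more careful than the paper: you explicitly justify that the lengths $\ell_n$ are bounded (via minimality of radial geodesics in non-positive curvature), which is needed to ensure $\ell_n u_n$ has a limit point at all; the paper leaves this implicit.
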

\begin{proof}
Suppose that $p_n$ is a diverging Cauchy sequence asymptotic to a null support plane $P$. We need to show that there exists a proper finite-length geodesic asymptotic to $P$. Let $\Phi_{{\theta}}(p) = \langle\vec{\theta}, p\rangle$. By the last statement of Proposition \ref{prop: exists theta}, it is sufficient to find a proper finite-length geodesic on which $\Phi_{{\theta}}$ is bounded below. 

 Fix a point $p_0$ in $\Sigma$ and let $\gamma_n: [0,t_n] \to \Sigma$ be the sequence of unit speed geodesic segments from $p_0$ to $p_n$. Let $T_n$ be their tangent vectors at $p_0$, and let $T_0$ be a subsequential limit of the $T_n$. Let $\gamma: [0, t_0) \to \Sigma$ be the maximal geodesic segment starting at $p_0$ with tangent vector $T_0$. The main point is to prove that $t_0 \leq \liminf_{n \to \infty} t_n$.
 
Let $\Omega \subset T_{p_0} \Sigma$ be the domain of the exponential map at $p_0$. Since $\exp_{p_0}: \Omega \to \Sigma$ is a diffeomorphism by Proposition \ref{prop:geoconv} and $p_n = \exp(t_n T_n)$ is proper in $\Sigma$, the sequence $t_n T_n$ is proper in $\Omega$, so any limit point will lie outside of $\Omega$. Hence the maximal existence time of the geodesic in the limiting direction is less than or equal to the norm of this limit point.

It follows that $\gamma$ is finite length, and proper. To show that $\Phi_{{\theta}}$ is bounded on $\gamma$, first note that since $p_n$ converges to $P$, there is a constant $C$ such that $\Phi_{{\theta}}(p_n) \geq -C$ for all $n$. Next, the concavity of $\Phi_{{\theta}} \circ \gamma_n$ (Lemma \ref{lem: phi concave}) implies that $\Phi_{{\theta}} \circ \exp_{p_0}$ is bounded below by $\min(\Phi_{{\theta}}(p_0), -C)$ on each segment $[0, t_nT_n]$.  Since $\Phi_{{\theta}} \circ \exp_{p_0}$ is continuous on $\Omega$, it has the same bound on the segment $[0, t_0T_0]$. This completes the proof.
\end{proof}

\section{Completeness I: sequentially sublinear condition}\label{sec:Completeness I}

In this section we will prove the first completeness criterion, namely Theorem \ref{thm: Lipschitz completeness intro}. 
We  state it here in a local form, namely in terms of completeness in a given direction $\theta_0$. The statement given in the introduction then follows immediately by Corollary \ref{thm: incomplete at theta}.

\begin{reptheorem}{thm: Lipschitz completeness intro}[Sequentially sublinear condition -- local version] Let $\phi: \mathbb S^1 \to \R \cup \{+\infty\}$ be lower semicontinuous and finite on at least three points. Suppose $\theta_0 \in \mathbb S^1$ is such that $\phi(\theta_0)<+\infty$ and there exists $M > 0$ and a sequence $\theta_i \to \theta_0$ such that
\begin{equation} \label{eqn: M2}
\phi(\theta_i) < \phi(\theta_0) + M |\theta_i - \theta_0|~. \tag{Comp}
\end{equation}
If $\Sigma$ is a convex entire spacelike surface in $\R^{2,1}$ with curvature bounded below and null support function $\phi$, then $\Sigma$ is complete at $\theta_0$.

In particular, if condition \eqref{eqn: M2} holds for every $\theta_0$ at which $\phi$ is finite, then $\Sigma$ is complete.
\end{reptheorem}

The two fundamental properties that we will use in the proof are Lemma \ref{lem: timelike distance} and Proposition \ref{prop: L tilde}, which is proved in Section \ref{subsec:quant est} and in turn relies on Lemma \ref{lem:strip} below. The proof of Theorem \ref{thm: Lipschitz completeness intro} is then completed in Section \ref{subsec:proofA}.

\subsection{An application of the comparison principle}

Let us first quickly recall the (finite) comparison principle.

\begin{prop}\label{prop:finite comparison}
Let $\Omega\subset\R^2$ be an open domain with compact closure, and let $\Sigma_\pm=\mathrm{graph}(f_\pm)$ be spacelike surfaces in $\R^{2,1}$, for $f_\pm\in C^2(\Omega)\cap C^0(\overline\Omega)$. Suppose that the curvature functions of $\Sigma_+$ and $\Sigma_-$ satisfy:
$$K_{\Sigma_-}\leq -C\leq K_{\Sigma_+}\leq 0$$
for some constant $C>0$. If $f_-<f_+$ on $\partial\Omega$, then $f_-<f_+$ on $\Omega$.
\end{prop}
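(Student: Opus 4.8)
The plan is to translate the statement into a comparison principle for a Monge--Amp\`ere type equation and then close it with the strong maximum principle. Writing $\Sigma_\pm$ as the graphs of $f_\pm$ with $|Df_\pm|<1$ on $\Omega$, a direct computation of the first and second fundamental forms of a spacelike graph (which for $f=\sqrt{1+x^2+y^2}$ returns the correct value $K_{\Hyp^2}=-1$) gives
$$K_{\Sigma_\pm}=-\frac{\det D^2 f_\pm}{(1-|Df_\pm|^2)^2}~.$$
Hence the curvature hypotheses read $\det D^2 f_-\ge C\,(1-|Df_-|^2)^2$ and $0\le\det D^2 f_+\le C\,(1-|Df_+|^2)^2$; in particular, by the standing convention of Remark \ref{rmk convention} that our surfaces are future-convex, both $f_\pm$ are convex functions. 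So $f_-$ is a subsolution and $f_+$ a supersolution of $\det D^2 u=C(1-|Du|^2)^2$, and what must be shown is that the subsolution lies below a supersolution dominating it on $\partial\Omega$.

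I would argue by contradiction. If $w:=f_--f_+$ were nonnegative somewhere in $\Omega$, then, being continuous on the compact set $\overline\Omega$ and negative on $\partial\Omega$, it would attain a positive maximum $m>0$ at some interior point $p_0$, at which $Df_-(p_0)=Df_+(p_0)=:\xi$ and $D^2 f_-(p_0)\le D^2 f_+(p_0)$. From $\det D^2 f_-(p_0)\ge C(1-|\xi|^2)^2>0$ and convexity one concludes that $D^2 f_-(p_0)$ is positive definite, hence so is $D^2 f_+(p_0)$. At this stage the \emph{weak} maximum principle is already exhausted without yielding a contradiction: monotonicity of the determinant on positive semidefinite matrices gives $\det D^2 f_-(p_0)\le\det D^2 f_+(p_0)$, while the two curvature inequalities force the reverse, so one only learns $D^2 f_-(p_0)=D^2 f_+(p_0)$, consistent with $w$ having a degenerate interior maximum. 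Overcoming this degeneracy — which reflects the fact that the equation is only degenerate-elliptic — is the main obstacle, and I would resolve it with the strong maximum principle.

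For that, set $G(A,\zeta)=(\det A)^{1/2}-C^{1/2}(1-|\zeta|^2)$ for positive definite symmetric $2\times 2$ matrices $A$ and $|\zeta|<1$; then $K_{\Sigma_-}\le -C$ reads $G(D^2 f_-,Df_-)\ge 0$ and $-C\le K_{\Sigma_+}$ reads $G(D^2 f_+,Df_+)\le 0$. Since $D^2 f_\pm(p_0)$ are positive definite, I would choose a ball $U$ with $\overline U\subset\Omega$ and $p_0\in U$ on which both $D^2 f_\pm$ stay positive definite. Then for $x\in U$, $s\in[0,1]$ and $u_s=sf_-+(1-s)f_+$, the matrix $D^2 u_s(x)$ is positive definite and $|Du_s(x)|<1$, so integrating $\tfrac{d}{ds}G(D^2 u_s,Du_s)$ over $[0,1]$ yields
$$0\ \le\ G(D^2 f_-,Df_-)-G(D^2 f_+,Df_+)\ =\ \sum_{i,j}a_{ij}(x)\,\partial_{ij}w+\sum_k b_k(x)\,\partial_k w\ =:\ Lw~,$$
where $a_{ij}(x)=\int_0^1\partial_{A_{ij}}G(D^2 u_s,Du_s)\,ds$ and $b_k(x)=\int_0^1\partial_{\zeta_k}G(D^2 u_s,Du_s)\,ds$. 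Since $\partial_A(\det A)^{1/2}$ is a positive multiple of the cofactor matrix of the positive definite matrix $A$, each $\partial_A G(D^2 u_s,Du_s)$ is positive definite, hence so is $(a_{ij}(x))$; thus $L$ is a linear, uniformly elliptic operator on $\overline U$ with continuous coefficients and no zeroth-order term, and $Lw\ge 0$ there. As $w$ has an interior maximum at $p_0\in U$, Hopf's strong maximum principle forces $w\equiv m$ on $U$.

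To finish, the set $\{x\in\Omega:\,w(x)=m\}$ is nonempty and closed in $\Omega$, and open as well: every one of its points is again an interior maximum of $w$, so the gradient/Hessian discussion and the linearization of the previous paragraph apply there verbatim. Since $\Omega$ is connected, $w\equiv m>0$ on all of $\Omega$; but then, $w$ being continuous on $\overline\Omega$, also $w\equiv m>0$ on $\partial\Omega$, contradicting $w<0$ there. Therefore $f_-<f_+$ on $\Omega$.
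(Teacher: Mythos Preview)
Your proof is correct. The only tiny imprecision is the phrase ``positive maximum $m>0$'': the contradiction hypothesis is $w\ge 0$ somewhere, so a priori $m\ge 0$; but your strong maximum principle and connectedness argument work verbatim for $m=0$ as well, so this is harmless.

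Your route genuinely differs from the paper's. The paper handles the degenerate case $K_{\Sigma_+}\ge -C$ by perturbation: it first treats the strict case $K_{\Sigma_+}>-C$ with a one-line pointwise argument at an interior minimum of $f_+-f_-$ (tangency plus Hessian ordering forces $|K_{\Sigma_+}|\ge |K_{\Sigma_-}|$, a contradiction), and then dilates $\Sigma_+$ by a factor $1+\epsilon$ to make the curvature strictly greater than $-C$, applies the strict case, and lets $\epsilon\to 0$. You instead confront the degeneracy head-on by taking the square root of the Monge--Amp\`ere operator, linearizing along the segment between $f_-$ and $f_+$ to get a uniformly elliptic linear operator $L$ with $Lw\ge 0$ near the maximum, and invoking Hopf's strong maximum principle together with a connectedness argument. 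The paper's approach is shorter and more geometric; yours is the standard fully nonlinear PDE argument and has the advantage of not requiring any perturbation of the data or domain.
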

\begin{proof}
Let us first assume that the curvature of $\Sigma_+$ is strictly larger than $-C$. Assume that $f_+-f_-$ takes nonpositive values on $\Omega$, and let $x_{\min}$ be its minimum point, which is in $\Omega$ since $f_+-f_-$ is positive on $\partial\Omega$. Translating $\Sigma_+$ vertically by $f_+(x_{\min})-f_-(x_{\min})$, we obtain a new surface $\Sigma_+'$ which is tangent to $\Sigma_-$ over $x_{\min}$ and contained in the future of $\Sigma_-$. But this contradicts the condition that the curvature of $\Sigma_-$ is strictly larger in absolute value than that of $\Sigma_+$. 

In the general case, namely when we only assume $K_{\Sigma_+}\geq-C$, take $\delta>0$ so that $f_-+\delta\leq f_+$ on $\partial\Omega$.
Then applying a homothety of a factor  $1+\epsilon$ to $\Sigma_+$, the previous case applies. Take the limit as $\epsilon\to 0$, we conclude that $f_-+\delta\leq f_+$ on $\Omega$, so that
$f_-< f_+$ as claimed.
\end{proof}

By applying  the comparison principle, we now prove a fundamental estimate.

\begin{lemma}\label{lem:strip} Let $\Sigma=\mathrm{graph}(f)$ be an entire spacelike surface in $\R^{2,1}$ of curvature $\geq -1$. Suppose that $f(x,y) \geq h>0$ on the boundary of the half-infinite strip $[0,\infty) \times (-h, h)$. Then for $r$ sufficiently large (depending on $h$),  
\begin{equation}\label{eq:inequality z1}
f(r,0) \geq \frac{he^{-2r}}{4}~. 
\end{equation}
\end{lemma}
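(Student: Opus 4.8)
The plan is to build an explicit barrier surface over the finite strip $[0,R]\times(-h,h)$ (for large $R$) whose curvature is bounded below by $-1$ (in absolute value, curvature $\le -1$... actually we need $K \le $ a constant with $|K|$ large enough), and which lies below $\Sigma$ on the boundary of the strip, so that the finite comparison principle (Proposition \ref{prop:finite comparison}) forces $\Sigma$ to lie above the barrier on the interior, and in particular at the point $(r,0)$. So the real content is: find a convenient spacelike surface, defined over a strip of width $2h$, with curvature $\le -C < -1$ for some $C$ (we have latitude here: we may rescale), whose graphing function vanishes (or is small) on the two long edges $y=\pm h$ and on the short edge $\{0\}\times(-h,h)$, but which grows at a controlled exponential rate in the $x$-direction along the centerline $y=0$. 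A surface of curvature $\le -C$ invariant under translation in the $x$-direction cannot have a graph function growing in $x$; instead I expect the right model to be something like a hyperbolic-type surface that is a graph over the strip and is "anchored" on the two lines $y=\pm h$ at height $0$. Concretely, I would look for a barrier of the form $g(x,y) = \varphi(x)\,\eta(y)$ or, more robustly, use the known constant-curvature surface of revolution about a spacelike axis (a Hano--Nomizu surface, or the semitrough of Example \ref{ex:semitrough}), suitably scaled and positioned so that its "trough" sits along the line $\{y=0\}$ at the appropriate depth and its two "walls" rise to meet the planes $y=\pm h$; along the trough such a surface descends (in the past direction) exponentially as one moves along the axis, which is exactly the $he^{-2r}/4$ behaviour we want (the exponent $2$ matching the curvature bound $-1$ after the scaling).

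More precisely, here is how I would organize it. \emph{Step 1:} Reduce to the finite strip. Since $f \ge h$ on $\partial([0,\infty)\times(-h,h))$, in particular $f \ge h$ on $\partial([0,R]\times(-h,h))$ minus the far edge $\{R\}\times(-h,h)$; on that far edge $f$ is just bounded below by $0$ (using $f\ge h>0$... actually we only know $f\ge h$ on the boundary of the half-infinite strip, which does not include $\{R\}\times(-h,h)$; but we may always shrink: if the barrier $g$ satisfies $g\le 0$ on $\{R\}\times(-h,h)$ and $f$ is, say, bounded below on that edge by some constant we can absorb, we are fine — more carefully, one picks $R=r$ and builds a barrier depending on $r$ that is $\le f$ on the whole boundary of $[0,r]\times(-h,h)$; on the far edge we exploit that $\Sigma$, being spacelike with $f(x,y)\ge h$ somewhere, still lies above a sufficiently low barrier there). \emph{Step 2:} Exhibit the barrier. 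Take the semitrough $\Sigma_0$ of Example \ref{ex:semitrough}, which has constant curvature $-1$ and is a graph over a half-plane; position and reflect it so that its edge lies along a line parallel to the $x$-axis at distance $h$, and apply the parabolic/translation isometries $L_t$ to spread it out along the $x$-direction. Reading off its parametrization $X(t,s)=(t-\coth t,\ \sinh s/\sinh t,\ \cosh s/\sinh t)$, one computes the graph function along the relevant central line and checks it decays like $e^{-2t}$, up to constants; scaling to fit width $2h$ produces the factor $h/4$. \emph{Step 3:} Check the curvature inequality: the barrier has curvature $=-1 \le -C$ is not quite right — we need the barrier's curvature $\ge$ that of $\Sigma$, i.e. we need $K_{\text{barrier}} \ge -1 \ge$? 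No: Proposition \ref{prop:finite comparison} wants $K_{\Sigma_-}\le -C \le K_{\Sigma_+}\le 0$ and concludes $f_-<f_+$ from $f_-<f_+$ on the boundary; so to conclude $f \ge g$ we set $\Sigma_+ = \Sigma$ (curvature $\ge -1$, so $K_{\Sigma_+}\ge -1$, need $\le 0$ — true by convexity) and $\Sigma_- = $ barrier with curvature $\le -1$. A constant-curvature$-1$ barrier sits right at the borderline; to get strict comparison I would either use the $\delta$-trick already in the proof of Proposition \ref{prop:finite comparison} or scale the barrier slightly to have curvature $\le -1-\epsilon$ and let $\epsilon\to 0$. \emph{Step 4:} Apply the comparison principle on $[0,r]\times(-h,h)$ and evaluate at $(r,0)$ to get $f(r,0)\ge g(r,0)\ge he^{-2r}/4$ for $r$ large.

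\emph{The main obstacle} I anticipate is Step 2–3: producing a barrier that is genuinely a graph over the \emph{whole} finite strip $[0,r]\times(-h,h)$ (not just over a sub-region), with the two long edges pinned at height $\le 0$, with curvature controlled, and with centerline decay of the sharp form $he^{-2r}/4$ including the correct constants. The semitrough is the natural candidate because its trough is asymptotically null and descends exponentially along a spacelike axis at precisely rate $2$ (this is visible from $t-\coth t \to -\infty$ while $1/\sinh t \to 0$ like $2e^{-t}$, and a reparametrization by arclength along the axis converts this to the $e^{-2r}$ rate), but verifying that after the affine repositioning it lies below $\Sigma$ on the short near edge $\{0\}\times(-h,h)$ as well, and handling the far edge cleanly, will require a bit of care with the explicit formulas. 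An alternative, if the semitrough bookkeeping is unpleasant, is to write down a tailored non-constant-curvature barrier of the form $g(x,y)=h\,\psi(y)\,e^{-2x} - (\text{something})$ and verify the differential inequality $K_g \le -1$ directly; this trades geometric insight for a (short) computation, and is probably the cleanest route to the precise constant.
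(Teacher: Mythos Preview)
Your overall strategy—semitrough barrier plus Proposition~\ref{prop:finite comparison}—is exactly the paper's, but two concrete steps would not go through as you describe them.

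The main gap is the decay rate. Along its axis the semitrough decays like $e^{-x}$, not $e^{-2x}$: from $X(t,0)=(t-\coth t,\,0,\,1/\sinh t)$ one reads $z\approx 2e^{-t}\approx 2e^{-x}$. Scaling cannot repair this: a factor $\lambda$ gives decay rate $1/\lambda$ but curvature $-1/\lambda^2$, and the two constraints together pin $\lambda=1$. The doubling of the exponent in the paper comes instead from a \emph{two-parameter} placement of the semitrough: translate horizontally by $d$ and vertically by exactly the amount $\epsilon(d)$ needed to pass through the corner $(0,h,h)$ (which, by convexity and monotonicity of the height along the lines $y=\pm h$, forces the barrier below $h$ on the whole strip boundary). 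One computes $\epsilon(d)\approx 2e^{-2d}/h$, so the net height over $(r,0)$ is $\approx 2e^{-(d+r)}-2e^{-2d}/h$; maximizing in $d$ gives $d(r)=r-\log(h/2)$ and height $\approx he^{-2r}/2$. This optimization over $d$—absent from your sketch—is where both the exponent $2$ and the factor $h$ actually originate; neither ``scaling to width $2h$'' nor ``reparametrization by arclength'' produces them.

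Secondly, your handling of the unbounded domain is loose. On the far edge $\{R\}\times(-h,h)$ the translated semitrough's height tends to $|y|-\epsilon$, not to something $\le 0$, so ``the barrier is low there'' is false. One can rescue your truncation by observing $f(R,y)\ge |y|$ (from $f(R,\pm h)\ge h$ and $1$-Lipschitz) and then choosing $R$ carefully past $r$, but you do not say this. The paper sidesteps the issue with a different trick: apply a small Lorentz boost $L_\delta$ about the $y$-axis to $\Sigma$, which raises $f$ on $\{x\ge 0\}$ and (since $f>0$ on the strip) forces $f_\delta>h$ on the segment $x=h/\tanh\delta$, furnishing a legitimate far edge for the finite comparison; then let $\delta\to 0$.
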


\begin{proof}
For each value of $r$, we will find a semitrough $\Sigma_-$ whose height on the boundary of the the half-infinite strip (and therefore on the entire half-infinite strip, by convexity) is at most $h$, and whose height above the point $(r,0)$ is approximately $he^{-2r}/2$ up to lower order terms. We would like to conclude by the comparison principle (Proposition \ref{prop:finite comparison}) on the half-infinite strip that $\Sigma$ lies above this semitrough, but we cannot apply it directly since the strip is not bounded. 

Fortunately, there is an easy fix. Apply a $\delta$-Lorentz boost (for $\delta>0$ small) around the $y$-axis, namely the linear isometry
$$L_\delta=\begin{pmatrix}\cosh(\delta) & 0 & \sinh(\delta) \\ 0 & 1 & 0 \\ \sinh(\delta) & 0 & \cosh(\delta)
\end{pmatrix}~,$$
that sends $\Sigma$ to a surface $\Sigma^\delta=L_\delta(\Sigma)$. Let us call $f_\delta$ the function whose graph is $\Sigma^\delta$. Since $L_\delta$ does not decrease the height of all points whose first coordinate  is non-negative, $f_\delta\geq f$ on the half-infinite strip, and therefore  $f_\delta\geq h$ on the boundary of the half-infinite strip. 

Moreover, observe that $f>0$ on the half-infinite strip because it is at least $h$ on its boundary and 1-Lipschitz. Hence the portion of the surface $\Sigma^\delta$ over the half-infinite strip is in the future of the plane of equation $x\sinh(\delta)-z\cosh(\delta)=0$, and therefore $f_\delta>h$ on the segment of the half-infinite strip defined by $x=h/\tanh(\delta)$. We can then apply the standard comparison principle on the truncation $x\leq h/\tanh(\delta)$ of the half-infinite strip to see that $\Sigma^\delta$ lies above the semitrough, and take the limit as $\delta \to 0$ to conclude that $\Sigma$ does as well.

Given a point $(r,0)$ along the positive $x$ axis, it remains only to find a semitrough satisfying the two conditions above. For some $d,\epsilon$ to be given as functions of $r$, let $X_{d,\epsilon}(s,t)$ be a parametrized semitrough given by
\[
X_{d,\epsilon}(s,t) = \begin{bmatrix} t - \coth(t) - d + 1 \\  \sinh(s)/\sinh(t) \\ \cosh(s)/\sinh(t) - \epsilon   \end{bmatrix}
\]
for $s\in\R$ and $t\in(0,+\infty)$. (Observe that $X_{d,\epsilon}(s,t)$ is simply a translate of the semitrough introduced in Example \ref{ex:semitrough}, where $d$ and $\epsilon$ are the parameters of horizontal and vertical translation.)
In order to guarantee that the height of the semitrough along the boundary of the half-infinite strip is at most $h$, it is sufficient for the surface $X_{d,\epsilon}$ to contain the point $(0,h,h)$. Indeed, since the semitrough is symmetric by reflection in the plane $y=0$, if it contains the point $(0, h, h)$ then it also contains $(0, -h, h)$, hence by convexity its height over $\{0\}\times (-h,h)$ is $\leq h$. Moreover, by direct inspection one sees that the height function of the semitrough is decreasing on $\{(x,h)\,|\,x\in\R\}$ as a function of $x$. Observe also that for any $d\in\R$ there exists a unique $\epsilon$ such that $X_{d,\epsilon}$ contains the point $(0, h, h)$, since the vertical translates of an entire graph foliate $\R^{2,1}$. Moreover $\epsilon$ is positive because $X_{d,0}$ is always contained in the future of the $x$-axis. We begin by giving an asymptotic formula for this $\epsilon$ as a function of $d$, valid for large $d$.

By assumption, there is a point $(s_0, t_0)$ such that $X_{d,\epsilon}(s_0,t_0) = (0, h, h)$. Comparing $x$-coordinates, we have
\[
d = t_0 - \coth{t_0} + 1
\]
which we can invert to find
\[
t_0 = d + O(e^{-2d})~.
\]
We can eliminate $s_0$ from the two equations for the $y$- and $z$- coordinates using $\cosh^2 - \sinh^2 = 1$ to find
\[
(h+\epsilon(d))^2 - h^2 = 1/\sinh^2(t_0)~.
\]
Next, taking the positive solution of the quadratic equation for $\epsilon$ and Taylor approximating $\sinh$ yields
\begin{equation}\label{eq:formula epsilon}
\epsilon(d) = 2 e^{-2t_0}/h + O(e^{-4t_0}) = 2 e^{-2d}/h + O(e^{-4d})~.
\end{equation}
Here, and for the remainder of the proof, the implicit constant in $O()$ is allowed to depend on $h$.

The next step is to choose $d(r)$ in order that the height of $X_{d(r), \epsilon(d(r))}$ above the point $(r,0)$ is rather large, approximately $he^{-2r}/2$. For any $d$ and $r$, let $t_1$ be such that $X_{d, \epsilon(d)}(0,t_1) = (r, 0, z_1)$ for some height $z_1$. We now use the same Taylor approximations as above. Comparing $x$-coordinates gives
\[
t_1 = d + r + O(e^{-2(d+r)})~,
\]
and comparing $z$-coordinates, plugging in the formula \eqref{eq:formula epsilon} for $\epsilon(d)$, gives
\[
z_1 = 2 e^{-t_1} + O(e^{-3t_1}) - 2 e^{-2d}/h + O(e^{-4d})
\]
and hence
\[
z_1 = 2 e^{-(d+r)} - 2 e^{-2d}/h + O(e^{-3(d+r)}) + O(e^{-4d})
\]
To make sense of the big-O notation here, recall that we will take $d$ to be a function of $r$. Specifically, we choose $d(r)$ to maximize the highest-order part of $z_1$ (that is to say, everything except for the big-O terms). Doing so, we find that we should take $d(r) = r - \log(h/2)$, which yields
\[
z_1 = h e^{-2r}/2 + O(e^{-4r})~.
\]
It follows that for $r$ large enough, $z_1 \geq h e^{-2r}/4$. The lemma then follows by the comparison principle.
\end{proof}

\subsection{Finding long segments in the future}\label{subsec:quant est}

The following proposition, roughly speaking, serves to show that if a convex entire spacelike surface $\Sigma$ with curvature bounded below is close to a spacelike line $L$ and contained in the future of $L$, then the future of $\Sigma$ contains a long segment parallel to $L$.

\begin{prop} \label{prop: L tilde} Let $\Sigma$ be a convex entire spacelike surface in $\R^{2,1}$ of curvature $\geq -1$, and $L$ a spacelike line containing $\Sigma$ in its future. For any $h > 0$, there exists $\epsilon > 0$ such that for any $\delta < \epsilon$, if there is a pair of points $p \in L$ and $q \in \Sigma$ with timelike distance $\mathrm{dist}(p,q) \leq \delta$, then the spacelike segment $\tilde{L}$ of length $\frac{1}{2}\log(h/4\delta)$ parallel to $L$ at a distance $h$ bisected by the ray from $p$ to $q$ is contained in the future of $\Sigma$ (Figure \ref{fig: L tilde}).

\end{prop}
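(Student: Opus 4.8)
The plan is to rephrase the statement in terms of graphs and then derive a contradiction from the exponential estimate of Lemma~\ref{lem:strip}. After an isometry of $\R^{2,1}$ we may assume that $p$ is the origin and that the ray from $p$ to $q$ is the positive $z$-axis, so that $q=(0,0,\delta')$ with $\delta':=\mathrm{dist}(p,q)\leq\delta$; since $\Sigma$ is entire it is the graph of a convex $1$-Lipschitz function $f\colon\R^2\to\R$, and in this frame $f(0,0)=\delta'$. A rotation about the $z$-axis puts $L$ inside the plane $\{y=0\}$, and the hypothesis $\Sigma\subset J^+(L)$ becomes the pointwise bound $f\geq w$, where $w$ is the height of the null trough $\partial J^+(L)$ (a convex, piecewise-linear function vanishing along the projection of $L$). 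Finally, using the elementary fact that the causal future of an entire achronal surface is exactly its epigraph, the conclusion $\tilde L\subset J^+(\Sigma)$ is equivalent to the upper bound $f\leq h$ along the projection of $\tilde L$, a spacelike segment of length $\tfrac12\log(h/4\delta)$ whose midpoint projects to the origin.

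Assume, for contradiction, that $f(P_0)>h$ for some $P_0$ in the projection of $\tilde L$. The sublevel set $\{f<h\}$ is convex; by $1$-Lipschitzness of $f$ it contains the Euclidean ball of radius $h-\delta'$ about the origin (in particular the origin, once $\epsilon<h$), it does not contain $P_0$, and by $f\geq w$ it is contained in the region $\{w<h\}$, which has transverse width comparable to $h$. From this configuration one extracts a half-infinite strip $S$ of half-width $h$ which, after a translation and a reflection, is the strip $[0,\infty)\times(-h,h)$ of Lemma~\ref{lem:strip}, with the properties that $f\geq h$ on $\partial S$ and that the origin lies at depth $r$ inside $S$ with $h-\delta'\leq r\leq\tfrac14\log(h/4\delta)$ — the lower bound from $1$-Lipschitzness (so that $r$ is large enough for Lemma~\ref{lem:strip}), the upper bound because the closed end of $S$ may be placed just short of $P_0$, which lies within half the length of $\tilde L$ from the origin. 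Here the two unbounded sides of $S$ are produced by the trough bound $f\geq w$ together with convexity, and the closed end of $S$ by convexity of $\{f<h\}$ together with the single point $P_0$.

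Applying Lemma~\ref{lem:strip} to the graph of $f$ over $S$ gives, at the projection of $q$,
\[
\delta'=f(0,0)\ \geq\ \frac{h}{4}\,e^{-2r}\ \geq\ \frac{h}{4}\,e^{-\frac12\log(h/4\delta)}\ =\ \frac12\sqrt{h\delta}\,,
\]
where $r\leq\tfrac14\log(h/4\delta)$ is the depth of the origin in $S$; combined with $\delta'\leq\delta$ this forces $\delta\geq h/4$. Taking $\epsilon$ small enough, depending only on $h$ — at least $\epsilon\leq h/4$, and small enough that the depth $r$ falls in the range where Lemma~\ref{lem:strip} is applicable — yields the desired contradiction, so $\tilde L\subset J^+(\Sigma)$. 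The length $\tfrac12\log(h/4\delta)$ of $\tilde L$ is forced by this computation: it is precisely what makes the decay rate $e^{-2r}$ of Lemma~\ref{lem:strip} cancel the ratio between the wall height $h$ and the timelike distance $\delta$.

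The routine ingredients are the normalization and the identification of $J^+(\Sigma)$ with the epigraph of $f$. The crux, and where I expect the real work to lie, is the middle step: turning the soft information ``$f$ is convex, $f\geq w$, $f(0,0)=\delta'$ is small, and $f>h$ at one point $P_0$ near the origin'' into an honest half-strip in standard position on whose \emph{entire} boundary $f\geq h$ and in which $q$ sits at the prescribed depth, all while keeping track of the tilt of $L$ relative to the $z$-axis (which multiplies the relevant lengths by a bounded factor) and making sure the final inequality comes out strict.
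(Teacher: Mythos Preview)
Your contradiction strategy via Lemma~\ref{lem:strip} is natural, but the step you flag as ``the crux'' has a genuine gap that convexity alone does not close. You want the closed end of $S$ to be a vertical segment $\{a\}\times(-h,h)$ on which $f\geq h$, placed so that the origin sits at depth at most $\tfrac14\log(h/4\delta)$. But from the single point $P_0=(x_0,0)$ with $f(P_0)>h$ and the trough bound $f\geq|y|$, convexity of $\{f<h\}$ only yields a \emph{separating line}, and that line need not be vertical. Nothing prevents $\{f<h\}$ from being, say, the slanted half-strip $\{x<x_0+cy\}\cap\{|y|<h\}$; in that case $f\geq h$ holds on a full vertical segment $\{a\}\times(-h,h)$ only once $a\geq x_0+|c|h$, which destroys your depth bound. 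A slanted half-strip cannot be straightened by any Minkowski isometry that preserves both the curvature hypothesis and the horizontal strip $\{|y|<h\}$, so you cannot simply rotate into position for Lemma~\ref{lem:strip}.

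The paper circumvents this by running Lemma~\ref{lem:strip} in the forward direction rather than by contradiction. In coordinates with $L$ the $x$-axis and $q=(r,0,z)$, it observes that if the convex region $R=\{f\leq h\}$ were contained in the half-strip $[0,\infty)\times(-h,h)$ then the lemma would force $z>\delta$; hence $R$ must contain some point $w$ with \emph{negative} $x$-coordinate. The segment $\tilde L$ is then covered not by manufacturing a wall where $f\geq h$, but by taking the convex hull of $w$ together with the disk $I^+(q)\cap\{z=h\}$ of radius nearly $h$ centered at $(r,0,h)$ --- a purely planar convexity argument in the slice $\{z=h\}$. The asymmetry between the two uses of convexity (extracting one far-away point of $R$ versus demanding a whole vertical segment in its complement) is exactly what makes the paper's argument go through where yours stalls.
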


\begin{figure}[htb]
\centering
\includegraphics[height=6cm]{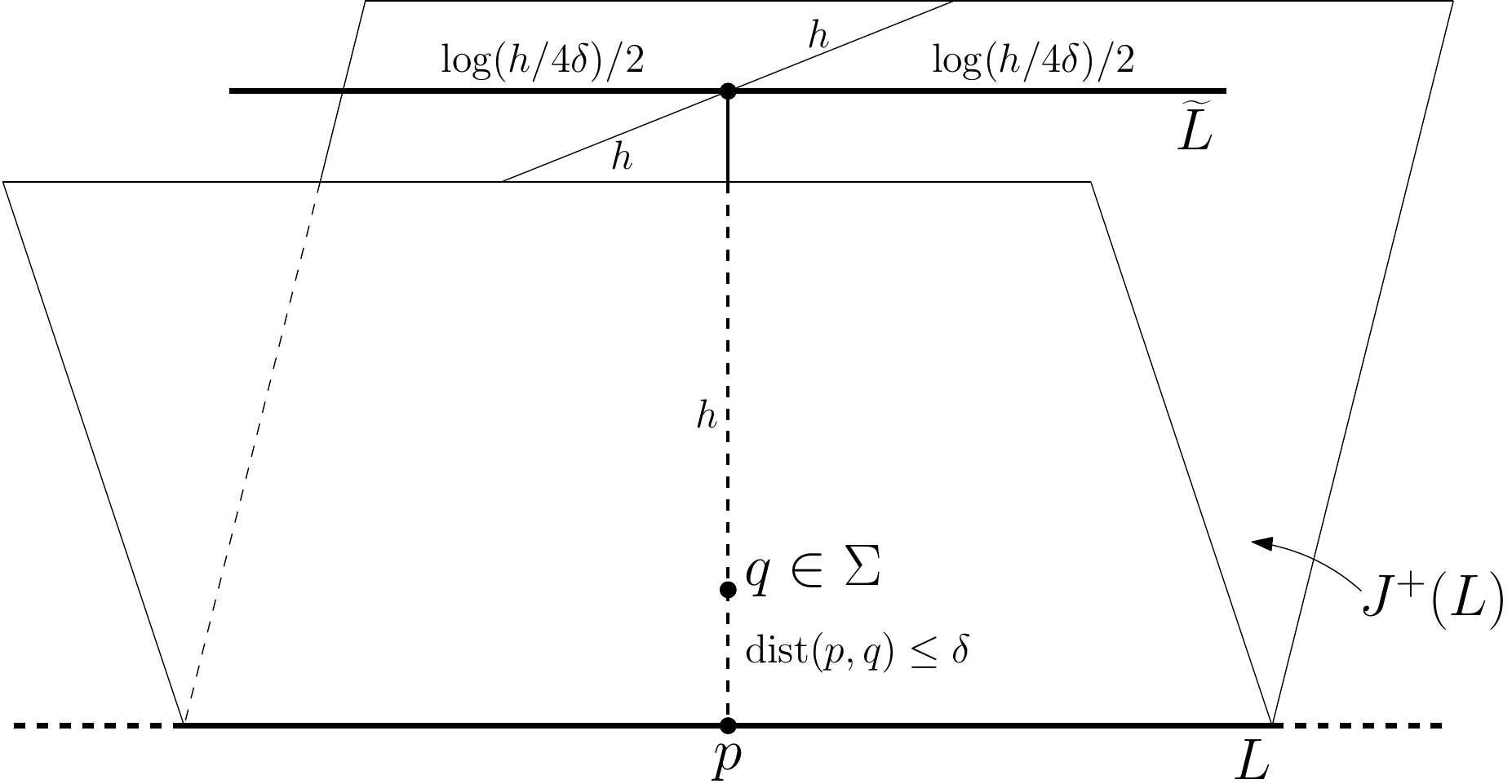}
\caption{The setting of Proposition \ref{prop: L tilde}. The statement says that, if the  convex entire surface $\Sigma$ goes through $q$ and is in the future of $L$, then the (long) parallel line segment $\widetilde L$ is in the future of $\Sigma$. \label{fig: L tilde}}
\end{figure}

\begin{proof}
{Fix $h>0$. Let us choose $r_0>0$ in such a way that inequality \eqref{eq:inequality z1} holds in Lemma \ref{lem:strip} for $r\geq r_0$. We will take $\epsilon=he^{-2r_1}/4$, for some $r_1\geq r_0$ to be chosen at the end of the proof. Any $\delta<\epsilon$ can be written as $\delta=he^{-2r}/4$ for some $r>r_1$. We need to prove, assuming $\mathrm{dist}(p,q) \leq \delta$, that the the spacelike segment $\tilde{L}$ of length $r$ parallel to $L$ at a distance $h$ bisected by the ray from $p$ to $q$ is contained in the future of $\Sigma$.}

Up to an isometry of $\R^{2,1}$, we may suppose that $L$ is the $x$-axis, $p$ is the point $(r,0,0)$, and $q = (r, 0, z)$ for $z \leq \delta$. Then $\tilde{L}$ is the segment from $(r/2, 0, h)$ to $(3r/2, 0, h)$. Let $R$ be the intersection of $J^+(\Sigma)$ with the spacelike plane $z=h$, or equivalently the convex hull of the intersection of $\Sigma$ with this plane. Since $\Sigma$ is contained in the future of $L$, $R$ is contained in the infinite strip $(-\infty, \infty) \times (-h, h) \times \{h\}$. However, if $R$ were contained in the half-infinite strip $[0, \infty) \times (-h, h) \times \{h\}$, then the height of $\Sigma$ would be at least $h$ on the boundary of this strip, and we would satisfy the conditions of Lemma \ref{lem:strip}; this gives a contradiction for $r>r_0$ since we have assumed $z \leq \delta=he^{-2r}/4.$ Therefore, there must be a point $w = (x,y,h) \in R$ with $x < 0$ and $y \in (-h, h)$.

Next, we use the fact that $w \in R$ to show that, for $r$ sufficiently large, half of the line $\tilde{L}$ is also in $R$. Since $\Sigma$ is spacelike and contains the point $q = (r, 0 ,z)$, the disk 
$I^+(q) \cap \{z=h\}$
 must also be contained in $R$. This horizontal disk is centered at $(r,0, h)$ with radius at least $h(1 - e^{-2r}/4)$. We shall show that, for $r$ sufficiently large, the convex hull of this disk together with the point $w$ contains the half of the segment $\tilde{L}$ between $(r/2, 0, h)$ and $(r,0,h)$. 
 
 \begin{figure}[htb]
\centering
\includegraphics[height=5cm]{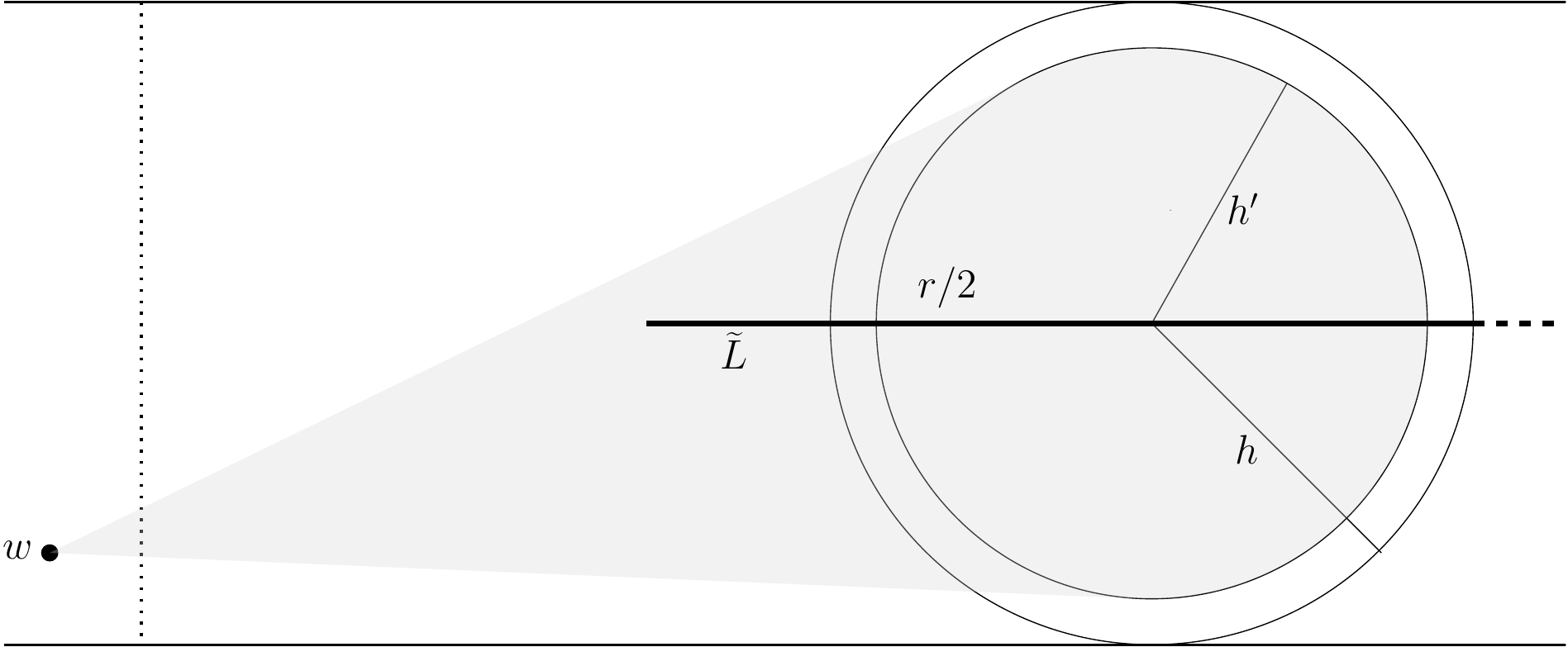}
\caption{{The shaded region represents the convex hull of the point $w$ and the disk of radius $h'$. This convex hull necessarily contains the left half of the line $\tilde L$ if we choose $h'\geq h/\sqrt{1+4h^2/r^2}$.}}
\label{fig:convexhull}
\end{figure}
 
Indeed, observe that the convex hull of a disk $D$ centered at $(r,0,h)$ and radius $h'$ and the point $w$ contains the half segment $\tilde L$ if and only if the line through $w$ and $(r/2,0,h)$ meets the disk $D$ (see Figure  \ref{fig:convexhull}), or equivalently if and only if the distance between the center $(r,0,h)$ of the disk $D$ and the line passing through $w= (x,y,h)$ and $(r/2,0,h)$ is smaller than $h'$. An elementary computation shows that this distance equals 
$$f(x,y):=\frac{(r/2)|y|}{\sqrt{y^2+(r/2-x)^2}}~.$$
Now, recall that $x<0$ and $|y|<h$, and observe that the function $f(x,y)$ satisfies $f(x,-y)=f(x,y)$; that $f(\cdot,y)$ is increasing as a function of $x\in (-\infty,0]$, and $f(x,\cdot)$ is increasing as a function of $y\in [0,h)$. Hence $f(x,y)$ is maximized at $(x,y)=(0,\pm h)$, where it attains the value 
$$\widehat h(r):=f(0,h)=h/\sqrt{1+4h^2/r^2}~.$$
In conclusion, if $h' \geq \widehat h(r)$, then half of the line $\tilde L$ is contained in the convex hull of $w$ and of the disc $D$ of radius $h'$. It turns out that the radius $h(1 - e^{-2r}/4)$ of the disk $I^+(q) \cap \{z=h\}$ that we consider is larger than $\widehat h(r)$ {if $r>r_1$, for $r_1$ sufficiently large}. 

This shows that, for $r$ sufficiently large, half of the line $\tilde L$ is contained in the convex hull of $w$ and of the disc $I^+(q) \cap \{z=h\}$. Repeating the same argument on the other side, we see that all of $\tilde{L}$ lies in the future of $\Sigma$.
\end{proof}


\subsection{Proof of Theorem \ref{thm: Lipschitz completeness intro}}\label{subsec:proofA}

Now we are ready to prove Theorem \ref{thm: Lipschitz completeness intro}, the main theorem of this section. 

\begin{proof}[Proof of Theorem \ref{thm: Lipschitz completeness intro}] It clearly suffices to prove the theorem in the case that $\theta_0 = 0$ and $\phi(0) = 0$. We will prove the contrapositive, namely we will suppose we have an entire hyperbolic surface $\Sigma$ that is incomplete at 0, and then we will show that for any slope $M$ there is some neighborhood of $0$ on which $\phi(\theta) > M |\theta|$. 

Let $P$ be the null support plane of $\Sigma$ in the direction $\theta = 0$. Let $\gamma$ be a geodesic ray of finite length in $\Sigma$ asymptotic to $P$. Also fix a second null support plane $Q$ of $\Sigma$, and let $L$ be the spacelike line $P \cap Q$. Up to an isometry of $\R^{2,1}$ we may suppose that $L$ is the $y$ axis and the unique null line in $P$ to which $\gamma$ converges (by Proposition \ref{prop: exists theta}) goes through the origin of $\R^{2,1}$. We furthermore suppose, by chopping off the beginning of $\gamma$ if necessary, that it is always in the future of $L$ and the coordinates $a$ and $c$ in \eqref{eqn: nullnull} of $\gamma$ are monotonic. This last condition implies that the timelike plane containing $L$ and $\gamma(t)$ converges monotonically in $t$ to the null plane $P$.

For any value of $\epsilon > 0$, by Lemma \ref{lem: timelike distance} there is a time after which the tail of $\gamma$ is entirely within timelike distance $\epsilon$ from $L$. For each point of this tail of $\gamma$, we apply Proposition \ref{prop: L tilde} with $h = 1$ to find a segment parallel to $L$ of length {$l(\epsilon)=-\log(4\epsilon)/2$}, lying in the future of $\Sigma$. By the choice of $h = 1$, each of these segments is contained in the hyperbolic cylinder $\{z^2 - x^2 = 1\}$. The union of these segments over the entire tail of $\gamma$ contains a curved half-infinite rectangular strip $R_\epsilon$ of this cylinder with bounds 
\begin{equation}\label{eq:2vs3}
|y| \leq l(\epsilon)/4
\end{equation} and $y \geq C(\epsilon)$ for some function $C(\epsilon)$ over which we have a priori no control. To explain the bound \eqref{eq:2vs3}, observe that if the curve $\gamma$ was actually contained in the plane $y=0$, then we would have the bound $|y| \leq l(\epsilon)/2$; in general, since the $y$-coordinates tends to zero along $\gamma$, we achieve \eqref{eq:2vs3} after taking $t$ large enough, or equivalently $x$ large enough. Since $R_\epsilon$ lies entirely in the future of $\Sigma$, the null support function of $R_\epsilon$ gives a lower bound for the null support function of $\Sigma$.

It remains to find the null support function $R_\epsilon$. Let $\tilde{R}_\epsilon$ be the doubly infinite strip we obtain by dropping the cutoff on the $y$ coordinate. We may equivalently describe $\tilde{R}_\epsilon$ as the intersection of the hyperbolic cylinder $\{z^2 - x^2 = 1\}$ with the future of a segment of $L$ of length $l(\epsilon)/2 - 2$, centered at $y= 0$. (See Figure \ref{fig:strip} for a picture of this intersection.) Consequently, the null support function of $\tilde{R}_\epsilon$ is at most the null support function of this segment. In fact it is not hard to see that every null support plane of $\tilde{R}_\epsilon$ is also a null support plane of this segment, and hence their null support functions actually coincide. Using the assumption that $L$ is the $y$-axis, one can check that this has null support function $(l(\epsilon)/2 - 2) |\sin(\theta)|/2$. Moreover, in those null directions sufficiently close to 0, every support plane of $\tilde{R}_\epsilon$ is also a support plane of $R_\epsilon$, and so in a neighborhood of zero (whose size depends on $C(\epsilon)$), the null support function of $R_\epsilon$ is exactly $(l(\epsilon)/2 - 2)|\sin(\theta)|/2$. 

Using the approximation $|\sin(\theta)| > |\theta|/2$ for sufficiently small $\theta$, we conclude that for all $M > 0$, there is a neighborhood of zero on which $\phi(\theta) \geq M |\theta|$, where $\phi$ is the support function of $\Sigma$. This is exactly the negation of the condition \eqref{eqn: M2} in the case $\phi(0) = 0$.
\end{proof}

\begin{figure}[htb]
\centering
\includegraphics[height=4cm]{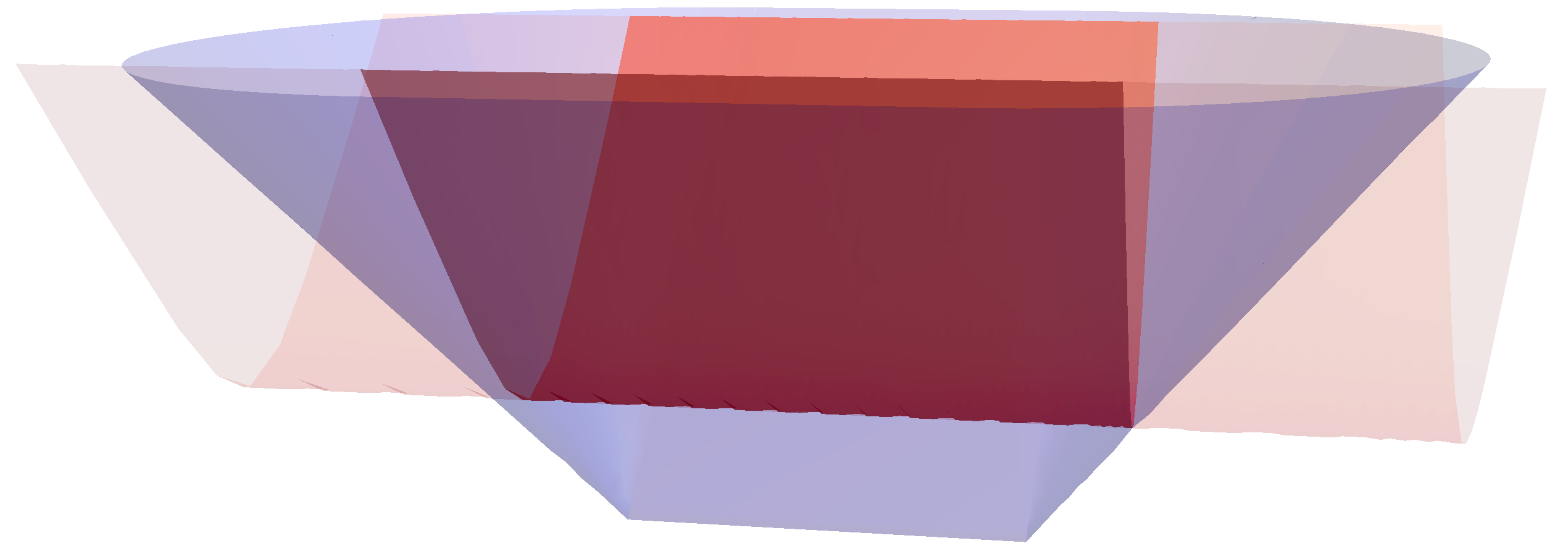}
\caption{The doubly infinite strip $\widetilde R_\epsilon$ is the intersection of the hyperbolic cylinder $\{z^2 - x^2 = 1\}$ with the future $J^+(S)$ of a segment $S$ (the blue region). Hence $\widetilde R_\epsilon$ and $J^+(S)$ have the same null support planes. \label{fig:strip}}
\end{figure}

\begin{example} If $C \subset \mathbb S^1$ is a Cantor set, then Theorem A shows that the entire hyperbolic surface whose null support function is
$$\chi_C=\begin{cases}  0 & \text{ on }C \\
+\infty &\text{ on }\mathbb S^1\setminus C\end{cases}$$
is complete. Cantor sets can arise as limit sets of linear actions of free groups in $\R^{2,1}$, in which case the hyperbolic surface with null support function $\chi_C$ will be invariant by the action.
\end{example}

\begin{remark} \label{rmk:nullvsparabolicComp} It is easy to check using the relationship in Equation \eqref{eq:null affine vs parabolic} (Proposition \ref{prop: ell/par}) that Condition \eqref{eqn: M2} at $\theta_0 = 0$ is equivalent to the same condition for the parabolic null support function $\psi$ at $\mathsf x = 0$, namely
\[
\psi(\mathsf x_i) < \psi(0) + M|\mathsf x_i|~.
\]
\end{remark}

\subsection{A geometric condition} \label{sec: dod}

In this section, we translate the condition \eqref{eqn: M2} into a condition on the domain of dependence of $\Sigma$ (Definition \ref{defi: dod}). For context, we remark that the domain of dependence of a convex entire spacelike surface is always a \emph{regular domain}, which is by definition an open subset of $\R^{2,1}$ that can be written as the intersection of the futures of some number --- possibly infinite, but at least 2 --- of non-parallel null planes \cite{Bonsante}. Its boundary is a convex entire achronal surface that is nowhere spacelike. The null support function of (the boundary of) the domain of dependence of $\Sigma$ is the same as the null support function of $\Sigma$, and conversely the domain of dependence of $\Sigma$ depends only on the null support function.

For reference, we state the negation of the condition \eqref{eqn: M2} at a point $\theta_0$: for all $M > 0$, there exists $\epsilon > 0$ such that $\phi(\theta) \geq \phi(\theta_0) + M |\theta - \theta_0|$ for all $\theta$ with $|\theta - \theta_0| < \epsilon$. By Remark \ref{rmk:infinite support complete} and Theorem \ref{thm: Lipschitz completeness intro}, if $\Sigma$ is incomplete at $\theta_0$ then $\phi(\theta_0)$ is finite and $\phi$ satisfies the negation of condition \eqref{eqn: M2} at $\theta_0$.

\begin{prop} \label{prop: contact set completeness} Suppose $\Sigma$ has a support plane $P$ in the direction $\theta_0$. Condition \eqref{eqn: M2} is equivalent to the condition that there is a null line in $P$ that does not intersect the boundary $\partial \mathcal D$ of the domain of dependence $\mathcal D$ of $\Sigma$.

\end{prop}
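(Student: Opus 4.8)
The plan is to work entirely in terms of the parabolic support function with point at infinity $\vec{\theta_0}$, which by Remark \ref{rmk:nullvsparabolicComp} turns condition \eqref{eqn: M2} into the statement that there is a sequence $\mathsf x_i \to 0$ with $\psi(\mathsf x_i) < \psi(0) + M|\mathsf x_i|$ for some $M$. After a translation we may assume $\theta_0$ corresponds to the point at infinity and $\phi(\theta_0) = 0$, so the support plane $P$ is the null plane $x+z = 0$ (a translate of $\vec\pi^\perp$), and the boundary of the domain of dependence $\mathcal D$ is the convex achronal surface dual to the \emph{restriction to $\partial\mathcal H$} of the parabolic support function, i.e.\ determined by $\psi$ alone. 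The key dictionary is: a null line $\ell$ in $P$ meets $\partial\mathcal D$ if and only if $\ell$ fails to be a support line of $\partial\mathcal D$ inside $P$, and since $\mathcal D$ is a regular domain with $P$ as a support plane, the null lines in $P$ that \emph{do} touch $\partial\mathcal D$ are exactly those contained in the contact set $P \cap \partial\mathcal D$; the null lines in $P$ that \emph{avoid} $\partial\mathcal D$ are precisely those on one side of this contact set. Concretely, parametrize the null lines of $P$ by the parabolic coordinate $\mathsf x \in \R$ (each such line is $\{\zeta(\mathsf x, 0)\text{-level set intersected with }P\}$-type object, i.e.\ the preimage under the boundary parametrization), so that ``there is a null line in $P$ disjoint from $\partial\mathcal D$'' becomes ``the contact set, as a subset of $\R = \partial\mathcal H$, is not all of $\R$,'' equivalently the convex function $\mathsf x \mapsto \psi(\mathsf x) - (\text{affine})$ is not identically zero near $0$ after subtracting its supporting affine function at $0$.

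The first step is to make this dictionary precise: I would recall from \cite{Bonsante} that $\partial \mathcal D = \partial J^+(\text{the measured-geodesic-lamination dual picture})$, but more elementarily, I would argue directly that a null plane $P' = \{x+z = \text{const}\}$ parallel to $P$ through a given null line $\ell \subset P$ is a support plane of $\mathcal D$ along $\ell$ iff $\ell$ lies in $P \cap \partial\mathcal D$, and that $P \cap \partial\mathcal D$ is the convex hull (inside $P$) of the set of null lines at which $P$ is the unique support plane. Because $\psi$ is the support function of $\partial\mathcal D$ in the parabolic chart and $\psi$ is convex (Corollary \ref{cor: convex duality 2}), the contact set $P \cap \partial\mathcal D$ corresponds exactly to the set where $\psi$ agrees with its supporting affine function $a(\mathsf x) = \psi(0) + \psi'(0^{\pm})\mathsf x$ at $0$ — more carefully, to the closed set where the subdifferential of $\psi$ contains the slope dual to $P$. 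Thus ``$\exists$ null line in $P$ avoiding $\partial\mathcal D$'' $\iff$ ``$\psi$ does not coincide with a fixed affine function on any full neighbourhood of $0$'' $\iff$ ``$\psi$ is not affine near $0$ with the supporting slope,'' and I need to match this with condition \eqref{eqn: M2}.

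The core of the argument is then the equivalence, for a convex function $\psi$ on an interval around $0$ with $\psi(0) = 0$: condition \eqref{eqn: M2} (existence of $M$ and $\mathsf x_i \to 0$ with $\psi(\mathsf x_i) < M|\mathsf x_i|$) holds \emph{if and only if} $\psi$ is not equal, on a one-sided neighbourhood of $0$, to an affine function through the origin with the one-sided derivative $\psi'(0^+)$ or $\psi'(0^-)$ — equivalently, the contact set is a proper subset of the parameter line near $0$. For the ``only if'' direction: if $\psi$ equals its supporting affine function on a full two-sided neighbourhood, then $\psi(\mathsf x) \geq$ that affine function everywhere (convexity), and having slope of that affine function being $0$ at the dual direction forces $\psi(\mathsf x) = 0$ in a neighbourhood so there is no escaping null line; if instead the supporting slope is nonzero the relevant reformulation (after Remark \ref{rmk:nullvsparabolicComp}, which fixes the normalization so that the supporting affine function at the contact direction is identically $0$) shows \eqref{eqn: M2} fails. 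For the ``if'' direction: if the contact set is proper, pick $\mathsf x_i \to 0$ outside it; convexity of $\psi$ together with finiteness of $\psi$ at three points gives an a priori linear bound $\psi(\mathsf x_i) \leq M|\mathsf x_i|$ for a uniform $M$ on a compact sub-neighbourhood (the difference quotient of a convex function is monotone and finite), and since $\mathsf x_i$ is outside the contact set the inequality is strict.

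The main obstacle I anticipate is the careful bookkeeping of \emph{which} affine/slope normalization corresponds to the null plane $P$ in the direction $\theta_0$, and correctly identifying the contact set $P \cap \partial\mathcal D$ with a level/coincidence set of $\psi$ — in particular handling the parabolic chart's omission of the single direction $\vec\pi$ and the behaviour of $\psi$ at $\pm\infty$ (equation \eqref{eqn: u of infinity}), and the edge cases where the contact set is a single null line versus a sub-segment versus a ray. I would isolate these in a lemma stating that, in the parabolic chart normalized so $\theta_0 \mapsto \infty$ and $\phi(\theta_0) = 0$, a null line of $P$ corresponding to parameter $\mathsf x$ meets $\partial\mathcal D$ iff $\psi(\mathsf x) = 0$, after which Proposition \ref{prop: contact set completeness} reduces to the elementary convex-function statement above and the already-proved Remark \ref{rmk:nullvsparabolicComp}. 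The remaining work — verifying the dictionary lemma from the definition of the domain of dependence and the duality of Corollary \ref{cor: convex duality 2} — is geometric but routine.
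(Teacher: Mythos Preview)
Your setup conflates two different parabolic charts. You declare the point at infinity to be $\vec{\theta_0}$, then invoke Remark~\ref{rmk:nullvsparabolicComp} to translate \eqref{eqn: M2} into a condition on $\psi$ near $\mathsf x = 0$; but that remark is stated for $\theta_0 = 0$, i.e.\ with $\theta_0$ at the \emph{origin} of the chart, not at infinity. With $\theta_0$ at infinity, \eqref{eqn: M2} becomes a growth condition on $\psi$ as $|\mathsf x|\to\infty$, not a condition near $0$. More seriously, your claimed dictionary ``a null line of $P$ with parameter $\mathsf x$ meets $\partial\mathcal D$ iff $\psi(\mathsf x)=0$'' does not type-check: the null lines in $P$ are all parallel to $\vec{\theta_0}$ and are indexed by a \emph{transverse} coordinate in $P$, whereas the boundary parameter $\mathsf x\in\partial\mathcal H$ labels null \emph{directions}. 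Finally, your core argument rests on $\psi$ being convex in $\mathsf x$, but it is not: by equation~\eqref{eq:par supp point}, $\psi(\mathsf x)=\sup_{p\in\Sigma} f_p(\mathsf x)$ with each $f_p$ a concave quadratic, so the convex-function machinery you invoke (subdifferentials, monotone difference quotients, coincidence with a supporting affine function) is unavailable.

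The paper's proof instead takes $\theta_0=0$ (so $P=\{p:f_p(0)=0\}$) and observes that the null lines in $P$ are exactly the level sets $L_m=\{p\in P:f_p'(0)=m\}$ of the linear coefficient; this is the correct bijection between null lines in $P$ and real numbers. Then $L_m$ meets $\partial\mathcal D$ iff some $p\in L_m$ satisfies $f_p\leq\psi$ on all of $\R$ (so $p\in J^+(\partial\mathcal D)\cap P=\partial\mathcal D$). One direction is immediate: if $p\in L_m\cap\partial\mathcal D$, then $\psi\geq f_p$ with $f_p(0)=0$ and $f_p'(0)=m$, forcing $\psi(\mathsf x)\geq m\mathsf x$ near $0$. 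For the converse, one slides $p$ along $L_m$ toward the future, making the quadratic term of $f_p$ arbitrarily negative so that $f_p\leq\psi$ holds automatically outside a fixed neighbourhood of $0$, while inside that neighbourhood the negation of \eqref{eqn: M2} supplies $\psi(\mathsf x)\geq m|\mathsf x|\geq f_p(\mathsf x)$. No convexity of $\psi$ is needed.
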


\begin{proof} Since both conditions are invariant by the isometry group of $\R^{2,1}$, it suffices to prove the proposition in the case that $\theta_0 = 0$ and $\phi(\theta_0) = 0$. Having done so, it is convenient to use Remark \ref{rmk:nullvsparabolicComp} to substitute the parabolic null support function $\psi$ for the elliptic null support function $\phi$. The null support plane $P$ corresponds to $\psi(0) = 0$.

We begin with an observation about the parabolic null support function of points. From Equations \eqref{eq:defi p} (with $\mathsf y=0$) and \eqref{eq: par null supp fun} for the parabolic null support function, we see that the parabolic null support function of a single point $p=[a,b,c]$ (or equivalently, of its future null cone) is the polynomial 
\begin{equation}\label{eq:par supp point}
f_p(\mathsf x)=(a-c)+2b\mathsf x-(a+c)\mathsf x^2
\end{equation}
 of degree at most 2 in $\mathsf x$. Clearly this construction provides a vector space isomorphism between $\R^{2,1}$ and the space of polynomials of degree at most 2. Moreover, observe that $p$ is in the null plane $P$ exactly when $\langle p,[1,0,1]\rangle=a-c=0$, which is equivalent to $f_p(0) = 0$.  In this case, the linear coefficient $f_p'(0)$ measures the horizontal displacement in $P$, meaning that the sets $\{p\in P\,|\,f_p'(0)=m\}$ are precisely the null lines $L$ in $P$. This provides a bijective correspondence between slopes $m\in \R$ and null lines $L\subset P$. 

With that in mind, we may now prove the proposition. First suppose that every null line in $P$ meets $\partial\mathcal D$ somewhere. Then for any positive number $M$, let $L=\{p\in P\,|\,f_p'(0)=M+1\}$ be the null line corresponding to the slope $m = M+1$.  By assumption, there is a point $p \in L \cap \partial\mathcal D$. Since $\psi$ is the parabolic null support function of $\mathcal D$ --- defined as a supremum over points of $\mathcal D$ --- it follows that $\psi \geq f_p$. But since $f_p(0) = 0$ and $f_p'(0) > M$, there is an $\epsilon>0$ such that $f_p(\mathsf x) \geq M\mathsf x$ for $0 \leq \mathsf x < \epsilon$. Repeating the argument for negative $M$ and taking the minimum of the two values of $\epsilon$, we conclude that $\psi$ satisfies the negation of condition \eqref{eqn: M2} at 0.

Conversely, suppose that the negation of condition \eqref{eqn: M2} holds for $\psi$ at $0$, and let $L$ be any null line in $P$, and $M$ be its corresponding slope. By assumption, there is an $\epsilon$ such that for all $\mathsf x$ with $|\mathsf x| < \epsilon$, there holds $\psi(\mathsf x) \geq M |\mathsf x|$. Now let $q$ be a point in the future of $\partial\mathcal D$, so that $f_q$ is a lower bound for $\psi$. Choose $p \in L$ sufficiently far in the future --- thus making its quadatic term sufficiently negative --- that $f_p(\mathsf x) < f_q(\mathsf x)$ for all $\mathsf x$ with $|\mathsf x| \geq \epsilon$. If we require furthermore that the quadratic term of $f_p$ is at least less than zero, then $f_p(\mathsf x) \leq M|\mathsf x|$ for $|\mathsf x| < \epsilon$ as well. Hence, whether $\mathsf x$ is small or large, we have $f_p(\mathsf x) \leq \psi(\mathsf x)$, and so $p$ is in $J^+(\partial\mathcal D)$. Since $p$ is also in $P$, a support plane for $\partial\mathcal D$, it follows that $p$ must actually be contained in $\partial\mathcal D$. This completes the proof.
\end{proof}

We immediately conclude the following:

\begin{cor}
\label{cor: contact set completeness} Let $\mathcal{D}$ be a regular domain in $\R^{2,1}$. Suppose that every support plane $P$ of $\mathcal{D}$ contains a null line disjoint from $\partial{\mathcal{D}}$. Then any
convex entire spacelike surface with domain of dependence $\mathcal{D}$ and curvature bounded below is complete.
\end{cor}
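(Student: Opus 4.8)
The plan is to obtain this corollary purely by assembling results already established, with no new estimates. Let $\Sigma$ be any convex entire spacelike surface with domain of dependence $\mathcal D$ and curvature bounded below, and write $\phi$ for its null support function. Recall that $\Sigma$ and $\mathcal D$ have the same null support function, hence the same null support planes: for a null direction $\theta\in\mathbb S^1$, either $\phi(\theta)=+\infty$ and neither $\Sigma$ nor $\mathcal D$ has a support plane in the direction of $\theta$, or $\phi(\theta)<+\infty$ and the plane $P_\theta=\{\langle\cdot,\vec\theta\rangle=\phi(\theta)\}$ is their common support plane in that direction. I would then reduce the problem to directional completeness via Corollary \ref{thm: incomplete at theta}: it suffices to check that $\Sigma$ is complete at every $\theta\in\mathbb S^1$.

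Fix $\theta$. In the first case ($\phi(\theta)=+\infty$), $\Sigma$ is automatically complete at $\theta$ by Remark \ref{rmk:infinite support complete}. In the second case, $P_\theta$ is a support plane of $\mathcal D$, so by hypothesis it contains a null line disjoint from $\partial\mathcal D$; Proposition \ref{prop: contact set completeness} then translates this exactly into condition \eqref{eqn: M2} for $\phi$ at $\theta$. Since $\Sigma$ has curvature bounded below and null support function $\phi$, the local version of Theorem \ref{thm: Lipschitz completeness intro} applies and yields that $\Sigma$ is complete at $\theta$. As $\theta$ was arbitrary, $\Sigma$ is complete at every direction, and Corollary \ref{thm: incomplete at theta} finishes the proof.

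There is essentially no genuine obstacle here: all the substantive work has already been done in the proof of Theorem \ref{thm: Lipschitz completeness intro} (the construction of wide spacelike strips in the future of $\Sigma$, via Lemma \ref{lem: timelike distance} and Proposition \ref{prop: L tilde}) and of Proposition \ref{prop: contact set completeness} (the dictionary between slopes of null lines in $P_\theta$ and linear lower bounds on the parabolic null support function). The one point deserving a line of care is that Proposition \ref{prop: contact set completeness}, which is stated in terms of the domain of dependence \emph{of $\Sigma$}, applies verbatim here because the domain of dependence is determined by $\phi$ alone and $\mathcal D$ is by hypothesis that domain; equivalently, one must record that the support planes of $\mathcal D$ coincide with those of $\Sigma$. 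Once this identification is in place, the three cited results chain together immediately.
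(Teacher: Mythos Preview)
Your proposal is correct and follows exactly the route the paper intends: the paper simply writes ``We immediately conclude the following'' before stating the corollary, meaning it is obtained by combining Proposition~\ref{prop: contact set completeness} with (the local version of) Theorem~\ref{thm: Lipschitz completeness intro}, precisely as you spell out. Your added remark that $\Sigma$ and $\mathcal D$ share the same null support function (hence the same null support planes) is the only identification needed, and you handle it correctly.
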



\section{Comparison results}\label{sec:comparison}


 In this section we prove a tool to deduce incompleteness in a given direction $\theta$ of convex entire  spacelike surfaces from the incompleteness of another one having the same asymptotics at $\theta$. The results of this section will be used to prove Theorem \ref{thm: Holder incomplete} and Theorem \ref{thm: xlogx incomplete}, which provide incompleteness criteria, but also in the proof of Theorem \ref{thm: xloglogx completeness}.

\begin{prop}\label{lem:complete comparison} Let $\Sigma_+$ and $\Sigma_-$ be convex entire spacelike  surfaces in $\R^{2,1}$ and let ${\theta}\in \mathbb S^1$. Suppose that:
\begin{enumerate}
\item $\Sigma_+$ contained in $J^+(\Sigma_-)$, and
\item $\Sigma_-$ and $\Sigma_+$ have the same null support plane in the direction of ${\theta}$.
\end{enumerate}
If $\Sigma_+$ is incomplete at ${\theta}$, then $\Sigma_-$ is also incomplete at ${\theta}$.
\end{prop}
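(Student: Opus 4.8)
The plan is to transport a diverging Cauchy sequence from $\Sigma_+$ to $\Sigma_-$ via a $1$-Lipschitz map, and then invoke Lemma \ref{lemma:incompleteness by cauchy} (which allows us to detect incompleteness at $\theta$ using merely a Cauchy sequence, not a geodesic). The key observation making this work is that the hypothesis (1), namely $\Sigma_+\subset J^+(\Sigma_-)$, produces a natural $1$-Lipschitz map $F:\Sigma_+\to\Sigma_-$: since $\Sigma_-$ is entire, every timelike line meets it exactly once, so one can define $F(p)$ to be the point where $\Sigma_-$ meets the vertical line through $p$ (equivalently, both surfaces are graphs of $1$-Lipschitz functions $f_\pm$ over $\R^2$ with $f_-\le f_+$, and $F$ is the vertical projection). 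First I would verify that $F$ is $1$-Lipschitz with respect to the induced first fundamental forms; this is a standard Minkowski fact, following because the difference of a spacelike and a causal tangent direction contracts lengths — concretely, if $v\in T_p\Sigma_+$ then its image $dF(v)$ differs from $v$ by a vertical (hence causal) vector, and one checks $\langle dF(v),dF(v)\rangle\le\langle v,v\rangle$. (This is the content referenced as Lemma \ref{prop:contraction} in the introduction.)

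Next I would take a proper finite-length geodesic $\gamma:[0,T)\to\Sigma_+$ witnessing incompleteness of $\Sigma_+$ at $\theta$; by Proposition \ref{prop: exists theta} it is asymptotic to the null support plane $P$ of $\Sigma_+$ in the direction $\theta$, and $\langle\gamma(t),\vec\theta\rangle\to\phi_+(\theta)$. Pick any sequence $t_n\to T$; then $p_n:=\gamma(t_n)$ is a diverging Cauchy sequence on $\Sigma_+$ (Cauchy because $\gamma$ has finite length; diverging because $\gamma$ is proper), and it is asymptotic to $P$ in the sense of Definition \ref{defi: asymptotic to}. Now set $q_n:=F(p_n)\in\Sigma_-$. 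Since $F$ is $1$-Lipschitz, $(q_n)$ is Cauchy in $\Sigma_-$. It is diverging because $F$ only changes the height coordinate, so the horizontal projections of $p_n$ and $q_n$ agree and hence escape every compact set of $\R^2$, forcing $q_n$ to leave every compact subset of $\Sigma_-$.

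It remains to check that $(q_n)$ is asymptotic to the common null support plane of $\Sigma_\pm$ in the direction $\theta$ — call it $P$, using hypothesis (2). Since $P$ is a support plane of $\Sigma_-$ as well, we have $\langle q_n,\vec\theta\rangle\le\phi_-(\theta)=\phi_+(\theta)$. On the other hand $q_n=p_n-(0,0,s_n)$ for some $s_n\ge 0$ (as $f_-\le f_+$), and since $\langle(0,0,1),\vec\theta\rangle=-1<0$ we get $\langle q_n,\vec\theta\rangle=\langle p_n,\vec\theta\rangle+s_n\ge\langle p_n,\vec\theta\rangle\to\phi_+(\theta)$. Hence $\langle q_n,\vec\theta\rangle\to\phi_-(\theta)$, which says exactly that $q_n$ converges in $\R^{2,1}/\vec\theta^{\,\perp}$ to the point corresponding to $P$, i.e. $(q_n)$ is asymptotic to $P$. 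Applying Lemma \ref{lemma:incompleteness by cauchy} to the diverging Cauchy sequence $(q_n)$ then gives that $\Sigma_-$ is incomplete at $\theta$, completing the proof. The one point requiring a little care — and the likeliest place to stumble — is the $1$-Lipschitz property of the vertical projection $F$ in the Lorentzian setting; everything else is bookkeeping with the inequality $f_-\le f_+$ and the sign $\langle(0,0,1),\vec\theta\rangle=-1$.
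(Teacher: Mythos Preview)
Your overall strategy is exactly the paper's: transport a diverging Cauchy sequence from $\Sigma_+$ to $\Sigma_-$ via a $1$-Lipschitz map and invoke Lemma~\ref{lemma:incompleteness by cauchy}. Your verification that the image sequence is diverging and asymptotic to $P$ is fine. The gap is precisely where you flagged it: the vertical projection is \emph{not} $1$-Lipschitz in the Lorentzian setting, and your heuristic ``$dF(v)$ differs from $v$ by a causal vector, hence is shorter'' is false. If $v$ is spacelike and $w=(0,0,s)$, then $\langle v-w,v-w\rangle=\langle v,v\rangle+2sv_3-s^2$, which can exceed $\langle v,v\rangle$. Concretely, take $\Sigma_-$ to be the horizontal plane $\{z=-10\}$ and $\Sigma_+$ the hyperboloid $\{z=\sqrt{1+x^2+y^2}\}$; both are convex entire spacelike and $\Sigma_+\subset J^+(\Sigma_-)$. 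The tangent vector $(1,0,x/\sqrt{1+x^2+y^2})$ on $\Sigma_+$ has squared norm $(1+y^2)/(1+x^2+y^2)<1$, while its vertical projection $(1,0,0)$ has squared norm $1$. So vertical projection strictly \emph{expands}.

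The paper's Lemma~\ref{prop:contraction} constructs the $1$-Lipschitz map differently: it foliates $\R^{2,1}$ by vertical translates of $\Sigma_+$ and flows along the past-directed \emph{normal} vector field $V$ of this foliation until hitting $\Sigma_-$. The contraction then comes from the Lie derivative computation $\mathcal L_V\langle\tilde W,\tilde W\rangle=-2|V|\,\II(\tilde W,\tilde W)\le 0$, which uses precisely the convexity of $\Sigma_+$ (not of $\Sigma_-$). This is the Lorentzian analogue of nearest-point projection onto a convex body, but one must project along normals to the convex surface, not along an arbitrary timelike direction. Once you replace vertical projection by this normal-flow map, the rest of your argument goes through unchanged; indeed the paper's verification that $\Pi(p_n)$ is asymptotic to $P$ is essentially your inequality $\langle p_n,\vec\theta\rangle\le\langle\Pi(p_n),\vec\theta\rangle\le\phi_-(\theta)$, using that $\Pi(p_n)-p_n$ is past-causal.
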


\subsection{An important corollary}

Before proving the lemma, we note that in the case of hyperbolic surfaces, or more generally under certain bounds on the curvature, we can restate our assumptions purely in terms of support functions. Given two functions $\phi_1,\phi_2$ on the circle, we say that $\phi_1$ \emph{touches $\phi_2$ from below at $\theta\in \mathbb S^1$} if $\phi_1\leq \phi_2$ and $\phi_1(\theta)=\phi_2(\theta) < \infty$.

First, observe that if conditions (1) and (2) in Proposition \ref{lem:complete comparison}  hold, then denoting by $\phi_\pm$ the null support functions of $\Sigma_\pm$, $\phi_+$ touches $\phi_-$ from below at ${\theta}$. Indeed, if $\Sigma_+$ is in the future of $\Sigma_-$, the support functions $\phi_\pm$ of $\Sigma_\pm$ satisfy $\phi_+ \leq \phi_-$ everywhere; the condition (2) is then equivalent to the equality $\phi_+(\theta)=\phi_-(\theta)$. 
The converse holds under a suitable curvature assumption, based on an application of the maximum principle. The proof follows immediately from \cite[Proposition 3.11]{Bonsante:2019aa}.

\begin{prop}\label{prop:comp_principle}
Let $\Sigma_+$ and $\Sigma_-$ be entire  spacelike surfaces in $\R^{2,1}$ with null support functions $\phi_+$ and $\phi_-$, and let ${\theta}\in \mathbb S^1$. Suppose that the curvature functions of $\Sigma_+$ and $\Sigma_-$ satisfy:
$$K_{\Sigma_-}\leq -C\leq K_{\Sigma_+}\leq 0$$
for some constant $C>0$, and that $\phi_+$ touches $\phi_-$ from below at ${\theta}$.
Then
\begin{enumerate}
\item $\Sigma_+$ contained in the future of $\Sigma_-$, and
\item $\Sigma_-$ and $\Sigma_+$ have the same null support plane in the direction of ${\theta}$.
\end{enumerate}
\end{prop}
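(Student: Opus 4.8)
The plan is to treat this as the global counterpart of the finite comparison principle, Proposition~\ref{prop:finite comparison}; the globalization is precisely the content of the cited result \cite[Proposition 3.11]{Bonsante:2019aa}, and below I sketch the mechanism it encapsulates.

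Conclusion (2) is immediate, and uses neither conclusion (1) nor the curvature hypothesis. Since ``$\phi_+$ touches $\phi_-$ from below at $\theta$'' includes $\phi_+(\theta)=\phi_-(\theta)<+\infty$, the Remark following Definition~\ref{defi:support plane in direction of} shows that each of $\Sigma_+$ and $\Sigma_-$ has a unique null support plane in the direction of $\theta$, and in both cases this is the plane $\{\langle\,\cdot\,,\vec\theta\rangle=\phi_+(\theta)\}$; hence they coincide.

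For conclusion (1), write $\Sigma_\pm=\mathrm{graph}(f_\pm)$ with $f_\pm\colon\R^2\to\R$ convex and $1$-Lipschitz (convexity because $K_{\Sigma_\pm}\le0$, by the conventions of Remark~\ref{rmk convention}). A point $(x,y,z)$ lies in $J^+(\Sigma_-)$ exactly when $z\ge f_-(x,y)$, so $\Sigma_+\subseteq J^+(\Sigma_-)$ is equivalent to $h:=f_+-f_-\ge 0$ on $\R^2$, and that is what I would prove. The argument has two ingredients. The first is a statement ``at infinity'': the hypothesis $\phi_+\le\phi_-$ gives $\mathcal D_-\subseteq\mathcal D_+$, and combined with the fact that a convex entire spacelike surface stays, outside a large ball, uniformly close to the boundary of its own domain of dependence (a property of such surfaces that follows from the analysis of \cite{Bonsante, Bonsante:2019aa}), one obtains
\[
\liminf_{|(x,y)|\to\infty}h(x,y)\ \ge\ 0 .
\]
The second is the maximum principle. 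Suppose towards a contradiction that $m:=\inf_{\R^2}h<0$. By the inequality above, $h>m/2>m$ outside a sufficiently large ball, so $m$ is attained at an \emph{interior} point $v_0$. There one has $\nabla f_+=\nabla f_-$ and $D^2f_+\ge D^2f_-\ge 0$, while the Gauss equation in graph form, $K_{\Sigma_\pm}=-\det(D^2f_\pm)/(1-|\nabla f_\pm|^2)^2$, together with $K_{\Sigma_-}\le -C<0$ makes $D^2f_-$ positive definite at $v_0$ (in fact everywhere). Writing $\det(D^2f_+)-\det(D^2f_-)=\int_0^1\langle\operatorname{cof}(D^2f_-+tD^2h),D^2h\rangle\,dt$ and estimating the difference of the right-hand sides of the two Gauss equations by a bounded multiple of $\nabla h$ (here one uses $-K_{\Sigma_+}\le C\le -K_{\Sigma_-}$), one sees that near $v_0$ the function $h$ satisfies a linear, \emph{strictly} elliptic, second-order differential inequality $Lh\le 0$ with no zeroth-order term --- strict ellipticity following from the positive-definiteness of $D^2f_-$, which is where $C>0$ enters. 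By the strong maximum principle $h$ is constant equal to $m$ in a neighbourhood of $v_0$; as every point of $\{h=m\}$ is again an interior minimum, this set is open as well as closed, hence equals the connected plane $\R^2$, so $h\equiv m<0$, contradicting $\liminf_\infty h\ge 0$. Thus $h\ge 0$, which is (1).

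The step I expect to be the main obstacle is the ingredient ``at infinity''. The hypothesis $\phi_+\le\phi_-$ only records asymptotic information about the two surfaces, and turning it into the genuine pointwise bound $\liminf_\infty(f_+-f_-)\ge 0$ requires controlling the way each surface approaches the boundary of its domain of dependence in the non-compact directions; this is exactly what the cited result \cite[Proposition 3.11]{Bonsante:2019aa} (building on \cite{Bonsante}) handles. Everything else is the routine maximum-principle comparison already used in Proposition~\ref{prop:finite comparison}, and conclusion (2) is trivial.
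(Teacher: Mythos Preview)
Your approach matches the paper's, which simply invokes \cite[Proposition 3.11]{Bonsante:2019aa} without further detail; your sketch is a reasonable outline of the mechanism behind that citation, with conclusion (2) immediate and conclusion (1) a global maximum-principle argument whose only nontrivial ingredient is the control at infinity.

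Two small corrections to your sketch. First, the containment of domains of dependence is reversed: $\phi_+\le\phi_-$ means the null support planes of $\Sigma_+$ lie \emph{above} those of $\Sigma_-$, so $\mathcal D_+\subseteq\mathcal D_-$ (and hence $\partial\mathcal D_+$ lies in the future of $\partial\mathcal D_-$). Second, the assertion that ``a convex entire spacelike surface stays, outside a large ball, uniformly close to the boundary of its domain of dependence'' is false in that generality; it requires a curvature bound. The argument actually runs asymmetrically: for $\Sigma_-$ one uses $K_{\Sigma_-}\le -C<0$ to trap $\Sigma_-$ between $\partial\mathcal D_-$ and the constant-curvature-$(-C)$ surface sharing its domain of dependence (which does approach $\partial\mathcal D_-$ at infinity), whereas for $\Sigma_+$ one needs only the trivial inclusion $\Sigma_+\subset\mathcal D_+\subset\mathcal D_-$, giving $f_+\ge g_-$ where $g_-$ is the height function of $\partial\mathcal D_-$. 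Combining these yields $\liminf_{\infty}(f_+-f_-)\ge 0$ as you claim. Neither slip affects the overall validity.
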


Hence from Proposition \ref{lem:complete comparison} and Proposition \ref{prop:comp_principle} we obtain the following corollary, which is the statement we will concretely apply in the next sections:

\begin{corx} \label{cor:complete comparison} 
Let $\Sigma_+$ and $\Sigma_-$ be entire spacelike  surfaces in $\R^{2,1}$ with null support functions $\phi_+$ and $\phi_-$, and let ${\theta}\in \mathbb S^1$. Suppose that the curvature functions of $\Sigma_+$ and $\Sigma_-$ satisfy:
$$K_{\Sigma_-}\leq -C\leq K_{\Sigma_+}\leq 0$$
for some constant $C>0$. If $\phi_+$ touches $\phi_-$ from below at ${\theta}$ and $\Sigma_+$ is incomplete at $\theta$, then $\Sigma_-$ is also incomplete at $\theta$.
\end{corx}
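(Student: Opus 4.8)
The statement follows by combining Proposition \ref{prop:comp_principle} with Proposition \ref{lem:complete comparison}, and so the plan is essentially to chain these two inputs together; the only genuine content left is to verify that the hypotheses of the first feed correctly into the second. First I would record the hypotheses: $\Sigma_+$ and $\Sigma_-$ are entire spacelike surfaces (hence, by Remark \ref{rmk convention}, $C^2$ and convex up to a horizontal reflection, which we may assume away), with null support functions $\phi_+$ and $\phi_-$ satisfying $K_{\Sigma_-}\leq -C\leq K_{\Sigma_+}\leq 0$, and $\phi_+$ touches $\phi_-$ from below at $\theta$, meaning $\phi_+\leq\phi_-$ everywhere and $\phi_+(\theta)=\phi_-(\theta)<\infty$.

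The first step is to apply Proposition \ref{prop:comp_principle} verbatim: its hypotheses are exactly the curvature inequality and the condition that $\phi_+$ touches $\phi_-$ from below at $\theta$, so we conclude that $\Sigma_+\subseteq J^+(\Sigma_-)$ and that $\Sigma_-$ and $\Sigma_+$ have the same null support plane in the direction of $\theta$. The second step is to observe that these two conclusions are precisely hypotheses (1) and (2) of Proposition \ref{lem:complete comparison}. Since moreover $\Sigma_+$ and $\Sigma_-$ are convex entire spacelike surfaces, Proposition \ref{lem:complete comparison} applies, and since we are assuming $\Sigma_+$ is incomplete at $\theta$, its conclusion gives that $\Sigma_-$ is incomplete at $\theta$. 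This completes the proof.

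There is essentially no obstacle here beyond bookkeeping: the work has been isolated in Propositions \ref{prop:comp_principle} and \ref{lem:complete comparison}. If anything, the one point worth a sentence of care is that ``$\phi_+$ touches $\phi_-$ from below'' requires $\phi_+(\theta)=\phi_-(\theta)$ to be \emph{finite}, which is what guarantees that there is an honest common null support plane in the direction of $\theta$ (as opposed to no support plane at all, cf.\ Remark \ref{rmk:infinite support complete}); this finiteness is built into the definition of ``touches from below,'' so it is available for free, but it is the hypothesis that makes the chaining legitimate. Thus the proof is simply: Proposition \ref{prop:comp_principle} converts the support-function touching condition plus curvature bounds into the geometric inclusion and shared-support-plane conditions, and then Proposition \ref{lem:complete comparison} propagates incompleteness at $\theta$ from $\Sigma_+$ to $\Sigma_-$.
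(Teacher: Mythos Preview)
Your proposal is correct and matches the paper's approach exactly: the paper states the corollary as an immediate consequence of Proposition \ref{prop:comp_principle} and Proposition \ref{lem:complete comparison}, with no further argument. Your observation about the finiteness of $\phi_\pm(\theta)$ being built into the definition of ``touches from below'' is a nice point of care, but otherwise this is precisely the intended chaining.
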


\subsection{Proof of Proposition \ref{lem:complete comparison}}

Let us now provide the proof of Proposition \ref{lem:complete comparison}. We first need to show that we can find a 1-Lipschitz map from $\Sigma_+$ to $\Sigma_-$.

\begin{lemma}\label{prop:contraction} Let $\Sigma_+$ and $\Sigma_-$ be entire spacelike  surfaces in $\R^{2,1}$ with $\Sigma_+$ convex. Then there exists a 1-Lipschitz map $\Pi:\Sigma_+\cap J^+(\Sigma_-)\to\Sigma_-$. Moreover, if $\Sigma_+$ and $\Sigma_-$ have the same null support plane $P$ and $\Sigma_+$ has a diverging Cauchy sequence $p_n$ asymptotic to $P$ which is in the future of $\Sigma_-$, then $\Pi(p_n)$ is a diverging Cauchy sequence in $\Sigma_-$ asymptotic to $P$. 
\end{lemma}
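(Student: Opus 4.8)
The plan is to build the map $\Pi$ by "sliding along the future timelike cone." Concretely, for each $p \in \Sigma_+ \cap J^+(\Sigma_-)$, I want to define $\Pi(p)$ to be the unique point where the past-directed vertical line through $p$—or more naturally, a canonically-chosen past ray—meets $\Sigma_-$; but the cleanest choice giving the Lipschitz property is the following. Write $\Sigma_\pm = \mathrm{graph}(f_\pm)$ for $1$-Lipschitz convex functions $f_\pm : \R^2 \to \R$, and note that $\Sigma_+ \cap J^+(\Sigma_-)$ is exactly the graph of $f_+$ over the closed set $\Omega = \{x \in \R^2 : f_+(x) \geq f_-(x)\}$. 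The natural candidate is $\Pi(x, f_+(x)) = (x, f_-(x))$, i.e. vertical projection; this is easily seen to be $1$-Lipschitz for the induced metrics because the vertical projection $\R^{2,1} \to \R^2$ is $1$-Lipschitz from any spacelike surface onto $\R^2$ with the Euclidean metric (the first fundamental form of a spacelike graph dominates the Euclidean metric pulled back from $\R^2$), while $\R^2 \to \Sigma_-$ via $x \mapsto (x, f_-(x))$ is... not $1$-Lipschitz. So vertical projection is the wrong map.

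Instead I would use the \emph{closest-point-along-the-future-cone} construction, or equivalently the retraction used in \cite{Bonsante:2019aa}: since $\Sigma_+$ is convex and lies in $J^+(\Sigma_-)$, for each $p \in \Sigma_+$ the past cone $J^-(p)$ meets $\Sigma_-$ (because $\Sigma_-$ is entire, hence meets the timelike line through $p$, and $p$ is in the future of that intersection point). Define $\Pi(p)$ to be the point of $\Sigma_- \cap J^-(p)$ that is "highest," i.e. realizing $\sup\{f_-(x) : (x,f_-(x)) \in J^-(p)\}$; equivalently $\Pi(p)$ is the unique point of $\Sigma_-$ such that $p - \Pi(p) \in J^+$ and $p$ is in the past cone boundary of no point of $\Sigma_-$ higher than $\Pi(p)$. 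The key computation is that this retraction is $1$-Lipschitz with respect to the first fundamental forms: given $p, q \in \Sigma_+ \cap J^+(\Sigma_-)$, one shows $\langle \Pi(p)-\Pi(q), \Pi(p)-\Pi(q)\rangle \le \langle p - q, p - q\rangle$ using convexity of $\Sigma_+$ (so that $p - q$ is "spacelike-ish" in a controlled way) and the fact that $p - \Pi(p)$, $q - \Pi(q)$ are future causal. I expect this inequality—showing the causal-cone retraction onto a spacelike surface from a convex spacelike surface in its future is a contraction—to be the main obstacle; it is a Minkowski-geometry lemma that should follow from writing $p - q = (\Pi(p) - \Pi(q)) + ((p - \Pi(p)) - (q - \Pi(q)))$ and expanding $\langle p-q,p-q\rangle$, using that $p - \Pi(p), q-\Pi(q) \in J^+$ have nonpositive inner product with each other, and that the tangent-plane of $\Sigma_+$ at an appropriate point separates the causal vectors appropriately. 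Alternatively, one can realize $\Pi$ as the limit of compositions of the cosmological-time retractions and quote \cite[Proposition 3.11 or similar]{Bonsante:2019aa}, where such contractions are established.

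For the second assertion, suppose $\Sigma_+$ and $\Sigma_-$ share the null support plane $P$ in direction $\theta$, so $\phi_+(\theta) = \phi_-(\theta) =: c$ with $P = \{\langle \cdot, \vec\theta\rangle = c\}$, and let $p_n \in \Sigma_+ \cap J^+(\Sigma_-)$ be a diverging Cauchy sequence asymptotic to $P$, meaning $\langle p_n, \vec\theta\rangle \to c$. Since $\Pi$ is $1$-Lipschitz, $(\Pi(p_n))$ is Cauchy in $\Sigma_-$. It diverges in $\Sigma_-$: if it stayed in a compact set, then by properness of $d^{\mathrm{ext}}$ (Lemma \ref{lem: d ext proper}) and the $1$-Lipschitz bound, $(p_n)$ would lie in a bounded region of $\R^{2,1}$, and again by properness of the extrinsic distance on $\Sigma_+$ it would be non-diverging, a contradiction. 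Finally $\Pi(p_n)$ is asymptotic to $P$: since $p_n - \Pi(p_n) \in J^+$ and $\vec\theta$ is future null, $\langle p_n - \Pi(p_n), \vec\theta\rangle \le 0$, so $\langle \Pi(p_n), \vec\theta\rangle \ge \langle p_n, \vec\theta\rangle$; combined with $\langle \Pi(p_n), \vec\theta\rangle \le \phi_-(\theta) = c$ (definition of support function) and $\langle p_n, \vec\theta\rangle \to c$, we get $\langle \Pi(p_n),\vec\theta\rangle \to c = \phi_-(\theta)$, which is exactly the statement that $\Pi(p_n)$ is asymptotic to the null support plane $P$ of $\Sigma_-$. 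This finishes the lemma, and plugging into Lemma \ref{lemma:incompleteness by cauchy} will give Proposition \ref{lem:complete comparison}.
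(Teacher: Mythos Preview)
Your construction of $\Pi$ has a genuine problem. The ``highest point of $\Sigma_- \cap J^-(p)$'' is not well-defined in general: take $\Sigma_-$ to be the horizontal plane $\{z=0\}$ and $\Sigma_+$ the hyperboloid; then every point of the disk $\Sigma_- \cap J^-(p)$ has the same height. More seriously, even granting some tie-breaking rule, your sketched expansion $|p-q|^2 = |\Pi(p)-\Pi(q)|^2 + (\text{cross term}) + |(p-\Pi(p))-(q-\Pi(q))|^2$ does not close: the last term can be negative (the difference of two future causal vectors need not be causal), and the sign of the cross term would require a variational characterisation of $\Pi(p)$ relative to the \emph{target} surface $\Sigma_-$, which is not assumed convex. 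The Euclidean nearest-point argument you are mimicking uses convexity of the target; here only $\Sigma_+$ is convex.

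The paper exploits exactly this asymmetry. It foliates $\R^{2,1}$ by the vertical translates of $\Sigma_+$ (so every leaf is convex), takes $V$ to be the past-directed normal field to this foliation, and defines $\Pi(p)$ as the first hitting point of $\Sigma_-$ along the flow of $V$. The $1$-Lipschitz estimate then comes from $\mathcal L_V g = -2|V|\,\II_{\mathcal F}\le 0$, i.e.\ convexity of the foliation, together with a chain-rule decomposition of $d\Pi$ into a leaf-tangent part (contracted by the flow) and a $V$-part (timelike, hence only decreasing the norm). The key insight is that the contraction comes from flowing \emph{through} the convex foliation, not from projecting \emph{onto} a convex target.

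Your ``moreover'' paragraph is largely correct and matches the paper's argument for $\langle \Pi(p_n),\vec\theta\rangle\to\phi_-(\theta)$. One small gap: your divergence argument (``if $\Pi(p_n)$ stayed in a compact set, then by the $1$-Lipschitz bound $p_n$ would be bounded'') does not follow, since the Lipschitz inequality goes the wrong way. The paper instead observes that a subsequential limit $q_\infty$ of $\Pi(p_n)$ would lie in $\Sigma_-\cap P$, forcing the null plane $P$ to touch the acausal surface $\Sigma_-$, contradicting Proposition~\ref{prop achronal acausal}.
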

\begin{proof}
We essentially use the Lorentzian analog of the fact that closest point projection onto a convex body in Euclidean space is distance decreasing, but we must be slightly careful because the sign reversal means that we need to project from the wrong side, where the closest point may not be unique. Instead, we start with any foliation $\mathcal{F}=\{\mathcal F_t\}_{t\in \R}$ of $\R^{2,1}$ by  convex entire spacelike surfaces, including $\Sigma_+=\mathcal F_0$ as a leaf. For concreteness, take $\mathcal{F}$ to be the foliation by vertical translates of $\Sigma_+$. Let $V$ be the unique past timelike vector field such that 
\begin{enumerate}
\item $V$ is normal to $\mathcal{F}$, and
\item $\mathcal{F}$ is invariant by the flow of $V$, meaning that the time $t_0$ flow of $V$ maps $\mathcal F_{t}$ to $\mathcal F_{t+t_0}$.
\end{enumerate}
 If $\mathcal F_0$ is the graph of the function $f:\R^2\to\R$, then the leaves of $\mathcal{F}$ are the level sets of the function $F(x,y,z)=z-f(x,y)$, and then $V = - \grad F / \langle\grad F,\grad F\rangle$. Starting from any point in $\Sigma_+\cap J^+(\Sigma_-)$, if we flow by the vector field $V$ we will eventually hit $\Sigma_-$ because  $\Sigma_-$ is convex and entire (this is essentially the observation that the domain of dependence of a convex entire surface contains its future). Moreover, each flow line will intersect $\Sigma_-$ exactly once, and transversally, since $V$ is timelike and $\Sigma_-$ is spacelike. Let $\Phi(x,t): \Sigma_+ \times \R_{>0} \to \R^{2,1}$ be the flow of $V$, and $T: \Sigma_+\cap J^+(\Sigma_-) \to \R_{>0}$ be the intersection time with $\Sigma_-$, which is smooth by transversality. We want to show that if $x$ is in $\Sigma_+\cap J^+(\Sigma_-)$, then the smooth map $\Pi: \Sigma_+\cap J^+(\Sigma_-) \to \Sigma_-$ defined by $\Pi(x) = \Phi(x,T(x))$ is 1-Lipschitz. It is harmless to assume that $x$ is in  $\Sigma_+\cap I^+(\Sigma_-)$, so that a small neighbourhood of $x$ in $\Sigma_+$ is contained in the future of $\Sigma_-$. If $W$ is tangent to $\Sigma_+$ at $x$, then by the chain rule,
\[
d \Pi (W) = \frac{\partial \Phi}{\partial x}(W) + \frac{\partial \Phi}{\partial t} dT(W)
 \]
 Since $\frac{\partial \Phi}{\partial x}(W)$ is tangent to the leaves of the foliation, and $\frac{\partial \Phi}{\partial t} = V$ is orthogonal to the leaves of the foliation and timelike, we conclude
 \[
 |d \Pi(W)|^2 \leq \bigg| \frac{\partial \Phi}{\partial x}(W) \bigg|^2
 \]
It remains to show that the right hand side is less than or equal to $|W|^2$, which will follow from convexity of the foliation. If we extend $W$ to a vector field $\tilde{W}$ on $\R^{2,1}$ which is invariant by the flow of $V$, then the derivative of the square length of $\tilde{W}$ with respect to $t$ is
\[
\mathcal{L}_V \langle \tilde{W},\tilde{W}\rangle = ( \mathcal{L}_V g ) (\tilde{W}, \tilde{W}) =  - 2|V| \II (\tilde{W},\tilde{W}) \leq  0
\]
 where $g$ is the Minkowski metric, $\mathcal{L}$ the Lie derivative, $\II=(1/2)\mathcal L_Ng$ the second fundamental form of the foliation with respect to the future unit normal $N$, and $|V|$ the positive length defined by $V=-|V|N$. The final inequality is the convexity of the foliation. Integrating this inequality gives  
 \[
 \bigg| \frac{\partial \Phi}{\partial x}(0,T)(W) \bigg|^2 \leq  \bigg| \frac{\partial \Phi}{\partial x}(0,0)(W) \bigg|^2 = |W|^2
 \]
 Putting this together, we find that $\Pi$ is 1-Lipschitz.
 
 For the ``moreover" part, let $p_n$ a diverging Cauchy sequence in $\Sigma_+\cap J^+(\Sigma_-)$ such that 
 \begin{equation}\label{eq:cauchy1}
\langle p_n, \vec{\theta} \rangle \to \phi_+({\theta}).
\end{equation}
Since $\Pi$ is Lipschitz, the sequence $\Pi(p_n) \subset \Sigma_-$ is still Cauchy. Let us show that the sequence $\langle \Pi(p_n), \vec{\theta}\rangle$ converges to $\phi_-({\theta})$. Since $\Pi(p_n)$ lies in the timelike past of $p_n$ and $\vec{\theta}$ is future-directed, we have
\begin{equation}\label{eq:cauchy2}
\langle \Pi(p_n), \vec{\theta} \rangle \geq \langle p_n, \vec{\theta} \rangle.
\end{equation}
But since $\Sigma_-$ lies in the future of the null support plane of $\Sigma_+$ at $\theta$, we also have 
\begin{equation}\label{eq:cauchy3}
\langle \Pi(p_n), \vec{\theta}\rangle \leq \phi_+({\theta})=\phi_-(\theta).
\end{equation}
Putting together \eqref{eq:cauchy1}, \eqref{eq:cauchy2} and \eqref{eq:cauchy3}, we conclude that 
\begin{equation}\label{eq:limit product}
\langle \Pi(p_n), \vec{\theta} \rangle \to \phi_-({\theta})~.
\end{equation}

To conclude the proof, it only remains to show that the Cauchy sequence $\Pi(p_n)$ is diverging. This is easily done by a contradiction argument: if $\Pi(p_n)$ had a converging subsequence, then the limit point $q_\infty$ would be in $\Sigma_-$ because $\Sigma_-$ is closed, and in $P$ because $\langle q_\infty,\vec\theta\rangle=\phi_-(\theta)$ by \eqref{eq:limit product}. But then the null support plane $P$ would touch $\Sigma_-$ at $q_\infty$, contradicting Proposition \ref{prop achronal acausal}.
In conclusion, we have shown that $\Pi(p_n)$ is a diverging Cauchy sequence in $\Sigma_-$ asymptotic to $P$.
\end{proof}

We are now ready to conclude the proof of Proposition \ref{lem:complete comparison}.

\begin{proof}[Proof of Proposition \ref{lem:complete comparison}]   
Suppose $\Sigma_+$ is in the future of $\Sigma_-$. Let $\Pi:\Sigma_+\to\Sigma_-$ the 1-Lipschitz map from Lemma \ref{prop:contraction}. Since $\Sigma_+$ is incomplete at ${\theta}$, there is a Cauchy sequence $p_n$ in $\Sigma_+$ asymptotic to a null support plane $P$ in the direction of $\theta$. 
By the second part of Lemma \ref{prop:contraction}, $\Pi(p_n)$ is a diverging Cauchy sequence in $\Sigma_-$ asymptotic to $P$. 
By Lemma \ref{lemma:incompleteness by cauchy}, $\Sigma_-$ is incomplete at ${\theta}$.
\end{proof}


\section{Completeness II: subloglogarithmic condition}\label{sec:Completeness II}

The main result of this section, namely Theorem \ref{thm: xloglogx completeness} below, provides another sufficient condition for an entire spacelike surface of curvature bounded below to be complete. 

Let us give the statement of Theorem \ref{thm: xloglogx completeness}, which again we give in a local form; the statement that appears in the introduction  follows again by Corollary \ref{thm: incomplete at theta}.




\begin{reptheorem}{thm: xloglogx completeness}[Subloglogarithmic condition -- local version]
Let $\phi:\mathbb S^1\to\R\cup\{+\infty\}$ be lower semicontinuous and  finite on at least three points, and let $\lambda>0$. Suppose  $\theta_0 \in \mathbb S^1$ is such that $\phi(\theta_0)<+\infty$  and there is a one-sided neighbourhood $U$ of $\theta_0$ with
\begin{equation} \label{eqn: lxloglogx}
\phi(\theta) \leq \phi(\theta_0) + \frac{\lambda}{4}|\theta - \theta_0| \log(-\log|\theta-\theta_0|) \tag{Comp'}
\end{equation}
for every $\theta \in U$.
If $\Sigma$ is a convex entire spacelike surface in $\R^{2,1}$ with curvature bounded below by $-\lambda^2$ and null support function $\phi$, then $\Sigma$ is complete at $\theta_0$.

In particular, if condition \eqref{eqn: lxloglogx} holds for every $\theta_0$ at which $\phi$ is finite, then $\Sigma$ is complete.
\end{reptheorem}

\begin{remark} Note that in the statement of Theorem \ref{thm: xloglogx completeness}, we need an explicit lower bound on the curvature to conclude that the surface is complete, whereas in Theorem \ref{thm: Lipschitz completeness intro} we only needed that the curvature had some lower bound. A priori, it might be possible that there exist two convex entire spacelike surfaces with curvature bounded below, having the same null support function $\phi$, one of which is complete and the other incomplete. 

For instance, by the results in \cite{Bonsante:2019aa}, for every $\kappa>0$ there is a convex entire spacelike surface of constant negative curvature $-\kappa$ with a given null support function $\phi$, and these surfaces foliate a regular domain in $\R^{2,1}$. We are unable to rule out the possibility that some of these surfaces become incomplete $\kappa$   sufficiently large, for instance for $\phi$ a function as in Theorem \ref{thm: xloglogx completeness}.
\end{remark}

\begin{remark}
Theorem \ref{thm: xloglogx completeness} also shows that the sufficient condition for completeness from Theorem \ref{thm: Lipschitz completeness intro} is not a necessary condition. 
The point is that the function  $ |\theta - \theta_0| \log|\log|\theta-\theta_0|| $ does not satisfy the sequentially sublinear condition \eqref{eqn: M2} of the previous section. Neither though does the sequentially sublinear condition \eqref{eqn: M2} imply the subloglogarithmic condition \eqref{eqn: lxloglogx}, since the first condition applies even to subsequences. 
\end{remark}


\subsection{Preliminary steps}\label{sec:thmBprel}

To prove Theorem \ref{thm: xloglogx completeness}, we can assume after an isometry that $\theta_0=0$ and $\phi(0)=0$, in which case the condition \eqref{eqn: lxloglogx} becomes:
$$\phi(\theta) \leq \frac{\lambda|\theta| \log(-\log|\theta|)}{4}~.$$

We will find the computations easier if we work with parabolic null support functions, so as a preliminary step we show this condition essentially doesn't change when the null support function is replaced by the parabolic null support function, for the point $\theta_0 = 0$.

\begin{lemma}\label{lemma:mouse0} 
Let $s$ be a homogeneous function, and let $\phi$ and $\psi$ be respectively its elliptic dehomogenization and parabolic dehomogenization, with point at infinity $\pi$.
If 
\[
\phi(\theta) \leq \frac{\lambda|\theta| \log(-\log|\theta|)}{4}
\]
for all $\theta$ in some (one-sided) neighbourhood $U$ of 0, then 
\begin{equation} \label{eqn: par xloglogx}
\begin{split}
\psi(\mathsf x) \leq \frac{ \lambda|\mathsf x| \log(-\log|\mathsf x|)}{2}
\end{split}
\end{equation}
for all $\mathsf x$ in some (one-sided) neighbourhood $U'$ of 0.
\end{lemma}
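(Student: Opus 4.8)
The relation between $\phi$ and $\psi$ near $\theta_0 = 0$ is governed by Equation \eqref{eq:null affine vs parabolic} of Proposition \ref{prop: ell/par}, namely $\phi(\theta) = \psi(\mathsf x)/(1+\mathsf x^2)$ where $\mathsf x = \tan(\theta/2)$. So the proof is a direct substitution followed by an elementary asymptotic estimate. First I would rewrite the hypothesis as $\psi(\mathsf x) = (1+\mathsf x^2)\phi(\theta) \leq (1+\mathsf x^2)\frac{\lambda}{4}|\theta|\log(-\log|\theta|)$, with $\theta = 2\arctan(\mathsf x)$, valid for $\mathsf x$ in the image $U'$ of $U$ under $\theta \mapsto \tan(\theta/2)$ (which is again a one-sided neighbourhood of $0$, since this map is an increasing homeomorphism near $0$ fixing $0$).

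The remaining point is purely about the behaviour of the elementary functions as $\mathsf x \to 0$. I would observe that $\theta = 2\arctan(\mathsf x)$ satisfies $\theta/\mathsf x \to 2$ and more precisely $|\theta| \le 2|\mathsf x|$ for $\mathsf x$ small (using that $\arctan$ is concave on $[0,\infty)$ and odd, so $|\arctan \mathsf x| \le |\mathsf x|$). Also $1 + \mathsf x^2 \to 1$, so $(1+\mathsf x^2) \le 2$ for $|\mathsf x| \le 1$; in fact for the clean constant $\frac{\lambda}{2}$ I want to be slightly more careful. Write the bound on $\psi(\mathsf x)$ as $\frac{\lambda}{4}(1+\mathsf x^2)|\theta|\log(-\log|\theta|)$. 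Since $|\theta| \le 2|\mathsf x|$ we get $|\theta|\log(-\log|\theta|) \le 2|\mathsf x|\log(-\log|\theta|)$ provided $-\log|\theta| > 1$, i.e. $|\theta| < 1/e$, which holds for $\mathsf x$ small; and since $|\theta| \ge c|\mathsf x|$ for some $c>0$ near $0$ (indeed $|\theta| \ge |\mathsf x|$ for $\mathsf x$ small, as $\tan$ dominates the identity), we have $-\log|\theta| \le -\log|\mathsf x|$, hence $\log(-\log|\theta|) \le \log(-\log|\mathsf x|)$ for $\mathsf x$ small enough that both quantities are positive. Combining, $\psi(\mathsf x) \le \frac{\lambda}{4}(1+\mathsf x^2)\cdot 2|\mathsf x|\log(-\log|\mathsf x|) = \frac{\lambda}{2}(1+\mathsf x^2)|\mathsf x|\log(-\log|\mathsf x|)$, and absorbing the factor $1+\mathsf x^2 \to 1$ requires shrinking the constant slightly — so to land exactly on $\frac{\lambda}{2}$ I would instead note that $|\theta| \le (2-\eta)|\mathsf x|$ for any fixed $\eta > 0$ once $\mathsf x$ is small enough (since $\theta/\mathsf x \to 2$ from below, as $\arctan \mathsf x < \mathsf x$), pick $\eta$ with $(2-\eta)(1+\mathsf x^2) \le 2$ on a small enough one-sided neighbourhood, and conclude \eqref{eqn: par xloglogx} on that neighbourhood $U'$.

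**Main obstacle.** There is no real obstacle here — this is a routine change-of-variables lemma, and the only thing to be careful about is chasing the constant $\lambda/4 \rightsquigarrow \lambda/2$ correctly (which the factor-of-$2$ slack from $\theta \approx 2\mathsf x$ provides, with the $1+\mathsf x^2$ correction harmlessly absorbed by shrinking the neighbourhood) and making sure $\log(-\log|\cdot|)$ is monotone and positive on the relevant range, which forces us to take $|\mathsf x|$ (hence $|\theta|$) smaller than $1/e$. Since the conclusion only needs to hold on \emph{some} one-sided neighbourhood of $0$, all such shrinking is free.
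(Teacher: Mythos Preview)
Your overall approach is the same as the paper's: substitute via $\phi(\theta)=\psi(\mathsf x)/(1+\mathsf x^2)$, $\mathsf x=\tan(\theta/2)$, and then estimate. You correctly reach
\[
\psi(\mathsf x)\le \tfrac{\lambda}{2}(1+\mathsf x^2)\,|\mathsf x|\log(-\log|\mathsf x|),
\]
and correctly identify that the factor $(1+\mathsf x^2)$ must still be absorbed. But your proposed fix is wrong in direction: you claim $|\theta|\le(2-\eta)|\mathsf x|$ for small $\mathsf x$ ``since $\theta/\mathsf x\to 2$ from below''. That limit statement is right, but it gives the \emph{opposite} conclusion: for small $\mathsf x$ the ratio $|\theta|/|\mathsf x|$ is \emph{close to} $2$, so $|\theta|>(2-\eta)|\mathsf x|$, not the reverse. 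So you cannot gain slack on the factor $|\theta|\le 2|\mathsf x|$; that inequality is essentially sharp near $0$.

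The slack has to come from the other inequality. You used only $|\theta|\ge|\mathsf x|$, but since $|\theta|/|\mathsf x|\to 2$ you actually have $|\theta|\ge \tfrac{3}{2}|\mathsf x|$ for $\mathsf x$ small. This gives $-\log|\theta|\le -\log|\mathsf x|-\log(3/2)$, hence (via $\log(a+b)\le\log a+b/a$)
\[
\log(-\log|\theta|)\le \log(-\log|\mathsf x|)+\frac{\log(3/2)}{\log|\mathsf x|},
\]
where the correction term is \emph{negative}. Expanding the product, the cross terms $\mathsf x^2\log(-\log|\mathsf x|)+\log(3/2)/\log|\mathsf x|+\cdots$ are dominated by the negative $1/\log|\mathsf x|$ term for small $|\mathsf x|$ (set $|\mathsf x|=e^{-t}$ and compare $e^{-2t}\log t$ with $-\log(3/2)/t$), which kills the $(1+\mathsf x^2)$ overshoot. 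This is exactly how the paper closes the estimate.
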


\begin{proof}
The second statement of Proposition \ref{prop: ell/par}, gives us the relationships $\mathsf x = \tan(\theta/2)$ and
\[
\phi(\theta) = \frac{\psi(\mathsf x)}{1+\mathsf x^2}~.
\]
Assume $\mathsf x$ is small enough that $3|\mathsf x|/2 \leq |\theta| \leq 2|\mathsf x|$
\begin{align*}
\psi(\mathsf x) &= (1+\mathsf x^2) \phi(\theta) \\
&\leq (1 + \mathsf x^2) \frac{\lambda|\theta| \log(-\log|\theta|)}{4} \\
&\leq (1+\mathsf x^2) \frac{\lambda|\mathsf x| \log(-\log|\mathsf x| - \log(3/2))}{2} \\
&\leq \frac{\lambda|\mathsf x|}{2}(1+\mathsf x^2) \left(\log(-\log|\mathsf x|) + \frac{\log(3/2)}{\log|\mathsf x|}\right) 
\end{align*}
where in the last line we used that $\log(a+b)=\log a+\log(1+b/a)\leq \log a +(b/a)$ for any $a>0$ and $b\in (-a,+\infty)$.
Developing the cross terms in the product, we obtain:
$$\psi(\mathsf x)\leq \frac{\lambda|\mathsf x|\log(-\log|\mathsf x|)}{2}+\frac{\lambda|\mathsf x|}{2}\left(\mathsf x^2\log(-\log|\mathsf x|)+\frac{\log(3/2)}{\log|\mathsf x|}+\frac{\log(3/2)\mathsf x^2}{\log|\mathsf x|}\right)~.$$
We claim that the second term on the right hand side is negative for $|x|$ small, which concludes the proof. To see this more easily, set $|\mathsf x|=e^{-t}$. Then the sum  in the last bracket equals
$e^{-2t}\log(t)-\log(3/2)(1+e^{-2t})/t$, which is clearly negative for $t$ large. 
\end{proof}

As a last preliminary step, we observe that it suffices to prove Theorem \ref{thm: xloglogx completeness} assuming $\lambda=1$. This follows from the following lemma, whose proof is straightforward from the definition of support function.

\begin{lemma}\label{lemma:rescale par null}
Let $\phi$ and $\phi'$ be the  null support functions of two convex spacelike  surfaces $\Sigma$ and $\Sigma'$. If $\Sigma'=(1/\lambda)\Sigma$, then $\phi'=\phi/\lambda$. The same holds for the parabolic null support functions.
\end{lemma}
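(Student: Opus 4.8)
The plan is simply to unwind the definitions. Recall from \eqref{eq: ell null supp fun} that the null support function of a convex entire spacelike surface $\Sigma$ is $\phi(\theta) = \sup_{p\in\Sigma}\langle p, \vec\theta\rangle$, where $\vec\theta = (\cos\theta,\sin\theta,1)$. Since $\lambda>0$, the homothety $p\mapsto (1/\lambda)p$ is a linear map preserving orientation and time-orientation, so $\Sigma' = (1/\lambda)\Sigma$ is again a convex entire spacelike surface and its null support function $\phi'$ is well-defined as an element of $\R\cup\{+\infty\}$.

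The only computation needed is: for each $\theta\in\mathbb S^1$,
$$\phi'(\theta) = \sup_{q\in\Sigma'}\langle q,\vec\theta\rangle = \sup_{p\in\Sigma}\left\langle \tfrac{1}{\lambda}p,\vec\theta\right\rangle = \frac{1}{\lambda}\sup_{p\in\Sigma}\langle p,\vec\theta\rangle = \frac{\phi(\theta)}{\lambda}~,$$
where in the third equality we used that $1/\lambda>0$ to pull the positive constant out of the supremum; this identity remains valid in $\R\cup\{+\infty\}$, since the left-hand side is $+\infty$ if and only if the right-hand side is. This proves $\phi'=\phi/\lambda$.

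For the parabolic null support function the argument is identical: by \eqref{eq: par null supp fun} we have $\psi(\mathsf x) = \sup_{p\in\Sigma}\langle p,\zeta(\mathsf x,0)\rangle$, and replacing $\Sigma$ by $(1/\lambda)\Sigma$ scales this supremum by $1/\lambda$ for the same reason. (The same reasoning also scales the homogeneous support function $s$ and the dehomogenizations $s_{\mathrm{ell}}$, $s_{\mathrm{par}}$ by $1/\lambda$, though this is not needed here.) There is no real obstacle; the only point requiring the hypothesis $\lambda>0$ is the factoring of the positive constant through the supremum.
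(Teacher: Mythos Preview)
Your proof is correct and is exactly the ``straightforward from the definition of support function'' argument that the paper alludes to in lieu of a written proof. There is nothing to add.
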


Indeed, from Lemma \ref{lemma:rescale par null}, if $\Sigma$ has curvature bounded below by $-\lambda^2$ and satisfies \eqref{eqn: par xloglogx}, then $\Sigma'=(1/\lambda)\Sigma$ has curvature bounded below by $-1$ and satisfies \eqref{eqn: par xloglogx} with $\lambda=1$. Clearly $\Sigma$ is complete in a given direction $\theta_0$ if and only if $\Sigma'$ is. Hence it is harmless to assume $\lambda=1$ in Theorem \ref{thm: xloglogx completeness}.

\subsection{Proof of Theorem \ref{thm: xloglogx completeness}}\label{subsec:proofthmB}

In the proof of Theorem \ref{thm: Lipschitz completeness intro}, we used two fundamental qualitative observations:
\begin{enumerate}
\item the timelike distance to a spacelike line contained in a null support plane $P$ tends to zero along incomplete geodesics asymptotic to $P$ (Lemma \ref{lem: timelike distance}), and
\item to each point in $\Sigma$ at which this timelike distance is small, there corresponds a segment in the future of $\Sigma$ with large spacelike length (Proposition \ref{prop: L tilde}).
\end{enumerate} 

We will achieve Theorem \ref{thm: xloglogx completeness} by turning the qualitative observations (1) and (2)  into more quantitative estimates.

\begin{proof}[Proof of Theorem \ref{thm: xloglogx completeness}]
In Section \ref{sec:thmBprel} we observed already that we can assume $\theta_0=0$, which corresponds to $\mathsf x=0$ in parabolic coordinates, that the parabolic null support function $\psi$ satisfies \eqref{eqn: par xloglogx}, and that $\lambda=1$. Moreover, up to applying a reflection in the plane $\{y=0\}$ to the surface $\Sigma$,  we can assume that the one-sided neighbourhood $U$ contains an interval of the form $[0,\epsilon)$. Hence we assume that \eqref{eqn: par xloglogx} holds for small positive $\mathsf x$.

Suppose that $\Sigma$ is incomplete at $\mathsf x=0$, and let $\gamma$ be a proper geodesic ray asymptotic to the support plane of $\Sigma$ in the direction of $\mathsf x=0$, which we will denote $P$. Again allowing ourselves the freedom of an isometry of $\R^{2,1}$, we may suppose that the null line in $P$ to which $\gamma$ is asymptotic (Proposition \ref{prop: exists theta}) goes through the origin. We moreover assume that the $y$-axis is the intersection of $P$ with some other null support plane of $\Sigma$. In particular, $\Sigma$ is contained in the future of the $y$-axis.

 The next step is to define a certain spacelike surface with $P$ as a null support plane and which is complete at $P$. It is easiest to describe this surface in Lorentzian-cylindrical coordinates $(\rho,y,\alpha)$  defined on the future of the $y$-axis by

\begin{equation} \label{eqn: cylindrical}
\begin{split}
\begin{bmatrix}  x \\ y \\ z   \end{bmatrix} = \begin{bmatrix}  \rho \sinh(\alpha) \\ y \\ \rho \cosh(\alpha)   \end{bmatrix}~.
\end{split}
\end{equation}

Observe that $\rho$ is the timelike distance to the $y$-axis, and $\alpha \to +\infty$ as you approach $P$. Let $\Sigma_\epsilon$ be the spacelike surface defined by $\rho(\alpha) = \epsilon (1 + \alpha^2)^{-\frac{1}{2}}$ for $\epsilon > 0$ to be chosen. We first show that $\Sigma_\epsilon$ is spacelike and complete at $P$.

Since $\Sigma_\epsilon$ is invariant by translation in the $y$ direction, it suffices to show that the curve $\rho = \epsilon (1 + \alpha^2)^{-\frac{1}{2}}$ in $\R^{1,1}$ is spacelike with infinite length as $\alpha \to +\infty$. Using the equation \eqref{eqn: cylindrical}, we find that the square norm of the derivative of this curve is 
\[
\rho^2 - \left(\frac{d\rho}{d\alpha}\right)^2 = \epsilon^2\left(\frac{1}{1 + \alpha^2} - \frac{\alpha^2}{(1 + \alpha^2)^3}\right)
\]
and we see that this is spacelike for all $\alpha$. For $\alpha$ large, this is bigger than $\epsilon^2/(2\alpha^2)$, so integrating its square root to infinity, we see that the length is infinite. 

We will now use the completeness of $\Sigma_\epsilon$ at $P$ to show that there is a divergent sequence of points $\gamma(t_n)$ on the curve $\gamma$ that lie in the past of $\Sigma_\epsilon$. Otherwise, beyond a certain point $\gamma$ would lie entirely in the future of $\Sigma_\epsilon$, and using Lemma \ref{prop:contraction} we could project this tail of $\gamma$ to a proper curve on $\Sigma_\epsilon$ of length at most the length of $\gamma$, which is still asymptotic to $P$. Hence using Lemma \ref{lemma:incompleteness by cauchy}, this would contradict the completeness of $\Sigma_\epsilon$ at $P$.

Let us now consider the points $\gamma(t_n)$ in the cylindrical coordinates defined above. By the convergence of $\gamma$ to the null line through the origin, the $y$-coordinates $y_n$ of these points tend to zero. Furthermore, the $\alpha$-coordinates $\alpha_n$ go to infinity, and by constructions the $\rho$-coordinates satisfy $0 < \rho_n < \epsilon (1 + \alpha_n^2)^{-\frac{1}{2}}<\epsilon \alpha_n^{-1}$. Define $q_n = \gamma(t_n)$ and let $p_n$ be the point in the $y$-axis with the same $y$-coordinate as $q_n$. 

We now apply Proposition \ref{prop: L tilde} to each of the pairs of points $(p_n, q_n)$ to produce line segments $\tilde{L}_n$ in the future of $\Sigma$. Taking $h=1$, and using $\mathrm{dist}(p_n, q_n) \leq \epsilon \alpha_n^{-1}$, the proposition gives a segment $\tilde{L}_n$ of length 
\[
r=\frac{1}{2}\log\left(\frac{\alpha_n}{4\epsilon}\right)~.
\]
Setting $C = -\log(4\epsilon)/4$, this implies $r/2 = \log(\alpha_n)/4 + C$. In summary, the endpoints of the segment $\tilde{L}_n$ have $\rho$-coordinate 1, $\alpha$-coordinate $\alpha_n$, and $y$-coordinate $y_n \pm (\log(\alpha_n)/4 + C)$. Let us call $w_n$ the endpoint with $y$-coordinate $y_n +\log(\alpha_n)/4 + C$. 

These segments $\tilde{L}_n$ lie in the future of $\Sigma$, and therefore give lower bounds on the (parabolic) null support function $\psi$ of $\Sigma$. To calculate these lower bounds, we need to get our hands just a little bit dirty with parabolic coordinates.

Recall from Equation \eqref{eq:par supp point} that the parabolic null support function of the point $p=[a, b, c]$ (equivalently, of its future null cone) equals
\[
f_p(\mathsf x) = (a-c) + 2b\mathsf x - (a+c)\mathsf x^2~.
\]
Hence the parabolic null support function of the point $w_n$ with cylindrical coordinates $(1, \alpha_n, y_n + \log(\alpha_n)/4{+C})$ is
\[
f_{w_n}(\mathsf x) = - e^{-\alpha_n} + 2\left(y_n + \frac{\log(\alpha_n)}{4} + C\right)\mathsf x - e^{\alpha_n}\mathsf x^2~.
\]
For each $n$, we have $f_{w_n}(\mathsf x) \leq \psi(\mathsf x)$ for all $\mathsf x$, but we only use this inequality at the point $\mathsf x_n = e^{-\alpha_n}$, for which we have
\[
\psi(\mathsf x_n) \geq f_{w_n}(\mathsf x_n) = 2\mathsf x_n\left(-1 +  y_n + \frac{\log(-\log \mathsf x_n)}{4} + C\right)~.
\]
Since the $y_n$ tend to zero, we can take $\epsilon$ small enough so that $C > \max_n y_n + 1$, and conclude
\[
\psi(\mathsf x_n) > \frac{\mathsf x_n \log(-\log \mathsf x_n)}{2}~,
\]
and this gives a contradiction. Hence the theorem is proved. 
\end{proof}

\section{Incompleteness I: power function condition}\label{sec:Holder incomplete}

The purpose of this section is to prove Theorem \ref{thm: Holder incomplete}, which we restate below with a slightly strengthened conclusion with respect to the introduction, namely that $\Sigma$ is incomplete \emph{at $\theta_0$}.

\begin{reptheorem}{thm: Holder incomplete}[Power function condition -- local version]  Let $\phi: \mathbb S^1 \to \R \cup \{+\infty\}$ be lower semicontinuous and finite on at least three points. Suppose $\theta_0 \in \mathbb S^1$ is such that $\phi(\theta_0)<+\infty$ and there exists a neighborhood $U$ of $\theta_0$ and constants $\epsilon>0$, $0 < \alpha < 1$ with
\begin{equation}\label{eq:inc power}
\phi(\theta) - \phi(\theta_0) > \epsilon|\theta - \theta_0|^{\alpha} \tag{Inc}
\end{equation}
for every $\theta \in U$. 
If $\Sigma$ is a convex entire spacelike  surface in $\R^{2,1}$ with null support function $\phi$ and curvature bounded above by a negative constant, then $\Sigma$ is incomplete at $\theta_0$.
\end{reptheorem}

\subsection{Outline of the strategy}\label{subsec:outline}
\label{subsec:easy}
Before entering the details of the proof of Theorem \ref{thm: Holder incomplete}, we remark that an easier incompleteness criterion can be immediately deduced from Corollary \ref{cor:complete comparison}. That is, we prove the following statement. 

\begin{cor}\label{cor easy}
Suppose $\phi: \mathbb S^1 \to \R \cup \{+\infty\}$ is a lower semicontinuous function, finite on at least three points. Suppose that there exist $\theta_0 \in \mathbb S^1$ at which $\phi$ has a \emph{two-sided jump}: that is, $\phi(\theta_0)<+\infty$ and there are a neighborhood $U$ of $\theta_0$ and a constant $\epsilon>0$ such that
\begin{equation}\label{eq:weak inc}
\phi(\theta) - \phi(\theta_0) > \epsilon
\end{equation}
for every $\theta \in U$. If $\Sigma$ is a convex entire spacelike  surface in $\R^{2,1}$ with null support function $\phi$ and curvature bounded above by a negative constant, then $\Sigma$ is incomplete at $\theta_0$.
\end{cor}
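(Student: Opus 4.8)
The plan is to deduce Corollary~\ref{cor easy} directly from the comparison principle, Corollary~\ref{cor:complete comparison}, using as a ``barrier'' a suitable affine image of the incomplete surface of Example~\ref{ex:parabolic}. Write $-\kappa<0$ for an upper bound on the curvature of $\Sigma$. The surface of Example~\ref{ex:parabolic} with parameter $\eta\ge0$ has constant curvature $-1$ and null support function equal to $0$ for $\theta\neq\pi$ and $-\eta$ at $\pi$. Rescaling it by a factor $\lambda$ (Lemma~\ref{lemma:rescale par null}), rotating about the $z$-axis so as to move the distinguished direction from $\pi$ to $\theta_0$, and translating by a vector $v$, one obtains a convex entire spacelike surface of constant curvature $-1/\lambda^2$ whose null support function has the form
$$\theta\longmapsto\begin{cases} a(\theta) & \theta\neq\theta_0\\ a(\theta_0)-\lambda\eta & \theta=\theta_0\end{cases}$$
where $a(\theta)=v_1\cos\theta+v_2\sin\theta-v_3$ is an \emph{arbitrary} affine function on $\mathbb S^1$ (a $z$-rotation permutes $\mathbb S^1$, a homothety multiplies the null support function by $\lambda$, and a translation by $v$ adds $\langle v,\vec\theta\rangle=v_1\cos\theta+v_2\sin\theta-v_3$). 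By Example~\ref{ex:parabolic} this surface is incomplete, and by Proposition~\ref{prop: exists theta} it is incomplete \emph{at} $\theta_0$, since the finite-length geodesic witnessing incompleteness is asymptotic to the null support plane in the direction $\theta_0$.

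The key point is then to choose $a$ so that it lies below $\phi$ everywhere off $\theta_0$ but strictly above $\phi(\theta_0)$ at $\theta_0$. Since $\Sigma$ is entire and convex, $\phi$ is bounded below, say $\phi\ge m$ on $\mathbb S^1$; and by \eqref{eq:weak inc} there are $\epsilon>0$ and a neighbourhood $U$ of $\theta_0$ with $\phi(\theta)\ge\phi(\theta_0)+\epsilon$ on $U\setminus\{\theta_0\}$. I would take
$$a(\theta):=\phi(\theta_0)+\tfrac{\epsilon}{2}-A\bigl(1-\cos(\theta-\theta_0)\bigr),$$
which is affine in $\cos\theta,\sin\theta$. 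For $\theta\in U\setminus\{\theta_0\}$ we have $a(\theta)\le\phi(\theta_0)+\tfrac\epsilon2<\phi(\theta_0)+\epsilon\le\phi(\theta)$ for every value of $A$; while for $\theta\notin U$ the quantity $1-\cos(\theta-\theta_0)$ is bounded below by some $\delta>0$, so once $A$ is large enough that $\tfrac\epsilon2-A\delta\le m-\phi(\theta_0)$, we get $a(\theta)\le m\le\phi(\theta)$. Fix such an $A$. Then $a\le\phi$ on $\mathbb S^1\setminus\{\theta_0\}$, while $a(\theta_0)=\phi(\theta_0)+\tfrac\epsilon2>\phi(\theta_0)$.

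Now apply the first paragraph with this $a$, choosing $\lambda=\kappa^{-1/2}$ and $\eta=\lambda^{-1}\epsilon/2$: this produces a convex entire spacelike surface $\Sigma_+$ of constant curvature $-\kappa$, incomplete at $\theta_0$, whose null support function $\phi_+$ equals $a$ off $\theta_0$ and equals $a(\theta_0)-\lambda\eta=\phi(\theta_0)$ at $\theta_0$. By the previous paragraph $\phi_+\le\phi$ on all of $\mathbb S^1$ and $\phi_+(\theta_0)=\phi(\theta_0)<\infty$, so $\phi_+$ touches $\phi$ from below at $\theta_0$; moreover $K_\Sigma\le-\kappa\le K_{\Sigma_+}=-\kappa\le0$. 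Corollary~\ref{cor:complete comparison} (with $C=\kappa$) then yields that $\Sigma$ is incomplete at $\theta_0$, which is the assertion of the corollary.

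The only substantive point, which I would flag as the crux, is the second paragraph: recognizing that an affine function on $\mathbb S^1$ of the displayed form can be made to lie strictly below $\phi$ off $\theta_0$ while poking above $\phi(\theta_0)$ at $\theta_0$. The room near $\theta_0$ is exactly what the two-sided jump hypothesis \eqref{eq:weak inc} provides, and the control far from $\theta_0$ comes from the automatic lower bound on $\phi$ together with the coercivity of $1-\cos(\theta-\theta_0)$ there; everything else is the bookkeeping of the first paragraph and a direct invocation of Corollary~\ref{cor:complete comparison}.
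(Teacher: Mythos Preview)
Your proof is correct and follows the same strategy as the paper's: use an affine image of the parabolic-invariant surface of Example~\ref{ex:parabolic} as the barrier $\Sigma_+$ in Corollary~\ref{cor:complete comparison}. The only differences are cosmetic (you work directly with the elliptic null support function and a general translation, whereas the paper normalizes to $\theta_0=\pi$ and translates only in the direction $\vec\pi$, using the parabolic null support function); one small imprecision is that Proposition~\ref{prop: exists theta} alone does not tell you \emph{which} null direction the barrier is incomplete at---that fact is established in Example~\ref{ex:parabolic} via Lemma~\ref{lem: u symmetric incomplete}, and you should cite it there rather than through Proposition~\ref{prop: exists theta}.
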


\begin{remark}
Theorem \ref{thm: Holder incomplete}, which is the main goal of this section, is a much stronger result in this direction. In fact, it infers incompleteness of $\Sigma$ under the condition \eqref{eq:inc power}, which is much weaker than \eqref{eq:weak inc}.  
\end{remark}

As a warm-up towards Theorem \ref{thm: Holder incomplete}, let us prove Corollary \ref{cor easy}.

\begin{proof}[Proof of Corollary \ref{cor easy}]
We can assume (up to an isometry and a rescaling) that $\theta_0=\pi$, $\phi(\theta_0)=-\epsilon$ (which implies $\phi>0$ in a punctured neighbourhood of $\pi$) and that the surface $\Sigma$ has curvature bounded above by $-1$. The strategy of the proof consists in applying Corollary \ref{cor:complete comparison} to $\Sigma_-=\Sigma$ and to $\Sigma_+$ a parabolic-invariant  entire surface of curvature $-1$ as in Example \ref{ex:parabolic}, which is incomplete at $\pi$, and has null support function $\phi_+$ equal to $-\epsilon$ at $\pi$ and identically zero elsewhere. Hence in particular, we have $\phi_+(\pi)=\phi(\pi)=-\epsilon$ and $\phi_+\leq \phi$ in a neighbourhood of $\pi$. By identity \eqref{eq:null affine vs parabolic}, the parabolic null support function $\psi_+$ of $\Sigma_+$ is also identically zero.

We are however not yet in the conditions to  apply Corollary \ref{cor:complete comparison}, since for that we  need  the inequality $\phi_+\leq \phi$ to hold globally on $\mathbb S^1$. This is easily fixed by replacing  $\Sigma_+$ with a translate of $\Sigma_+$ in the direction $\vec{\pi}$. Indeed, the parabolic null support function $\psi$ of $\Sigma$ is lower semi-continuous and positive near infinity, so it is bounded below by some constant $-2c$. By Equations \eqref{eq:defi p} (with $\mathsf y=0$) and \eqref{eq: par null supp fun}, translating $\Sigma_+$ by the vector $c\vec{\pi}$ adds the constant $-2c$ to its parabolic null support function, and it clearly does not change the value of the elliptic null support function at $\pi$. Hence, the parabolic null support function $\psi'_+$ of the surface $\Sigma_+' = \Sigma_+ + c \vec{\pi}$ is equal to $-2c$, and so its elliptic null support function is also less than or equal to $\phi$ everywhere. So, we apply corollary \ref{cor:complete comparison} to $\Sigma$ and $\Sigma_+'$ and conclude that $\Sigma$ is incomplete at $\pi$.
\end{proof}

To achieve Theorem \ref{thm: Holder incomplete}, we will apply a very similar strategy. However, we construct a more complicated surface to serve as a ``barrier'' $\Sigma_+$, a role that in the proof of Corollary \ref{cor easy} above is played by the surface of Example \ref{ex:parabolic}. Let us outline the strategy that we will implement.

The idea of our proof is to construct, for any choice of constants $\epsilon, \alpha$ and $C$, an explicit  entire surface $\Sigma_+$, which:
\begin{enumerate}
\item is spacelike
\item  has curvature between $-C$ and 0,
\item is incomplete at $\theta_0$, and
\item  has elliptic null support function $\phi_+$ satisfying 
\begin{enumerate}
\item $\phi_+(\pi) = 0$
\item $\phi_+(\theta) - \phi_+(\theta_0) \leq \epsilon |\theta - \theta_0|^{\alpha}$ on some neighborhood of $\theta_0$, and
\item $\phi_+$ is bounded above.
\end{enumerate}
 \end{enumerate}
and to invoke the comparison principle Corollary \ref{cor:complete comparison}. 
Indeed, as long as $\phi_+$ satisfies these four conditions, then if $\Sigma$ is any surface with null support function $\phi$ satisfying the inequality \eqref{eq:inc power} on some neighborhood of $\theta_0$, then a suitable translate of $\Sigma_+$ in the null direction $\vec{\theta_0}$ will have null support function less than $\phi$ and will still be incomplete at $\theta_0$ (as in the proof of Corollary \ref{cor easy}). Hence by the curvature assumptions, we can conclude by Corollary \ref{cor:complete comparison} that $\Sigma$ is incomplete.

This approach will require a few preliminary steps, which are proved in Subsection \ref{subsec:criteria},
in order to reformulate the conditions on $\Sigma_+$ into conditions on its parabolic support function. For the impatient reader, we point out that the explicit expression for the parabolic support function $u = s_{\mathrm{par}}$ of $\Sigma_+$ is given in Subsection \ref{subsec:explicit}, more precisely  in equation \eqref{eqn: u}. 
The motivation behind the somewhat complicated formula in equation \eqref{eqn: u} will be more clear from the preliminary lemmas of Subsection \ref{subsec:criteria}.

\subsection{Preliminary steps}\label{subsec:criteriaXX}\label{subsec:criteria}
In this section we will establish some standard relationships between geometric properties of a surface and its parabolic support functions.
In particular we will explicitly compute the curvature of a surface at the pre-image of the Gauss map in terms of the parabolic support function under some natural regularity assumptions. 
Most of the content of this section is  known in the literature (see for instance \cite{bonfill}), 
but we give a self contained exposition since the computations in terms of the parabolic support function 
are less common.

Recall from Corollary \ref{lem: convex duality} that a convex entire achronal surface is associated to a proper closed convex function $u$ on the upper half-space. 
The first lemma of this section gives a sufficient condition in terms of $u$ for the corresponding surface $\Sigma_+$ to be acausal instead of merely achronal. For the surface obtained from our specific function $u$ defined in \eqref{eqn: u}, we will then apply Corollary \ref{cor:spacelike iff acausal}  to ensure that $\Sigma_+$ is spacelike, which is the first condition in the list of Section \ref{subsec:outline}.

To simplify the notation, we will associate to a proper closed function $u:\overline{\mathcal H}\cup\{\infty\}\to\R\cup\{+\infty\}$, which is convex with respect to the affine structure of the plane $x+z=2$ (see Corollary \ref{cor: convex duality 2} for this definition), an ``extension'' $\overline u:\overline{\mathcal H}\cup\{\infty\}$ to the one-point compactification $\overline{\mathcal H}\cup\{\infty\}$ of $\overline{\mathcal H}$, defined by:
\begin{equation}\label{eq:extension infty}
\overline{u}(\infty) = \liminf_{\mathsf x^2 + \mathsf y^2 \to \infty} \frac{u(\mathsf x,\mathsf y)}{1 + \mathsf x^2 + \mathsf y^2}~.
\end{equation}

\begin{remark}\label{remark theta0 pi}
The definition of $\overline{u}(\infty)$ in \eqref{eq:extension infty} is of course motivated by Proposition \ref{prop: ell/par} (see \eqref{eqn: u of infinity}): if $u=s_\mathrm{par}$ is the parabolic support function of a convex entire achronal surface $\Sigma$, then $\overline{u}(\infty)$ equals the value of the (elliptic) null support function of $\Sigma$ at $\pi$. We remark that in the remainder of this section we will assume (up to applying an isometry of $\R^{2,1}$) that $\theta_0 = \pi$ and $\phi(\theta_0) = 0$ in the statement of Theorem \ref{thm: Holder incomplete}; hence we will be particularly interested in the behaviour of surfaces $\Sigma$ in the direction $\pi$ in relation with the null support function around $\pi$.
\end{remark}

\begin{lemma} \label{lem: spacelike} 
Let $\Sigma$ be a convex entire achronal surface in $\R^{2,1}$ and let 
$$u=s_{\mathrm{par}}: \overline{\mathcal{H}} \to \R \cup \{+\infty\}$$ be its parabolic support function. Suppose that for every function $f$ of the form 
\begin{equation}\label{eq:quadratic}
f(\mathsf x,\mathsf y) = a + b \mathsf x + c (\mathsf x^2 + \mathsf y^2)
\end{equation} such that $f \leq u$ on $\overline{\mathcal{H}}$, it holds that $\overline{f} < \overline{u}$ on $\R \cup \{\infty\}$. Then $\Sigma$ is acausal.
\end{lemma}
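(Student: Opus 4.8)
The plan is to argue by contradiction. Suppose $\Sigma$ is not acausal. Since $\Sigma$ is convex and achronal, by Remark~\ref{rmk locally graphs} it is the graph of a convex $1$-Lipschitz function $f\colon\R^2\to\R$, and any two of its points are at most lightlike-separated. Failure of acausality means some causal curve meets $\Sigma$ twice, so there are distinct points $p,q\in\Sigma$ with $q-p$ future-directed and causal, and necessarily \emph{null}, since two timelike-separated points would be joined by a timelike curve meeting $\Sigma$ twice. Write $p=(p_1,p_2,p_3)$ and $q-p=\mu\,\vec\theta$ with $\mu>0$, where $\vec\theta=(\cos\theta,\sin\theta,1)$. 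I would then (i) show that $\vec\theta$ is the direction of a support plane of $\Sigma$ which \emph{touches} $\Sigma$ at $p$, and (ii) derive a contradiction by applying the hypothesis to the parabolic support function of the single point $p$ and reading off the conclusion on the circle via Proposition~\ref{prop: ell/par}.

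For step (i): nullity of $q-p$ gives $q_3-p_3=\|\pi(q)-\pi(p)\|$ with $(\cos\theta,\sin\theta)=(\pi(q)-\pi(p))/\|\pi(q)-\pi(p)\|$, so along the segment $[\pi(p),\pi(q)]$ the function $f$ attains the maximal slope $1$; being convex and $1$-Lipschitz, $f$ is affine on that segment with directional derivative $1$ in the direction $(\cos\theta,\sin\theta)$. A short convex-analysis argument (the subdifferential of $f$ is contained in the closed unit ball and here must pair to $1$ with the unit vector $(\cos\theta,\sin\theta)$, hence equals $\{(\cos\theta,\sin\theta)\}$ at interior points of the segment) gives $(\cos\theta,\sin\theta)\in\partial f(\pi(p))$. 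Therefore the affine function $g(x,y)=p_3+(x-p_1)\cos\theta+(y-p_2)\sin\theta$ satisfies $g\le f$ on $\R^2$, so for every $v=(x,y,f(x,y))\in\Sigma$ we have $\langle v,\vec\theta\rangle=x\cos\theta+y\sin\theta-f(x,y)\le x\cos\theta+y\sin\theta-g(x,y)=\langle p,\vec\theta\rangle$. Thus $s(\vec\theta)=\phi(\theta)=\langle p,\vec\theta\rangle$ is finite and is attained at $p\in\Sigma$.

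For step (ii): by \eqref{eq:defi p} and \eqref{eq:par supp point}, the parabolic support function of the point $p$ is $f_p(\mathsf x,\mathsf y)=\langle p,\zeta(\mathsf x,\mathsf y)\rangle=(p_1-p_3)+2p_2\mathsf x-(p_1+p_3)(\mathsf x^2+\mathsf y^2)$, which is exactly of the form \eqref{eq:quadratic}; since $p\in\Sigma$ we have $f_p(\mathsf x,\mathsf y)=\langle p,\zeta(\mathsf x,\mathsf y)\rangle\le s(\zeta(\mathsf x,\mathsf y))=u(\mathsf x,\mathsf y)$ on $\overline{\mathcal H}$. The hypothesis then forces $\overline{f_p}<\overline{u}$ on $\R\cup\{\infty\}$. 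But step (i) produces the opposite equality at the point of $\R\cup\{\infty\}$ attached to $\theta$: if $\theta\neq\pi$, at $\mathsf x_0=\tan(\theta/2)$ one has $\zeta(\mathsf x_0,0)=(1+\mathsf x_0^2)\vec\theta$, hence $f_p(\mathsf x_0,0)=(1+\mathsf x_0^2)\langle p,\vec\theta\rangle=(1+\mathsf x_0^2)\phi(\theta)=\psi(\mathsf x_0)=u(\mathsf x_0,0)$ by \eqref{eq:null affine vs parabolic}; if $\theta=\pi$, then $\overline{f_p}(\infty)=-(p_1+p_3)=\langle p,\vec\pi\rangle=\phi(\pi)=\overline{u}(\infty)$ by \eqref{eq:extension infty} and \eqref{eqn: u of infinity}. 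Either way this contradicts $\overline{f_p}<\overline{u}$, proving the lemma.

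The main obstacle is step (i): upgrading "$f$ is affine along the null chord $[\pi(p),\pi(q)]$" to "$(\cos\theta,\sin\theta)$ is a subgradient of $f$ at $\pi(p)$", equivalently that the null plane through $p$ and $q$ is a genuine support plane of $\Sigma$ touching at $p$. Everything else is bookkeeping with the elliptic/parabolic dictionary of Proposition~\ref{prop: ell/par}, the only mild care being the uniform treatment of $\theta=\pi$, which is absorbed into the point at infinity of $\R\cup\{\infty\}$ by the $\liminf$ formula \eqref{eq:extension infty}.
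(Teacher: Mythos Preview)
Your proof is correct and follows the same strategy as the paper's: find a point of $\Sigma$ where a null support plane actually touches, take its parabolic support function (a quadratic of the required form lying below $u$), and contradict the hypothesis at the corresponding point of $\R\cup\{\infty\}$. The one difference is in your step (i): the paper works at the \emph{future} endpoint $q$ rather than $p$ and simply observes that any spacelike plane through $q$ has $p$ strictly in its past and so cannot support $\Sigma$, whence (using Proposition~\ref{prop achronal acausal}) the support plane through $q$ must be null --- this sidesteps the subdifferential argument you flagged as the main obstacle.
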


\begin{proof}
Suppose $\Sigma$ were not acausal. Then we claim that it is touched by a null support plane at some point $q$. Indeed, it contains points $p$ and $q$ such that the segment from $p$ to $q$ is lightlike. Notice that $p$ is contained in the past of every spacelike plane through $q$, so $\Sigma$ can have no spacelike support plane containing $q$. Therefore the support plane through $q$ must be lightlike (and indeed its tangent space is the orthogonal to the vector $q-p$).

 Let $f$ be the parabolic support function of the point $q$ (equivalently, of its future null cone). From Equation \eqref{eq:defi p} and \eqref{eq: par supp fun}, $f$ is a quadratic polynomial of the form \eqref{eq:quadratic} for some $a,b,c\in\R$. Since $\Sigma$ is achronal, the future null cone of $q$ lies in its (causal) future so the elliptic support function of $q$ is less than or equal to the elliptic support function of $\Sigma$. But, since both are supported by the null plane $P$, their support functions are equal in the null direction corresponding to $P$. Translated into the parabolic support functions using Proposition \ref{prop: ell/par}, this says that $f \leq u$ on $\overline{\mathcal{H}}$ but $f = u$ at the point of $\R \cup \{\infty\}$ corresponding to $P$.
\end{proof}

{Following the list in Section \ref{subsec:outline}, we then} need to be able to {show that $\Sigma_+$ is $C^2$ and to} verify property (2)  of the surface $\Sigma_+$, namely that its curvature is bounded below. In order to do this, we will need to parametrize $\Sigma_+$ by the inverse of its Gauss map. More precisely, under a mild regularity assumption on the support function,
we will point out a continuous map $\ginv:\mathcal H\to\Sigma$ which has the property that the plane though $\ginv(\mathsf x,\mathsf y)$ orthogonal to $\zeta(\mathsf x,\mathsf y)$ is a support plane for $\Sigma$. (Recall that the map $\zeta$ has been introduced in \eqref{eq:defi p}.)

\begin{lemma}\label{lm:c1u}
Let $\Sigma$ be a convex entire acausal surface in $\R^{2,1}$ and let 
$$u=s_{\mathrm{par}}:\overline{\mathcal{H}}\to\mathbb R\cup\{+\infty\}$$ be its parabolic support function.
Assume that $u$ is $C^1$ in a neighbourhood $U$ of  a point $(\mathsf{x}_0,\mathsf{y}_0)\in\mathcal H$. Then
\begin{enumerate}
\item There exists a unique map $\ginv: U\to\mathbb R^{2,1}$ which satisfies the following equations:
\begin{align} 
\langle \ginv, \zeta \rangle &= u \label{eqn: def v 1}\\
\langle \ginv, \zeta_{\mathsf x} \rangle &= u_{\mathsf x} \label{eqn: def v 2} \\  
\langle \ginv, \zeta_{\mathsf y} \rangle &= u_{\mathsf y}   \label{eqn: def v 3}
\end{align}
\item $\ginv$ is continuous and $\ginv(\mathsf{x}_0,\mathsf{y}_0)$ is a point of $\Sigma$ and the affine plane through $\ginv(\mathsf{x}_0,\mathsf{y}_0)$ orthogonal to $\zeta(\mathsf{x}_0,\mathsf{y}_0)$ meets $\Sigma$ only
 at $\ginv(\mathsf{x}_0,\mathsf{y}_0)$.
\item  If moreover $u\circ\zeta^{-1}$ is strictly convex on $U$, then $\ginv$ is a homeomorphism onto an open subset $V$ of $\Sigma$.
 In this case  $\Sigma$ is $C^1$ and spacelike on $V$, and 
 \begin{equation}\label{eq:c1}
 G\circ \ginv=\bar\zeta
 \end{equation}
  where  $G:V\to\mathbb H^2$ is the Gauss map and $\bar\zeta:=\frac{\zeta}{|\zeta|}$.
\end{enumerate}

\end{lemma}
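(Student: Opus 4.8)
The plan is to handle the three items in order, using throughout the identification $u=s\circ\zeta$ with the homogeneous support function $s$ (so that $s_{\mathrm{ell}}=s\circ\Xi$), together with two standard facts from convex analysis: that the subdifferential of a support function $h_K$ at $\vec v$ is $\mathrm{cl\,conv}(K)\cap\{\langle\cdot,\vec v\rangle=h_K(\vec v)\}$, and that a convex function which is differentiable at every point of an open set is $C^1$ there. For item (1): equations \eqref{eqn: def v 1}--\eqref{eqn: def v 3} form a linear system for $\ginv\in\R^{2,1}$, uniquely solvable exactly when $\{\zeta,\zeta_{\mathsf x},\zeta_{\mathsf y}\}$ is a basis of $\R^{2,1}$, equivalently when their Gram matrix is nonsingular. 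A direct computation gives $\langle\zeta,\zeta\rangle=-4\mathsf y^2$, $\langle\zeta,\zeta_{\mathsf x}\rangle=0$, $\langle\zeta,\zeta_{\mathsf y}\rangle=-4\mathsf y$, $\langle\zeta_{\mathsf x},\zeta_{\mathsf x}\rangle=4$ and $\langle\zeta_{\mathsf x},\zeta_{\mathsf y}\rangle=\langle\zeta_{\mathsf y},\zeta_{\mathsf y}\rangle=0$, so the Gram determinant equals $-64\mathsf y^2\neq0$ on $\mathcal H$. Hence $\ginv$ exists and is unique on $U$, and since it is obtained by applying the smoothly varying inverse of the coefficient matrix to $(u,u_{\mathsf x},u_{\mathsf y})$, which is continuous because $u\in C^1$, the map $\ginv$ is continuous.

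For item (2), I would first transfer the regularity of $u$ to $s$: since $|\zeta|=2\mathsf y$, the vector $\zeta(\mathsf x_0,\mathsf y_0)$ is future timelike, hence in the interior of $J^+$; $1$-homogeneity together with the explicit smooth section $\zeta$ gives $s(\vec v)=\tfrac12(v_1+v_3)\,u\big(\zeta^{-1}(\tfrac{2\vec v}{v_1+v_3})\big)$ for $\vec v$ near $\zeta(\mathsf x_0,\mathsf y_0)$, so $s$ is finite and $C^1$ on a neighbourhood of $\zeta(\mathsf x_0,\mathsf y_0)$ in $\mathrm{int}\,J^+$. Now $s$ is the support function of the closed convex epigraph $\hat\Sigma$ of $\Sigma$, and it is finite and differentiable at the interior point $\zeta(\mathsf x_0,\mathsf y_0)$ of its domain; by the subdifferential formula, $\partial s(\zeta(\mathsf x_0,\mathsf y_0))$ reduces to the single point $p_0:=\nabla s(\zeta(\mathsf x_0,\mathsf y_0))$, which lies in $\hat\Sigma$ and realizes the supremum defining $s(\zeta(\mathsf x_0,\mathsf y_0))$; since $\zeta(\mathsf x_0,\mathsf y_0)$ is future causal, that supremum is attained on $\partial\hat\Sigma=\Sigma$, so $p_0\in\Sigma$. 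Differentiating $u=s\circ\zeta$ at $(\mathsf x_0,\mathsf y_0)$ gives $\langle p_0,\zeta\rangle=u$, $\langle p_0,\zeta_{\mathsf x}\rangle=u_{\mathsf x}$, $\langle p_0,\zeta_{\mathsf y}\rangle=u_{\mathsf y}$, so $\ginv(\mathsf x_0,\mathsf y_0)=p_0$ by the uniqueness in (1). Finally, the affine plane through $p_0$ orthogonal to $\zeta(\mathsf x_0,\mathsf y_0)$ is $\{q:\langle q,\zeta(\mathsf x_0,\mathsf y_0)\rangle=s(\zeta(\mathsf x_0,\mathsf y_0))\}$, whose intersection with $\Sigma$ is the set of maximizers of $\langle\cdot,\zeta(\mathsf x_0,\mathsf y_0)\rangle$ on $\Sigma$, namely $\{p_0\}$. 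Since this works at every point of $U$, we get $\ginv(U)\subseteq\Sigma$.

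For item (3), the key observation is that strict convexity of $u\circ\zeta^{-1}$ forbids the normal cone of $\Sigma$ at a point of $\ginv(U)$ from being larger than a ray. Fix $(\mathsf x_0,\mathsf y_0)\in U$, set $p_0=\ginv(\mathsf x_0,\mathsf y_0)$, and suppose $p_0$ admits a support plane spanned by a vector $\vec w$ not proportional to $\zeta(\mathsf x_0,\mathsf y_0)$. Because $\Sigma$ is acausal, that plane is spacelike (Proposition \ref{prop achronal acausal}), so $\vec w$ is timelike and a positive multiple of $\zeta(\mathsf a,\mathsf b)$ for some $(\mathsf a,\mathsf b)\in\mathcal H$ with $(\mathsf a,\mathsf b)\neq(\mathsf x_0,\mathsf y_0)$. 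The normal cone at $p_0$ is convex, so it contains the non-degenerate segment $[\zeta(\mathsf x_0,\mathsf y_0),\zeta(\mathsf a,\mathsf b)]$, which lies in the affine plane $x+z=2$; along it $s$ agrees with the restriction of the affine function $\langle p_0,\cdot\rangle$, so $u\circ\zeta^{-1}=s|_{\{x+z=2\}}$ is affine on it, hence affine on the sub-segment near $\zeta(\mathsf x_0,\mathsf y_0)$ which lies in the open set $\zeta(U)$ --- contradicting strict convexity. Thus $\Sigma$ has a unique, spacelike, support plane at each point of $\ginv(U)$, so the convex function whose graph is $\Sigma$ is differentiable on the corresponding open subset of $\R^2$, hence $C^1$ there; that is, $\Sigma$ is $C^1$ --- and then spacelike, again by Proposition \ref{prop achronal acausal} --- on $V:=\ginv(U)$, with future unit normal $\zeta(\mathsf x,\mathsf y)/|\zeta(\mathsf x,\mathsf y)|=\bar\zeta(\mathsf x,\mathsf y)$ at $\ginv(\mathsf x,\mathsf y)$, i.e. $G\circ\ginv=\bar\zeta$. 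The same uniqueness shows $\ginv$ is injective (two points of $U$ with the same image would give two distinct vectors of the plane $x+z=2$ in the same normal ray), and invariance of domain, applied to the continuous injection $\ginv$ of the open set $U\subset\R^2$ into the topological surface $\Sigma$, makes $V$ open in $\Sigma$ and $\ginv\colon U\to V$ a homeomorphism.

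The delicate step is item (2): correctly transferring $C^1$ regularity between the dehomogenized function $u$ and the homogeneous support function $s$ on the open cone $\mathrm{int}\,J^+$, and then invoking the convex-analytic identification of $\nabla s$ with the unique supporting point of $\Sigma$ in the prescribed direction. Items (1) and (3) are comparatively routine once (2) is in place, the single idea in (3) being the ``affine-on-a-segment'' contradiction with strict convexity.
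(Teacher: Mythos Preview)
Your proof is correct and reaches the same conclusions as the paper, but by a genuinely different route. For item (2), the paper introduces the auxiliary function $F(\mathsf x,\mathsf y)=u(\mathsf x,\mathsf y)-\langle\ginv_0,\zeta(\mathsf x,\mathsf y)\rangle$, observes that $F\circ\zeta^{-1}$ is convex with value zero and a critical point at $(\mathsf x_0,\mathsf y_0)$, hence $F\geq 0$ everywhere; this places $\ginv_0$ in the closed convex region bounded by $\Sigma$, and a short first-order argument shows it is the unique point of $\Sigma$ in the support plane. You instead invoke the standard convex-analytic fact that the subdifferential of a support function at an interior point of its domain equals the exposed face in that direction, which collapses to a singleton by differentiability of $s$. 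These are dual packagings of the same idea (your ``unique maximizer'' condition is exactly the paper's equation $F=0$), but your version outsources the work to Rockafellar-type theorems while the paper's argument is self-contained. Similarly for item (3): the paper notes that strict convexity of $u\circ\zeta^{-1}$ makes $F\circ\zeta^{-1}$ strictly convex, so its zero set --- which parametrizes all support-plane directions at $\ginv_0$ --- is the single point $(\mathsf x_0,\mathsf y_0)$; you argue dually that a non-trivial normal cone would force $u\circ\zeta^{-1}$ to be affine on a segment.

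Two small cautions. First, when you write $p_0=\nabla s(\zeta(\mathsf x_0,\mathsf y_0))$ and then verify $\langle p_0,\zeta_{\mathsf x}\rangle=u_{\mathsf x}$, be explicit that you mean the \emph{Lorentzian} gradient (the unique vector with $ds(\cdot)=\langle p_0,\cdot\rangle$); the Euclidean subdifferential formula you quote applies to $h_{J\hat\Sigma}^{\mathrm{Eucl}}$ and differs from $p_0$ by the sign matrix $J=\mathrm{diag}(1,1,-1)$. Second, in (3) your logical order should be: uniqueness of support plane at each $\ginv(\mathsf x,\mathsf y)$, then injectivity of $\ginv$, then openness of $V$ by invariance of domain, and only then the ``differentiable convex $\Rightarrow C^1$'' conclusion, since that last step requires an open set.
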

\begin{proof}
The fact that $\ginv$ is well-defined and continuous follows from the observation that $\zeta(\mathsf{x},\mathsf{y}), \zeta_{\mathsf x}(\mathsf{x},\mathsf{y}), \zeta_{\mathsf y}(\mathsf{x},\mathsf{y})$ form a basis of $\mathbb R^{2,1}$ for every $(\mathsf{x},\mathsf{y})\in\mathcal H$.

Let $\ginv_0=\ginv(\mathsf{x}_0,\mathsf{y}_0)$ and consider the function $F: \mathcal H\to\mathbb R$ defined by 
$$F(\mathsf x,\mathsf y)=u(\mathsf x,\mathsf y)-\langle \ginv_0,\zeta(\mathsf x,\mathsf y)\rangle~,$$
Notice that $F\circ\zeta^{-1}$ is convex and $F$ vanishes at $(\mathsf{x}_0,\mathsf{y}_0)$ by \eqref{eqn: def v 1}. Moreover by \eqref{eqn: def v 2} and \eqref{eqn: def v 3}, $F$ has a critical point at 
$(\mathsf{x}_0,\mathsf{y}_0)$. It follows that $F(\mathsf x,\mathsf y)\geq 0$ for all $(\mathsf x,\mathsf y)\in\mathcal H$, that is  $\langle \ginv_0,\zeta(\mathsf x,\mathsf y)\rangle\leq u(\mathsf x,\mathsf y)$, with equality at $\zeta(\mathsf{x}_0,\mathsf{y}_0)$. Now, recall from Proposition \ref{prop: convex duality} and Corollary \ref{cor: convex duality 2} that the entire surface $\Sigma$ is entirely determined by its parabolic support function $u$: more precisely, $\Sigma$ is the boundary of the convex domain in $\R^{2,1}$ consisting of the points $p$ such that $\langle p,\zeta(\mathsf x,\mathsf y)\rangle \leq u(\mathsf x,\mathsf y)$ for all $(\mathsf x,\mathsf y)\in \mathcal H$. Hence $\ginv_0\in\Sigma$.

Let now $p\in\mathbb R^{2,1}$ be a point in the affine plane defined by $u(\mathsf x_0,\mathsf y_0)=\langle p,\zeta(\mathsf x_0,\mathsf y_0)\rangle$, with $p\neq \ginv_0$. 
Then at least  one among \eqref{eqn: def v 2} and  \eqref{eqn: def v 3} does not hold so there is a vector $W\in T_{(\mathsf x_0,\mathsf y_0)}\mathcal H$ such that
\[
    du(W)<\langle p,d\zeta(W)\rangle
\] 
and this implies that there is some point $(\mathsf x_1,\mathsf  y_1)\in\mathcal H$ such that
\[
    u(\mathsf x_1,\mathsf  y_1)<\langle p,\zeta(\mathsf x_1,\mathsf  y_1)\rangle
\]
showing that $p\notin\Sigma$.

Assume now that $u\circ\zeta^{-1}$ is strictly convex on $U$. Hence the function $F\circ\zeta^{-1}$  is strictly convex, since the difference between $u\circ\zeta^{-1}$ and $F\circ\zeta^{-1}$ is an affine function. We have already observed that  $F(\mathsf x,\mathsf y)\geq 0$ and $F(\mathsf x_0,\mathsf y_0)= 0$. Hence strict convexity of $F\circ\zeta^{-1}$ implies that the zero set of $F$ only contains $(\mathsf x_0,\mathsf y_0)$. 
Since the zero set of $F$ equals the set of $(\mathsf x,\mathsf y)\in\mathcal H$  such that $\zeta(\mathsf x,\mathsf y)$ is orthogonal to a support plane through $\ginv_0$, this proves that $\ginv$ is an injective function, and by the Invariance of domain Theorem, $V=\ginv(U)$ is an open subset of $\Sigma$.
Moreover each point in $V$ admits only one support plane, hence the surface $\Sigma$ is $C^1$ on $\ginv(U)$.
Since those support planes are spacelike, $V$ is a spacelike surface.
Finally using that the plane through $\ginv(\mathsf x,\mathsf y)$ orthogonal to $\zeta(\mathsf x,\mathsf y)$ is a support plane for $\Sigma$ we conclude that $G\circ \ginv=\bar\zeta$.
\end{proof}

Now for a $C^2$ function $u$ on $\mathcal H$,  we give a characterization of the convexity of  $u\circ\zeta^{-1}$ in terms of the derivatives of $u$.
\begin{lemma}\label{lm:convexu}
Let $u$ be a $C^2$ function on a domain $U$ in the upper half plane $\mathcal{H}$, then $u\circ\zeta^{-1}$ is locally convex if and only if
the matrix 
\begin{equation}\label{AA}
A = \frac{1}{2}\begin{bmatrix} \mathsf yu_{\mathsf x\mathsf x} - u_{\mathsf y} & \mathsf yu_{\mathsf x\mathsf y}  \\ \mathsf yu_{\mathsf x\mathsf y} & \mathsf yu_{\mathsf y\mathsf y} - u_{\mathsf y}  \end{bmatrix}
\end{equation}
is positive semi-definite at every point of $U$. Moreover if $A$ is positive definite, then $u\circ\zeta^{-1}$ is strictly convex on $\zeta(U)$. 
\end{lemma}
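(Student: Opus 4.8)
The plan is to make the affine structure of the null plane $\{x+z=2\}$ explicit, thereby reducing the statement to an ordinary Hessian comparison, and then to read off the matrix $A$ from the chain rule.

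First I would fix affine coordinates on the plane $\{x+z=2\}$. Since $\zeta(\mathsf x,\mathsf y)=(1-\mathsf x^2-\mathsf y^2,\,2\mathsf x,\,1+\mathsf x^2+\mathsf y^2)$ lies on this plane, the functions $(\xi,\eta):=(y,\,z-1)$ restrict to affine coordinates on it, and in these coordinates $\zeta$ becomes the map $\Psi(\mathsf x,\mathsf y):=(2\mathsf x,\,\mathsf x^2+\mathsf y^2)$, which is a diffeomorphism of $\mathcal H$ onto its image (one recovers $\mathsf x=\xi/2$, $\mathsf y=\sqrt{\eta-\xi^2/4}>0$). Writing $\tilde u$ for the function $u\circ\zeta^{-1}$ expressed in the coordinates $(\xi,\eta)$, we have $u=\tilde u\circ\Psi$ on $U$. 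Because convexity is affine-invariant, $u\circ\zeta^{-1}$ is locally convex on $\zeta(U)$ exactly when $\tilde u$ is locally convex on $\Psi(U)$; and since $\tilde u$ is $C^2$, this holds if and only if $\Hess\tilde u\succeq 0$ at every point, while $\Hess\tilde u\succ 0$ at every point forces $\tilde u$, hence $u\circ\zeta^{-1}$, to be strictly convex.

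Next I would compute $\Hess u$ from $u=\tilde u\circ\Psi$. The first component of $\Psi$ is affine and the second has constant Hessian $2\,\Id$, so the second-order chain rule gives
\[
\Hess u \;=\; (d\Psi)^{\mathsf T}\,(\Hess\tilde u)\,(d\Psi)\;+\;2\,\tilde u_{\eta}\,\Id~.
\]
From the first-order chain rule one also gets $u_{\mathsf y}=2\mathsf y\,\tilde u_{\eta}$, i.e.\ $2\tilde u_{\eta}=u_{\mathsf y}/\mathsf y$. Substituting this and writing out the matrix of $\Hess u$ in the $(\mathsf x,\mathsf y)$-coordinates, one finds
\[
(d\Psi)^{\mathsf T}\,(\Hess\tilde u)\,(d\Psi)\;=\;\begin{bmatrix} u_{\mathsf x\mathsf x}-u_{\mathsf y}/\mathsf y & u_{\mathsf x\mathsf y} \\ u_{\mathsf x\mathsf y} & u_{\mathsf y\mathsf y}-u_{\mathsf y}/\mathsf y\end{bmatrix}\;=\;\frac{2}{\mathsf y}\,A~.
\]
Since $\Psi$ is a diffeomorphism of $\mathcal H$, the matrix $d\Psi$ is invertible, so the left-hand side is congruent to $\Hess\tilde u$; as $2/\mathsf y>0$, it follows that $\Hess\tilde u$ is positive semi-definite (respectively positive definite) if and only if $A$ is. Combining this with the reductions of the previous paragraph yields both assertions of the lemma.

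The argument is entirely computational; the only point requiring care is the second-order chain rule, namely keeping track of the correction term $2\tilde u_\eta\,\Id$ coming from the nonlinearity of $\Psi$ and verifying that, after the substitution $2\tilde u_\eta=u_{\mathsf y}/\mathsf y$, it contributes exactly $-u_{\mathsf y}/\mathsf y$ on the diagonal and nothing off-diagonal, so that the matrix congruent to $\Hess\tilde u$ is precisely $\tfrac{2}{\mathsf y}A$. I do not anticipate any genuine obstacle.
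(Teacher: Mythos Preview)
Your proof is correct. The paper takes a slightly different route to the same computation: rather than fixing explicit affine coordinates on $\{x+z=2\}$ and applying the full second-order chain rule, it subtracts from $u$ the pullback $\langle \Gamma_0, \zeta(\cdot)\rangle$ of the tangent affine function of $u\circ\zeta^{-1}$ at each point (here $\Gamma_0$ is the unique vector solving $\langle\Gamma_0,\zeta\rangle=u$, $\langle\Gamma_0,\zeta_{\mathsf x}\rangle=u_{\mathsf x}$, $\langle\Gamma_0,\zeta_{\mathsf y}\rangle=u_{\mathsf y}$). This produces a function $F$ with a critical point at $(\mathsf x_0,\mathsf y_0)$, so its Hessian there transforms by pure congruence under $\zeta$; a direct computation using $\zeta_{\mathsf{xx}}=\zeta_{\mathsf{yy}}=\zeta_{\mathsf y}/\mathsf y$ and $\zeta_{\mathsf{xy}}=0$ then gives $\Hess F(\mathsf x_0,\mathsf y_0) = \tfrac{2}{\mathsf y_0}A$. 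Your approach instead keeps the first-derivative correction term $2\tilde u_\eta\,\Id$ visible throughout and identifies it as $(u_{\mathsf y}/\mathsf y)\,\Id$ after the fact. Your argument is the more elementary and self-contained one; the paper's is tailored to its surrounding framework, since the vector $\Gamma_0$ (the inverse Gauss map parametrization) is the central object of the whole section.
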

\begin{proof}
Consider for every $(\mathsf{x}_0,\mathsf{y}_0)\in U$ a vector $\ginv_0=\ginv(\mathsf{x}_0,\mathsf{y}_0)\in\mathbb R^{2,1}$ which satisfies the system of equations $(\ref{eqn: def v 1},~\ref{eqn: def v 2},~\ref{eqn: def v 3})$. This vector exists since $\zeta,\zeta_x,\zeta_y$ are linearly indipendent.
As observed before, both the function $F_{(\mathsf x_0, \mathsf y_0)}(\mathsf x, \mathsf y)= u(\mathsf x,\mathsf y)-\langle \ginv_0, \zeta(\mathsf x,\mathsf y))$ and its differential vanish at $(\mathsf x_0,\mathsf y_0)$. Since $F_{(\mathsf x_0, \mathsf y_0)}\circ\zeta^{-1}$ and $u\circ\zeta^{-1}$ differ by an affine function, $u\circ\zeta^{-1}$  is locally convex if and only if for every $(\mathsf x_0,\mathsf y_0)$ the function $F_{(\mathsf x_0, \mathsf y_0)}$ attains a local minimum at 
$(\mathsf x_0, \mathsf y_0)$.
By differentiating \eqref{eq:defi p} we have that
\[
  \zeta_{\mathsf{xx}}=\zeta_{\mathsf{yy}}=\frac{1}{\mathsf y}\zeta_{\mathsf{y}}\,,\qquad\zeta_{\mathsf{xy}}=0
\]
so a simple computation shows that the Hessian of $F_{(\mathsf x_0, \mathsf y_0)}$ at $(\mathsf x_0,\mathsf y_0)$ with respect to the basis $\frac{\partial\,}{\partial\mathsf x},\frac{\partial\,}{\partial\mathsf y}$ is given by 
$\frac{2}{\mathsf y_0}A(\mathsf x_0,\mathsf y_0)$ and the result follows.
\end{proof}

In order to relate a higher regularity of the support function to a higher regularity of the surface $\Sigma$, the following standard observation will be useful.

\begin{lemma}\label{lm:gaussc1}
Let $\Sigma$ be a  spacelike surface in $\mathbb R^{2,1}$, and $G:\Sigma\to\mathbb H^2$ be its Gauss map.
If $G$ is $C^1$ around a point $p_0\in\Sigma$, then $\Sigma$ is a $C^2$-submanifold around $p_0$.
\end{lemma}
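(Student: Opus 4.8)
\textbf{Plan for the proof of Lemma \ref{lm:gaussc1}.} The statement is that if the Gauss map $G$ of a spacelike surface $\Sigma$ is $C^1$ near $p_0$, then $\Sigma$ is $C^2$ near $p_0$. The plan is to work locally in the graph parametrization: write $\Sigma$ near $p_0$ as the graph of a $C^1$ function $f:\Omega\to\R$ over a domain $\Omega\subset\R^2$ with $\|Df\|<1$ (this is possible by Remark \ref{rmk locally graphs}, since a spacelike surface is $C^1$). In this parametrization the future unit normal is the explicit algebraic expression
\[
N(x,y)=G(x,y)=\frac{1}{\sqrt{1-|Df|^2}}\,(f_x,f_y,1)~,
\]
where I am using $|Df|^2=f_x^2+f_y^2$. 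The first two components of $N$ are $Df/\sqrt{1-|Df|^2}$, so if I denote by $\Psi:B(0,1)\to\R^2$ the diffeomorphism $\Psi(v)=v/\sqrt{1-|v|^2}$ (whose inverse is $w\mapsto w/\sqrt{1+|w|^2}$, smooth on all of $\R^2$), then $Df=\Psi^{-1}(\pi\circ G)$, where $\pi$ denotes projection to the first two coordinates.

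Now I carry out the steps in order. First, $G$ being $C^1$ near $p_0$ means that $x,y\mapsto G(x,y)$ is $C^1$ on a neighborhood of the corresponding point in $\Omega$; hence $\pi\circ G$ is $C^1$ there. Second, since $\Psi^{-1}$ is a smooth (indeed real-analytic) diffeomorphism from $\R^2$ onto $B(0,1)$, the composition $Df=\Psi^{-1}\circ(\pi\circ G)$ is $C^1$ on that neighborhood. Third, $Df$ being $C^1$ means precisely that $f$ is $C^2$ near the relevant point, i.e. $\Sigma$ is a $C^2$ submanifold near $p_0$. This completes the argument.

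The only point requiring any care — and the closest thing to an obstacle — is making sure that the $C^1$ hypothesis on $G$ as a map \emph{on the abstract surface $\Sigma$} transfers to a $C^1$ statement in the graph coordinates. This is immediate because the graph chart $(x,y)\mapsto(x,y,f(x,y))$ is a $C^1$ diffeomorphism onto a neighborhood of $p_0$ in $\Sigma$, and $G$ being $C^1$ on the $C^1$ manifold $\Sigma$ means exactly that its expression in any $C^1$ chart is $C^1$; post-composing with the smooth map $\Psi^{-1}\circ\pi$ does not lower regularity. Everything else is the elementary observation that the unit normal of a graph is an explicit smooth algebraic function of $Df$, invertible for $Df$ in the unit ball, so that control of $N$ is equivalent to control of $Df$. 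No curvature bounds or convexity are needed here; the lemma is purely a regularity bootstrap.
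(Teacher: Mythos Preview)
Your proof is correct and follows essentially the same approach as the paper: both write $\Sigma$ locally as the graph of a $C^1$ function $f$, express the Gauss map algebraically in terms of $Df$, and invert that algebraic relation to recover $Df$ as a smooth function of the (now $C^1$) Gauss map. The only cosmetic difference is that the paper recovers $f_x,f_y$ by dividing the first two components of $G$ by the third, whereas you package the same inversion as the explicit diffeomorphism $\Psi^{-1}(w)=w/\sqrt{1+|w|^2}$.
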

\begin{proof}
Recall that a spacelike surface is $C^1$ by definition. Hence around $p_0$ the surface $\Sigma$ is the graph of a $C^1$-function $f$ defined on some open subset $\Omega$ of $\mathbb R^2$.
The map $\iota(x,y)=(x,y,f(x,y))$ furnishes a $C^1$-parameterization of $\Sigma$ around $p_0$. In this parameterization we have
\[
    G(\iota(x,y))=\frac{1}{\sqrt{1-f_x^2-f_y^2}}\begin{bmatrix}f_x\\f_y\\1\end{bmatrix}~.
\]
Since the standard immersion of $\mathbb H^2$ into $\mathbb R^{2,1}$ is smooth, we have that the functions
\[
  g_1(x,y)=\frac{f_x}{\sqrt{1-f_x^2-f_y^2}},\quad  g_2(x,y)=\frac{f_y}{\sqrt{1-f_x^2-f_y^2}},\quad  g_3(x,y)=\frac{1}{\sqrt{1-f_x^2-f_y^2}},
\]
are $C^1$. Since $f_x=g_1/g_3$, and $f_y=g_2/g_3$ the result immediately follows.
\end{proof}

\begin{lemma}\label{lm:c2}
Let $\Sigma$ be a convex spacelike  surface in $\mathbb R^{2,1}$, and let $u:\mathcal{H}\to\mathbb R\cup\{+\infty\}$ be its parabolic support function.
Assume that $u$ is $C^2$ in a neighborhood $U$ of  $(\mathsf{x}_0, \mathsf{y}_0)$, and that $A(\mathsf{x}_0, \mathsf{y}_0)$ is  positive definite, where $A$ is the matrix defined in \eqref{AA}.
Denote by $\ginv: U\to\Sigma$ the map defined in Lemma \ref{lm:c1u}. Then
\begin{itemize}
\item $\Sigma$ is $C^2$ around $p_0=\ginv(\mathsf{x}_0,\mathsf{y}_0)$;
\item The representative matrix of the shape operator $\mathsf B(p_0)$ with respect to the basis of $T_{p_0}\Sigma=T_{\bar\zeta_0}\mathbb H^2= \{\bar\zeta_{\mathsf x}(\mathsf{x}_0, \mathsf{y}_0), \bar\zeta_{\mathsf y}(\mathsf{x}_0, \mathsf{y}_0)\}$ is given by $A^{-1}(\mathsf{x}_0, \mathsf{y}_0)$.
\end{itemize}
In particular its intrinsic curvature on $\ginv(U)$ is strictly negative and  satisfies
\begin{equation} \label{eqn: det B}
\begin{split}
K(\ginv(\mathsf x,\mathsf y))^{-1} =- \frac{1}{4}\left((\mathsf yu_{\mathsf x\mathsf x} - u_{\mathsf y})(\mathsf yu_{\mathsf y\mathsf y} - u_{\mathsf y}) - \mathsf y^2u^2_{\mathsf x\mathsf y}\right)
\end{split}
\end{equation}
\end{lemma}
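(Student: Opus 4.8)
\textbf{Proof plan for Lemma \ref{lm:c2}.} The strategy is to combine the three preceding lemmas. First I would invoke Lemma \ref{lm:convexu}: since $A(\mathsf x_0,\mathsf y_0)$ is positive definite and $A$ is continuous (because $u$ is $C^2$ on $U$), $A$ stays positive definite on a possibly smaller neighbourhood, so $u\circ\zeta^{-1}$ is strictly convex there. Then Lemma \ref{lm:c1u}(3) applies: the map $\ginv$ is a homeomorphism onto an open subset $V$ of $\Sigma$, the surface is $C^1$ and spacelike on $V$, and $G\circ\ginv=\bar\zeta$. Since $\bar\zeta=\zeta/|\zeta|$ is a smooth (in particular $C^1$) map of $(\mathsf x,\mathsf y)$ and $\ginv$ is a homeomorphism, to see that $G$ itself is $C^1$ on $V$ we need $\ginv^{-1}$ to be $C^1$; this is where we use that $\ginv$ is $C^1$ with invertible differential. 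Concretely, differentiating the defining system \eqref{eqn: def v 1}--\eqref{eqn: def v 3} (which is legitimate since $u\in C^2$ and $\zeta$ is smooth), the derivative $d\ginv$ is expressed through $u$ and its first and second derivatives, and the computation of the Hessian in Lemma \ref{lm:convexu} — namely that it equals $\frac{2}{\mathsf y_0}A$ — shows $d\ginv$ is an isomorphism at $(\mathsf x_0,\mathsf y_0)$. Hence $\ginv$ is a $C^1$-diffeomorphism near $(\mathsf x_0,\mathsf y_0)$, so $G=\bar\zeta\circ\ginv^{-1}$ is $C^1$ near $p_0$, and Lemma \ref{lm:gaussc1} upgrades $\Sigma$ to a $C^2$ submanifold around $p_0$.

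For the shape operator, the plan is to differentiate the relation $G\circ\ginv=\bar\zeta$ from \eqref{eq:c1}. Applying $d$ gives $dG\circ d\ginv = d\bar\zeta$, i.e. in the basis $\{\partial_{\mathsf x},\partial_{\mathsf y}\}$ of $T_{(\mathsf x_0,\mathsf y_0)}\mathcal H$ and the basis $\{\bar\zeta_{\mathsf x},\bar\zeta_{\mathsf y}\}$ of $T_{\bar\zeta_0}\mathbb H^2 = T_{p_0}\Sigma$, the matrix of $dG$ composed with the matrix of $d\ginv$ equals the identity, where I also use the identification of $T_{p_0}\Sigma$ with $T_{\bar\zeta_0}\mathbb H^2$ via parallel transport (both tangent spaces equal $\zeta_0^\perp$ as linear subspaces of $\R^{2,1}$). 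Then the shape operator $\mathsf B = P\circ dG$ has matrix $(d\ginv)^{-1}$ in the basis $\{\bar\zeta_{\mathsf x},\bar\zeta_{\mathsf y}\}$. The main computational point is to identify the matrix of $d\ginv$ with $\frac{2}{\mathsf y_0}A$, up to the change of basis relating $\{\bar\zeta_{\mathsf x},\bar\zeta_{\mathsf y}\}$ in the image to $\{\partial_{\mathsf x},\partial_{\mathsf y}\}$ in the source. In fact this is exactly the content already extracted inside the proof of Lemma \ref{lm:convexu}: the Hessian of $F_{(\mathsf x_0,\mathsf y_0)}(\mathsf x,\mathsf y)=u(\mathsf x,\mathsf y)-\langle\ginv_0,\zeta(\mathsf x,\mathsf y)\rangle$ at $(\mathsf x_0,\mathsf y_0)$ is $\frac{2}{\mathsf y_0}A(\mathsf x_0,\mathsf y_0)$; and since $F=u-\langle\ginv_0,\zeta\rangle$, we have $\mathrm{Hess}\,F = \mathrm{Hess}\,u - \langle\ginv_0,\mathrm{Hess}\,\zeta\rangle$, while differentiating \eqref{eqn: def v 2}, \eqref{eqn: def v 3} in the directions $\partial_{\mathsf x},\partial_{\mathsf y}$ gives $\langle d\ginv(\cdot),\zeta_i\rangle = \mathrm{Hess}\,F(\cdot,\cdot)$ in the appropriate sense, so $d\ginv$ is conjugate to $\mathrm{Hess}\,F$ via the frame $\{\zeta_{\mathsf x},\zeta_{\mathsf y}\}$. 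Keeping careful track of the relation between $\{\zeta_{\mathsf x},\zeta_{\mathsf y}\}$ and $\{\bar\zeta_{\mathsf x},\bar\zeta_{\mathsf y}\} = \{\zeta_{\mathsf x}/|\zeta|,\zeta_{\mathsf y}/|\zeta|\}$ (since $\langle\zeta,\zeta_i\rangle=0$, the normalization only scales by $1/|\zeta| = 1/(2\mathsf y_0)$, and the two scalings combine to cancel the factor $\frac{2}{\mathsf y_0}$, giving $\mathsf B(p_0)$ with matrix $A^{-1}(\mathsf x_0,\mathsf y_0)$ in the basis $\{\bar\zeta_{\mathsf x},\bar\zeta_{\mathsf y}\}$) yields the claim.

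Finally, the curvature formula follows from the Gauss equation $K_\I = -\det\mathsf B$ recalled in the preliminaries. Since the matrix of $\mathsf B(p_0)$ is $A^{-1}(\mathsf x_0,\mathsf y_0)$, we get $K(\ginv(\mathsf x_0,\mathsf y_0)) = -\det A^{-1} = -1/\det A$, hence
\[
K(\ginv(\mathsf x,\mathsf y))^{-1} = -\det A = -\frac14\left((\mathsf y u_{\mathsf x\mathsf x}-u_{\mathsf y})(\mathsf y u_{\mathsf y\mathsf y}-u_{\mathsf y}) - \mathsf y^2 u_{\mathsf x\mathsf y}^2\right),
\]
which is \eqref{eqn: det B}; and strict negativity of $K$ is immediate from $A$ being positive definite, so $\det A>0$. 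I expect the main obstacle to be the careful bookkeeping of the several identifications of tangent spaces — $T_{(\mathsf x_0,\mathsf y_0)}\mathcal H$, $T_{p_0}\Sigma$, $T_{\bar\zeta_0}\mathbb H^2$, and the ambient subspace $\zeta_0^\perp$ — and the scaling factors introduced by passing between $\zeta_i$ and $\bar\zeta_i$, rather than any conceptual difficulty; once the dictionary with the Hessian computation of Lemma \ref{lm:convexu} is set up correctly, the shape operator identity drops out, and everything else is the Gauss equation.
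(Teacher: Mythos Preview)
Your plan is correct and follows essentially the same route as the paper: strict convexity via Lemma \ref{lm:convexu}, then Lemma \ref{lm:c1u}(3) for $C^1$ regularity and $G\circ\ginv=\bar\zeta$, then the differentiation of \eqref{eqn: def v 2}--\eqref{eqn: def v 3} showing that $d\ginv\circ(d\bar\zeta)^{-1}$ has matrix $A$ in the basis $\{\bar\zeta_{\mathsf x},\bar\zeta_{\mathsf y}\}$, hence $\ginv$ is a $C^1$-diffeomorphism, Lemma \ref{lm:gaussc1} gives $C^2$, and $dG$ has matrix $A^{-1}$. One small correction in your bookkeeping: it is not true that $\langle\zeta,\zeta_{\mathsf y}\rangle=0$ (since $|\zeta|=2\mathsf y$ depends on $\mathsf y$), so $\bar\zeta_{\mathsf y}$ is only congruent to $\zeta_{\mathsf y}/|\zeta|$ modulo $\mathrm{Span}(\zeta)$; the orthogonality you actually need in order to discard the $\mathrm{Span}(\zeta)$ component when computing $\langle\ginv_i,\bar\zeta_j\rangle$ is $\langle\ginv_{\mathsf x},\zeta\rangle=\langle\ginv_{\mathsf y},\zeta\rangle=0$, which follows by differentiating \eqref{eqn: def v 1} and subtracting \eqref{eqn: def v 2}, \eqref{eqn: def v 3}.
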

\begin{proof}
By Lemma \ref{lm:c1u} part (2), the surface $\Sigma$ is strictly convex on $\ginv(U)$. 
Moreover by Lemma \ref{lm:convexu} the function $u\circ\zeta^{-1}$ is strictly convex on $\zeta(U)$ so, applying Lemma \ref{lm:c1u} part (3),  $\Sigma$ is $C^1$ over $\ginv(U)$.
Notice from its definition in Lemma \ref{lm:c1u} part (1) that the map $\Gamma$ is $C^1$, moreover by \eqref{eq:c1} we have that $T_{\ginv(\mathsf{x},\mathsf{y})}\Sigma=T_{\bar\zeta(\mathsf{x},\mathsf{y})}\Hyp^2$ as subspaces of $\mathbb R^{2,1}$.

We claim that $A(\mathsf{x},\mathsf{y})$ is the representative matrix of the endomorphism $d_{(\mathsf{x},\mathsf{y})}\Gamma\circ(d_{({\mathsf{x},\mathsf{y}})}\bar\zeta)^{-1}$ with respect to the basis
$\bar\zeta_{\mathsf x},\bar\zeta_{\mathsf y}$. 
In fact,  differentiating the second and third equations defining $\ginv$, namely \eqref{eqn: def v 2} and \eqref{eqn: def v 3},  gives (omitting the evaluation point in the sequel for the sake of simplicity):
\begin{equation}
\begin{split}
\langle \ginv_{\mathsf x}, \zeta_{\mathsf x} \rangle + \langle \ginv, \zeta_{{\mathsf x}{\mathsf x}} \rangle &= u_{{\mathsf x}{\mathsf x}} \\
\langle \ginv_{\mathsf x}, \zeta_{\mathsf y} \rangle + \langle \ginv, \zeta_{{\mathsf x}{\mathsf y}} \rangle &= u_{{\mathsf x}{\mathsf y}} \\
\langle \ginv_{\mathsf y}, \zeta_{\mathsf x} \rangle + \langle \ginv, \zeta_{{\mathsf y}{\mathsf x}} \rangle &= u_{{\mathsf y}{\mathsf x}} \\
\langle \ginv_{\mathsf y}, \zeta_{\mathsf y} \rangle + \langle \ginv, \zeta_{{\mathsf y}{\mathsf y}} \rangle &= u_{{\mathsf y}{\mathsf y}} \\
\end{split}
\end{equation}
Verifying from the definition of $\zeta$ that $\zeta_{{\mathsf x}{\mathsf y}} = 0$ and $\zeta_{\mathsf x\mathsf x} = \zeta_{{\mathsf y}{\mathsf y}} = \zeta_{\mathsf y}/{\mathsf y}$ and using \eqref{eqn: def v 3}, we see that
\begin{equation}
\begin{split}
\begin{bmatrix} \langle \ginv_{\mathsf x}, \zeta_{\mathsf x} \rangle & \langle \ginv_{\mathsf x}, \zeta_{\mathsf y} \rangle  \\ \langle \ginv_{\mathsf y}, \zeta_{\mathsf x} \rangle & \langle \ginv_{\mathsf y}, \zeta_{\mathsf y} \rangle  \end{bmatrix} = \begin{bmatrix} u_{{\mathsf x}{\mathsf x}} - u_{\mathsf y}/{\mathsf y} & u_{{\mathsf x}{\mathsf y}}  \\ u_{{\mathsf y}{\mathsf x}} & u_{{\mathsf y}{\mathsf y}} - u_{\mathsf y}/{\mathsf y}  \end{bmatrix}=\frac{2}{\mathsf{y}}A=\frac{4}{|\zeta|}A
\end{split}
\end{equation}
Now we use that $\bar\zeta_{\mathsf x}\in \frac{\zeta_{\mathsf{x}}}{|\zeta|}+\mathrm{Span}(\zeta)$, $\bar\zeta_{\mathsf y}\in \frac{\zeta_{\mathsf{y}}}{|\zeta|}+\mathrm{Span}(\zeta)$, and  that $\langle\ginv_x,\zeta\rangle=\langle\ginv_y,\zeta\rangle=0$ (which is obtained by combining \eqref{eqn: def v 1}, \eqref{eqn: def v 2} and \eqref{eqn: def v 3}), to deduce that:

\begin{equation}\label{eq:cc}
\begin{split}
\begin{bmatrix} \langle \ginv_{\mathsf x}, \bar\zeta_{\mathsf x} \rangle & \langle \ginv_{\mathsf x}, \bar\zeta_{\mathsf y} \rangle  \\ \langle \ginv_{\mathsf y}, \bar\zeta_{\mathsf x} \rangle & \langle \ginv_{\mathsf y}, \bar \zeta_{\mathsf y} \rangle  \end{bmatrix} = \frac{4}{|\zeta|^2} A  
\end{split}
\end{equation}
Since $\bar\zeta_{\mathsf x}, \bar\zeta_{\mathsf y}$ is an orthogonal basis of $T_{\bar\zeta}\mathbb H^2$ and $||\bar\zeta_{\mathsf x}||^2=||\bar\zeta_{\mathsf y}||^2=4/|\zeta|^2$, 
the coordinates of a vector of $T_{\bar\zeta}\mathbb H^2$ with respect to this basis are given by the scalar product with the elements of the basis multiplied by $|\zeta|^2/4$.
Since $d\Gamma\circ(d\bar\zeta)^{-1}$ sends $\bar\zeta_{\mathsf x}$ to $\Gamma_{\mathsf x}$ and  $\bar\zeta_{\mathsf y}$ to $\Gamma_{\mathsf y}$, from \eqref{eq:cc}
we see that its representative matrix is given by $A$.

A consequence of this computation is that the differential of $\Gamma$ is injective under the hypothesis that $A$ is invertible, so $\Gamma$ realizes a $C^1$-diffeomorphism onto its image.
Since $G\circ\ginv=\bar\zeta$ we  see that $G=\bar\zeta\circ\ginv^{-1}$, so that the restriction of $G$ to $\ginv(U)$ is $C^1$. We deduce that $\ginv(U)$ is $C^2$ by Lemma \ref{lm:gaussc1}.

Finally using that $dG=d\bar\zeta\circ (d\ginv)^{-1}=(d\ginv\circ (d\bar\zeta)^{-1})^{-1}$ we conclude that the representative matrix of $dG$ with respect to the basis $\bar\zeta_{\mathsf x}, \bar\zeta_{\mathsf y}$ is
$A^{-1}$.
By the Gauss equation $K(\ginv(\mathsf x,\mathsf y))=\det A^{-1}$ and the result follows.
\end{proof}

Before finally giving the formula for our function $u$, we need two additional lemmas in order to deal with the conditions (3) and (4) in the list of Section \ref{subsec:outline}. The first lemma gives a formula for the extension of $u$ to $\infty$ and a condition for incompleteness of the corresponding surface under the additional assumption that $u$ is symmetric about the $\mathsf y$ axis. 

\begin{lemma} \label{lem: u symmetric incomplete}
Suppose that $u: \mathcal{H} \to \R$ is the parabolic support function of a convex entire spacelike surface $\Sigma$, and that $u(\mathsf x,\mathsf y)=u(-\mathsf x,\mathsf y)$. Suppose furthermore that the restriction of $u$ to the $\mathsf y$ axis is $C^2$ and strictly convex, and satisfies
\[
\int_1^\infty \left( u_{\mathsf y\mathsf y}-\frac{u_{\mathsf y}}{\mathsf y}\right) d\mathsf y < \infty~.
\]
Then $\Sigma$ is incomplete at $\theta_0 = \pi$ and 
$$\overline{u}(\infty) = \sup_{(0,\mathsf y) \in \mathcal{H}} \frac{u_\mathsf y(0,\mathsf y)}{2\mathsf y}~.$$
\end{lemma}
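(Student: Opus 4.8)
The plan is to use the reflection symmetry of $\Sigma$ to reduce everything to the one-dimensional spacelike curve $\tilde\Sigma := \Sigma\cap\{y=0\}$, which will be a geodesic, and then to read off both assertions from its support function in timelike directions. First I would note that the hypothesis $u(\mathsf x,\mathsf y)=u(-\mathsf x,\mathsf y)$, together with $\zeta(-\mathsf x,\mathsf y)=R(\zeta(\mathsf x,\mathsf y))$ where $R(x,y,z)=(x,-y,z)$ and the homogeneity of the homogeneous support function, yields $s_\Sigma=s_\Sigma\circ R$; by Proposition \ref{prop: convex duality}, $\Sigma$ is then invariant under the isometric reflection $R$, so its fixed-point set $\tilde\Sigma=\Sigma\cap\{y=0\}$ is a totally geodesic curve, namely the graph over the $x$-axis of the convex $1$-Lipschitz function $x\mapsto f(x,0)$. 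Moreover, since $f(x,0)\le f(x,y)$ by convexity and symmetry, for every future-directed causal vector $v$ in the plane $\{y=0\}$ one has $\sup_{p\in\Sigma}\langle p,v\rangle=\sup_{p\in\tilde\Sigma}\langle p,v\rangle$; in particular $\sup_{p\in\tilde\Sigma}\langle p,\zeta(0,\mathsf y)\rangle=u(0,\mathsf y)$.

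I would then parametrize $\tilde\Sigma$ by the hyperbolic angle $\sigma$ of its future unit normal. Writing $\mathsf y=e^{-\sigma}$, one has $\zeta(0,e^{-\sigma})/|\zeta(0,e^{-\sigma})|=(\sinh\sigma,0,\cosh\sigma)=:n(\sigma)$, and the function $g(\sigma):=\sup_{p\in\tilde\Sigma}\langle p,n(\sigma)\rangle=u(0,e^{-\sigma})/(2e^{-\sigma})$ is $C^2$ by hypothesis. By the classical recovery of a convex curve from its support function, $\tilde\Sigma$ is parametrized by $P(\sigma)=g'(\sigma)\,n'(\sigma)-g(\sigma)\,n(\sigma)$, where $n'(\sigma)=(\cosh\sigma,0,\sinh\sigma)$ is the unit tangent, with velocity $P'(\sigma)=(g''(\sigma)-g(\sigma))\,n'(\sigma)$, so that $g''-g\ge0$ is the speed; a chain-rule computation rewrites this as $g''(\sigma)-g(\sigma)=\tfrac12\bigl(\mathsf y\,u_{\mathsf y\mathsf y}(0,\mathsf y)-u_{\mathsf y}(0,\mathsf y)\bigr)\big|_{\mathsf y=e^{-\sigma}}$. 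The end $\sigma\to-\infty$ is the one whose normals $n(\sigma)$ converge projectively to $\vec\pi$, so there $\tilde\Sigma$ heads toward the null direction $\pi$; by the substitution $\mathsf y=e^{-\sigma}$ its length is $\int_{-\infty}^0(g''-g)\,d\sigma=\tfrac12\int_1^\infty(u_{\mathsf y\mathsf y}-u_{\mathsf y}/\mathsf y)\,d\mathsf y<\infty$ by hypothesis. Since $\tilde\Sigma$ is closed in $\Sigma$ and $P(\sigma)$ cannot subconverge in $\tilde\Sigma$ as $\sigma\to-\infty$ (otherwise the limiting normal $\vec\pi$ would be a timelike normal direction of the spacelike curve $\tilde\Sigma$), this ray, reparametrized by arclength on an interval $[0,L)$, is a proper geodesic of finite length, which I call $\bar\gamma$.

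The last step is to identify its asymptotic null direction. Using $\zeta_{\mathsf y}(0,\mathsf y)=2\mathsf y\,\vec\pi$ and the first-order relation $\langle P(\sigma),\zeta_{\mathsf y}(0,\mathsf y)\rangle=u_{\mathsf y}(0,\mathsf y)$ (equation \eqref{eqn: def v 3}, valid here because $P'(\sigma)$ is tangent to $\tilde\Sigma$ while $\zeta(0,\mathsf y)$ is normal to it), one obtains $\langle\bar\gamma,\vec\pi\rangle=u_{\mathsf y}(0,\mathsf y)/(2\mathsf y)$ along the ray. Now $\tfrac{d}{d\mathsf y}\bigl(u_{\mathsf y}(0,\mathsf y)/2\mathsf y\bigr)=(g''-g)/\mathsf y^2\ge0$, and $\int_1^\infty\tfrac{u_{\mathsf y\mathsf y}-u_{\mathsf y}/\mathsf y}{2\mathsf y}\,d\mathsf y\le\tfrac12\int_1^\infty(u_{\mathsf y\mathsf y}-u_{\mathsf y}/\mathsf y)\,d\mathsf y<\infty$, so $u_{\mathsf y}(0,\mathsf y)/(2\mathsf y)$ is nondecreasing and bounded in $\mathsf y$; hence as $\sigma\to-\infty$ it increases to $L_\pi:=\sup_{\mathsf y>0}u_{\mathsf y}(0,\mathsf y)/(2\mathsf y)<\infty$. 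In particular $\langle\bar\gamma,\vec\pi\rangle$ does not tend to $-\infty$, so by Proposition \ref{prop: exists theta}(4) the unique asymptotic null direction of the proper finite-length geodesic $\bar\gamma$ must be $\pi$; therefore $\bar\gamma$ is asymptotic to the null support plane of $\Sigma$ in direction $\pi$, and $\Sigma$ is incomplete at $\theta_0=\pi$. Finally, Proposition \ref{prop: exists theta}(3) gives $\langle\bar\gamma,\vec\pi\rangle\to\phi(\pi)$, so $\phi(\pi)=L_\pi$; since $\overline u(\infty)=\phi(\pi)$ by equation \eqref{eqn: u of infinity} and the definition \eqref{eq:extension infty}, this is exactly the asserted formula $\overline u(\infty)=\sup_{(0,\mathsf y)\in\mathcal H}u_{\mathsf y}(0,\mathsf y)/(2\mathsf y)$.

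The main obstacle is the bookkeeping in the second step: verifying that $P(\cdot)$ is a genuine, global, regular parametrization of all of $\tilde\Sigma$ --- in particular that every timelike direction $n(\sigma)$ is actually a touching support direction of $\tilde\Sigma$ --- without circularly assuming beforehand that $\tilde\Sigma$ reaches the null directions. This is where the $C^2$ and strict convexity hypotheses on the restriction of $u$ to the $\mathsf y$-axis are used, together with the fact that $\Sigma$, hence $\tilde\Sigma$, is spacelike: one argues that the curve defined by the formula for $P$ lies in the closed convex region bounded by $\tilde\Sigma$ and simultaneously on the support plane with normal $n(\sigma)$, so it must lie on $\tilde\Sigma$, and that injectivity (no corners, since $\tilde\Sigma$ is $C^1$) together with the impossibility of subconvergence as $\sigma\to\pm\infty$ forces the image to exhaust $\tilde\Sigma$. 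The chain-rule identity for $g''-g$ and the ensuing change of variables are routine but must be carried out carefully so as to reproduce exactly the integrand $u_{\mathsf y\mathsf y}-u_{\mathsf y}/\mathsf y$ appearing in the statement.
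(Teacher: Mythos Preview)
Your proof is correct and follows the same overall strategy as the paper: exploit the reflection symmetry to reduce to the curve $\tilde\Sigma=\Sigma\cap\{y=0\}$, parametrize it via its support function in timelike directions, and read off both the finite length and the value $\phi(\pi)$ from that parametrization. The chain-rule identity $g''-g=\tfrac12(\mathsf y u_{\mathsf y\mathsf y}-u_{\mathsf y})$ and the change of variables $\mathsf y=e^{-\sigma}$ are exactly what the paper obtains (in slightly different packaging) from the formula $|\Gamma_{\mathsf y}|=\tfrac12(u_{\mathsf y\mathsf y}-u_{\mathsf y}/\mathsf y)$ via equation~\eqref{eq:cc}.

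The one noteworthy difference is in how you establish the formula for $\overline u(\infty)$ and identify the direction of incompleteness. The paper argues directly: by symmetry the sup defining $\phi(\pi)$ over $\Sigma$ equals the sup over $\tilde\Sigma$, which is $\sup_{\mathsf y} u_{\mathsf y}(0,\mathsf y)/(2\mathsf y)$; and since the null support plane of $\Sigma$ in direction $\pi$ restricts to the null support line of $\tilde\Sigma$, the curve is asymptotic to it. You instead route both conclusions through Proposition~\ref{prop: exists theta}: since $\langle\bar\gamma,\vec\pi\rangle$ stays bounded, part~(4) forces $\theta_+=\pi$, and then part~(3) identifies the limit with $\phi(\pi)$. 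This is a clean use of the machinery already built in Section~\ref{sec:finitelengthgeo}, and it has the minor advantage of not needing to argue separately that the curve is asymptotic to the support plane. Your route does cost you the monotonicity-plus-integrability argument to show the limit of $u_{\mathsf y}/(2\mathsf y)$ exists and equals the sup, which the paper's direct-sup argument avoids.

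Your last paragraph correctly identifies the one point that needs care: verifying that $P(\sigma)$ globally parametrizes $\tilde\Sigma$. This is exactly the one-dimensional shadow of Lemma~\ref{lm:c1u}, and your sketch (land on the curve via the support relation, injectivity from $C^1$, exhaustion from no subconvergence) is the right argument.
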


\begin{proof} Because of the bijection established in Corollary \ref{lem: convex duality}, $\Sigma$ also has the corresponding reflection symmetry, across the timelike plane $y=0$ spanned by the vectors $\vec{0}$ and $\vec{\pi}$. Since $\Sigma$ is convex, it follows that its null support planes in these directions contains the null support lines of its intersection with this timelike plane. Since $u$ is $C^2$ and strictly convex on the $\mathsf y$ axis, we can parametrize the intersection of $\Sigma$ with this plane by $\ginv(0,\mathsf y)$ defined by equations $(\ref{eqn: def v 1},~\ref{eqn: def v 2},~\ref{eqn: def v 3})$. Then since $\overline{u}(\infty)$ is the value of the elliptic support function of $\Sigma$ at ${{\theta_0}=}{\pi}$ by Proposition \ref{prop: ell/par} and since $\vec{\pi} = \zeta_{\mathsf y}/2\mathsf y$, we have
\begin{equation}
\begin{split}
\overline{u}(\infty) &= \sup_{\mathsf y} \langle \ginv(0,\mathsf y), \vec{\pi} \rangle \\
&= \sup_{\mathsf y} \langle \ginv(0,\mathsf y), \frac{\zeta_{\mathsf y}}{2\mathsf y} \rangle \\
&= \sup_{\mathsf y} \frac{u_{\mathsf y}(0,\mathsf y)}{2\mathsf y}~.
\end{split}
\end{equation}

For the incompleteness part of the lemma, it suffices to show that the length of $\ginv(0,\mathsf y)$ is finite as $\mathsf y$ approaches infinity, since we have already argued that this curve approaches the null support plane of $\Sigma$ in the direction $\vec{\pi}$. By the reflection symmetry, $\Gamma_{\mathsf y}$ is a multiple of $\overline{\zeta}_{\mathsf y}$, so its length is $\langle \Gamma_{\mathsf y}, \overline{\zeta}_{\mathsf y} \rangle / |\overline{\zeta}_{\mathsf y}|$. From equation \eqref{eq:cc}, which clearly remains true in the $\mathsf y$ direction even though $u$ is not differentiable in the $\mathsf x$ direction, this is equal to 
\[
\frac{1}{2} \left(u_{\mathsf{yy}} - \frac{u_{\mathsf y}}{\mathsf y} \right).
\]
Its integral is the length of the curve $\ginv(0, \mathsf y)$, so if the integral from one to infinity is finite, then the surface $\Sigma$ is incomplete at $\pi$.
%
\end{proof}

Finally, using the relationship \eqref{eq:null affine vs parabolic} between the elliptic and parabolic null support functions, we can translate the conditions on $\phi_+$ to conditions on the parabolic null support function $\psi_+$ of $\Sigma_+$. 

 
\begin{lemma}\label{lemma:mouse}
 Let $s$ be a homogeneous function, and let $\phi$ and $\psi$ be respectively its elliptic dehomogenization and parabolic dehomogenization, with point at infinity $\pi$.
 If 
\begin{equation} \label{eqn: psi+}
\begin{split}
\psi(\mathsf x) \leq \epsilon |\mathsf x|^{2-\alpha}
\end{split}
\end{equation}
in the complement of a compact set of $\R$, then
$$\phi(\theta) \leq \epsilon |\theta - \pi|^{\alpha}$$
in a neighbourhood of $\pi$.
 \end{lemma}
\begin{proof}
From Proposition \ref{prop: ell/par}, we have $\mathsf x = \tan(\theta/2)$ and
\[
\phi(\theta) = \frac{\psi(\mathsf x)}{1+\mathsf x^2}\leq \epsilon\frac{ |\mathsf x|^{2-\alpha}}{1+\mathsf x^2}< \epsilon |\mathsf x|^{-\alpha} ~.
\]
Using $1/\mathsf x = \tan(\pi/2 - \theta/2)$ and a linear bound $|t| \geq |\tan(t/2)| $ for $t$ sufficiently small, we get
$\phi(\theta)<\epsilon|\theta-\pi|^{\alpha}$ in a neighbourhood of $\pi$.
\end{proof}

\subsection{The explicit surface $\Sigma_+$}\label{subsec:explicit}

We are now ready to present the formula for the parabolic support function $u$ of $\Sigma_+$. For constants $0 < \beta, \gamma < 1$ and $M > 0$ to be determined, let
\begin{equation} \label{eqn: u}
u(\mathsf x,\mathsf y) = - M\mathsf y^\beta + \epsilon |\mathsf x|^{2- \alpha}(1+\mathsf y)^{-\gamma}~.
\end{equation}
Clearly $u(\mathsf x,0)$ satisfies \eqref{eqn: psi+}. Since $u$ is symmetric about the $\mathsf y$ axis {and its restriction to the $\mathsf y$ axis is $C^2$ and strictly convex}, we can use Lemma \ref{lem: u symmetric incomplete} to check incompleteness and the value of $\overline{u}(\infty)$, once we know that $u$ is convex. We also remark that $u$ is, inconveniently, not $C^2$ on the $\mathsf y$-axis.

Let $F$ be the operator
\begin{equation} \label{eqn: def of F}
\begin{split}
F(u) = -4K^{-1} = (\mathsf yu_{\mathsf x\mathsf x} - u_{\mathsf y})(\mathsf yu_{\mathsf y\mathsf y} - u_{\mathsf y}) - \mathsf y^2u^2_{\mathsf x\mathsf y}
\end{split}
\end{equation}
so that $-4/F(u)$ is the curvature of the surface constructed in Lemma \ref{lm:c2}.

\begin{lemma} \label{lem: F subsolution} 
Let $u$ be the function defined in \eqref{eqn: u}, and $F$ the operator defined in \eqref{eqn: def of F}.
\begin{itemize}
\item If $\gamma <1- \alpha$, then $F(u)>0$ for all points $(\mathsf x,\mathsf y)$ with $\mathsf x \neq 0$.
\item If $\gamma \leq \beta(1- \alpha)$, then for any $C>0$ there exists $M>0$ such that $F(u) \geq 4/C$ for all points $(\mathsf x,\mathsf y)$ with $\mathsf x \neq 0$.
\end{itemize}
\end{lemma}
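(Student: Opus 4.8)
The plan is to compute the relevant partial derivatives of $u$ explicitly and then expand $F(u)$ as a sum of terms, tracking the homogeneity degree in $\mathsf x$ and the power of $\mathsf y$ (or $(1+\mathsf y)$) in each. Write $u = u_1 + u_2$ with $u_1(\mathsf x,\mathsf y) = -M\mathsf y^\beta$ and $u_2(\mathsf x,\mathsf y) = \epsilon|\mathsf x|^{2-\alpha}(1+\mathsf y)^{-\gamma}$. Since $u_1$ depends only on $\mathsf y$, we have $(u_1)_{\mathsf x\mathsf x} = (u_1)_{\mathsf x\mathsf y} = 0$, while $(u_1)_{\mathsf y} = -M\beta\mathsf y^{\beta-1}$ and $(u_1)_{\mathsf y\mathsf y} = -M\beta(\beta-1)\mathsf y^{\beta-2} = M\beta(1-\beta)\mathsf y^{\beta-2} > 0$. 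For $u_2$ one gets $(u_2)_{\mathsf x\mathsf x} = \epsilon(2-\alpha)(1-\alpha)|\mathsf x|^{-\alpha}(1+\mathsf y)^{-\gamma}$ (valid for $\mathsf x \neq 0$, which is why the statement excludes $\mathsf x = 0$), $(u_2)_{\mathsf y} = -\epsilon\gamma|\mathsf x|^{2-\alpha}(1+\mathsf y)^{-\gamma-1}$, $(u_2)_{\mathsf y\mathsf y} = \epsilon\gamma(\gamma+1)|\mathsf x|^{2-\alpha}(1+\mathsf y)^{-\gamma-2}$, and $(u_2)_{\mathsf x\mathsf y} = -\epsilon(2-\alpha)\gamma\,\mathrm{sgn}(\mathsf x)|\mathsf x|^{1-\alpha}(1+\mathsf y)^{-\gamma-1}$.

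Next I would substitute into $F(u) = (\mathsf y u_{\mathsf x\mathsf x} - u_{\mathsf y})(\mathsf y u_{\mathsf y\mathsf y} - u_{\mathsf y}) - \mathsf y^2 u_{\mathsf x\mathsf y}^2$. The key structural observation is that $-u_{\mathsf y} = M\beta\mathsf y^{\beta-1} + \epsilon\gamma|\mathsf x|^{2-\alpha}(1+\mathsf y)^{-\gamma-1}$ is a sum of two \emph{positive} terms, and likewise $\mathsf y u_{\mathsf x\mathsf x} = \mathsf y\,(u_2)_{\mathsf x\mathsf x} \geq 0$ and $\mathsf y u_{\mathsf y\mathsf y} = \mathsf y\,(u_1)_{\mathsf y\mathsf y} + \mathsf y\,(u_2)_{\mathsf y\mathsf y} \geq 0$. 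Hence both factors $\mathsf y u_{\mathsf x\mathsf x} - u_{\mathsf y}$ and $\mathsf y u_{\mathsf y\mathsf y} - u_{\mathsf y}$ are strictly positive when $\mathsf x \neq 0$. So the only danger to positivity of $F(u)$ is the subtracted term $\mathsf y^2 u_{\mathsf x\mathsf y}^2$. To control it, I would isolate within the product $(\mathsf y u_{\mathsf x\mathsf x} - u_{\mathsf y})(\mathsf y u_{\mathsf y\mathsf y} - u_{\mathsf y})$ the single cross term $(\mathsf y\,(u_2)_{\mathsf x\mathsf x})\cdot(\epsilon\gamma|\mathsf x|^{2-\alpha}(1+\mathsf y)^{-\gamma-1})$ coming from $\mathsf y u_{\mathsf x\mathsf x}$ times $-(u_2)_{\mathsf y}$, compute it to be $\epsilon^2(2-\alpha)(1-\alpha)\gamma\,\mathsf y\,|\mathsf x|^{2-2\alpha}(1+\mathsf y)^{-2\gamma-1}$, and compare it with $\mathsf y^2 u_{\mathsf x\mathsf y}^2 = \epsilon^2(2-\alpha)^2\gamma^2\,\mathsf y^2|\mathsf x|^{2-2\alpha}(1+\mathsf y)^{-2\gamma-2}$. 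Dividing, the ratio of the cross term to the subtracted term is $\frac{(1-\alpha)}{(2-\alpha)\gamma}\cdot\frac{1+\mathsf y}{\mathsf y}$, which exceeds $1$ precisely when $\frac{1-\alpha}{(2-\alpha)\gamma} \geq \frac{\mathsf y}{1+\mathsf y}$; since $\frac{\mathsf y}{1+\mathsf y} < 1$ and one checks $\frac{1-\alpha}{(2-\alpha)\gamma} > 1 - \alpha > \gamma$ under $\gamma < 1-\alpha$ (using $2-\alpha < 2$, so $\frac{1-\alpha}{2-\alpha} > \frac{1-\alpha}{2}$; a cleaner route is to note $\frac{1-\alpha}{(2-\alpha)\gamma}>\frac{1-\alpha}{2\gamma}$, and I would verify the bound carefully), the cross term dominates $\mathsf y^2 u_{\mathsf x\mathsf y}^2$. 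All remaining terms in the expanded product are nonnegative, so $F(u) > 0$, proving the first bullet.

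For the second bullet, with $\gamma \leq \beta(1-\alpha)$ I would instead extract the term $(\mathsf y\,(u_2)_{\mathsf x\mathsf x})\cdot(M\beta\mathsf y^{\beta-1})$, equal to $\epsilon M\beta(2-\alpha)(1-\alpha)\,\mathsf y^{\beta}|\mathsf x|^{-\alpha}(1+\mathsf y)^{-\gamma}$. Writing $|\mathsf x|^{-\alpha} = |\mathsf x|^{2-2\alpha}\cdot|\mathsf x|^{\alpha-2}$ and comparing with $\mathsf y^2 u_{\mathsf x\mathsf y}^2 = \epsilon^2(2-\alpha)^2\gamma^2\,\mathsf y^2|\mathsf x|^{2-2\alpha}(1+\mathsf y)^{-2\gamma-2}$, the point is that after cancelling the common factor $|\mathsf x|^{2-2\alpha}$ the ratio is a function of $\mathsf y$ (and $|\mathsf x|^{\alpha-2}$, which only helps since $|\mathsf x|^{\alpha-2}\to\infty$ as $\mathsf x\to 0$ and is bounded below on the relevant half-plane only near $\mathsf x=0$ — I would need to be slightly more careful here and rather compare against the full term keeping $|\mathsf x|$, perhaps combining with the earlier cross term). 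The exponent condition $\gamma \leq \beta(1-\alpha)$ is what makes the $\mathsf y^\beta(1+\mathsf y)^{-\gamma}$ growth beat the $\mathsf y^2(1+\mathsf y)^{-2\gamma-2}$ decay uniformly; after this comparison, $F(u)$ is bounded below by a positive multiple of $\epsilon M$ times a quantity bounded away from $0$, so choosing $M$ large (depending on $C, \epsilon, \alpha, \beta, \gamma$) gives $F(u) \geq 4/C$. The main obstacle I anticipate is the bookkeeping of the mixed $|\mathsf x|$-powers and $\mathsf y$-powers across all nine terms of the expanded product, and in particular verifying uniformity in $\mathsf y \in (0,\infty)$ and in $\mathsf x$ — one must confirm that the "good" positive term that dominates $\mathsf y^2 u_{\mathsf x\mathsf y}^2$ does so with a uniform gap, not just pointwise, which is exactly where the strict inequalities $\gamma < 1-\alpha$ and $\gamma \leq \beta(1-\alpha)$ enter.
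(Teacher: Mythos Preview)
Your overall strategy---compute the derivatives, expand $F(u)$, and show that certain positive terms in the product dominate the subtracted term $\mathsf y^2 u_{\mathsf x\mathsf y}^2$---is exactly the paper's approach. However, for the first bullet you have chosen the wrong cross term, and the resulting inequality fails.

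You isolate the term $(\mathsf y u_{\mathsf x\mathsf x})\cdot\bigl(-(u_2)_{\mathsf y}\bigr)=\epsilon^2(2-\alpha)(1-\alpha)\gamma\,\mathsf y\,|\mathsf x|^{2-2\alpha}(1+\mathsf y)^{-2\gamma-1}$ and claim it dominates $\mathsf y^2 u_{\mathsf x\mathsf y}^2=\epsilon^2(2-\alpha)^2\gamma^2\,\mathsf y^2\,|\mathsf x|^{2-2\alpha}(1+\mathsf y)^{-2\gamma-2}$. The ratio is indeed $\frac{1-\alpha}{(2-\alpha)\gamma}\cdot\frac{1+\mathsf y}{\mathsf y}$, but as $\mathsf y\to\infty$ this tends to $\frac{1-\alpha}{(2-\alpha)\gamma}$, and the hypothesis $\gamma<1-\alpha$ does \emph{not} force this constant to be at least $1$. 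For instance with $\alpha=\tfrac12$ and $\gamma=0.4$ one gets $\frac{0.5}{1.5\cdot 0.4}=\tfrac{5}{6}<1$. Your own hedging (``I would verify the bound carefully'') is warranted: the inequality you need is $1-\alpha\geq(2-\alpha)\gamma$, which is strictly stronger than $\gamma<1-\alpha$.

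The paper instead pairs $\mathsf y u_{\mathsf x\mathsf x}$ with $\mathsf y(u_2)_{\mathsf y\mathsf y}$ rather than with $-(u_2)_{\mathsf y}$. That cross term equals $\epsilon^2(2-\alpha)(1-\alpha)\gamma(1+\gamma)\,\mathsf y^2\,|\mathsf x|^{2-2\alpha}(1+\mathsf y)^{-2\gamma-2}$, which has \emph{exactly} the same $\mathsf y$- and $(1+\mathsf y)$-dependence as the subtracted term, so the comparison is a pure constant: $(1-\alpha)(1+\gamma)>(2-\alpha)\gamma$ rearranges to $1-\alpha>\gamma$, which is precisely the hypothesis. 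This is the key combinatorial point you are missing.

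For the second bullet your sketch is too vague to succeed as written. The paper's argument keeps three terms after the comparison above---the red term just discussed, the purple term $(\mathsf y u_{\mathsf x\mathsf x})\cdot M\beta(2-\beta)\mathsf y^{\beta-1}$, and the blue term $M^2\beta^2(2-\beta)\mathsf y^{2\beta-2}$---then splits into $\mathsf y\leq 1$ (where the blue term alone gives $\geq M^2\beta^2$) and $\mathsf y\geq 1$ (where a weighted AM--GM on the $|\mathsf x|$-powers of the first two terms yields a lower bound of the form $\delta\,\mathsf y^{(\text{exponent})}(M\beta)^{(2-2\alpha)/(2-\alpha)}$, with the exponent nonnegative exactly when $\gamma\leq\beta(1-\alpha)$). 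Your attempt to compare a single term against $\mathsf y^2 u_{\mathsf x\mathsf y}^2$ while simultaneously extracting a uniform positive lower bound will not close; the uniformity in both $|\mathsf x|$ and $\mathsf y$ genuinely requires combining two terms via AM--GM.
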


\begin{proof} Let us compute
\begin{equation}\label{eq:first derivatives}
u_{\mathsf x}=\epsilon(2-\alpha)\mathsf x|\mathsf x|^{-\alpha}(1+\mathsf y)^{-\gamma}\qquad u_{\mathsf y}=-M\beta \mathsf y^{-1+\beta} - \epsilon\gamma |\mathsf x|^{2-\alpha}(1+\mathsf y)^{-1-\gamma}
\end{equation}
and
\begin{equation}\label{eq:second derivatives}
\begin{split}
u_{\mathsf x\mathsf x}&=\epsilon(2-\alpha)(1-\alpha) |\mathsf x|^{-\alpha}(1+\mathsf y)^{-\gamma}\\
u_{\mathsf x\mathsf y}&=-\epsilon(2-\alpha)\gamma\mathsf x|\mathsf x|^{-\alpha}(1+\mathsf y)^{-1-\gamma}\\
 u_{\mathsf y\mathsf y}&=M\beta(1-\beta) \mathsf y^{-2 + \beta} + \epsilon\gamma(1+\gamma) |\mathsf x|^{2-\alpha}(1+\mathsf y)^{-2-\gamma}~.
 \end{split}
\end{equation}
Hence we have:
\begin{equation*}
\begin{split}
F(u) =& (\mathsf y \orange{u_{\mathsf x\mathsf x}} - \blue{u_{\mathsf y}})(\mathsf y \red{u_{\mathsf y\mathsf y}} - \blue{u_{\mathsf y}}) - \mathsf y^2 \purple{u_{\mathsf x\mathsf y}}^2 \\
=& \left(\mathsf y\orange{\epsilon(2-\alpha)(1-\alpha) |\mathsf x|^{-\alpha}(1+\mathsf y)^{-\gamma}} + \blue{M\beta \mathsf y^{-1+\beta} + \epsilon\gamma |\mathsf x|^{2-\alpha}(1+\mathsf y)^{-1-\gamma}}\right) \cdot\\
& \left(\mathsf y(\red{M\beta(1-\beta) \mathsf y^{-2 + \beta} + \epsilon\gamma(1+\gamma) |\mathsf x|^{2-\alpha}(1+\mathsf y)^{-2-\gamma}}) + \right.\\
 & \qquad \left.\blue{M\beta \mathsf y^{-1+\beta} + \epsilon\gamma |\mathsf x|^{2-\alpha}(\mathsf y+1)^{-1-\gamma}}\right) \\
& - \mathsf y^2\left(\purple{-\epsilon(2-\alpha)\gamma\mathsf x|\mathsf x|^{-\alpha}(1+\mathsf y)^{-1-\gamma}}\right)^2
\end{split}
\end{equation*}

We drop the final term in each of the two parenthetical expressions and combine two equivalent terms in the second, and recolor:
\begin{equation*}
\begin{split}
F(u) \geq& \left(\red{\mathsf y\epsilon(2-\alpha)(1-\alpha) |\mathsf x|^{-\alpha}(\mathsf y+1)^{-\gamma}} + \blue{M\beta \mathsf y^{-1+\beta}} \right) \cdot \\
& \left( \blue{M\beta(2-\beta) \mathsf y^{-1+\beta}} + \red{\mathsf y\epsilon\gamma(1+\gamma) |\mathsf x|^{2-\alpha}(1+\mathsf y)^{-2-\gamma}}\right) \\
&- \red{\mathsf y^2\left(\epsilon(2-\alpha)\gamma|\mathsf x|^{1-\alpha}(1+\mathsf y)^{-1-\gamma}\right)^2} 
\end{split}
\end{equation*}
and now expand, dropping one of the four terms of the product:
\begin{equation*}
\begin{split}
F(u) \geq & \red{((1-\alpha)(1+\gamma) - (2 - \alpha)\gamma) \left(\mathsf y^2 \epsilon^2 (2-\alpha)\gamma|\mathsf x|^{2-2\alpha}(1+\mathsf y)^{-2-2\gamma}\right)} \\
& + \purple{\epsilon(2 - \alpha)(1 - \alpha) M\beta(2-\beta)|\mathsf x|^{- \alpha}(1+\mathsf y)^{-\gamma} \mathsf y^\beta} \\
& + \blue{M^2 \beta^2(2-\beta)\mathsf y^{-2+2\beta}}
\end{split}
\end{equation*}

As long as $\gamma <1- \alpha$, the first term is positive, and hence $F(u)$ is strictly positive. Assuming this, we now try to refine our choice of constants so that $F(u) \geq 4/C$. First of all, if $\mathsf y\leq 1$, the last term is at least $M^2\beta^2$, so for any $\beta$ we can choose $M$ to make this at least $4/C$. Hence, so long as we are free to choose $M$ sufficiently large, we need only bound $F(u)$ from below under the assumption that $\mathsf y \geq 1$. In this case, $(1+\mathsf y)^{-1} \geq \frac{1}{2}\mathsf y^{-1}$, and so for some $\delta(\epsilon, \alpha, \gamma)$ small we have

\begin{equation*}
\begin{split}
F(u) \geq& \delta\left(\frac{\alpha}{2-\alpha}|\mathsf x|^{2-2\alpha}\mathsf y^{-2\gamma} + \frac{2-2\alpha}{2-\alpha}M \beta |\mathsf x|^{-\alpha}\mathsf y^{\beta - \gamma}\right) \\
\geq & \delta \mathsf y^{(-2\gamma\alpha+2(1-\alpha)(\beta - \gamma))/(2-\alpha)}(M\beta)^{(2-2\alpha)/(2-\alpha)}
\end{split}
\end{equation*}
where we have used the weighted arithmetic-geometric inequality $\lambda a + (1-\lambda) b \geq a^{\lambda}b^{1-\lambda}$ in the last line. If we choose $\gamma$ such that
\[
-2\gamma\alpha+2(1-\alpha)(\beta - \gamma) \geq 0
\]
then for $M$ sufficiently large, $F(u) \geq 4/C$ for all $\mathsf y \geq 1$. A little algebra shows that it suffices to take $\gamma \leq \beta(1-\alpha)$.
\end{proof}
\
\begin{lemma}
{If $\gamma<1-\alpha$}, the function $u$ defined in \eqref{eqn: u} is the parabolic support function of a strictly convex entire   spacelike surface $\Sigma_+$.
\end{lemma}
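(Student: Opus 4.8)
The plan is to feed the explicit function $u$ of \eqref{eqn: u} into the duality and regularity machinery of this section. \emph{First}, I will check that $u$ is a proper closed function on $\overline{\mathcal H}$ which is convex with respect to the affine structure of the plane $x+z=2$, so that Corollary \ref{cor: convex duality 2} produces a convex entire achronal surface $\Sigma_+$ with $s_{\mathrm{par}}=u$. Since $u$ is continuous and finite on $\overline{\mathcal H}$ it is automatically proper and closed. Away from the $\mathsf y$-axis $u$ is smooth, and there the matrix $A$ of Lemma \ref{lm:convexu} satisfies $\det A=\tfrac14 F(u)>0$ by the first item of Lemma \ref{lem: F subsolution} (this is precisely where $\gamma<1-\alpha$ enters), while its diagonal entries $\mathsf y u_{\mathsf x\mathsf x}-u_{\mathsf y}$ and $\mathsf y u_{\mathsf y\mathsf y}-u_{\mathsf y}$ are positive, a one-line check from \eqref{eq:first derivatives} and \eqref{eq:second derivatives} using $0<\alpha,\beta<1$. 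Hence $A$ is positive definite and $u\circ\zeta^{-1}$ is strictly convex on $\mathcal H\setminus\{\mathsf x=0\}$. To cross $\{\mathsf x=0\}$ I will use that $2-\alpha>1$, so $|\mathsf x|^{2-\alpha}$, and therefore $u$, is $C^1$ on $\mathcal H$, and that in the affine coordinate $Z=1+\mathsf x^2+\mathsf y^2$ along the line $\{\mathsf x=0\}$ (where $\{\mathsf x=0\}$ becomes a straight line in the plane $x+z=2$) one has $u=-M(Z-1)^{\beta/2}$, which is strictly convex in $Z$ because $\beta/2<1$. A $C^1$ function that is convex on each open side of a line and strictly convex along the line is globally strictly convex: a segment on which it were affine would, by strict convexity on the two sides, have to lie inside the line, which is impossible. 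This gives strict convexity of $u\circ\zeta^{-1}$ on $\zeta(\mathcal H)$, hence convexity on $\zeta(\overline{\mathcal H})$ by continuity, and Corollary \ref{cor: convex duality 2} applies.

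\emph{Second}, I will show $\Sigma_+$ is acausal via Lemma \ref{lem: spacelike}. Given $f(\mathsf x,\mathsf y)=a+b\mathsf x+c(\mathsf x^2+\mathsf y^2)$ with $f\le u$ on $\overline{\mathcal H}$: testing at $\mathsf x=0$ as $\mathsf y\to\infty$ (where $u(0,\mathsf y)=-M\mathsf y^\beta\to-\infty$) rules out $c=0$, and comparing with $u(\mathsf x,0)=\epsilon|\mathsf x|^{2-\alpha}=o(\mathsf x^2)$ rules out $c>0$, so $c<0$; since $\mathsf y^\beta$ and $|\mathsf x|^{2-\alpha}$ are $o(\mathsf x^2+\mathsf y^2)$ we get $\overline u(\infty)=0>c=\overline f(\infty)$. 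And $f$ cannot touch $u$ at a boundary point $\mathsf x_0\in\R$: moving into the interior, $u(\mathsf x_0,\mathsf y)-f(\mathsf x_0,\mathsf y)=-M\mathsf y^\beta+O(\mathsf y)$ is negative for small $\mathsf y>0$ because $\beta<1$, contradicting $f\le u$. So $\overline f<\overline u$ on $\R\cup\{\infty\}$ and Lemma \ref{lem: spacelike} gives that $\Sigma_+$ is acausal.

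\emph{Third}, I will run Lemma \ref{lm:c1u}(3) with $U=\mathcal H$, legitimate because $u$ is $C^1$ there and $u\circ\zeta^{-1}$ is strictly convex there, obtaining $\Gamma:\mathcal H\to\Sigma_+$, a homeomorphism onto an open subset on which $\Sigma_+$ is $C^1$ and spacelike with $G\circ\Gamma=\bar\zeta$. Since $\Sigma_+$ is acausal, each of its support planes is spacelike (Proposition \ref{prop achronal acausal}), with normal direction $\bar\zeta(\mathsf x,\mathsf y)$ for some $(\mathsf x,\mathsf y)\in\mathcal H$ (as $\bar\zeta$ maps $\mathcal H$ onto $\mathbb H^2$), hence equal to $\zeta(\mathsf x,\mathsf y)^\perp+\Gamma(\mathsf x,\mathsf y)$, which by Lemma \ref{lm:c1u}(2) meets $\Sigma_+$ only at $\Gamma(\mathsf x,\mathsf y)$; therefore $\Gamma(\mathcal H)=\Sigma_+$, so $\Sigma_+$ is globally $C^1$ and spacelike. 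Finally $G=\bar\zeta\circ\Gamma^{-1}$ is a bijection onto $\mathbb H^2$, so $G$ is injective, $\Sigma_+$ contains no line segment, and $\Sigma_+$ is strictly convex. (Equivalently, away from the image of the $\mathsf y$-axis Lemma \ref{lm:c2} already gives $\Sigma_+$ of class $C^2$ with shape operator $A^{-1}$ positive definite, hence locally strictly convex there, and injectivity of $G$ upgrades this to strict convexity everywhere.)

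The step I expect to cause the most friction is not any single estimate but the bookkeeping around the $\mathsf y$-axis: Lemmas \ref{lm:convexu}, \ref{lm:c1u}, \ref{lm:c2} and \ref{lem: F subsolution} are all phrased for $C^2$ support functions, whereas $u$ is only $C^1$ across $\{\mathsf x=0\}$, so convexity, spacelikeness and strict convexity each have to be established on the complement of this line and then extended across it. The two facts that make the extensions painless are $2-\alpha>1$ (so $u$ stays $C^1$) and $\beta/2<1$ (so $u$ remains strictly convex along the axis in the affine parameter); I would isolate these two observations first, after which the three gluing arguments above are routine.
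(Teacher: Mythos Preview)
Your proof is correct and follows the same overall route as the paper: establish strict convexity of $u\circ\zeta^{-1}$ on $\zeta(\mathcal H)$ so that Corollary~\ref{cor: convex duality 2} yields $\Sigma_+$; verify the hypothesis of Lemma~\ref{lem: spacelike} to get acausality; and then combine Lemma~\ref{lm:c1u}(3) with Proposition~\ref{prop achronal acausal} to conclude that $\Gamma$ is onto and $\Sigma_+$ is $C^1$, spacelike, and strictly convex.

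The only real difference is how you handle the line $\{\mathsf x=0\}$, where $u$ is not $C^2$. The paper compares with the smooth auxiliary function $u_0(\mathsf x,\mathsf y)=-M\mathsf y^\beta$: one checks $F(u_0)=M^2\beta^2(2-\beta)\mathsf y^{2\beta-2}>0$, so by Lemma~\ref{lm:convexu} the function $u_0\circ\zeta^{-1}$ is strictly convex on all of $\zeta(\mathcal H)$; since $u\ge u_0$ with equality on $\{\mathsf x=0\}$, the affine supports of $u_0\circ\zeta^{-1}$ along that line are also affine supports of $u\circ\zeta^{-1}$, which yields (strict) convexity there with no further computation. Your route instead uses the $C^1$ regularity of $u$ across $\{\mathsf x=0\}$ (from $2-\alpha>1$) to glue convexity from the two sides along each chord, and checks strict convexity along the axis directly in the affine parameter $Z=1+\mathsf y^2$. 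Both arguments are valid; the paper's comparison trick is a bit slicker since it stays entirely inside the $F(u)>0$ framework of Lemma~\ref{lm:convexu}, while yours makes the role of the exponents $2-\alpha>1$ and $\beta/2<1$ more transparent.
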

The proof below shows that $\Sigma_+$ is a $C^1$ surface, as in the definition of spacelike surfaces. We will prove later (Lemma \ref{lem: exists sigma+}) that $\Sigma_+$ is in fact $C^2$, but this will require a little additional work.
\begin{proof}
By Corollary \ref{lem: convex duality}, in order to prove the existence of an achronal surface $\Sigma_+$ with parabolic support function $u$, we need to check that  the function $u\circ\zeta^{-1}$ is a  convex function of the intersection of the affine  plane $P_0$ of equation $x+z=2$ with the future cone  $J^+$. Since the intersection is convex, it is sufficient to prove that $u\circ\zeta^{-1}$ is locally convex around any point $(x,y,z)\in P_0\cap I^+$; then the function $u$ defined in \eqref{eqn: u}, which is clearly continuous up to the boundary of $\mathcal H$, is the unique closed convex extension to  $P_0\cap J^+$.

The strict convexity of $u\circ\zeta^{-1}$ at  $(x,y,z)\in P_0\cap I^+$ for $y\neq 0$ follows from Lemma \ref{lem: F subsolution}, which implies that the matrix \eqref{AA} is positive definite, and Lemma \ref{lm:convexu}.

To prove the strict convexity  at points of the form $(x,0,x+2)$, notice that $u(\mathsf x,\mathsf y)\geq  u_0(\mathsf x, \mathsf y)$ where
$u_0(\mathsf x, \mathsf y)=-M \mathsf{y}^ \beta$, with $u=u_0$ along $\mathsf x=0$.
We observe that $F(u_0)>0$ so $u_0\circ\zeta^{-1}$ is strictly convex again by Lemma \ref{lm:convexu}. 
We conclude that $u\circ\zeta^{-1}$ is strictly convex at the points of $P_0\cap I^+$ with $y=0$ as well. 

If $\Gamma:\mathcal H\to\Sigma_+$ is the map defined in Lemma \ref{lm:c1u}, we conclude that $\Sigma_+$ is $C^1$ and spacelike on the open subset  $\Gamma(\mathcal H)$, which consists of points admitting
spacelike support planes. 
In order to prove that $\Gamma$ is surjective, we show that $\Sigma_+$ is {acausal} as a consequence of Lemma \ref{lem: spacelike}.
Suppose $f$ is a polynomial of the given form with $f \leq u$ on $\overline{\mathcal{H}}$. Since $u$ has a singularity like $-\mathsf y^\beta$ near $\mathsf y=0$ and $f$ is smooth, we must certainly have $f < u$ on $\R$. Now at $\infty$, one checks $\overline{f}(\infty) = c$, and by Lemma \ref{lem: u symmetric incomplete}, we have $\overline{u}(\infty) = - \lim_{\mathsf y \to \infty} M \beta \mathsf y^{\beta - 2} = 0$. So if these were equal, then $f$ is independent of $\mathsf y$, so cannot be less than or equal to $-M\mathsf y^\beta$ on the $\mathsf y$ axis. We conclude that $f < u$ on $\R \cup \{\infty\}$, and so by Lemma \ref{lem: spacelike} that $\Sigma_+$ is  {acausal}. Hence by Corollary \ref{cor:spacelike iff acausal}, every point of $\Sigma_+$ has a spacelike support plane, that is, $\Gamma$ is surjective. This concludes the proof.
\end{proof}

By symmetry, $\Gamma(\{\mathsf x=0\})=\{y=0\}$ and so $\Gamma(\mathcal H\setminus\{\mathsf x=0\})=\Sigma_+\setminus\{y=0\}$. Since $u$ is $C^2$ on $\mathcal H\setminus\{\mathsf x=0\}$ and $F(u)>0$, Lemma \ref{lm:c2} implies that
$\Sigma_+\setminus\{y=0\}$ is $C^2$. We will prove now that $\Sigma_+$ is in fact $C^ 2$ everywhere.
First we prove the following lemma that roughly states that the shape operator of $\Sigma_+$ extends everywhere.
In order to make this statement more precise, notice that $T\Sigma_+\to\Sigma_+$ is a continuous vector bundle and a priori the shape operator $B$ is a section of $(T\Sigma_+)^*\otimes(T\Sigma_+)$
defined only on $\Sigma_+\setminus\{y=0\}$.

\begin{lemma}\label{lm:Bextends}
If $\gamma<1-\alpha$, the shape operator continuously extends to a section of $(T\Sigma_+)^*\otimes(T\Sigma_+)$ on the whole $\Sigma_+$.
\end{lemma}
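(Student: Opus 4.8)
The plan is to work on the upper half-plane side, where we have explicit formulas, and transfer the information to $\Sigma_+$ via the parametrization $\ginv$. On $\mathcal H \setminus \{\mathsf x = 0\}$, Lemma \ref{lm:c2} tells us that the shape operator $B$ at $\ginv(\mathsf x,\mathsf y)$ is represented, in the basis $\{\bar\zeta_{\mathsf x}, \bar\zeta_{\mathsf y}\}$ of $T_{\bar\zeta}\Hyp^2 = T_{\ginv(\mathsf x,\mathsf y)}\Sigma_+$, by the matrix $A^{-1}(\mathsf x,\mathsf y)$, where $A$ is the matrix in \eqref{AA}. So the first step is to examine the behaviour of $A^{-1}$ as $\mathsf x \to 0$. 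From the explicit second derivatives \eqref{eq:second derivatives}, the entries $\mathsf y u_{\mathsf x\mathsf x} - u_{\mathsf y}$ and $\mathsf y u_{\mathsf y\mathsf y} - u_{\mathsf y}$ have finite (indeed continuous) limits as $\mathsf x \to 0$: the term $|\mathsf x|^{-\alpha}$ appearing in $u_{\mathsf x\mathsf x}$ is multiplied by $\mathsf y$ and added to $-u_{\mathsf y}$, which contains $M\beta \mathsf y^{-1+\beta}$, and one should check that in fact $\mathsf y u_{\mathsf x\mathsf x}$ does \emph{not} blow up — wait, $|\mathsf x|^{-\alpha} \to \infty$. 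So more carefully: $\mathsf y u_{\mathsf x\mathsf x} = \mathsf y \epsilon(2-\alpha)(1-\alpha)|\mathsf x|^{-\alpha}(1+\mathsf y)^{-\gamma} \to +\infty$ as $\mathsf x \to 0$, while the off-diagonal entry $\mathsf y u_{\mathsf x\mathsf y} = -\mathsf y\epsilon(2-\alpha)\gamma\, \mathsf x |\mathsf x|^{-\alpha}(1+\mathsf y)^{-1-\gamma} \to 0$ (since $|\mathsf x|^{1-\alpha}\to 0$). Hence as $\mathsf x \to 0$, the matrix $A$ has one entry going to $+\infty$, one entry going to the finite limit $\frac12(\mathsf y u_{\mathsf y\mathsf y} - u_{\mathsf y})|_{\mathsf x = 0}$, and the off-diagonal going to $0$. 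Therefore $A^{-1}$ has a finite limit: its $(1,1)$ entry tends to $0$, its $(2,2)$ entry tends to $2/(\mathsf y u_{\mathsf y\mathsf y} - u_{\mathsf y})|_{\mathsf x=0}$, and the off-diagonal tends to $0$. So $A^{-1}$ extends continuously across $\{\mathsf x = 0\}$ as a matrix-valued function on $\mathcal H$.

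The second step is to promote this to a statement about the bundle $(T\Sigma_+)^* \otimes (T\Sigma_+)$. The subtlety is that the basis $\{\bar\zeta_{\mathsf x}, \bar\zeta_{\mathsf y}\}$ used in Lemma \ref{lm:c2} is a frame for $T\Hyp^2$ along $\bar\zeta(\mathsf x,\mathsf y)$, and we are identifying $T_{\ginv(\mathsf x,\mathsf y)}\Sigma_+$ with $T_{\bar\zeta(\mathsf x,\mathsf y)}\Hyp^2$ via \eqref{eq:c1}. Since $\bar\zeta: \mathcal H \to \Hyp^2$ is smooth (it is the conformal identification of Remark \ref{rmk:metric halfspace}) and $\ginv: \mathcal H \to \Sigma_+$ is a homeomorphism onto $\Sigma_+$ (by the surjectivity proved in the previous lemma, using that $\Sigma_+$ is acausal hence spacelike, and injectivity from strict convexity of $u\circ\zeta^{-1}$), the frame $\{\bar\zeta_{\mathsf x}, \bar\zeta_{\mathsf y}\}$ transported through $\ginv^{-1}$ gives a continuous local frame for $T\Sigma_+$ over all of $\Sigma_+$ — in particular across $\{y=0\}$. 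In this continuous frame, the shape operator (defined a priori only off $\{y=0\}$) is represented by $A^{-1} \circ \ginv^{-1}$, which by Step 1 extends continuously across $\{y=0\}$. A section of a continuous vector bundle which is continuous off a closed set and whose components in a continuous frame extend continuously is itself continuous; this gives the desired continuous extension of $B$ to a section of $(T\Sigma_+)^* \otimes (T\Sigma_+)$ on all of $\Sigma_+$.

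The main obstacle is Step 1: one must be careful about what exactly ``extends continuously'' means for $A^{-1}$, since $A$ itself degenerates (one eigenvalue blows up). The clean way is to write $A^{-1} = \frac{1}{\det A}\,\mathrm{adj}(A)$ and track each entry. With $P := \mathsf y u_{\mathsf x\mathsf x} - u_{\mathsf y}$, $Q := \mathsf y u_{\mathsf y\mathsf y} - u_{\mathsf y}$, $R := \mathsf y u_{\mathsf x\mathsf y}$, we have $4\det A = PQ - R^2 = F(u) > 0$ (this is \eqref{eqn: def of F}, positive by Lemma \ref{lem: F subsolution} for $\gamma < 1-\alpha$, and one checks $F(u)$ extends continuously and stays positive as $\mathsf x\to 0$ using \eqref{eq:first derivatives}--\eqref{eq:second derivatives}), so $A^{-1} = \frac{2}{F(u)}\begin{bmatrix} Q & -R \\ -R & P \end{bmatrix}$ — but $P \to \infty$. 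The resolution is that $F(u) = PQ - R^2$ also $\to \infty$ (driven by the $P$ term, since $Q$ has a strictly positive finite limit and $R \to 0$), at the same rate as $P$, so $P/F(u) \to 1/Q_0$ where $Q_0 = Q|_{\mathsf x=0} > 0$, while $Q/F(u) \to 0$ and $R/F(u) \to 0$. This makes the continuous extension of $A^{-1}$ explicit, and one records that the limiting shape operator along $\{y=0\}$ is $\mathrm{diag}(0,\, 1/Q_0)$ in the frame $\{\bar\zeta_{\mathsf x},\bar\zeta_{\mathsf y}\}$ — consistent with the fact, visible from the symmetry, that the curve $\ginv(0,\mathsf y)$ is a geodesic of $\Sigma_+$ (second fundamental form vanishing in the $\bar\zeta_{\mathsf x}$ direction along $\{y=0\}$). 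This completes the proof.
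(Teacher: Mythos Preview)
Your proof is correct and follows essentially the same approach as the paper's: both compute the limit of $A^{-1}$ as $\mathsf x\to 0$ in the continuous frame $\{\bar\zeta_{\mathsf x},\bar\zeta_{\mathsf y}\}$, observing that the $(1,1)$ entry of $A$ blows up like $|\mathsf x|^{-\alpha}$ while the other entries have finite limits, so that $A^{-1}\to\mathrm{diag}\bigl(0,\,2/(M\beta(2-\beta)\mathsf y^{-1+\beta})\bigr)$. Your adjugate computation is a slightly more explicit bookkeeping of the same limit; the only quibble is the momentary phrase ``$F(u)$ extends continuously'' before you correctly note that in fact $F(u)\to\infty$ at the same rate as $P$, and the closing remark linking the zero eigenvalue to $\ginv(0,\mathsf y)$ being a geodesic is not quite the right justification (that curve is a geodesic because it is the fixed locus of the reflection symmetry, not because $\II(\bar\zeta_{\mathsf x},\cdot)=0$).
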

\begin{proof}
We consider the continuous parameterization $\Gamma:\mathcal H\to \Sigma_+$ given by the inverse of the Gauss map.
Notice that for any $(\mathsf x,\mathsf y)\in\mathcal H$ the vectors  $\bar{\zeta}_{\mathsf x}(\mathsf x, \mathsf y)$,
$\bar\zeta_{\mathsf y}(\mathsf x, \mathsf y)$ form a basis of $T_{\Gamma(\mathsf x,\mathsf y)}\Sigma_+=T_{\bar\zeta(\mathsf x,\mathsf y)}\mathbb H^2$.
In this way a continuous frame of $T\Sigma_+$ is defined everywhere.
By Lemma \ref{lm:c2} the representative matrix of the shape operator $B$ with respect to this frame at a point $\Gamma(\mathsf x,\mathsf y)$ is given by
$ A(\mathsf x, \mathsf y)^{-1}$ where 
\begin{equation}\label{eq:nnnn}
 A(\mathsf x,\mathsf y)=\frac{1}{2}\begin{bmatrix} \mathsf yu_{\mathsf x\mathsf x} - u_{\mathsf y} & \mathsf yu_{\mathsf x\mathsf y}  \\ \mathsf yu_{\mathsf x\mathsf y} & \mathsf yu_{\mathsf y\mathsf y} - u_{\mathsf y}  \end{bmatrix}
(\mathsf x, \mathsf y)
\end{equation}

So in order to conclude it is sufficient to check that the entries of the inverse of \eqref{eq:nnnn} extend at $\mathsf x=0$.
From \eqref{eq:first derivatives} and \eqref{eq:second derivatives}, we observe that $u_{\mathsf x\mathsf y}(\mathsf x,\mathsf y)\rightarrow 0$ as $\mathsf x\to 0$, while $u_{\mathsf y}(\mathsf x,\mathsf y)\to -M\beta \mathsf y^{-1+\beta}$ and $u_{\mathsf y\mathsf y}(\mathsf x,\mathsf y)\to M\beta(1-\beta)\mathsf y^{-2+\beta}$, which implies that $\mathsf yu_{\mathsf y\mathsf y} - u_{\mathsf y}\to M\beta(2-\beta) \mathsf y^{-1+\beta}$.
On the other hand $u_{\mathsf x\mathsf x}(\mathsf x,\mathsf y) \sim |\mathsf x|^{-\alpha}$ diverges as $\mathsf x\to 0$, so we simply see that the matrix $A^{-1}(\mathsf x,\mathsf y)$ converges to the matrix
\[
\begin{bmatrix} 0 & 0\\0 & \frac{2}{M\beta(2-\beta)\mathsf y^{-1+\beta}}\end{bmatrix}\,.
\]
as $(\mathsf x, \mathsf y)\to (0, \mathsf y)$. 
\end{proof}

We will need to use an elementary lemma for which we give a short proof for the convenience of the reader.
\begin{lemma}\label{lm:c11}
Let $E=\{(x,y)\in\mathbb R^2\,|\, y= 0\}$ be the $x$-axis, and $f\in C^0(\mathbb R^2)\cap C^1(\mathbb R^2\setminus E)$.
Let us assume that $df$ extends to a continuous $1$-form $\omega=h(x,y)dx+k(x,y)dy$  on $\mathbb R^2$.
Then $f\in C^1(\mathbb R^2)$ and $df=\omega$ everywhere.
\end{lemma}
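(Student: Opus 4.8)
The plan is to reduce everything to the line $E=\{y=0\}$: since $f$ is already $C^1$ on $\mathbb{R}^2\setminus E$ with $df=\omega$ there, it suffices to prove that the two partial derivatives of $f$ exist at every point of $E$ as well and agree with $h$ and $k$; then $\partial_x f=h$ and $\partial_y f=k$ globally, both continuous, and the classical criterion (a function whose first partials exist everywhere and are continuous is $C^1$, with total differential given by those partials) finishes the proof.

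First I would treat $\partial_x f$ along $E$. Fix $x_1<x_2$. For any $y\neq 0$ the horizontal segment from $(x_1,y)$ to $(x_2,y)$ misses $E$, so $t\mapsto f(t,y)$ is $C^1$ with derivative $h(t,y)$, and the fundamental theorem of calculus gives $f(x_2,y)-f(x_1,y)=\int_{x_1}^{x_2}h(t,y)\,dt$. Letting $y\to 0$, the left-hand side tends to $f(x_2,0)-f(x_1,0)$ by continuity of $f$, and the right-hand side tends to $\int_{x_1}^{x_2}h(t,0)\,dt$ because $h$ is continuous, hence uniformly continuous on the compact set $[x_1,x_2]\times[-1,1]$, so $h(\cdot,y)\to h(\cdot,0)$ uniformly on $[x_1,x_2]$. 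Thus $x\mapsto f(x,0)$ has continuous derivative $h(\cdot,0)$, i.e. $\partial_x f$ exists on $E$ and equals $h$ there; combined with the known identity off $E$ this gives $\partial_x f=h$ on all of $\mathbb{R}^2$.

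Next, $\partial_y f$ at a point $(x_0,0)$. For $0<\varepsilon<y$ the vertical segment from $(x_0,\varepsilon)$ to $(x_0,y)$ misses $E$, so $f(x_0,y)-f(x_0,\varepsilon)=\int_{\varepsilon}^{y}k(x_0,s)\,ds$; sending $\varepsilon\to 0^+$ and using continuity of $f$ together with continuity (hence integrability near $0$) of $s\mapsto k(x_0,s)$ yields $f(x_0,y)-f(x_0,0)=\int_{0}^{y}k(x_0,s)\,ds$, and the analogous identity holds for $y<0$. Differentiating this in $y$ at $y=0$ shows $\partial_y f(x_0,0)=k(x_0,0)$, so $\partial_y f=k$ everywhere. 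Since $h$ and $k$ are continuous, $f\in C^1(\mathbb{R}^2)$ and $df=h\,dx+k\,dy=\omega$.

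The argument is entirely routine; the only point needing a little care — and the nearest thing to an obstacle — is interchanging the limit $y\to 0$ with the integral $\int_{x_1}^{x_2}h(t,y)\,dt$, which is exactly where the hypothesis that $df$ extends to a \emph{continuous} form across $E$ is used, via uniform convergence on a compact interval. Everything else is just the fundamental theorem of calculus and the standard ``continuous partials imply $C^1$'' theorem.
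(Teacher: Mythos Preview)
Your proof is correct and follows essentially the same approach as the paper: both arguments integrate $h$ and $k$ along horizontal and vertical segments and use continuity of $f$ and of $\omega$ to pass across $E$. The paper packages the same computations via auxiliary functions $K(x,y)=f(x,0)+\int_0^y k(x,t)\,dt$ and $H(x,y)=f(0,y)+\int_0^x h(s,y)\,ds$, showing $K=f$ and $H=f$ by a ``constant on half-lines plus continuity'' argument, while you take explicit limits of the FTC identity as $y\to 0$ (respectively $\varepsilon\to 0$); these are cosmetic variations of the same idea.
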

\begin{proof}
Let us consider the continuous function on $\mathbb R^2$
\[
K(x,y)=f(x,0)+\int_0^y k(x,t)dt\,
\]
so that  $K_y(x,y)=k(x,y)$ for all $(x,y)\in\mathbb R^2$.
Since $(K-f)_y$ vanishes on $\mathbb R^2\setminus E$, the function $K-f$ is constant on the half-lines of the form $\{x\}\times(0,+\infty)$ and on the half-lines of the form $\{x\}\times(-\infty,0)$.
Since $K-f$ is continuous and vanishes on $E$, it vanishes everywhere, which implies that $f$ admits $y$-derivative everywhere, that coincide with $k(x,y)$.

Similarly let us consider the function
\[
 H(x,y)=f(0,y)+\int_0^x h(s,y)ds\,
\]
so that $H_x(x,y)=h(x,y)$ everywhere.
We have that $H-f$ vanishes on $\mathbb R^2\setminus E$. Since $H-f$ is continuous on $\mathbb R^2$ we conclude that $f$ coincides with $H$ everywhere.
\end{proof}

\begin{lemma} \label{lem: exists sigma+} {If $\gamma<1-\alpha$, the function $u$ given by \eqref{eqn: u} is the parabolic support function of a $C^2$ convex entire  spacelike surface $\Sigma_+$. If moreover $\gamma \leq \beta(1- \alpha)$ and $M$ is chosen as in the second  point of Lemma \ref{lem: F subsolution}, then $\Sigma_+$ has curvature greater than or equal to $-C$.}
\end{lemma}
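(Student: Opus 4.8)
The plan is simply to assemble the preparatory lemmas of this subsection. For $\gamma<1-\alpha$ we already know that $u$ from \eqref{eqn: u} is the parabolic support function of a strictly convex entire spacelike (hence $C^1$) surface $\Sigma_+$, that the inverse Gauss map $\Gamma\colon\mathcal H\to\Sigma_+$ of Lemma \ref{lm:c1u} is a homeomorphism with $\Gamma(\{\mathsf x=0\})=\Sigma_+\cap\{y=0\}$, and that $\Sigma_+\setminus\{y=0\}=\Gamma(\mathcal H\setminus\{\mathsf x=0\})$ is a $C^2$ surface on which, by Lemma \ref{lm:c2} and \eqref{eqn: det B}, one has $K(\Gamma(\mathsf x,\mathsf y))^{-1}=-\frac14 F(u)(\mathsf x,\mathsf y)$. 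So only two things remain to be proved: that $\Sigma_+$ is $C^2$ across the curve $\{y=0\}$, and the lower curvature bound. The first is the main obstacle, since $u$ is only $C^1$ — not $C^2$ — on the $\mathsf y$-axis, so $C^2$ regularity of $\Sigma_+$ along $\{y=0\}$ cannot be read off directly from $u$ and must be obtained by a separate argument.

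To handle the regularity, I would show that the Gauss map $G\colon\Sigma_+\to\mathbb H^2$ is $C^1$ in a neighbourhood of an arbitrary point $p_0\in\Sigma_+\cap\{y=0\}$ and then invoke Lemma \ref{lm:gaussc1}. By Lemma \ref{lm:Bextends} the shape operator $B$ extends to a continuous section of $(T\Sigma_+)^*\otimes(T\Sigma_+)$; concretely, in the frame $\bar\zeta_{\mathsf x},\bar\zeta_{\mathsf y}$ of $T\Sigma_+$ (continuous along $\Sigma_+$ because $\Gamma$ is a homeomorphism and $\bar\zeta$ is smooth), its matrix $A^{-1}$ extends continuously across $\{y=0\}$. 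Since parallel transport in $\R^{2,1}$ is the identity on the common subspace $T_{\Gamma(\mathsf x,\mathsf y)}\Sigma_+=T_{\bar\zeta(\mathsf x,\mathsf y)}\mathbb H^2$, the identity $B=P\circ dG$ shows that $dG$, in the same frame, is likewise $A^{-1}$, hence extends continuously across $\{y=0\}$. Then I would write $\Sigma_+$ near $p_0$ as the graph of $f\in C^1(\Omega)\cap C^2(\Omega\setminus E)$, with $\Omega$ a neighbourhood of the origin in $\R^2$ and $E=\Omega\cap\{y=0\}$ (note $\pi(\Sigma_+\cap\{y=0\})$ is the $x$-axis), set $\iota(x,y)=(x,y,f(x,y))$, and consider $\widetilde G:=G\circ\iota$. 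This is continuous on $\Omega$, is $C^1$ on $\Omega\setminus E$, and $d\widetilde G=dG\circ d\iota$ extends to a continuous $\R^{2,1}$-valued $1$-form on $\Omega$ (because $d\iota$ is continuous and $dG$ extends). Applying Lemma \ref{lm:c11} componentwise to $\widetilde G$ gives $\widetilde G\in C^1(\Omega)$, so $G$ is $C^1$ near $p_0$, and hence $\Sigma_+$ is $C^2$ near $p_0$ by Lemma \ref{lm:gaussc1}. Since $p_0\in\Sigma_+\cap\{y=0\}$ was arbitrary and $\Sigma_+$ is already $C^2$ off $\{y=0\}$, the surface $\Sigma_+$ is $C^2$ everywhere.

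Finally, for the curvature bound I would assume $\gamma\le\beta(1-\alpha)$ (which in particular gives $\gamma<1-\alpha$, so the previous part applies) and choose $M$ as in the second item of Lemma \ref{lem: F subsolution}, so that $F(u)(\mathsf x,\mathsf y)\ge 4/C$ whenever $\mathsf x\ne 0$. By \eqref{eqn: det B}, for all $(\mathsf x,\mathsf y)$ with $\mathsf x\ne 0$,
\[
K(\Gamma(\mathsf x,\mathsf y))^{-1}=-\frac14 F(u)(\mathsf x,\mathsf y)\le-\frac1C<0,
\]
so $-C\le K(\Gamma(\mathsf x,\mathsf y))<0$; that is, $K\ge -C$ on the dense open set $\Sigma_+\setminus\{y=0\}$. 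Since $\Sigma_+$ is now known to be $C^2$, its Gaussian curvature $K$ is continuous, so $K\ge -C$ on all of $\Sigma_+$; on $\{y=0\}$ itself the extended shape operator has the limiting matrix computed in the proof of Lemma \ref{lm:Bextends}, whose determinant vanishes, so in fact $K=-\det B=0$ there. Together with $K\le 0$, which holds on $\Sigma_+$ by convexity, this proves the lemma.
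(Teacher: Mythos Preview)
Your proof is correct and follows essentially the same approach as the paper: both argue that the Gauss map $G$ is $C^1$ by composing with the graph parametrization, using Lemma~\ref{lm:Bextends} to see that the partial derivatives of $G\circ\iota$ extend continuously across $\{y=0\}$, applying Lemma~\ref{lm:c11} componentwise, and then invoking Lemma~\ref{lm:gaussc1}; the curvature bound is likewise obtained from $F(u)\ge 4/C$ on $\{\mathsf x\neq 0\}$ together with continuity of $K$ on the now-$C^2$ surface. Your write-up is slightly more explicit in places (e.g.\ the identification $B=P\circ dG$ and the observation that $K=0$ on $\{y=0\}$), but the argument is the same.
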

\begin{proof}
Let $f:\mathbb R^2\to\mathbb R$ be the $C^1$-function whose graph is $\Sigma_+$, 
and consider the corresponding $C^1$-parametrization  $\sigma(x,y)=(x,y, f(x,y))$.
Notice that $(G\circ\sigma)_x=B(\sigma(x,y))\cdot \sigma_x$.
Since by Lemma \ref{lm:Bextends} the shape operator $B$ extends to a continuous section of $(T\Sigma_+)^*\otimes(T\Sigma_+)$,  $\sigma_x$ extends to $y=0$ we conclude that
the function $(x,y)\to (G(\sigma(x,y)))_x$ continuously extends at $y=0$.
Similarly we have that $(x,y)\to (G(\sigma(x,y))_y$ continuously extends at $y=0$. 

 Lemma \ref{lm:c11} implies that $G\circ\sigma:\mathbb R^2\to\mathbb H^2$ is $C^1$.
 Since $\sigma$ is a $C^1$-parametrization of $\Sigma_+$, the map  $G:\Sigma_+\to\mathbb H^2$ is  $C^1$ as well, so by Lemma \ref{lm:gaussc1}, $\Sigma_+$ is $C^2$.


{Finally, under the choices of constants as in the second  point of Lemma \ref{lem: F subsolution} we have $F(u) \geq 4/C$. Therefore the curvature of $\Sigma_+$ is greater than or equal to $-C$ away from its intersection with the $y=0$ plane, and thus it is greater than or equal to $-C$ everywhere by continuity.}
\end{proof}

\subsection{Proof of Theorem \ref{thm: Holder incomplete}}\label{sec: conclusion of the proof}

At this point, we have assembled all the necessary components for a proof of the main theorem of this section, and so we end by simply putting them in order.

\begin{proof}[Proof of Theorem \ref{thm: Holder incomplete}]
{As already observed at the beginning of Section \ref{subsec:outline} (Remark \ref{remark theta0 pi}), we can assume $\theta_0=\pi$ and $\phi(\pi)=0$.}
Let $\Sigma_+$ be the surface whose existence is given by Lemma \ref{lem: exists sigma+}, where $\gamma \leq \beta(1- \alpha)$ and the constant  $M$ is chosen as in the second  point of Lemma \ref{lem: F subsolution}. 
From \eqref{eq:first derivatives} and \eqref{eq:second derivatives} we find $u_{\mathsf y\mathsf y}(0,\mathsf y) - u_{\mathsf y}(0,\mathsf y)/\mathsf y = M\beta(2-\beta)\mathsf y^{-2+\beta}$; moreover we have $u_{\mathsf y}(0,\mathsf y)/2\mathsf y = - M \beta \mathsf y^{-2+\beta}/2$. Hence Lemma \ref{lem: u symmetric incomplete} shows that $\Sigma_+$ is incomplete at $\pi$ and its elliptic null support function $\phi_+$ has value 0 at $\pi$. Moreover, since its parabolic null support function is  
$$\psi_+(\mathsf x)=\epsilon|\mathsf x|^{2-\alpha}~,$$
by Lemma \ref{lemma:mouse} {and the condition \eqref{eq:inc power}, we have that $\phi_+(\theta)\leq \epsilon|\theta-\pi|^{\alpha} < \phi(\theta)$ for $\theta$ in a neighbourhood $U$ of $\pi$. In addition, we have $\phi_+(\pi)=\phi(\pi)=0$.}

{Analogously to the proof of Corollary \ref{cor easy}, we remark that we are not yet in the hypothesis of Corollary \ref{cor:complete comparison}, since we  need  the inequality $\phi_+\leq \phi$ to hold globally on $\mathbb S^1$. This issue is solved by replacing  $\Sigma_+$ with a translate in the direction of $\vec{\pi}$, which has the effect of changing its parabolic null support function $\psi_+$ by adding the constant $-2c$ and leaving the   value of the elliptic null support function $\phi_+$ at $\pi$ unchanged. Since $\psi_+$ is continuous, hence bounded above, and $\psi$ is lower semicontinuous, hence bounded below, by choosing $c$ large enough, the translate $\Sigma'_+$ of $\Sigma_+$ satisfies $\psi'_+<\psi$, and therefore $\phi_+'\leq \phi$ on the whole $\mathbb S^1$. By Corollary \ref{cor:complete comparison} applied to $\Sigma$ and $\Sigma_+'$, we conclude that $\Sigma$ is incomplete at $\pi$.}
\end{proof}

\section{Incompleteness II: one-sided superlogarithmic condition}\label{sec:xlogx incomplete}
The goal of this section is to prove Theorem \ref{thm: xlogx incomplete}, which we state here as usual in a slightly stronger version with respect to the introduction, using directional incompleteness.

\begin{reptheorem}{thm: xlogx incomplete} [One-sided superlogarithmic condition -- local version]  Let $\phi: \mathbb S^1 \to \R \cup \{+\infty\}$ be lower semicontinuous and finite on at least three points. Suppose $\theta_0 \in \mathbb S^1$ is such that $\phi(\theta_0)<+\infty$ and there exist a neighborhood $U$ of $\theta_0$ and a constant $\epsilon > 0$ such that 
\begin{equation} \label{eq:one sided log}\tag{Inc'}
\begin{cases}
\phi(\theta)= +\infty & \text{ if }\theta\text{ is on one side of }\theta_0  \\ 
\phi(\theta)\geq \phi(\theta_0) + \epsilon|(\theta - \theta_0)\log|\theta - \theta_0|| &  \text{ if }\theta\text{ is on the other one side of }\theta_0.
\end{cases}
\end{equation}
for every $\theta \in U$. 

If $\Sigma$ is a convex entire spacelike  surface in $\R^{2,1}$ with null support function $\phi$ and curvature bounded above by a negative constant, then $\Sigma$ is incomplete at $\theta_0$.
\end{reptheorem}

To prove the incompleteness criterion given in Theorem \ref{thm: xlogx incomplete},
we will construct an incomplete entire hyperbolic surface  $\Sigma_\lambda$ (depending on a real parameter $\lambda)$ invariant under a one-parameter glide hyperbolic group,  study its null support function, and apply the comparison principle (Corollary \ref{cor:complete comparison}).


\subsection{The invariant surface $\Sigma_\lambda$}

Let us fix  $\lambda\in\R$, and consider the one-parameter group $A^\lambda=\{A^\lambda_s\}_{s\in\mathbb R}$ of affine isometries of Minkowski space defined by 
$$A^\lambda_s\begin{bmatrix}x_1\\x_2\\x_3\end{bmatrix}=\begin{bmatrix}x_1+\lambda s\\ \cosh(s)x_2+\sinh(s) x_3\\ \sinh(s) x_2+\cosh(s)x_3\end{bmatrix}~.$$
Let us define the map $X^\lambda:(0,+\infty)\times\R \to \R^{2,1}$ given by
\[
     X^\lambda(t,s)=\begin{bmatrix}  \sqrt{1+\lambda^2}( t -\coth(t) ) +\lambda s\\ \sqrt{1+\lambda^2}{\sinh(s)}/{\sinh(t)} \\ \sqrt{1+\lambda^2}{\cosh(s)}/{\sinh(t)} \end{bmatrix}
\]
We have that
\[
A^\lambda_{s_0}X^\lambda(t,s)=X^\lambda(t, s+s_0)~.
\]
\begin{remark}
When $\lambda=0$ in the above expression, we recover the embedding $X=X^0$ of the semitrough that we described in Example \ref{ex:semitrough}. We have already observed there that the semitrough is complete. We will see that $X^\lambda$ is a proper spacelike immersion, whose image $\Sigma_\lambda$ is an entire spacelike surface of constant curvature $-1$ which is complete if and only if $\lambda=0$. The incompleteness of $\Sigma_\lambda$ for $\lambda\neq 0$ is the fundamental property that we will apply to prove Theorem \ref{thm: xlogx incomplete}.
\end{remark}

{\begin{remark}\label{rmk sign lambda}
Observe that $\Sigma_{-\lambda}=R(\Sigma_\lambda)$, where $R=\mathrm{diag}(1,-1,1)$ is the reflection in the plane $y=0$. Indeed, we immediately see from the definition that $X^{-\lambda}(t,s)=R\circ X^\lambda(t,-s)$.
\end{remark}}

\begin{lemma}\label{lemmaXproper}
For any $\lambda\in\R$, the image of $X^\lambda$ is an entire spacelike surface.
\end{lemma}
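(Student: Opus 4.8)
\textbf{Proof plan for Lemma \ref{lemmaXproper}.}

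The plan is to show directly that $X^\lambda$ parametrizes a closed, spacelike, locally achronal surface, and then invoke Lemma \ref{lem: entire} to conclude it is entire. First I would check that $X^\lambda$ is a spacelike immersion by computing the first fundamental form. Differentiating, one gets
\[
X^\lambda_t = \begin{bmatrix} \sqrt{1+\lambda^2}\,(1 + \sinh^{-2}(t)) \\ -\sqrt{1+\lambda^2}\,\sinh(s)\cosh(t)/\sinh^2(t) \\ -\sqrt{1+\lambda^2}\,\cosh(s)\cosh(t)/\sinh^2(t) \end{bmatrix}, \qquad
X^\lambda_s = \begin{bmatrix} \lambda \\ \sqrt{1+\lambda^2}\,\cosh(s)/\sinh(t) \\ \sqrt{1+\lambda^2}\,\sinh(s)/\sinh(t) \end{bmatrix},
\]
and a routine computation gives $\langle X^\lambda_s, X^\lambda_s\rangle = \lambda^2 - (1+\lambda^2)/\sinh^2(t)$ times... — actually one should simply record $\langle X^\lambda_s,X^\lambda_s\rangle$, $\langle X^\lambda_t,X^\lambda_t\rangle$, $\langle X^\lambda_t,X^\lambda_s\rangle$ and verify that the induced metric is positive definite for all $(t,s)\in(0,\infty)\times\R$ (here the $+\lambda s$ term only affects the $x_1$-component of $X^\lambda_s$, and the cross terms are arranged so that the metric stays Riemannian); this is the one genuine calculation, but it is short. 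In particular $X^\lambda$ is an immersion with spacelike tangent planes everywhere, hence locally the graph of a $C^1$ function with gradient of norm $<1$, so the image is locally achronal.

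Next I would show $X^\lambda$ is proper, i.e. that its image is closed in $\R^{2,1}$. The key is the equivariance $A^\lambda_{s_0}\circ X^\lambda(t,s) = X^\lambda(t,s+s_0)$, which reduces properness to understanding the behavior as $t\to 0^+$ and as $t\to+\infty$, uniformly on the slice $s=0$ after translating by the isometry $A^\lambda_{s}$. Consider the ``profile curve'' $t\mapsto X^\lambda(t,0) = (\sqrt{1+\lambda^2}(t-\coth t),\, 0,\, \sqrt{1+\lambda^2}/\sinh t)$ in the plane $x_2=0$. As $t\to\infty$ its first coordinate $\to+\infty$ and third coordinate $\to 0$; as $t\to 0^+$ its first coordinate $\to -\infty$ and third coordinate $\to+\infty$. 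So the profile curve is proper in the timelike plane $\{x_2=0\}$, and since the group $A^\lambda_s$ acts by isometries whose orbits move off to infinity in a controlled way (the $x_1$-coordinate gains $\lambda s$ and the hyperbolic rotation in the $(x_2,x_3)$-plane carries bounded sets to sets escaping to infinity unless $s$ stays bounded), any sequence $X^\lambda(t_n,s_n)$ staying in a compact set forces $t_n$ to stay in a compact subinterval of $(0,\infty)$ and $s_n$ bounded; hence it has a convergent subsequence inside the image. This gives properness, so the image is a closed subset of $\R^{2,1}$.

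Finally, being a closed locally achronal (indeed spacelike) surface, Lemma \ref{lem: entire} applies and the image is an entire achronal surface. I expect the main obstacle to be the properness argument at $t\to 0^+$: one has to confirm that although the surface ``curls up'' toward a null direction there (as suggested by Figure \ref{fig:hanonomizu2} for $\lambda=0$), it still escapes every compact set — this should follow because the $x_3$ (and $x_2$) coordinates blow up like $1/\sinh t$ while the $x_1$-coordinate $\to-\infty$, so the extrinsic distance to any fixed point diverges; making this quantitative with the glide term $\lambda s$ present, using that $A^\lambda_s$ is an isometry, is the one place requiring care. (Alternatively, one can cite the more detailed analysis to come in Section \ref{sec:xlogx incomplete}, where $\Sigma_\lambda$ is shown to have constant curvature $-1$; but the self-contained argument above already yields entireness.)
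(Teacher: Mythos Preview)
Your overall strategy matches the paper's exactly: compute the first fundamental form to verify that $X^\lambda$ is a spacelike immersion, prove properness, and then invoke Lemma~\ref{lem: entire} (the paper cites the equivalent \cite[Proposition 1.10]{Bonsante:2019aa}). The first fundamental form step is fine (modulo the sign slip in your aborted formula for $\langle X^\lambda_s,X^\lambda_s\rangle$; the correct value is $\lambda^2+(1+\lambda^2)/\sinh^2 t$).

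The gap is in your properness sketch. The reduction ``equivariance plus properness of the profile curve'' does not work as stated, because the profile curve $t\mapsto X^\lambda(t,0)$ approaches the fixed axis $\{x_2=x_3=0\}$ of the linear part of $A^\lambda$ as $t\to\infty$: its $(x_2,x_3)$ part is $\bigl(0,\sqrt{1+\lambda^2}/\sinh t\bigr)\to(0,0)$. Applying a large hyperbolic rotation to a point near this axis can keep it bounded, so your claim that ``the hyperbolic rotation carries bounded sets to sets escaping to infinity unless $s$ stays bounded'' fails precisely in the regime $t_n\to\infty$, $|s_n|\to\infty$. You cannot decouple the $t$ and $s$ variables so cleanly. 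Relatedly, you misidentify the delicate case: $t\to 0^+$ is the \emph{easy} direction, since the third coordinate $\sqrt{1+\lambda^2}\cosh(s)/\sinh(t)\to+\infty$ for every $s$. The genuinely tricky case is $t_n\to+\infty$ with $|s_n|\to\infty$ balanced so that $\cosh(s_n)/\sinh(t_n)$ stays bounded, which forces $|s_n|\le t_n+C$.

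The paper closes this gap by a direct case split on $s_n\to+\infty$, $s_n\to-\infty$, and $s_n$ bounded. In the first two cases, boundedness of the second/third coordinates forces $t_n>|s_n|-C$, and then the first coordinate
\[
\sqrt{1+\lambda^2}\,(t_n-\coth t_n)+\lambda s_n \;\ge\; (\sqrt{1+\lambda^2}\pm\lambda)\,|s_n| - C'
\]
diverges because $\sqrt{1+\lambda^2}>|\lambda|$. This inequality is the missing quantitative ingredient in your sketch.
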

\begin{proof}
An explicit computation shows that
\[
\partial_t X^\lambda(t,s)=\sqrt{1+\lambda^2}\frac{\cosh(t)}{\sinh^2(t)}\begin{bmatrix}\cosh(t)\\ -\sinh(s)\\ -\cosh(s)\end{bmatrix}\qquad 
\partial_s X^\lambda(t,s)=\frac{1}{\sinh(t)}\begin{bmatrix}\lambda \sinh(t) \\ \sqrt{1+\lambda^2}\cosh(s)\\  \sqrt{1+\lambda^2}\sinh(s)\end{bmatrix}
\]
so that the coefficients of the pull-back of the Minkowski product are
\begin{align*}
E&=\langle \partial_t X^\lambda, \partial_t X^\lambda\rangle=(1+\lambda^2)\coth^2(t)\\
F&=\langle \partial_t X^\lambda, \partial_s X^\lambda\rangle=\lambda\sqrt{1+\lambda^2}\coth^2(t)\\
G&=\langle \partial_s X^\lambda, \partial_s X^\lambda\rangle=\lambda^2\coth^2(t)+\frac{1}{\sinh^2(t)}\,.
\end{align*}
Hence we get
\begin{equation}\label{eq:detIlambda}
EG-F^2=(1+\lambda^2)\frac{\coth^2(t)}{\sinh^2(t)}>0
\end{equation}
which shows that the pull-back metric is Riemannian, and the map $X^\lambda$ is a spacelike immersion.

Let us prove that $X^\lambda$ is proper. By contradiction let us assume there exists a diverging sequence $(t_n, s_n)\in\mathbb R_+\times\mathbb R$ such that $X^\lambda(t_n,s_n)$ is converging in $\mathbb R^{2,1}$.
First let us assume that $s_n\to+\infty$.
Since $X^\lambda(t_n,s_n)$ stays in a compact region of $\R^{2,1}$, from boundedness of the second or third coordinate we deduce that $e^{s_n-t_n}$ is bounded. That is, that there exists $C>0$ such that $t_n>s_n-C$. Therefore $\coth(t_n)$ is bounded above by some constant $C'$, and the first component of $X_\lambda(t_n,s_n)$ is  larger than $(\sqrt{1+\lambda^2}+\lambda)s_n- (C+C')\sqrt{1+\lambda^2}$ for $n$ sufficiently large. Since this quantity tends to $+\infty$, we get a contradiction.

Similarly, if $s_n\to-\infty$, from the third component we obtain that $e^{-s_n-t_n}$ is bounded, hence $t_n>-s_n-C$, and the third component would be larger than $(\sqrt{1+\lambda^2}-\lambda)(-s_n)- (C+C')\sqrt{1+\lambda^2}$, which is impossible. 

Finally let us consider the case where $s_n$ is bounded and $t_n$ is diverging in $\R_+$.
If $t_n\to 0$ then the third component of $X(t_n, s_n)$ diverges.
On the other hand, if $t_n\to+\infty$ then the first component diverges.

We have thus shown that $X^\lambda$ is a proper spacelike immersion. By \cite[Proposition 1.10]{Bonsante:2019aa}, $X^\lambda$ is a proper embedding, and its image is an entire spacelike surface.
This concludes the proof.
\end{proof}

Let us call $\Sigma_\lambda$ the image of the embedding $X^\lambda$, which is an entire spacelike surface. We now show that it is hyperbolic:

\begin{lemma}\label{lemma:curv -1}
For any $\lambda\in\R$, the surface $\Sigma_\lambda$ has constant curvature $-1$.
\end{lemma}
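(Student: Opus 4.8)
The plan is to verify directly that the first fundamental form induced by $X^\lambda$ has Gaussian curvature $-1$, using the already-computed metric coefficients. From the proof of Lemma \ref{lemmaXproper} we have
\[
E=(1+\lambda^2)\coth^2(t),\qquad F=\lambda\sqrt{1+\lambda^2}\coth^2(t),\qquad G=\lambda^2\coth^2(t)+\frac{1}{\sinh^2(t)},
\]
all independent of $s$. First I would perform the change of coordinates that decouples the metric: since $E,F,G$ depend only on $t$, and $E\,dt^2+2F\,dt\,ds+G\,ds^2$ can be written as $E\big(dt+\tfrac{F}{E}ds\big)^2+\tfrac{EG-F^2}{E}ds^2$ with $F/E=\lambda/\sqrt{1+\lambda^2}$ a constant, I would introduce $\tau$ with $d\tau=\sqrt{E}\,dt=\sqrt{1+\lambda^2}\coth(t)\,dt$ and $\sigma = s + \tfrac{\lambda}{\sqrt{1+\lambda^2}}\,(\text{function of }t)$ — but more simply, set $u = \sqrt{1+\lambda^2}\,t + \lambda s$ (which is, up to a constant, essentially the interesting combination) and keep $s$; actually the cleanest is to note that in the coordinates $(p,q)$ with $p = \int \sqrt{1+\lambda^2}\coth(t)\,dt$ appropriately shifted, the metric becomes $dp^2 + \varphi(p)^2\,dq^2$ for a suitable warping function, i.e. a warped-product (rotationally-symmetric-type) metric, for which the curvature is $-\varphi''/\varphi$.

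Concretely, the key computation is: write $\I = E\,dt^2 + 2F\,dt\,ds + G\,ds^2$. Complete the square in $ds$:
\[
\I = \frac{EG-F^2}{G}\,dt^2 + G\Big(ds + \frac{F}{G}\,dt\Big)^2.
\]
Using \eqref{eq:detIlambda}, $\dfrac{EG-F^2}{G} = \dfrac{(1+\lambda^2)\coth^2(t)/\sinh^2(t)}{\lambda^2\coth^2(t)+1/\sinh^2(t)}$. Introduce the new coordinate $r$ by $dr = \sqrt{\dfrac{EG-F^2}{G}}\,dt$ and the new coordinate $w = s + \int \dfrac{F}{G}\,dt$, so that $\I = dr^2 + G(t(r))\,dw^2$, a warped product $dr^2 + \mathfrak g(r)^2\,dw^2$ with $\mathfrak g(r)^2 = G(t(r))$. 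Then $K = -\mathfrak g''(r)/\mathfrak g(r)$, and I would carry out this one-variable computation to check it equals $-1$. As a cross-check, I would also verify via the Gauss equation $K = -\det B = -\det(\I)^{-1}\det(\II)$ by computing the second fundamental form: the unit normal is $N = X^\lambda_t \times X^\lambda_s / |X^\lambda_t\times X^\lambda_s|$ (Lorentzian cross product), and $\II$ has coefficients $\langle X^\lambda_{tt}, N\rangle$, etc.; the invariance $A^\lambda_{s_0}X^\lambda(t,s) = X^\lambda(t,s+s_0)$ guarantees all these coefficients depend only on $t$, which keeps the computation to one variable.

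The main obstacle is purely computational bookkeeping: one must differentiate $X^\lambda$ twice, compute the Lorentzian cross product and its norm, and simplify the resulting hyperbolic-trigonometric expressions (the factors of $\coth(t)$, $\sinh(t)$, $\cosh(t)$ and $\sqrt{1+\lambda^2}$ proliferate). There is no conceptual difficulty — the $s$-invariance reduces everything to functions of $t$ alone — but care is needed to avoid sign errors in the Lorentzian inner product and to handle the orientation of the normal. An appealing shortcut, which I would mention, is that $\Sigma_0$ (the semitrough) is known to have curvature $-1$ (Example \ref{ex:semitrough}), and one can try to exhibit $\Sigma_\lambda$ as the image of $\Sigma_0$ under a suitable (non-isometric but curvature-preserving) affine reparametrization; however the cleanest rigorous route remains the direct warped-product computation, so I would present that as the proof.
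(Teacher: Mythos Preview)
Your intrinsic approach via the warped-product form is valid and genuinely different from the paper's. The paper proceeds extrinsically: it computes the unit normal $N(t,s)$ explicitly, then the second fundamental form coefficients $e=g=(1+\lambda^2)\coth(t)/\sinh(t)$ and $f=\lambda\sqrt{1+\lambda^2}\coth(t)/\sinh(t)$, and concludes via the Gauss equation $K=-(eg-f^2)/(EG-F^2)=-1$. Your route avoids the normal and the second derivatives of $X^\lambda$ altogether: completing the square in $ds$ gives $\I=dr^2+\mathfrak g(r)^2\,dw^2$ with $\mathfrak g^2=G$, and if one actually pushes through the chain rule one finds $\mathfrak g'(r)=-\sqrt{1+\lambda^2}/\sinh(t)$ and then $\mathfrak g''(r)=\mathfrak g(r)$, hence $K=-1$. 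A point in favor of the paper's choice is that the second fundamental form is not wasted: immediately after the lemma its positive-definiteness is used to confirm that $\Sigma_\lambda$ is convex, so the extrinsic computation serves double duty.

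That said, what you have written is a plan, not a proof. The decisive one-variable computation $-\mathfrak g''/\mathfrak g=-1$ is asserted but never executed, and the exposition meanders: the $F/E$ completion is started and abandoned mid-sentence, and the ``affine reparametrization'' shortcut is too vague to constitute an argument (there is no obvious curvature-preserving affine map taking $\Sigma_0$ to $\Sigma_\lambda$). If you write this up, drop those detours and simply carry out the warped-product calculation.
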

\begin{proof}
A direct computation shows that the future pointing unit normal vector of $\Sigma_\lambda$ at $X^\lambda(t,s)$ is 
\begin{equation}\label{eq:gaussmap}
   N(t,s)=\begin{bmatrix}{-\sqrt{1+\lambda^2}}/{\sinh(t)}\\ \sqrt{1+\lambda^2}\coth(t)\sinh(s)+\lambda\cosh(s)\\ \sqrt{1+\lambda^2}\coth(t)\cosh(s)+\lambda\sinh(s)\end{bmatrix}
\end{equation}
On the other hand the Hessian of $X^\lambda$ is the following $\R^{2,1}$-valued symmetric form:
\[
\begin{split}
D^2X_\lambda&=(\partial_{tt}X_\lambda) dt^2+2 (\partial_{ts}X_\lambda) dtds+(\partial_{ss}X_\lambda) ds^2=\\
&=\sqrt{1+\lambda^2}\left(\begin{bmatrix}-2\frac{\coth(t)}{\sinh^2(t)}\\\frac{1+\cosh^2(t)}{\sinh^3(t)}\sinh(s)\\\frac{1+\cosh^2(t)}{\sinh^3(t)}\cosh(s)\end{bmatrix}dt^2
-2\begin{bmatrix}0\\\frac{\coth(t)}{\sinh(t)}\cosh(s)\\ \frac{\coth(t)}{\sinh(t)}\sinh(s)\end{bmatrix}dtds +\begin{bmatrix}0\\ \frac{1}{\sinh(t)}\sinh(s)\\ \frac{1}{\sinh(t)}\cosh(s)\end{bmatrix}ds^2
\right)
\end{split}
\]
Recall that the second fundamental form is defined as  {$\II=-\langle D^2 X_\lambda, N\rangle$}. It follows that the coefficients of the second fundamental form are given by

\begin{align*}
e&=g=(1+\lambda^2)\frac{\coth(t)}{\sinh(t)}\\
f&=\lambda\sqrt{1+\lambda^2}\frac{\coth(t)}{\sinh(t)}~.\\
\end{align*}
We have
\[
eg-f^2=\frac{\coth^2(t)}{\sinh^2(t)}[(1+\lambda^2)^2-\lambda^2(1+\lambda^2)]=(1+\lambda^2)\frac{\coth^2(t)}{\sinh^2(t)}
\]
By the Gauss equation and Equation \eqref{eq:detIlambda} we have that the curvature of $\Sigma$ is given by
\[
K=-\frac{eg-f^2}{EG-F^2}=-1
\]
as claimed.
\end{proof}

By Proposition \ref{prop:Gauss map properties}, $\Sigma_\lambda$ is either convex or concave; the proof of Lemma \ref{lemma:curv -1} actually showed that the second fundamental form is positive definite, hence $\Sigma_\lambda$  is convex (Remark \ref{rmk convention}).

\begin{lemma}\label{lm:xlnx-inc}
The surface $\Sigma_\lambda$ is incomplete at $\theta_0=\pi/2$ if $\lambda<0$, and at $\theta_0=-\pi/2$ if $\lambda>0$.
\end{lemma}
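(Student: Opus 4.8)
The plan is to reduce to one case using symmetry and then produce an explicit finite‑length curve. First, by Remark~\ref{rmk sign lambda} we have $\Sigma_{-\lambda}=R(\Sigma_\lambda)$ with $R=\mathrm{diag}(1,-1,1)$, and since $R\vec{\theta}=\vec{-\theta}$, the isometry $R$ sends the null support plane of a surface in direction $\theta$ to that of its image in direction $-\theta$; hence ``$\Sigma_\lambda$ incomplete at $\pi/2$'' is equivalent to ``$\Sigma_{-\lambda}$ incomplete at $-\pi/2$''. It therefore suffices to treat $\lambda<0$ and $\theta_0=\pi/2$, the case $\lambda>0$ following by applying $R$. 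Moreover, by Lemma~\ref{lemma:incompleteness by cauchy} we need not exhibit a geodesic: it is enough to find a diverging Cauchy sequence on $\Sigma_\lambda$ (for the induced metric) asymptotic, in the sense of Definition~\ref{defi: asymptotic to}, to a null support plane of $\Sigma_\lambda$ in the direction $\pi/2$.

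Next I would identify that plane. Since $\vec{\pi/2}=(0,1,1)$, substituting the formula for $X^\lambda$ gives
\[
\langle X^\lambda(t,s),\vec{\pi/2}\rangle \;=\; X^\lambda_2-X^\lambda_3 \;=\; -\sqrt{1+\lambda^2}\,\frac{e^{-s}}{\sinh t}\,,
\]
which is strictly negative on all of $\Sigma_\lambda$ and tends to $0$ as $s\to+\infty$. Thus $\phi(\pi/2)=\sup_{\Sigma_\lambda}\langle\cdot,\vec{\pi/2}\rangle=0<\infty$, the plane $P=\{y=z\}$ is the null support plane of $\Sigma_\lambda$ in the direction $\pi/2$ by Remark~\ref{rmk directional support plane 1}, and a sequence in $\R^{2,1}$ is asymptotic to $P$ exactly when $y-z\to0$ along it. (In particular $\Sigma_\lambda$ does lie in the future of $P$, which is the content of the sign above.)

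The heart of the proof is then the construction of one proper finite‑length curve asymptotic to $P$. Completing the square in the coefficients $E,F,G$ computed in the proof of Lemma~\ref{lemmaXproper}, the first fundamental form of $\Sigma_\lambda$ is, with $a:=\sqrt{1+\lambda^2}$,
\[
\I \;=\; \coth^2 t\,(a\,dt+\lambda\,ds)^2 \;+\; \frac{1}{\sinh^2 t}\,ds^2\,.
\]
I would consider the curve $\gamma$ given by $at+\lambda s=c$ for a constant $c$, parametrised by $s$; on $\gamma$ the first term of $\I$ vanishes, so $|\dot\gamma|^2=\sinh^{-2}t$. Because $\lambda<0$, along $\gamma$ one has $t=(c-\lambda s)/a\to+\infty$ as $s\to+\infty$, hence $\sinh t$ grows exponentially in $s$ and $\int^{+\infty}\sinh^{-1}t\,ds<\infty$: a tail of $\gamma$ has finite length. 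This tail is proper in $\Sigma_\lambda$ since its image leaves every compact subset of $\R^{2,1}$ (for instance $X^\lambda_2+X^\lambda_3=a\,e^{s}/\sinh t\to+\infty$, as $1+\lambda/a>0$) and $\Sigma_\lambda$ is closed; and it is asymptotic to $P$ since $X^\lambda_2-X^\lambda_3=-a\,e^{-s}/\sinh t\to0$. Choosing any increasing sequence $s_n\to+\infty$, the points $X^\lambda(t(s_n),s_n)$ form a diverging Cauchy sequence asymptotic to $P$, and Lemma~\ref{lemma:incompleteness by cauchy} yields incompleteness at $\pi/2$; the symmetrisation of the first paragraph then finishes the proof.

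I expect the difficulties to be bookkeeping rather than conceptual. The one identity that must be checked carefully is $\I=\coth^2t\,(a\,dt+\lambda\,ds)^2+\sinh^{-2}t\,ds^2$, since it is precisely the term $\sinh^{-2}t\,ds^2$ restricted to $\gamma$ that, together with $t\to+\infty$ when $\lambda<0$, makes the length finite — note that for $\lambda=0$ the curve $\gamma$ is simply $t=\mathrm{const}$ and has infinite length, consistently with the semitrough of Example~\ref{ex:semitrough} being complete. One must also verify that $P$ is genuinely a support plane, i.e. that $\Sigma_\lambda$ lies in its future, which is exactly the sign computation above. (One could instead argue abstractly, identifying $\Sigma_\lambda$ via \cite{bss2} with a half‑plane in $\Hyp^2$ bounded by the axis of the hyperbolic translation induced by $A^\lambda$; but the explicit curve keeps the argument self‑contained and directly pins down the direction $\theta_0$.)
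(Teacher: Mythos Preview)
Your proof is correct and follows essentially the same strategy as the paper: both exhibit the same finite-length curve (the paper writes it as $\gamma(\tau)=X^\lambda(\lambda\tau,-\sqrt{1+\lambda^2}\,\tau)$, which satisfies exactly your relation $a\,t+\lambda\,s=0$), verify directly that its length integral converges and that it is proper and asymptotic to the null support plane, and then conclude via Lemma~\ref{lemma:incompleteness by cauchy}. Your preliminary step of completing the square to write $\I=\coth^2 t\,(a\,dt+\lambda\,ds)^2+\sinh^{-2}t\,ds^2$ is a nice touch that makes the choice of curve transparent rather than ad hoc, but the underlying argument is the same.
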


\begin{proof}
{By Remark \ref{rmk sign lambda}, it suffices to prove the statement for $\lambda>0$.}
Let us consider the proper path $\gamma:[1,+\infty)\to\Sigma$ given by
\[
   \gamma(\tau)=X^\lambda(\lambda\tau, -\sqrt{1+\lambda^2}\tau)
\]
We claim that $\gamma$ has finite length.
Indeed, a direct computation shows that
$$\dot\gamma(\tau)=\lambda\partial_tX^\lambda(\lambda\tau, -\sqrt{1+\lambda^2}\tau)-\sqrt{1+\lambda^2}\partial_sX_\lambda(\lambda\tau, -\sqrt{1+\lambda^2}\tau)~.$$
Using the coefficients of the first fundamental form that we derived in the proof of Lemma \ref{lemmaXproper}, we have
\begin{align*}
||\dot\gamma(\tau)||^2&=\lambda^2E(\lambda\tau, -\sqrt{1+\lambda^2}\tau)+(1+\lambda^2)G(\lambda\tau, -\sqrt{1+\lambda^2}\tau)-2\lambda\sqrt{1+\lambda^2}F(\lambda\tau, -\sqrt{1+\lambda^2}\tau)\\
&=2\lambda^2(1+\lambda^2)\coth^2(\lambda\tau)+\frac{1+\lambda^2}{\sinh^2(\lambda\tau)}-2\lambda^2(1+\lambda^2)\coth^2(\lambda\tau)=
\frac{1+\lambda^2}{\sinh^2(\lambda\tau)}~.
\end{align*}
This shows that 
$$\int_1^{+\infty}||\dot\gamma(\tau)||d\tau<+\infty~.$$

It remains to show that the direction of incompleteness is $\theta_0=-\pi/2$. One sees directly from the definition of $X^\lambda$ that the null planes $z=\pm y$ are support planes for $\Sigma^\lambda$ (for all $\lambda$). Moreover, we can check that
$$\langle \gamma(\tau),(0,-1,1)\rangle=-\sqrt{1+\lambda^2}\frac{e^{-\sqrt{1+\lambda^2}\tau}}{\sinh(\lambda\tau)}\to 0$$
as $\tau\to+\infty$. Hence we can apply
 Lemma \ref{lemma:incompleteness by cauchy}, and conclude that for $\lambda>0$, $\Sigma_\lambda$ is incomplete at $-\pi/2$.
 \end{proof}


\subsection{Null support function}

We will now compute the null support function of the surface $\Sigma_\lambda$. We will use again the parabolic null support function. However, since we have shown in Lemma \ref{lm:xlnx-inc} that $\Sigma_\lambda$ is incomplete at $-\pi/2$, we will use a variation of the map $\zeta$ defined in Equation \eqref{eq:defi p}, so as to make the point $\theta_0=-\pi/2$  correspond to $\mathsf x_0=0$, and the point $\pi/2$ correspond to the direction at infinity. Namely, we just compose the map $ \zeta:\overline{\mathcal H}\to\R^{2,1}$ with a rotation of angle $-\pi/2$. That is, let us define:
\begin{equation}\label{eq:defi q}
{\xi}(\mathsf x,\mathsf y) := \begin{bmatrix}  2\mathsf x \\ -1+ \mathsf x^2 + \mathsf y^2 \\ 1+ \mathsf x^2 + \mathsf y^2 \end{bmatrix}
\end{equation}
We then define the parabolic null support function \emph{with point at infinity $\pi/2$} analogously to \eqref{eq: par null supp fun} with respect to $ \xi$ instead of $ \zeta$, and we will denote it by $\ph$ hereafter:
\begin{equation} \label{eq: par null supp fun2}
\ph(\mathsf x) := \sup_{{p} \in \Sigma} \langle {p}, {\xi}(\mathsf x,0) \rangle~.
\end{equation}

Notice that $ \xi(\mathsf x,0)$  parameterizes all the lightlike directions $\vec\theta$ except $(0,1,1)$, which is however the projective limit of ${\xi}(\mathsf x,0)$ as $\mathsf x\to\infty$. Observe that with this convention, $\mathsf x=\tan(\theta+\pi/2)$. 

We are interested in the parabolic null support function of $\Sigma_\lambda$ near the point $\mathsf x_0=0$, which corresponds to $\theta_0=-\pi/2$ and to $ \xi(0,0)=(0,-1,1)$. The fundamental result we have to achieve is the following, which holds for any $\lambda\in\R$, although  we are mostly concerned with the case where $\Sigma_\lambda$ is incomplete at $-\pi/2$, namely (by Lemma \ref{lm:xlnx-inc}) when $\lambda>0$.

\begin{prop}\label{pr:xlnx}
For any $\lambda\in\R$, let $\ph_\lambda$ be the parabolic null support function of the surface $\Sigma_\lambda$, with point at infinity $\pi/2$. Then
\[
\ph_\lambda(\mathsf x)=\left\{\begin{array}{ll}-2\lambda |\mathsf x|\log |\mathsf x| & \textrm{if } \mathsf x<0\\
0 & \textrm{if } \mathsf x=0\\
+\infty & \textrm{if } \mathsf x>0.\end{array}\right.
\]
\end{prop}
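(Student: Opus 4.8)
The plan is to compute $\varphi_\lambda(\mathsf x)=\sup_{p\in\Sigma_\lambda}\langle p,\xi(\mathsf x,0)\rangle$ directly from the parametrization $X^\lambda(t,s)$, and to analyze the resulting one-variable optimization problem in $(t,s)$. First I would write out the function
\[
h_{\mathsf x}(t,s):=\langle X^\lambda(t,s),\xi(\mathsf x,0)\rangle
\]
using $\xi(\mathsf x,0)=(2\mathsf x,\,\mathsf x^2-1,\,\mathsf x^2+1)$. Since $\langle (a,b,c),(2\mathsf x,\mathsf x^2-1,\mathsf x^2+1)\rangle=2\mathsf x a+(\mathsf x^2-1)b-(\mathsf x^2+1)c$, and $X^\lambda$ has $y$- and $z$-components proportional to $\sinh(s)/\sinh(t)$ and $\cosh(s)/\sinh(t)$ with the same factor $\sqrt{1+\lambda^2}$, the terms involving $s$ combine into something of the form $(\text{const})\cdot e^{-s}/\sinh(t)$ or $e^{s}/\sinh(t)$ depending on $\mathsf x$; specifically $(\mathsf x^2-1)\sinh s-(\mathsf x^2+1)\cosh s = -e^{s}-\mathsf x^2 e^{-s}$. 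So
\[
h_{\mathsf x}(t,s)=2\mathsf x\sqrt{1+\lambda^2}\,(t-\coth t)+2\mathsf x\lambda s-\sqrt{1+\lambda^2}\,\frac{e^{s}+\mathsf x^2 e^{-s}}{\sinh t}.
\]
This makes the three cases transparent. For $\mathsf x=0$: the $s$-terms reduce to $-\sqrt{1+\lambda^2}\,e^{s}/\sinh t<0$, so the sup is $0$, approached as $s\to-\infty$ (and any $t$); the null plane $z=-y$ being a support plane, shown in the proof of Lemma \ref{lm:xlnx-inc}, confirms $\varphi_\lambda(0)=0$. For $\mathsf x>0$: choosing $s=t$ and letting $t\to+\infty$, the term $2\mathsf x\sqrt{1+\lambda^2}\,t$ dominates and $h_{\mathsf x}\to+\infty$, giving $\varphi_\lambda(\mathsf x)=+\infty$. (This corresponds to the fact, already noted, that $\Sigma_\lambda$ has $z=+y$ as a support plane but approaches it; equivalently $\phi=+\infty$ on one side.)

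The substantive case is $\mathsf x<0$. Here $2\mathsf x\lambda s$ and the $t$-dependent terms all interact, so I would optimize in $t$ and $s$ separately. Fix $t$ and optimize over $s$: setting $\partial_s h_{\mathsf x}=0$ gives $2\mathsf x\lambda = \sqrt{1+\lambda^2}\,(e^{s}-\mathsf x^2 e^{-s})/\sinh t$, a quadratic in $e^{s}$. Solving and substituting back should yield, after simplification using $e^{s}-\mathsf x^2e^{-s}$ and $e^{s}+\mathsf x^2e^{-s}=\sqrt{(e^s-\mathsf x^2e^{-s})^2+4\mathsf x^2}$, an expression for $\sup_s h_{\mathsf x}(t,s)$ as a function of $t$ alone, involving $\log$ (the $\log$ entering through $s^*$, which behaves like $\log|\mathsf x|$ plus a $t$-dependent correction as $\mathsf x\to0^-$). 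Then I would optimize the resulting function over $t\in(0,\infty)$; the optimal $t$ should tend to $0$ or behave like a constbined function of $\mathsf x$, and the leading behavior of the maximum as $\mathsf x\to0^-$ should come out to $-2\lambda|\mathsf x|\log|\mathsf x| = 2\lambda\mathsf x\log|\mathsf x|$ (note $\mathsf x<0$ so $|\mathsf x|\log|\mathsf x|<0$ and $-2\lambda|\mathsf x|\log|\mathsf x|>0$ when $\lambda>0$). Actually the claim is an \emph{exact} formula, not just asymptotic, so I expect the two optimizations to collapse cleanly: I would guess the critical point has $\coth t\to1$, i.e. the sup is attained in the limit $t\to\infty$ with $s$ scaling appropriately, or more likely there is an exact cancellation making $\sup_{t,s}h_{\mathsf x}(t,s)=-2\lambda\mathsf x\log|\mathsf x|$ on the nose. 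I would verify this by checking that the function $g(\mathsf x)=-2\lambda|\mathsf x|\log|\mathsf x|$ (extended by $0$ at $0$ and $+\infty$ for $\mathsf x>0$) is convex with respect to the parabolic affine structure — which it must be, being a parabolic support function — and that the plane $\langle\cdot,\xi(\mathsf x_0,0)\rangle = g(\mathsf x_0)$ is tangent to $\Sigma_\lambda$ along the curve $s\mapsto(\text{appropriate }t(\mathsf x_0),s)$, or touches it in the limit.

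The main obstacle I anticipate is the bookkeeping in the two-variable optimization of $h_{\mathsf x}(t,s)$: getting the $s$-optimization in closed form is routine (a quadratic), but then the $t$-optimization of the resulting messy expression, and extracting the exact formula $-2\lambda|\mathsf x|\log|\mathsf x|$ rather than merely its leading asymptotics, will require care — in particular identifying exactly where the supremum is attained (interior critical point versus boundary limit $t\to0$ or $t\to\infty$). A cleaner alternative route, which I would pursue if the direct computation gets unwieldy, is to use the $A^\lambda$-equivariance: since $\Sigma_\lambda$ is invariant under the glide-hyperbolic group $A^\lambda_s$, and $A^\lambda_s$ acts on the null directions (on the boundary circle, fixing $\pm\pi/2$) by a parabolic-type transformation while translating in a ``log'' coordinate, the support function must transform equivariantly; combined with the homogeneity and the constant curvature $-1$, this strongly constrains $\varphi_\lambda$ and may pin it down after computing a single value (say at one convenient $\mathsf x_0$) together with the asymptotic rate. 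Either way, the boundary values $\varphi_\lambda(0)=0$ and $\varphi_\lambda(\mathsf x)=+\infty$ for $\mathsf x>0$ are immediate from the sign analysis above, so the whole content is the $\mathsf x<0$ computation.
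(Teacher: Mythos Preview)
Your setup is correct: the explicit formula
\[
h_{\mathsf x}(t,s)=2\mathsf x\sqrt{1+\lambda^2}\,(t-\coth t)+2\mathsf x\lambda s-\sqrt{1+\lambda^2}\,\frac{e^{s}+\mathsf x^2 e^{-s}}{\sinh t}
\]
is right, and your arguments for $\mathsf x=0$ and $\mathsf x>0$ are fine. For $\mathsf x<0$, however, the paper does not carry out the full two-variable optimization for each $\mathsf x$ as you first propose; it uses exactly the ``cleaner alternative route'' you sketch at the end. Concretely, the $A^\lambda$-invariance of $\Sigma_\lambda$ yields (Lemma \ref{lm:eqvv}) the exact functional equation
\[
\ph_\lambda(e^{s}\mathsf x)=e^{s}\bigl(\ph_\lambda(\mathsf x)+2\lambda s\,\mathsf x\bigr),
\]
since the linear part of $A^\lambda_s$ acts on the null circle as the hyperbolic dilation $\mathsf x\mapsto e^{-s}\mathsf x$ (not a parabolic transformation) and the translation part contributes the linear term $2\lambda s\,\mathsf x$. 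This reduces the entire $\mathsf x<0$ computation to the single claim $\ph_\lambda(-1)=0$, from which $\ph_\lambda(-e^{s})=-2\lambda s e^{s}$, i.e. $\ph_\lambda(\mathsf x)=-2\lambda|\mathsf x|\log|\mathsf x|$, follows immediately.

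For that single value the paper does essentially your $s$-then-$t$ optimization on $h_{-1}$, but the outcome is the opposite of your guess: setting $G(t)=\sup_s \tfrac12 h_{-1}(t,s)$ one finds the optimal $s_{\max}(t)$ from $\sinh(s_{\max})=-\tfrac{\lambda}{\sqrt{1+\lambda^2}}\sinh t$, and a short computation gives $G'(t)<0$. Thus the supremum is \emph{not} attained as $t\to\infty$ but rather as $t\to 0^{+}$, where a Taylor expansion shows $G(t)\to 0$. Your direct approach for general $\mathsf x$ would in principle work and produce the same answer, but the equivariance shortcut is what makes the formula come out exact rather than merely asymptotic with minimal effort.
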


Before proving Proposition \ref{pr:xlnx} we notice that the invariance of $\Sigma^\lambda$ under the action of the one-parameter group $A^\lambda=\{A^\lambda_s\}_{s\in\mathbb R}$ gives a strong constraint on its null support function.
\begin{lemma}\label{lm:eqvv}
Let $\ph_\lambda$ be the parabolic null support function of any convex entire achronal surface in $\R^{2,1}$ invariant under the one-parameter group $A^\lambda$, with point at infinity $\pi/2$. Then for every $s\in\mathbb R$ and $\mathsf x\in\mathbb R$ the following identity holds:
\[
\ph_\lambda(e^{s}\mathsf x)=e^{s}(\ph_\lambda(\mathsf x)+2\lambda s \mathsf x)
\]
\end{lemma}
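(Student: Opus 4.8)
The statement is an equivariance property for the parabolic null support function under the affine group $A^\lambda$, so the natural approach is to unwind the definition \eqref{eq: par null supp fun2} and use the invariance $A^\lambda_s(\Sigma)=\Sigma$. First I would compute how the linear part of $A^\lambda_s$ acts on the vectors $\xi(\mathsf x,0)$ from \eqref{eq:defi q}. Writing $A^\lambda_s = B_s + \tau_s$ where $B_s$ is the linear part (a hyperbolic boost in the $(x_2,x_3)$-plane, trivial on the $x_1$-axis) and $\tau_s = (\lambda s, 0, 0)$ the translation part, the key computation is to find scalars so that $B_s^{\mathsf T}\,\xi(\mathsf x',0)$ is proportional to $\xi(\mathsf x,0)$ for an appropriate reparametrization $\mathsf x'\leftrightarrow\mathsf x$. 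Concretely, since $B_s$ fixes the null line through $(0,1,1)$ up to scale and scales the opposite null direction by $e^{-2s}$ (or $e^{2s}$, depending on orientation), and $\xi(\mathsf x,0) = (2\mathsf x,\ \mathsf x^2-1,\ \mathsf x^2+1)$ interpolates between these two null directions, one expects $B_s^{-1}\xi(e^s\mathsf x,0) = e^s\,\xi(\mathsf x,0) + (\text{correction along the }x_1\text{-axis})$, or something of this shape. I would verify this by direct matrix multiplication; it is a short linear-algebra computation.

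The second ingredient is the translation part. Since $\langle p+\tau_s,\ v\rangle = \langle p,v\rangle + \langle \tau_s,v\rangle$ and $\tau_s = (\lambda s,0,0)$, the translation contributes $\langle \tau_s, \xi(\mathsf x,0)\rangle = 2\lambda s\,\mathsf x$ to the pairing. Now the chain of equalities is: $\ph_\lambda(e^s\mathsf x) = \sup_{p\in\Sigma}\langle p,\xi(e^s\mathsf x,0)\rangle$; substitute $p = A^\lambda_s(q)$ with $q$ ranging over $\Sigma$ (using invariance), so this equals $\sup_{q\in\Sigma}\langle B_s q + \tau_s,\ \xi(e^s\mathsf x,0)\rangle = \sup_{q}\bigl(\langle q,\ B_s^{\mathsf T}\xi(e^s\mathsf x,0)\rangle + \langle \tau_s,\ \xi(e^s\mathsf x,0)\rangle\bigr)$. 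Here one should be careful that $B_s^{\mathsf T}$ means the transpose with respect to the \emph{Minkowski} form, i.e. $\langle B_s q, v\rangle = \langle q, B_s^{-1} v\rangle$ since $B_s$ is an isometry. Using the linear-algebra identity from the first step, $B_s^{-1}\xi(e^s\mathsf x,0) = e^s\xi(\mathsf x,0) + c_s\,e_1$ for suitable $c_s$ (linear in $s$ and $\mathsf x$), and pulling the scalar $e^s$ and the extra terms out of the supremum, one lands on $e^s\ph_\lambda(\mathsf x) + (\text{affine terms})$. Collecting the affine terms should give exactly $e^s\cdot 2\lambda s\,\mathsf x$ after bookkeeping — this is where the factor $2\lambda s\mathsf x$ inside the parenthesis of the claimed formula comes from.

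\textbf{Main obstacle.} The only real subtlety is keeping the parametrizations and the translation contributions perfectly aligned: one must track (i) how $\xi(e^s\mathsf x,0)$ pulls back under $B_s^{-1}$ — it will not be exactly $e^s\xi(\mathsf x,0)$ but will have a component along $e_1=(1,0,0)^{\mathsf T}$ (with respect to the Minkowski pairing, $\langle e_1,\cdot\rangle$ reads off the $x_1$-coordinate), and (ii) the $\langle\tau_s,\cdot\rangle$ term evaluated at the \emph{unscaled} versus \emph{scaled} argument. Since $\Sigma$ is a graph over $\R^2$ but the $x_1$-coordinate of points of $\Sigma$ is not bounded, one must make sure the "extra $e_1$-component" contribution is a genuine constant (independent of $q\in\Sigma$) so that it can be pulled out of the supremum; this is automatic because $\langle q, c_s e_1\rangle = c_s\,\pi_1(q)$ is \emph{not} constant — so in fact one needs the $e_1$-component to cancel against the translation contribution, and only the genuinely constant remainder $2\lambda s\mathsf x$ (times $e^s$) survives. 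I would carry out the computation for the linear part $L_s = B_s$ alone first (giving the $\lambda=0$ special case, which must reduce to ordinary $1$-homogeneity $\ph_0(e^s\mathsf x)=e^s\ph_0(\mathsf x)$ since the semitrough's linear hyperbolic invariance forces it), then add the translation and check the cross-terms. A sanity check at the end: setting $\lambda=0$ recovers homogeneity, and differentiating the identity in $s$ at $s=0$ gives a first-order ODE $\mathsf x\,\ph_\lambda'(\mathsf x) = \ph_\lambda(\mathsf x) + 2\lambda\mathsf x$, whose solutions are $\ph_\lambda(\mathsf x) = c\,\mathsf x + 2\lambda\mathsf x\log|\mathsf x|$ — consistent with Proposition \ref{pr:xlnx} up to the sign convention and the choice of $c$ on each side, which will be pinned down there.
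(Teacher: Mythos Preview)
Your approach is essentially the same as the paper's: use invariance $A^\lambda_s(\Sigma)=\Sigma$, split $A^\lambda_s$ into linear part $L_s$ and translation $\tau_s=(\lambda s,0,0)$, move $L_s$ across the Minkowski pairing, and read off the transformation law. The paper starts from $\ph_\lambda(\mathsf x)$ and substitutes $p=A_s(q)$, arriving at $\ph_\lambda(\mathsf x)=e^s\ph_\lambda(e^{-s}\mathsf x)+2\lambda s\mathsf x$, which is the same identity you are after with $s\mapsto -s$.

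Your ``main obstacle'' is a phantom. You anticipate that $B_s^{-1}\xi(e^s\mathsf x,0)$ equals $e^s\xi(\mathsf x,0)$ plus a correction along $e_1=(1,0,0)$, and then worry about whether $\langle q,c_s e_1\rangle$ can be pulled out of the supremum. But the direct computation (which you defer) shows there is \emph{no} correction term: since $L_s$ acts trivially on the first coordinate and as a boost on the $(y,z)$-plane, one has
\[
L_{-s}\,\xi(\mathsf x,0)=\bigl(2\mathsf x,\ e^{-s}\mathsf x^2-e^s,\ e^{-s}\mathsf x^2+e^s\bigr)=e^s\,\xi(e^{-s}\mathsf x,0)
\]
exactly. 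Equivalently $L_s^{-1}\xi(e^s\mathsf x,0)=e^s\xi(\mathsf x,0)$ on the nose. Once you plug this in, together with $\langle\tau_s,\xi(e^s\mathsf x,0)\rangle=2\lambda s\,e^s\mathsf x$, your chain of equalities gives the result in one line; there is nothing to cancel and no issue with unbounded $x_1$-coordinates. The sanity checks you list (the $\lambda=0$ case and the ODE obtained by differentiating in $s$) are fine.
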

\begin{proof}
Let us call $\Sigma$ the invariant surface. Denote by $L_s$ and $\tau_s$  the linear and the translational part of $A_s$ respectively, namely
$$L_s=\begin{bmatrix}1&0&0\\ 0&\cosh(s)&\sinh(s)\\ 0&\sinh(s) &\cosh(s)\end{bmatrix}~,\qquad \tau_s=\begin{bmatrix}\lambda s\\ 0\\ 0\end{bmatrix}~.$$
By the invariance  under the action of the group $A^\lambda$, we have
\begin{align*}
    \ph_\lambda(\mathsf x)&=\sup_{p\in \Sigma}\langle  \xi(\mathsf x,0),p\rangle\\
    &=\sup_{p\in \Sigma}\langle  \xi(\mathsf x,0), A_s(p)\rangle\\
    &= \sup_{p\in \Sigma}\langle  \xi(\mathsf x,0), L_s(p)+\tau_s\rangle\\
    &= \sup_{p\in \Sigma}\langle  \xi(\mathsf x,0), L_s(p)\rangle+\langle \xi(\mathsf x,0),\tau_s\rangle\\
    &= \sup_{p\in \Sigma}\langle  L_{-s}(\xi(\mathsf x,0)), p\rangle+2\lambda\mathsf x s
\end{align*}
By a direct computation,
\[
L_{-s}( \xi(\mathsf x,0))=e^{s} \xi( e^{-s}\mathsf x,0)~.
\]
Hence we obtain
\[
  \ph_\lambda(\mathsf x)= e^{s}\ph_\lambda(e^{-s}\mathsf x)+2\lambda \mathsf x s\,
\]
and the result follows.
\end{proof}
\begin{proof}[Proof of Proposition \ref{pr:xlnx}]
{We have already observed in the proof of Lemma \label{lm:xlnx-inc} that the null planes $z=\pm y$ are support planes of $\Sigma_\lambda$. This implies that the elliptic null support function of $\Sigma^\lambda$ at $\theta_0=\pm \pi/2$ equals $0$. Comparing the elliptic and parabolic null support functions as in Proposition \ref{prop: ell/par}, one has $\ph_\lambda(0)=0$.}

Moreover, from Equation \eqref{eq:gaussmap}, the image of the Gauss map is contained in the half plane defined by $x\geq 0$ in the hyperboloid model of $\Hyp^2$, and therefore the elliptic support function takes the value $+\infty$ on the half-disc $\{(x,y,1)\,|\,x\geq 0\}$. 
{By convexity}, the elliptic null support function equals $+\infty$ at every $\theta\in (-\pi/2,\pi/2)$, that is $\ph_\lambda(\mathsf x)=+\infty$ for every $\mathsf x>0$.

Finally, we claim that $\ph_\lambda(-1)=0$.  From Lemma \ref{lm:eqvv}, this will imply that
$$\ph_\lambda(-e^{s})=-2\lambda se^{s}
$$
and the statement will be proved.

By definition of $\ph_\lambda$, since $\xi(-1,0)=(-2,0,2)$, we have 
\begin{equation}\label{eq:8am}
\ph_\lambda(-1)=\sup_{p\in\Sigma_\lambda}\langle p,(-2,0,2)\rangle~.
\end{equation}
As an additional preliminary remark, we observe that it suffices to show that $\ph_\lambda(-1)=0$ for $\lambda\geq 0$. Indeed, by Remark \ref{rmk sign lambda}, $\Sigma_{-\lambda}=R(\Sigma_\lambda)$ where $R$ is the reflection in the plane $y=0$. Hence
$$\ph_{-\lambda}(-1)=\sup_{p\in\Sigma_\lambda}\langle R(p),(-2,0,2)\rangle=\sup_{p\in\Sigma_\lambda}\langle p,(-2,0,2)\rangle=\ph_\lambda(-1)$$
where we have used that $R$ fixes $(-2,0,2)$.

We are now ready to provide the final computation. From \eqref{eq:8am} and the parameterization of $\Sigma_\lambda$, we have
\begin{align*}
   \ph_\lambda(-1)&= 
   \sup_{(t,s)\in\mathbb R_+\times\mathbb R}\langle X^\lambda(t,s),(-2,0,2)\rangle\\
   &=
   2\sup_{(t,s)\in\mathbb R_+\times\mathbb R}\left(\sqrt{1+\lambda^2}\left(\coth(t)-\frac{\cosh(s)}{\sinh(t)}-t\right)-\lambda s\right)\,.
\end{align*}
For every $t\in\mathbb R_+$ let us consider the function $F_t:\mathbb R\to\mathbb R$ defined by 
$$F_t(s)=\sqrt{1+\lambda^2}\left(\coth(t)-\frac{\cosh(s)}{\sinh(t)}-t\right)-\lambda s~.$$
This function has a global maximum at the point $s_{\max}=s_{\max}(t)$ such that 
\begin{equation}\label{eq:smax}
\sinh(s_{\max})=-({\lambda}/{\sqrt{1+\lambda^2}})\sinh(t)~.
\end{equation}
 The corresponding maximum value  is
\begin{align*}
G(t):&=\sup_{s} F_t(s)=F(s_{\max}(t)) \\
&=\sqrt{1+\lambda^2}\left(\coth(t)-\frac{\cosh (s_{\max}(t))}{\sinh (t)}-t\right)-\lambda s_{\max}(t)\\
&=\sqrt{1+\lambda^2}(\coth(t)-t)+\lambda(\coth(s_{\max}(t))-s_{\max}(t))\\
&=\sqrt{1+\lambda^2}g(t)+\lambda g(s_{\max}(t))~,
\end{align*}
where we have set $g(t):=\coth(t)-t$. We will show that $G'(t)<0$. For this, let us first observe that $g'(t)=-\coth^2(t)$ and, differentiating \eqref{eq:smax}, we have the relation
$$\cosh(s_{\max}(t))s_{\max}'(t)=-\frac{\lambda}{\sqrt{1+\lambda^2}}\cosh (t)~.$$
Using these two relations, we obtain:
\begin{align*}
G'(t)&=-\sqrt{1+\lambda^2}\frac{\cosh^2(t)}{\sinh^2(t)}-\lambda\frac{\cosh^2 (s_{\max}(t))}{\sinh^2 (s_{\max}(t))}(s_{\max}'(t))\\
&={\cosh (t)}\left(-{\sqrt{1+\lambda^2}}\frac{\cosh(t)}{\sinh^2(t)}+\frac{\lambda^2}{{\sqrt{1+\lambda^2}}}\frac{\cosh (s_{\max}(t))}{\sinh^2(s_{\max}(t))}\right)\\
&=\sqrt{1+\lambda^2}\frac{\cosh (t)}{\sinh^2(t)}(-\cosh(t)+\cosh(s_{\max}(t)))<0
\end{align*}
where in the last inequality we have used that, from \eqref{eq:smax}, $|s_{\max}(t)|<t$.

This shows that $\psi_\lambda(-1)=2\lim_{t\to 0^+}G(t)$. To compute this limit, observe that, by  \eqref{eq:smax},
\[
   s_{\max}(t)=-\frac{\lambda}{\sqrt{1+\lambda^2}} t+O(t^2)~.
\]
From the expression of $G(t)$, we then have:
\begin{align*}
G(t)&=\sqrt{1+\lambda^2}\left(\frac{\cosh(t)-\cosh (s_{\max}(t))}{\sinh(t)}\right)-\sqrt{1+\lambda^2}t-\lambda s_{\max}(t)\\
&=\sqrt{1+\lambda^2}\left(\frac{ct^2+O(t^3)}{t+O(t^3)}\right)+O(t)~.
\end{align*}
This shows that  $\lim_{t\to 0^+} G(t)=0$ and thus concludes the proof.
\end{proof}

\subsection{The surface $\Sigma_+$}
By Proposition \ref{pr:xlnx}, the invariant surface has the correct local behaviour in order to prove Theorem \ref{thm: xlogx incomplete}, using the comparison result of Corollary \ref{cor:complete comparison}. The first step is, analogously to Lemmas \ref{lemma:mouse0} and \ref{lemma:mouse}, comparing the elliptic and parabolic null support functions.

\begin{lemma}\label{lemma:mouse2}
 Let $s$ be a homogeneous function, and let $\phi$ and $\ph$ be respectively its elliptic dehomogenization and parabolic dehomogenization, the latter with point at infinity $\pi/2$. If
$$
\ph(\mathsf x) \leq \epsilon |\mathsf x\log|\mathsf x||
$$
for all $\mathsf x$ in some (one-sided) neighbourhood of  $\mathsf x_0=0$, then
$$\phi(\theta) \leq \epsilon |(\theta - \theta_0)\log|\theta - \theta_0||$$
for all $\theta$ in some (one-sided) neighbourhood $\theta_0=-\pi/2$.
 \end{lemma}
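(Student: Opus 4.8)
The plan is to prove it by direct comparison of the elliptic and parabolic null support functions, analogously to Lemmas \ref{lemma:mouse0} and \ref{lemma:mouse}, the simplification here being that the function $t\mapsto|t\log t|$ is monotone near $0$ (in contrast to the $\log(-\log)$ function treated in Lemma \ref{lemma:mouse0}), which makes the estimate cleaner. First I would record the analogue of Proposition \ref{prop: ell/par} for the rotated map $\xi$ of \eqref{eq:defi q}. Since $\xi=R\circ\zeta$ with $R\colon(x,y,z)\mapsto(y,-x,z)$ the linear isometry rotating the $xy$-plane by $-\pi/2$, and $R$ sends $\vec\theta$ to $\vec{\theta-\pi/2}$, the homogeneity argument of Proposition \ref{prop: ell/par} gives $\xi(\mathsf x,0)=(1+\mathsf x^2)\,\vec\theta$ with $\mathsf x=\tan\!\big((\theta-\theta_0)/2\big)$, where $\theta_0=-\pi/2$; hence
\[
\phi(\theta)=\frac{\ph(\mathsf x)}{1+\mathsf x^2}.
\]
Setting $\eta:=\theta-\theta_0$, so that $\mathsf x=\tan(\eta/2)$, the assignment $\eta\mapsto\mathsf x$ is a sign-preserving increasing homeomorphism between neighbourhoods of $0$; thus a one-sided neighbourhood $U'$ of $\mathsf x_0=0$ pulls back to a one-sided neighbourhood of $\theta_0$, and $|\mathsf x|=|\tan(\eta/2)|\le|\eta|$ as soon as $|\eta|$ is small.

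Next, using the hypothesis $\ph(\mathsf x)\le\epsilon|\mathsf x\log|\mathsf x||$ on $U'$: if $\ph(\mathsf x)\ge 0$ then $\phi(\theta)=\ph(\mathsf x)/(1+\mathsf x^2)\le\ph(\mathsf x)\le\epsilon|\mathsf x\log|\mathsf x||$, while if $\ph(\mathsf x)<0$ then $\phi(\theta)<0\le\epsilon|\mathsf x\log|\mathsf x||$; in either case $\phi(\theta)\le\epsilon|\mathsf x\log|\mathsf x||$. Finally, $t\mapsto -t\log t$ has derivative $-\log t-1>0$ on $(0,e^{-1})$, hence is increasing there, so from $0\le|\mathsf x|\le|\eta|<e^{-1}$ (valid for $|\eta|$ small) we deduce $|\mathsf x\log|\mathsf x||\le|\eta\log|\eta||$. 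Chaining these inequalities yields $\phi(\theta)\le\epsilon|\eta\log|\eta||=\epsilon|(\theta-\theta_0)\log|\theta-\theta_0||$ on a one-sided neighbourhood of $\theta_0$; the endpoint $\theta=\theta_0$ is included since the hypothesis at $\mathsf x_0=0$ forces $\ph(0)\le 0$, whence $\phi(\theta_0)=\ph(0)\le 0$.

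I do not anticipate a genuine obstacle: the argument is a change of variables followed by a one-variable monotonicity estimate. The only point needing a little attention is the bookkeeping of the rotation defining $\xi$ — verifying that it makes $\theta_0=-\pi/2$ correspond to $\mathsf x_0=0$ and identifies the angular displacement $\theta-\theta_0$ with the Weierstrass parameter through $\mathsf x=\tan((\theta-\theta_0)/2)$ — together with checking that both $|\mathsf x|$ and $|\eta|$ stay below $e^{-1}$ so that the monotonicity of $t\mapsto -t\log t$ can be invoked; both are automatic for $\theta$ sufficiently close to $\theta_0$.
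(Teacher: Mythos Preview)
Your proposal is correct and follows essentially the same approach as the paper's proof: both use the change of variables $\mathsf x=\tan\!\big((\theta-\theta_0)/2\big)$, the bound $|\mathsf x|\le|\theta-\theta_0|$ for small angles, and the monotonicity of $t\mapsto -t\log t$ on $(0,e^{-1})$. Your case split on the sign of $\ph(\mathsf x)$ and the explicit treatment of the endpoint are slightly more detailed than the paper's one-line chain of inequalities, but the argument is the same.
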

\begin{proof}
The proof is very similar to Lemma \ref{lemma:mouse0}, except that we consider the point at infinity $\pi/2$. Analogously to Proposition \ref{prop: ell/par}, we have $\mathsf x = \tan((1/2)(\theta-\theta_0))$, hence for $\mathsf x$ close to $\mathsf x_0=0$, the linear bound $|\mathsf x|\leq |\theta-\theta_0|$ holds. Thus
\[
\phi(\theta) = \frac{\ph(\mathsf x)}{1+\mathsf x^2}\leq  \frac{\epsilon}{1+\mathsf x^2}|\mathsf x\log|\mathsf x||\leq \epsilon|(\theta - \theta_0)\log|\theta - \theta_0||~.
\]
This concludes the proof.
\end{proof}

So, let $\phi$ be a function as in the hypothesis of Theorem \ref{thm: xlogx incomplete}. Of course, up to applying an isometry of $\R^{2,1}$, we can assume $\theta_0=-\pi/2$ and $\phi(\theta_0)=0$. Let $\phi_\lambda$ the elliptic null support function of the invariant surface $\Sigma_\lambda$, for $\lambda=\epsilon/2$. We thus have $\phi_\lambda(\theta_0)=\phi(\theta_0)$ and $\phi_\lambda\leq \phi$ in a neighbourhood of $\theta_0=-\pi/2$. However, in order to apply Corollary \ref{cor:complete comparison}, with $\phi_\lambda$ playing the role of $\phi_+$, we would need the inequality $\phi_\lambda\leq \phi$ to hold globally. 

This is an issue that we have already taken care of in the proof of Theorem \ref{thm: Holder incomplete}, by a translation of the surface. In this setting, however, there is one more step to do. Indeed, we have that $\phi_\lambda$ equals $+\infty$ in the whole interval $(-\pi/2,\pi/2)$, while the function $\phi$ is by hypothesis equal to $+\infty$ only in a one-sided neighbourhood (we can assume it is a right neighbourhood) of $\theta_0=-\pi/2$. Translating the surface has an effect of adding a bounded function to its null support function, hence will not change this situation. We will instead modify the surface $\Sigma_\lambda$ by applying a parabolic isometry fixing the direction $\vec{\theta_0}$, which has the effect of ``shrinking'' the interval on which $\phi_\lambda$ is infinite. We provide the details of this argument in the following proof.

\begin{prop}\label{prop:barrier xlogx}
For any $\epsilon,\kappa>0$ and $\theta_0,\theta_1\in  \mathbb S^1$, there exists a convex entire spacelike surface $\Sigma_+$ of constant curvature $-\kappa$ which is incomplete at $\theta_0$ and whose null support function $\phi_+$ satisfies
\begin{itemize}
\item $\phi_+(\theta_0)=0$;
\item the set $\{\phi_+=+\infty\}$ equals the right open interval between $\theta_0$ and $\theta_1$;
\item $\phi_+$ is continuous on the subset $\{\phi_+<+\infty\}$ where it is finite;
\item $\phi_+(\theta)\leq \epsilon|(\theta-\theta_0)\log|\theta-\theta_0||$ in a left one-sided neighbourhood of $\theta_0$.
\end{itemize}
\end{prop}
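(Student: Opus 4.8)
The plan is to obtain $\Sigma_+$ from the invariant surface $\Sigma_\lambda$ of Proposition \ref{pr:xlnx}, for a suitably small parameter $\lambda>0$, by composing with a rotation about the $z$-axis, a linear parabolic isometry fixing the null direction $\vec{\theta_0}$, and finally a homothety. We use $\Sigma_\lambda$ with $\lambda>0$: by Lemma \ref{lm:xlnx-inc} it is incomplete at $-\pi/2$, and by Proposition \ref{pr:xlnx} its parabolic null support function with point at infinity $\pi/2$ equals $-2\lambda|\mathsf x|\log|\mathsf x|$ for $\mathsf x\le 0$ and $+\infty$ for $\mathsf x>0$; in particular $\ph_\lambda(\mathsf x)=2\lambda\,|\mathsf x\log|\mathsf x||$ on $(-1,0)$. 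Passing to the elliptic null support function $\phi_\lambda$ via Proposition \ref{prop: ell/par} (and recalling the facts about $\phi_\lambda$ already established in the proof of Proposition \ref{pr:xlnx}, together with the comparison in Lemma \ref{lemma:mouse2} applied with $\epsilon$ replaced by $2\lambda$), one gets: $\phi_\lambda(-\pi/2)=0$; $\{\phi_\lambda=+\infty\}$ is the right open interval from $-\pi/2$ to $\pi/2$; $\phi_\lambda$ is continuous on the complementary closed arc; and $\phi_\lambda(\theta)\le 2\lambda\,|(\theta+\tfrac{\pi}{2})\log|\theta+\tfrac{\pi}{2}||$ in a left neighbourhood of $-\pi/2$. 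Up to precomposing with the reflection $\mathrm{diag}(1,-1,1)$, which by Remark \ref{rmk sign lambda} replaces $\Sigma_\lambda$ by $\Sigma_{-\lambda}$ and exchanges the two sides of the distinguished direction, we may assume these match the orientation conventions of the statement.

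Next I would move the endpoint of the infinite interval to the prescribed $\theta_1$ (which we may assume is $\neq\theta_0$). First apply a rotation $\rho$ about the $z$-axis with $\rho(-\pi/2)=\theta_0$, hence $\rho(\pi/2)=\theta_0+\pi$. Then, among the one-parameter group $\{L_t\}$ of linear isometries fixing $\vec{\theta_0}$ — which fixes $\theta_0$ in $\mathbb S^1$ and acts on $\mathbb S^1\setminus\{\theta_0\}$ freely, orientation-preservingly, and transitively (in the half-plane model of Remark \ref{rmk:parabolic equivariance} it is a translation of the boundary coordinate) — pick the unique $t$ with $L_t(\theta_0+\pi)=\theta_1$. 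Put $\Sigma':=L_t\circ\rho(\Sigma_\lambda)$. Since $L_{-t}$ is a time-orientation preserving linear isometry, for any null vector $\vec\theta$ one has $s_{\Sigma'}(\vec\theta)=s_{\rho(\Sigma_\lambda)}(L_{-t}\vec\theta)=c(\theta)\,\phi_\lambda^{\rho}(\sigma_{-t}(\theta))$, where $\phi_\lambda^\rho=\phi_\lambda\circ\rho^{-1}$, where $\sigma_{-t}$ is the induced analytic orientation-preserving circle diffeomorphism, and where $c(\theta)>0$ is the $z$-component of $L_{-t}\vec\theta$, a positive smooth function with $c(\theta_0)=1$. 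From this identity one reads off: $\phi'(\theta_0)=0$; $\{\phi'=+\infty\}=\sigma_t(\{\phi_\lambda^\rho=+\infty\})$ is the right open interval from $\theta_0$ to $\theta_1$; $\phi'$ is continuous on its complement; and, using that $\sigma_{-t}$ fixes $\theta_0$ so that $|\sigma_{-t}(\theta)-\theta_0|\le D|\theta-\theta_0|$ near $\theta_0$ for a constant $D=D(\theta_0,\theta_1)$, that $c$ is bounded near $\theta_0$, and the elementary estimate $Dx|\log(Dx)|\le 2Dx|\log x|$ for small $x$, that $\phi'(\theta)\le C_0\lambda D\,|(\theta-\theta_0)\log|\theta-\theta_0||$ in a left neighbourhood of $\theta_0$, for an absolute constant $C_0$.

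Finally, set $\Sigma_+:=\tfrac{1}{\sqrt\kappa}\,\Sigma'$. A homothety scales the curvature by $\kappa$, so $\Sigma_+$ has constant curvature $-\kappa$; it fixes all null directions, so $\Sigma_+$ is still incomplete at $\theta_0$ and $\{\phi_+=+\infty\}$ is unchanged; and by Lemma \ref{lemma:rescale par null} its null support function is $\phi_+=\phi'/\sqrt\kappa$, whence $\phi_+(\theta)\le (C_0D/\sqrt\kappa)\,\lambda\,|(\theta-\theta_0)\log|\theta-\theta_0||$ near $\theta_0$. Since $D$ depends only on $\theta_0$ and $\theta_1$, it now suffices to have chosen $\lambda>0$ small enough that $C_0D\lambda/\sqrt\kappa\le\epsilon$; this delivers the four required properties. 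The main obstacle — and essentially the only delicate point — is the second step: one must check that the parabolic action on $\mathbb S^1\setminus\{\theta_0\}$ is transitive and orientation-preserving, so that the infinite interval can be placed as the prescribed right interval $(\theta_0,\theta_1)$, and, more subtly, that conjugation by $L_t$ distorts the $|(\theta-\theta_0)\log|\theta-\theta_0||$ bound near $\theta_0$ only by a multiplicative constant $D=D(\theta_0,\theta_1)$, so that the last free parameter $\lambda$ can absorb it. Getting the order of the choices right ($\theta_1\rightsquigarrow t\rightsquigarrow D$, then $\lambda$, then the homothety) is what makes the construction close up.
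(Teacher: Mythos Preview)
Your proof is correct and follows the same overall architecture as the paper's: start from the invariant surface $\Sigma_\lambda$, apply a parabolic linear isometry fixing $\vec{\theta_0}$ to relocate the far endpoint of the infinite arc, then rescale. The substantive difference is in how you control the effect of the parabolic isometry on the $|(\theta-\theta_0)\log|\theta-\theta_0||$ bound. The paper switches to parabolic null coordinates with point at infinity $\theta_0$: there the parabolic isometry acts as a translation $\mathsf x\mapsto \mathsf x+c$, and the bound follows from the asymptotic $(t+c)\log(t+c)/(t\log t)\to 1$, which even shows that any $\lambda<\epsilon/2$ works, independent of $c$ (hence of $\theta_1$). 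You instead stay in elliptic coordinates and bound the induced circle diffeomorphism $\sigma_{-t}$ by a Lipschitz constant $D=D(\theta_0,\theta_1)$ near the fixed point $\theta_0$, then absorb $D$ into the choice of $\lambda$. Both are valid; the paper's route gives a slightly sharper dependence (no factor $D$ in the constraint on $\lambda$) and avoids tracking the positive weight $c(\theta)$, while yours is more direct. Two minor remarks: the derivative of $\sigma_{-t}$ at $\theta_0$ is in fact $1$ for a parabolic isometry, so your constant $D$ can be taken arbitrarily close to $1$ on small enough neighbourhoods, which morally recovers the paper's sharper conclusion; and the paper reduces to $\kappa=1$ at the outset rather than applying the homothety at the end, but this is cosmetic.
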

\begin{proof}
First, observe that it suffices to prove the statement for $\kappa=1$, since applying a dilatation has the effect of multiplying the support function by a constant (Lemma \ref{lemma:rescale par null}). 

Now, assuming $\theta_0=-\pi/2$, we will prove the proposition by applying to the surface $\Sigma_\lambda$ studied above (for $\lambda>0$ to be chosen later) a parabolic linear isometry $L$ that fixes the direction $\vec{\theta_0}=(0,-1,1)$. Let $\Sigma_\lambda'=L(\Sigma_\lambda)$ and $\phi_\lambda'$ be the elliptic null support function of $\Sigma_\lambda'$. Clearly $L$ fixes the plane $z=-y$, which is a null support plane of $\Sigma_\lambda$, and it is therefore also a null support plane of $\Sigma_\lambda'$. Hence $\phi_\lambda'(\theta_0)=\phi_\lambda(\theta_0)=0$. Moreover, the action induced by $L$ on the unit circle $\mathbb S^1$ clearly maps $\{\theta\,|\,\phi_\lambda(\theta)=+\infty\}=(-\pi/2,\pi/2)$ to $\{\theta\,|\,\phi_\lambda'(\theta)=+\infty\}$. Choosing $L$ suitably, we can therefore achieve the latter set to be equal to the right interval between $\theta_0=-\pi/2$ and $\theta_1$, for any $\theta_1\in \mathbb S^1$. 

Observe that by Lemma \ref{lemma:mouse2}, the null support function $\phi_\lambda$ is smaller than  $\lambda|(\theta+\pi/2)\log|\theta+\pi/2||$ on the left of $-\pi/2$. It remains to show that $\phi_\lambda'$ still has the same behaviour in a left one-sided neighbourhood of $-\pi/2$. This is done, avoiding unnecessary computations, by considering the parabolic null support function \emph{with point at infinity $-\pi/2$.}

For this purpose, let us introduce yet another map $ \widehat\xi:\overline{\mathcal H}\to\R^{2,1}$ by:
\begin{equation}\label{eq:defi q new}
{\widehat \xi}(\mathsf x,\mathsf y) = \begin{bmatrix}  2\mathsf x \\ 1- \mathsf x^2 - \mathsf y^2 \\ 1+ \mathsf x^2 + \mathsf y^2 \end{bmatrix}~.
\end{equation}
This map parameterizes all timelike and lightlike directions in Minkowski space, except $(0,-1,1)$, which corresponds to $\vec{\theta_0}$. Observe that $\widehat \xi=R\circ \xi$, where $\xi$ is the map we have used in this section (Equation \eqref{eq:defi q}) and $R$ is the reflection in the plane $y=0$. Hence, defining the parabolic null support function $\widehat \psi$ \emph{with respect to $-\pi/2$} as
$$
\widehat \ph_\lambda(\mathsf x) = \sup_{{p} \in \Sigma_\lambda} \langle {p}, \widehat{\xi}(\mathsf x,0) \rangle~,
$$
we immediately obtain from the identity $\Sigma_{-\lambda}=R(\Sigma_\lambda)$ (Remark \ref{rmk sign lambda}) that
$$\widehat \ph_\lambda(\mathsf x)= \sup_{{p} \in \Sigma_\lambda} \langle {p}, \widehat{\xi}(\mathsf x,0) \rangle= \sup_{{p} \in \Sigma_\lambda} \langle {p}, R\circ {\xi}(\mathsf x,0) \rangle= \sup_{{p} \in \Sigma_\lambda} \langle R(p), {\xi}(\mathsf x,0) \rangle=\ph_{-\lambda}(\mathsf x)~.$$
Therefore from Proposition \ref{pr:xlnx}, we have:
$$\widehat \ph_\lambda(\mathsf x)=2\lambda |x|\log|x|$$
for $\mathsf x<0$. Similarly to Remark \ref{rmk:parabolic equivariance} (or by a direct check using \eqref{eq:defi q new}), we have $L\circ \widehat \xi(\mathsf x,\mathsf y)=\widehat \xi(\mathsf x+c,\mathsf y)$. 
Hence applying a parabolic isometry fixing the direction $\vec\theta_0$ to a surface has the effect of precomposing the parabolic null support function with point at infinity $\theta_0$ by a translation:
$$\widehat\ph'_{\lambda}(\mathsf x):=\sup_{{p} \in \Sigma'_\lambda=L(\Sigma_\lambda)} \langle {p}, \widehat{\xi}(\mathsf x,0) \rangle=\sup_{{p} \in \Sigma_\lambda} \langle {p}, \widehat{\xi}(\mathsf x+c,0) \rangle=\widehat\ph_\lambda(\mathsf x+c)~.$$
In particular, 
$$\widehat \ph'_\lambda(\mathsf x)=2\lambda |\mathsf x+c|\log|\mathsf x+c|~.$$
Since 
$$\lim_{t\to +\infty}\frac{(t+c)\log(t+c)}{t\log t}=1~,$$
we have that, for any $\epsilon>2\lambda$ and for $|\mathsf x|$ large,
$$\widehat \ph'_{\lambda}(\mathsf x)\leq \epsilon|\mathsf x|\log|\mathsf x|$$
This shows the null support function of $L(\Sigma_\lambda)$ is smaller than that of $\Sigma_{\epsilon/2}$ in a one-sided neighbourhood of $\theta_0=-\pi/2$. Going back to the parabolic null support function with point at infinity $-\pi/2$, this means (from Proposition \ref{pr:xlnx}) that $\ph'_\lambda(\mathsf x)\leq -\epsilon|\mathsf x|\log|\mathsf x|$ for $\mathsf x$ in a left neighbourhood of $0$. Using Lemma \ref{lemma:mouse2}, the proof is complete.
\end{proof}

\subsection{Proof of Theorem \ref{thm: xlogx incomplete}}
We are finally ready to conclude the proof of Theorem \ref{thm: xlogx incomplete}.

\begin{proof}[Proof of Theorem \ref{thm: xlogx incomplete}]
Let $\phi$ be the null support function as in the hypothesis of the theorem. Assume as usual that  $\phi(\theta_0)=0$, and let's say that the curvature of $\Sigma$ is bounded above by $-\kappa$. By Proposition \ref{prop:barrier xlogx} (possibly applying a reflection) let be $\phi_+$ the null support function of a convex entire spacelike surface $\Sigma_+$ of curvature $-\kappa$, with the following properties:
\begin{itemize}
\item $\Sigma_+$ is incomplete at $\theta_0$,
\item $\phi_+(\theta_0)=\phi(\theta_0)=0$,
\item the set where $\phi_+=+\infty$ is a proper subset of the set where $\phi=+\infty$,
\item $\phi_+$ is continuous on the subset of $ \mathbb S^1$ where it is finite,
\item $\phi_+\leq\phi$ in a one-sided neighbourhood of $\theta_0$, on the side of $\theta_0$ where both are finite.
\end{itemize}
Exactly as in the proof of  Theorem \ref{thm: Holder incomplete}, we must now modify $\Sigma_+$ further so that $\phi_+\leq\phi$ holds globally, so that by Corollary \ref{cor:complete comparison} we will deduce that $\Sigma$ is incomplete at $\theta_0$. To achieve this, it now suffices to translate $\Sigma_+$ in the direction of $\vec\theta_0$. Indeed, $\phi_+$ is continuous on the complement of the open interval where it is equal to $+\infty$, whereas $\phi$ is lower-semicontinuous, hence bounded below.  The same of course holds for the parabolic null support functions $\widehat \psi_+$ and $\widehat \psi$ with point at infinity $\theta_0$. This implies that $\widehat\psi-\widehat\psi_+$ is bounded below on $\{\widehat\psi_+<+\infty\}$. Recall from  the proof of  Theorem \ref{thm: Holder incomplete} that translating $\Sigma_+$ in the direction $c\vec\theta_0$ has the effect of adding the constant $-2c$ to $\widehat \psi_+$, and does not change the elliptic null support function at $\theta_0$. Hence choosing $c$ large enough, one can make sure that  $\widehat \psi_+\leq \widehat\psi$, and therefore also $\phi_+\leq \phi$ on the whole $\mathbb S^1$. The proof is then concluded by Corollary \ref{cor:complete comparison}.
\end{proof}

\bibliographystyle{alpha}
\bibliography{bs-bibliography_comp.bib}

\end{document}